\tikzset{ext/.style={circle, draw,inner sep=1pt},int/.style={circle,draw,fill,inner sep=1pt},nil/.style={inner sep=1pt}}
\tikzset{exte/.style={circle, draw,inner sep=3pt},inte/.style={circle,draw,fill,inner sep=3pt}}
\tikzset{diagram/.style={matrix of math nodes, row sep=3em, column sep=2.5em, text height=1.5ex, text depth=0.25ex}}
\tikzset{diagram2/.style={matrix of math nodes, row sep=0.5em, column sep=0.5em, text height=1.5ex, text depth=0.25ex}}
\theoremstyle{plain}
\newtheorem{introthm}{Theorem}
\newtheorem{mainthm}{Theorem}
\newtheorem{claim}[mainthm]{Claim}
\newtheorem{thm}{Theorem}
\newtheorem{prop}[thm]{Proposition}
\newtheorem{lemm}[thm]{Lemma}
\newtheorem{cor}[thm]{Corollary}
\theoremstyle{definition}
\newtheorem*{rem}{Remark}
\newtheorem{const}[thm]{Construction}
\newtheorem{defn}[thm]{Definition}
\numberwithin{subsubsection}{subsection}
\title[Mapping spaces for dg Hopf cooperads]{Mapping Spaces for DG Hopf Cooperads and Homotopy Automorphisms of the Rationalization of $E_n$-operads}
\date{\today}
\author{Benoit Fresse}
\address{Univ. Lille, CNRS, UMR 8524 - Laboratoire Paul Painlev\' e, F-59000 Lille, France}
\email{Benoit.Fresse@univ-lille.fr}
\author{Thomas Willwacher}
\address{Department of Mathematics\\
ETH Z\"urich\\
R\"amistrasse 101\\
8092 Z\"urich, Switzerland}
\email{thomas.willwacher@math.ethz.ch}
\DeclareMathOperator{\kk}{\mathbb{k}}   
\DeclareMathOperator{\NN}{\mathbb{N}}
\DeclareMathOperator{\ZZ}{\mathbb{Z}}
\DeclareMathOperator{\QQ}{\mathbb{Q}}
\DeclareMathOperator{\s}{\mathit{s}}      
\DeclareMathOperator{\dg}{\mathit{dg}}      
\DeclareMathOperator{\g}{\mathit{g}}      
\DeclareMathOperator{\CCat}{\mathcal{C}}
\DeclareMathOperator{\Vect}{\mathcal{V}\mathit{ect}}
\DeclareMathOperator{\dgVect}{\dg\Vect}
\DeclareMathOperator{\dgZVect}{\dg_{\ZZ}\Vect}
\DeclareMathOperator{\Set}{\mathcal{S}\mathit{et}}  
\DeclareMathOperator{\Top}{\mathcal{T}\mathit{op}}  
\DeclareMathOperator{\sSet}{\mathit{s}\mathcal{S}\mathit{et}}   
\DeclareMathOperator{\ComCat}{\mathcal{C}\mathit{om}}   
\DeclareMathOperator{\dgCom}{\dg\ComCat}
\DeclareMathOperator{\dgZCom}{\dg_{\ZZ}\ComCat}
\DeclareMathOperator{\Tree}{\mathcal{T}\mathit{ree}}
\DeclareMathOperator{\Seq}{\mathcal{S}\mathit{eq}}  
\DeclareMathOperator{\Op}{\mathcal{O}\mathit{p}}    
\DeclareMathOperator{\Hopf}{\mathcal{H}\mathit{opf}}    
\DeclareMathOperator{\sSetOp}{\sSet\Op}
\DeclareMathOperator{\TopOp}{\Top\Op}
\DeclareMathOperator{\dgZOpc}{\dg_{\ZZ}\Op^c}
\DeclareMathOperator{\dgOpc}{\dg\Op^c}
\DeclareMathOperator{\dgOp}{\dg\Op}
\DeclareMathOperator{\dgZHOpc}{\dg_{\ZZ}\Hopf\Op^c}
\DeclareMathOperator{\dgHOpc}{\dg\Hopf\Op^c}
\DeclareMathOperator{\dgHOp}{\dg\Hopf\Op}
\DeclareMathOperator{\gHOpc}{\g\Hopf\Op^c}
\DeclareMathOperator{\dgSeqc}{\dg\Seq^c}
\DeclareMathOperator{\dgZSeqc}{\dg_{\ZZ}\Seq^c}
\DeclareMathOperator{\gSeqc}{\g\Seq^c}
\DeclareMathOperator{\dgHSeqc}{\dg\Hopf\Seq^c}
\DeclareMathOperator{\dgHSeq}{\dg\Hopf\Seq}
\DeclareMathOperator{\dgZHSeqc}{\dg_{\ZZ}\Hopf\Seq^c}
\DeclareMathOperator{\sLaOp}{\s\Lambda\Op}
\DeclareMathOperator{\dgLaOpc}{\Com^c/\dg\Lambda\Op^c}
\DeclareMathOperator{\dgLaHOpc}{\dg\Hopf\Lambda\Op^c}
\DeclareMathOperator{\dgZLaOpc}{\Com^c/\dg_{\ZZ}\Lambda\Op^c}
\DeclareMathOperator{\dgZLaHOpc}{\dg_{\ZZ}\Hopf\Lambda\Op^c}
\DeclareMathOperator{\Map}{\mathtt{Map}}
\DeclareMathOperator{\Mor}{\mathtt{Mor}}
\DeclareMathOperator{\Hom}{\mathtt{Hom}}
\DeclareMathOperator{\Aut}{\mathtt{Aut}}
\DeclareMathOperator{\Def}{\mathtt{Def}}
\DeclareMathOperator{\BiDer}{\mathtt{BiDer}}
\DeclareMathOperator{\id}{\mathit{id}}
\DeclareMathOperator{\eq}{\mathrm{eq}}
\DeclareMathOperator{\reflexiverightrightarrows}{\stackrel{\curvearrowleft}{\rightrightarrows}} 
\DeclareMathOperator{\trivialrightarrowtail}{\overset{\sim}{\rightarrowtail}} 
\DeclareMathOperator{\trivialtwoheadrightarrow}{\overset{\sim}{\twoheadrightarrow}} 
\DeclareMathOperator{\im}{\mathrm{im}}
\DeclareMathOperator*{\colim}{colim}
\DeclareMathOperator{\FreeOp}{\mathbb{F}}     
\DeclareMathOperator{\Sym}{\mathbb{S}}      
\DeclareMathOperator{\MC}{MC}
\DeclareMathAlphabet{\mathsfit}{OT1}{cmss}{m}{sl}
\DeclareMathOperator{\AOp}{\mathsfit{A}}
\DeclareMathOperator{\BOp}{\mathsfit{B}}
\DeclareMathOperator{\COp}{\mathsfit{C}}
\DeclareMathOperator{\DOp}{\mathsfit{D}}
\DeclareMathOperator{\EOp}{\mathsfit{E}}
\DeclareMathOperator{\IOp}{\mathsfit{I}}
\DeclareMathOperator{\MOp}{\mathsfit{M}}
\DeclareMathOperator{\NOp}{\mathsfit{N}}
\DeclareMathOperator{\POp}{\mathsfit{P}}
\DeclareMathOperator{\ROp}{\mathsfit{R}}
\DeclareMathOperator{\SOp}{\mathsfit{S}}
\DeclareMathOperator{\TOp}{\mathsfit{T}}
\DeclareMathOperator{\ZOp}{\mathsfit{Z}}
\DeclareMathOperator{\eop}{\mathsfit{e}}
\DeclareMathOperator{\Com}{\mathsfit{Com}}
\DeclareMathOperator{\HGC}{\mathrm{HGC}}
\DeclareMathOperator{\Graphs}{\mathsf{Graphs}}
\DeclareMathOperator{\GG}{\mathsf{G}}
\DeclareMathOperator{\GC}{\mathsf{GC}}
\DeclareMathOperator{\ICG}{\mathsf{ICG}}
\DeclareMathOperator{\IG}{\mathsf{IG}}
\DeclareMathOperator{\galg}{\mathfrak{g}}
\DeclareMathOperator{\halg}{\mathfrak{h}}
\DeclareMathOperator{\ad}{ad}
\newcommand{\htimes}{\times}
\begin{document}

\begin{abstract}
We define a simplicial enrichment on the category of differential graded Hopf cooperads (the category of dg Hopf cooperads for short).
We prove that our simplicial enrichment satisfies, in part, the axioms of a simplicial model category structure
on the category of dg Hopf cooperads.
We use this simplicial model structure to define a model of mapping spaces in the category of dg Hopf cooperads
and to upgrade results of the literature about the homotopy automorphism spaces of dg Hopf cooperads
by dealing with simplicial monoid structures.
The rational homotopy theory of operads implies that the homotopy automorphism spaces of dg Hopf cooperads
can be regarded as models for the homotopy automorphism spaces
of the rationalization of operads in topological spaces (or in simplicial sets).
We prove, as a main application, that the spaces of Maurer--Cartan forms on the Kontsevich graph complex Lie algebras
are homotopy equivalent, in the category of simplicial monoids,
to the homotopy automorphism spaces of the rationalization of the operads of little discs.
\end{abstract}

\maketitle

\section*{Introduction}

The rational homotopy theory of operads has been developed in~\cite{OperadHomotopyBook,ExtendedRHT}
by using a model given by the category of cooperads in differential graded commutative algebras (the category of dg Hopf cooperads for short).
The definition of this model works as follows.
The category of dg Hopf cooperads, denoted by $\dgHOpc$, is equipped with a model structure.
To relate this model category to the category of operads in simplicial sets $\sSetOp$
we use a Quillen adjunction
\begin{equation*}
G: \dgHOpc\rightleftarrows\sSetOp^{op} :\Omega_{\sharp}^*,
\end{equation*}
which extends the standard Quillen adjunction of the rational homotopy theory between the category of dg commutative algebras and the category of simplicial sets.
For $\POp\in\sSetOp$ a cofibrant simplicial operad, we can now define the rationalization as
\begin{equation*}
\POp^{\QQ} := LG\Omega_{\sharp}^*(\POp),
\end{equation*}
where $LG$ denotes the left derived functor of $G$.
The correspondence between our operadic adjunction and the standard Quillen adjunction of the rational homotopy theory ensures that,
under usual nilpotence and cohomological finiteness assumptions,
the components of this operad $\POp^{\QQ}$ are weakly equivalent to the rationalization of the spaces $\POp(r)$ underlying our object $\POp$.
Furthermore, we can regard the object $\Omega_{\sharp}^*(\POp)$ as a dg Hopf cooperad model for the rational homotopy type of the operad~$\POp$.

In what follows, we use that we can prolong the definition of this model of the rational homotopy theory to the category of operads in topological spaces
by using that the classical Milnor equivalence between simplicial sets and topological spaces preserves operads.
Let $|-|$ denote the classical geometric realization functor on simplicial sets.
For an operad in topological spaces $\POp$, we explicitly set $\Omega_{\sharp}^*(\POp) := \Omega_{\sharp}^*(\ROp)$,
where $\ROp$ is a cofibrant operad in simplicial sets such that $|\ROp|\sim\POp$.
We use this observation to apply our constructions to the topological operads of little discs $\DOp_n$.

The first goal of this paper is to prove that the model structure on $\dgHOpc$ can be enhanced to a simplicial model structure.
For this purpose, we adapt a definition of mapping spaces due to Bousfield--Gugenheim~\cite[Section 5]{BousfieldGugenheim} for dg commutative algebras
and to Hinich~\cite{Hinich} for dg coalgebras.
In fact, we only prove that our category $\dgHOpc$ is lax cotensored over the category of simplicial sets and, for a simplicial set $K$, our function object functor $\BOp\mapsto\BOp^K$
behaves properly with respect to finite limits only.
(We refer to Proposition \ref{prop:adjunctionrelation}-\ref{prop:pullbackcorner} and Theorem \ref{thm:map comparison} for the precise statements.)
Thus, we do not have a full simplicial model category structure on our category of dg Hopf cooperads.
(To feature our result, we say that we have a right finitely continuous simplicial model structure on $\dgHOpc$.)
Nevertheless, we have enough to ensure that the mapping spaces of our simplicial structure
are weakly equivalent to the hom-objects of the Dwyer-Kan simplicial localization of our category,
and therefore, have the right homotopy type.
In particular, our simplicial structure allows us to define a good model for the homotopy automorphism spaces of objects
in the category of dg Hopf cooperads $\Aut^h_{\dgHOpc}(\AOp)$.
To be specific, the definition of this model from a simplicial structure implies that our homotopy automorphism space inherits a natural monoid structure
and this monoid structure agrees with the one naturally associated to homotopy automorphism spaces
in the Dwyer-Kan simplicial localization of categories.

In the case of the dg Hopf cooperad $\AOp = \Omega_{\sharp}^*(\POp)$ associated to a (good) cofibrant operad in simplicial sets $\POp$,
we have a weak equivalence
\begin{equation*}
\Aut^h_{\dgHOpc}(\Omega_{\sharp}^*(\POp))\sim\Aut^h_{\sSetOp}(\POp^{\QQ})
\end{equation*}
and our model therefore enables us to determine the homotopy automorphism space of the rationalized operad $\POp^{\QQ}$ as a simplicial monoid.
We use this result to upgrade computations of the literature about the spaces of rational homotopy automorphisms of the little discs operads~\cite{FTW,FW}.

To express our result, we consider the Kontsevich graph complex $\GC_n^2$,
which is a complete dg Lie algebra whose elements are formal series of connected undirected graphs (see Section~\ref{sec:graph_complexes}).
We form a simplicial group such that
\begin{equation*}
Z_{\bullet}(\GC_n^2) := Z^0(\GC_n^2\hat\otimes\Omega^*(\Delta^{\bullet})),
\end{equation*}
where $Z^0(-)$ denotes the set of closed elements of degree zero in the cochain complex $\GC_n^2\hat\otimes\Omega^*(\Delta^{\bullet})$
with the group structure determined by the Baker--Campbell--Hausdorff formula.
We then use that the Lie algebra $\GC_n^2$ is graded by loop order to form a semidirect product simplicial group $\QQ^\times \ltimes Z_{\bullet}(\GC_n^2)$,
with $\lambda\in\QQ^\times$ acting on a graph of loop order $N$ by multiplication by $\lambda^N$,
and we establish the following main statement.

\begin{introthm}\label{thm:main_intro}
For $n\geq 2$, there are weak equivalences of simplicial monoids
\begin{equation*}
\Aut^h_{\TopOp}(\DOp_n^{\QQ})\sim\Aut^h_{\dgHOpc}(\Omega_{\sharp}^*(\DOp_n))\sim\QQ^\times\ltimes Z_{\bullet}(\GC_n^2).
\end{equation*}
\end{introthm}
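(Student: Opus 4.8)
The statement asserts a chain of two weak equivalences, and the plan is to establish each independently. The first equivalence, $\Aut^h_{\TopOp}(\DOp_n^{\QQ})\sim\Aut^h_{\dgHOpc}(\Omega_{\sharp}^*(\DOp_n))$, should follow from the rational homotopy theory of operads as assembled in the introduction. The key point is that $\Omega_{\sharp}^*$ transports the homotopy automorphism space of a (good cofibrant) topological operad, after rationalization, to the homotopy automorphism space of its dg Hopf cooperad model, and that this correspondence is compatible with the simplicial monoid structures coming from our right finitely continuous simplicial model structure on $\dgHOpc$. Concretely, I would apply the general equivalence $\Aut^h_{\dgHOpc}(\Omega_{\sharp}^*(\POp))\sim\Aut^h_{\sSetOp}(\POp^{\QQ})$ recorded in the introduction to $\POp = \ROp$ a cofibrant simplicial model with $|\ROp|\sim\DOp_n$, and then use that the Milnor equivalence preserves operads to pass between $\sSetOp$ and $\TopOp$. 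The monoidal refinement is exactly what the simplicial enrichment developed earlier (Proposition~\ref{prop:adjunctionrelation} and Theorem~\ref{thm:map comparison}) is designed to deliver, so this half is a matter of invoking those results rather than new computation.

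The substantive content lies in the second equivalence, $\Aut^h_{\dgHOpc}(\Omega_{\sharp}^*(\DOp_n))\sim\QQ^\times\ltimes Z_{\bullet}(\GC_n^2)$. The plan is to pass to a tractable cofibrant dg Hopf cooperad model for $\Omega_{\sharp}^*(\DOp_n)$, namely the graphical model $\Graphs_n$, whose homotopy type is controlled by the hairy and full graph complexes. The homotopy automorphisms of $\EOp_n$-operads were computed in \cite{FTW,FW}, where the homotopy automorphism space (as a space, or as a Lie/$L_\infty$-algebra of derivations) is identified with the Maurer--Cartan space of the graph complex $\GC_n^2$ together with its rescaling action. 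My approach would be to first identify the \emph{simplicial set} underlying $\Aut^h_{\dgHOpc}(\Graphs_n)$ with the Maurer--Cartan simplicial set $Z_{\bullet}(\GC_n^2)$ (up to the $\QQ^\times$ factor) by computing the deformation complex: the tangent / derivation complex of $\Graphs_n$ as a dg Hopf cooperad is quasi-isomorphic, as a complete dg Lie algebra, to $\GC_n^2$, and the grading by loop order produces the semidirect factor $\QQ^\times$ governing the Euler/rescaling automorphisms. One computes $\Aut^h$ via the mapping space $\Map_{\dgHOpc}(\Graphs_n,\Graphs_n)$ of our simplicial structure, restricting to the invertible components $\pi_0$.

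The crucial upgrade this paper contributes, and hence the main obstacle, is promoting the known equivalence of \emph{spaces} to an equivalence of \emph{simplicial monoids}. The point is that $Z_{\bullet}(\GC_n^2)$ carries its group structure from the Baker--Campbell--Hausdorff formula on the complete Lie algebra, whereas $\Aut^h_{\dgHOpc}(\Graphs_n)$ carries the composition product of homotopy automorphisms; one must produce a map of simplicial monoids inducing the underlying equivalence of spaces. Here I would exploit that our simplicial enrichment makes $\Aut^h_{\dgHOpc}(\AOp)$ a genuine simplicial monoid compatible with the Dwyer--Kan localization (as emphasized in the introduction), and that the exponential correspondence $\MC \leftrightarrow \exp$ translates the BCH product into composition of the corresponding $L_\infty$-automorphisms. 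The hard part will be checking that the grading automorphisms $\lambda\in\QQ^\times$, acting by $\lambda^N$ on loop order $N$, assemble with the $Z_{\bullet}(\GC_n^2)$ part into a semidirect product of simplicial monoids matching the composition structure on $\Aut^h_{\dgHOpc}(\Graphs_n)$ — that is, verifying the compatibility of the rescaling action with the BCH group law at the level of our simplicial mapping spaces, rather than merely on homotopy groups. Once this monoidal compatibility is secured, composing with the first equivalence yields the chain of weak equivalences of simplicial monoids asserted in Theorem~\ref{thm:main_intro}.
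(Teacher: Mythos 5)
Your outline of the first equivalence has a genuine gap at $n=2$. The general statement $\Aut^h_{\dgHOpc}(\Omega_{\sharp}^*(\POp))\sim\Aut^h_{\sSetOp}(\POp^{\QQ})$ that you invoke is Proposition~\ref{prop:good and Aut}, and it requires the operad $\POp$ to be $\QQ$-good. The little $2$-discs operad is \emph{not} $\QQ$-good (by~\cite{QBadSpace}), so your blanket appeal to goodness only covers $n\geq 3$; the case $n=2$ needs the separate hand computation recorded as Proposition~\ref{prop:Aut E2} (from~\cite[\S III.5.3]{OperadHomotopyBook}). As written, your argument silently excludes the case $n=2$ that the theorem asserts.

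For the second equivalence, your architecture is inverted relative to the paper's, and the inversion creates exactly the obstacle you then leave unresolved. You propose to first identify the \emph{space} underlying $\Aut^h_{\dgHOpc}$ with $Z_{\bullet}(\GC_n^2)$ (up to $\QQ^\times$) and afterwards ``promote'' this to simplicial monoids by checking that the Baker--Campbell--Hausdorff product matches composition on mapping spaces; you name this as the hard part but give no mechanism for it. The paper never faces this problem: the monoid map exists from the outset, since exponentiating the action of $\GC_n^2$ by biderivations on $\Graphs_n^2$ (note: the action lives on $\Graphs_n^2$, not on the reduced $\Graphs_n$) yields a map of simplicial monoids $\QQ^\times\ltimes Z_{\bullet}(\GC_n^2)\rightarrow\Map(\tau_{\sharp}\Graphs_n^2,\tau_{\sharp}\Graphs_n^2)^{\htimes}$ tautologically, because $\exp(x*y)=\exp(x)\exp(y)$. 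The recognition principle, Proposition~\ref{prop:hAut recognition}, then reduces the entire monoid-level theorem to a statement about simplicial sets: it suffices that composition with a fixed weak equivalence $\phi\colon\tau_{\sharp}\Graphs_n^2\rightarrow W^c(\eop_n^c)$ into a fibrant target induces a weak equivalence onto $\Map(\Graphs_n^2,W^c(\eop_n^c))^{\htimes}$ (Theorem~\ref{thm:main2}). Moreover, that space-level statement is not merely a citation of~\cite{FTW,FW}: since $\QQ L\ltimes\GC_n^2$ is not pro-nilpotent, one first splits off $\QQ^\times$ via a fiber sequence, identifies the remaining component of the mapping space with the Maurer--Cartan space of a codimension-one deformation $L_\infty$-subalgebra, realizes the comparison map as an $L_\infty$-morphism (Proposition~\ref{prop:U Def trivialization}), and applies the strengthened Goldman--Millson theorem~\ref{thm:GM}, with the edge-filtration spectral sequence computation of Proposition~\ref{prop:main_computation} reducing to the hairy-graph-complex results of~\cite{FW}. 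Without the recognition theorem (or an equivalent device), and without the fibrant resolution $W^c(\eop_n^c)$ and the cofibrancy of $\tau_{\sharp}\Graphs_n^2$ that make these mapping spaces homotopically meaningful, your outline does not close.
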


Recall that for a topological operad such as $\POp = \DOp_n$, we set $\Omega_{\sharp}^*(\DOp_n) := \Omega_{\sharp}^*(\ROp_n)$,
where $\ROp_n$ is a cofibrant operad in simplicial sets such that $|\ROp_n|\sim\DOp_n$.

In~\cite{OperadHomotopyBook}, the first author already obtained a computation of the space of rational homotopy automorphisms of the little $2$-discs operad as:
\begin{equation*}
\Aut^h_{\sSetOp}(\DOp_2^{\QQ})\sim S^1_{\QQ}\ltimes\mathrm{GRT},
\end{equation*}
where $\mathrm{GRT}$ denotes the Grothendieck-Teichm\"uller group.
Together with Theorem~\ref{thm:main_intro} above,
this statement yields another proof of the second author's result~\cite{Willwacher}
that $H^0(\GC_2)$ is identified with the Grothendieck--Teichm\"uller Lie algebra $\mathfrak{grt}_1$,
where $\GC_2$ denotes a reduced version (with no bivalent vertices allowed) of the graph complex $\GC_2^2$.

We give a brief summary of our conventions in the next preliminary section.
We explain the definition of our (right finitely continuous) simplicial model structure on dg Hopf cooperads in Section~\ref{sec:simplicial model structure}.
We proof that our simplicial structure satisfies the axioms of simplicial model categories in Section~\ref{sec:pullbackcorner}
after observing that our constructions restrict to the category of dg cooperads
where we forget about the dg commutative algebra structures
attached to our objects.
In fact, we carry out our verifications of the axioms in the model category of dg cooperads.

We tackle our study of homotopy automorphism spaces in Section~\ref{sec:hautspaces}.
We review general invariance properties of the monoids of homotopy automorphisms
and we check that we can compute the homotopy automorphism space of the rationalization of an operad
in terms of our dg Hopf cooperad model.
We explain afterwards a general approach for the computation of homotopy automorphism spaces of dg Hopf cooperads
in terms of deformation complexes and Maurer--Cartan spaces of $L_{\infty}$-algebras.
We devote Section~\ref{sec:deformationcomplexes} to this subject.
We apply this method to the Kontsevich graph complex $\GC_n^2$, which we associate to a graph complex model of the cohomology of the little discs operad $\DOp_n$,
regarded as a dg Hopf cooperad equipped with a trivial differential.
We recall the definition of these objects in Section~\ref{sec:graph_complexes} and we prove afterwards, in Section~\ref{sec:aut thm proof},
that the Kontsevich graph complex determines the monoid of homotopy automorphisms of the little discs operad,
as asserted in Theorem~\ref{thm:main_intro}.

We complete this paper by the study of a counterpart of our simplicial model structure for the category of dg Hopf $\Lambda$-cooperads,
which is used to model the rational homotopy of operads such that $\POp(0) = *$.
We treat this extension of our construction in Section~\ref{sec:Lambdaoperadmodel}.
We also devote an appendix section to the definition of an auxiliary model structure on the category of $\ZZ$-graded dg Hopf cooperads.
(We use this model structure in the constructions of Section~\ref{sec:aut thm proof}.)

\setcounter{section}{-1}
\section{Notation and conventions}\label{sec:notation}

In what follows we generally use the conventions and notation of the book ``Homotopy of operads and Grothendieck--Teichm\"uller groups'' \cite{OperadHomotopyBook}
for the rational homotopy of operads (with slight adjustments)
and of the paper \cite{ExtendedRHT} for the extension of this rational homotopy theory to operads with unary operations.

We denote by $\sSet$ the category of simplicial sets.
We equip this category $\sSet$ with the standard Kan model structure,
with the weak equivalences that correspond to the weak equivalences of topological spaces under geometric realization,
the fibrations given by the Kan fibrations,
and the cofibrations given by the class of dimensionwise injective maps of simplicial sets (see~\cite[Theorem II.1.3.12]{OperadHomotopyBook}).

We fix a ground field of characteristic zero $\kk$, which we specialize to $\QQ$ where appropriate.
We deal with two categories of cochain complexes, namely the category of general $\ZZ$-graded cochain complexes, which we denote by $\dgZVect$,
and the category of cochain complexes concentrated in non-negative degrees,
which we denote by $\dgVect$.
We use cohomological conventions all along this paper (in contrast with~\cite{OperadHomotopyBook,ExtendedRHT}
where cohomological conventions are only used for the cochain models of the rational homotopy theory).
If necessary, then we can use the standard equivalence $A_* = A^{-*}$ to convert a cohomological grading into a homological grading.

We actually use several categories of objects equipped with a differential graded structure (a dg structure for short).
We may assume that our dg objects are defined either within the category of non-negatively graded cochain complexes
or within the category of $\ZZ$-graded cochain complexes.
We do not specify the range of the grading of our dg objects in our terminologies in general,
but we mainly deal with dg objects defined in the category of non-negatively graded cochain complexes
when we form our models for the rational homotopy of operads
and we only use $\ZZ$-graded dg objects in auxiliary constructions.
We therefore generally assume by default that our categories of dg objects are formed within non-negatively graded cochain complexes
and we use a prefix $\dg$ (with no decoration) in the notation of such categories.
We just add a $\ZZ$ subscript to this notation when we consider dg objects in $\ZZ$-graded cochain complexes.
For example, we denote by $\dgCom$ the category of dg commutative algebras in non-negatively graded cochain complexes
while $\dgZCom$ denotes the category of dg commutative algebras in $\ZZ$-graded cochain complexes.
We adopt similar conventions for categories of graded objects (equivalent to cochain complexes with a zero differential),
which we single out by a prefix $\g$ or $\g_{\ZZ}$ depending on the range of the grading.

Let $\AOp$ be any dg object (for example a dg Hopf cooperad).
We denote by $\AOp^{\flat}$ the underlying graded object, obtained by setting the differential to zero, but retaining the rest of the structure.
In the case of dg Hopf cooperads for instance, this mapping gives a functor $(-)^{\flat} : \dgHOpc \rightarrow \gHOpc$,
where $\gHOpc$ denotes the category of Hopf cooperads in graded vector spaces.

The category of non-negatively graded cochain complexes $\dgVect$ is equipped with the model structure
where the weak equivalences are the quasi-isomorphisms,
the fibrations are the degreewise surjective maps,
and the cofibrations are the maps that are injective in positive degrees.
The category of $\ZZ$-graded cochain complexes $\dgZVect$ is equipped with the model structure
where the weak equivalences are the quasi-isomorphisms,
the fibrations are the degreewise surjective maps,
and the cofibrations are the maps that are injective in all degrees.
We also consider the usual model structure on the category of dg commutative algebras $\dgCom$,
which is defined by adjunction from the model structure on $\dgVect$
by assuming that a morphism of dg commutative algebras defines a weak equivalence (respectively, a fibration)
if this morphism defines a weak equivalence (respectively, a fibration) in $\dgVect$.
We consider the similarly defined model structure on $\dgZCom$ when we deal with $\ZZ$-graded dg commutative algebras.

Recall that we use the notation $\dgHOpc$ for the category of dg Hopf cooperads, which we define as the category of cooperads in dg commutative algebras.
We also consider the category of dg cooperads, defined as the category of cooperads in $\dgVect$ and which we denote by $\dgOpc$.
In our constructions, we also deal with $\ZZ$-graded variants of these categories,
which we respectively denote by $\dgZHOpc$ and $\dgZOpc$ in accordance with our conventions.
We generally assume that our cooperads have no term in arity zero, are coaugmented and conilpotent. (But our cooperads possibly have non-trivial cooperations in arity one.)
We use the model category structure defined in \cite{ExtendedRHT} for the categories $\dgOpc$ and $\dgHOpc$.
Recall simply that the classes of weak equivalences and cofibrations in $\dgOpc$ are created in $\dgVect$
as the classes of morphisms of dg cooperads that form weak equivalences and cofibrations of cochain complexes arity-wise,
while the class of fibrations of dg coperads is defined as the class of morphisms that have the right lifting property with respect to the acyclic cofibrations.
The model structure $\dgHOpc$ is created by adjunction from the model structure in $\dgOpc$
by assuming that a morphism of dg Hopf operads is a weak equivalence (respectively, a fibration)
if this morphism defines a weak equivalence (respectively, a fibration) of dg cooperads.
We have analogous model structures on the categories $\dgZOpc$ and $\dgZHOpc$ (see~Appendix~\ref{sec:modelcat}).

We call symmetric sequence the structure, underlying a cooperad, formed by a collection of objects $\MOp(r)$, $r = 1,2,\ldots$,
whose terms $\MOp(r)$ are equipped with an action of the symmetric groups $\Sigma_r$.
We stress that, by convention, our symmetric sequences have no term in arity zero.
We again use the expression `dg symmetric sequence' for the category of symmetric sequences in $\dgVect$ (or in $\dgZVect$)
and the expression `dg Hopf symmetric sequence' for the category of symmetric sequences in $\dgCom$ (or in $\dgZCom$).
We use the notation $\dgSeqc$ for the category of dg symmetric sequences and the notation $\dgHSeqc$ for the category of dg Hopf symmetric sequences
(and yet we use the notation $\dgZSeqc$ and $\dgZHSeqc$ for the $\ZZ$-graded variants of these categories).
We equip our categories of dg symmetric sequences and of dg Hopf symmetric sequences with the usual projective model structure of diagram categories,
with the class of weak equivalences and the class of fibrations created termwise in the underlying category of cochain complexes and of dg commutative algebras.
In our context, a morphism defines a cofibration in the category of dg symmetric sequences
as soon as this morphism forms a cofibration of cochain complexes termwise,
because we assume that our ground ring is a field of characteristic zero.

Following the conventions of~\cite{OperadHomotopyBook}, we denote the morphism sets of a category $\CCat$ by $\Mor_{\CCat}(-)$,
while we reserve $\Hom_{\CCat}(-)$ for the graded or dg hom-objects of an enriched category structure.
Mapping spaces will be denoted by $\Map_{\CCat}(-)$.
The subscript $\CCat$ is omitted from the notation whenever the context makes the choice of the category clear.
We use the symbol `$\sim$' to denote the class of weak equivalences in a model category, while the symbol `$\simeq$' denotes an isomorphism.

\section{The mapping spaces and function objects of dg Hopf cooperads}\label{sec:simplicial model structure}
In this section, we give the definition of our model of mapping spaces in the category of dg Hopf cooperads $\dgHOpc$.
We also prove that these mapping spaces are represented by function objects.
We mainly use the latter observation in the next section in order to check the validity of our model.
Throughout this paper, we use the notation $\Omega^*(\Delta^{\bullet})$ for the Sullivan simplicial dg algebra,
which consists of the piece-wise linear forms
on the simplices (see for instance~\cite[\S II.7.1]{OperadHomotopyBook}).

\begin{const}\label{subsubsection:construction}
Let $\AOp$ and $\BOp$ be dg Hopf cooperads.
For any $n\in\NN$, we set:
\begin{equation}
\label{equ:Mapping space def}
\Map(\AOp,\BOp)_n = \Mor_{\dgHOp^c_{\Omega^*(\Delta^n)}}(\AOp\otimes\Omega^*(\Delta^n),\BOp\otimes\Omega^*(\Delta^n)),
\end{equation}
where $\dgHOp^c_{\Omega^*(\Delta^n)}$ denotes the category of dg Hopf cooperads defined over the ground dg algebra $R = \Omega^*(\Delta^n)$,
and we consider the objects
\begin{gather*}
\AOp\otimes\Omega^*(\Delta^n),\BOp\otimes\Omega^*(\Delta^n)\in\dgHOp^c_{\Omega^*(\Delta^n)}
\intertext{such that}
(\AOp\otimes\Omega^*(\Delta^n))(r) = \AOp(r)\otimes\Omega^*(\Delta^n),\quad(\BOp\otimes\Omega^*(\Delta^n))(r) = \BOp(r)\otimes\Omega^*(\Delta^n),
\end{gather*}
for each arity $r>0$.
We take the set of morphisms between these objects in $\dgHOp^c_{\Omega^*(\Delta^n)}$.
To any map $u: \underline{m}\rightarrow\underline{n}$ in the simplicial category $\Delta$
we can associate the simplicial operator $u^*: \Map(\AOp,\BOp)_n\rightarrow\Map(\AOp,\BOp)_m$
such that:
\begin{equation*}
u^*\phi(\underbrace{a\otimes\omega}_{\in\AOp(r)\otimes\Omega^*(\Delta^m)})
= (\id\otimes u^*)(\phi(\underbrace{a\otimes 1}_{\in\AOp(r)\otimes\Omega^*(\Delta^n)}))\cdot\omega,
\end{equation*}
for any morphism $\phi: \AOp\otimes\Omega^*(\Delta^n)\rightarrow\BOp\otimes\Omega^*(\Delta^n)$,
so that the collection $\Map(\AOp,\BOp)_n$, $n\in\NN$, inherits the structure of a simplicial set.

Note that these mapping spaces satisfy the relation $\Map(\AOp,\BOp)_0 = \Mor(\AOp,\BOp)$
and inherit obvious composition operations $\circ: \Map(\COp,\BOp)\times\Map(\AOp,\COp)\rightarrow\Map(\AOp,\BOp)$,
which extend the composition of morphisms in the category of dg Hopf cooperads.
Hence, our construction provides the category of dg Hopf cooperads
with a simplicial enrichment.
\end{const}

\begin{rem}\label{rem:finite limits}
Note that our construction of the simplicial sets $\Map(\AOp,\BOp)$ is not compatible with arbitrary limits,
since the tensor product with the cochain complex $\Omega^*(\Delta^{\bullet})$ does not commute with arbitrary limits.
Nevertheless we do have an identity
\begin{equation*}
\Map(\AOp,\lim_{j\in J}\BOp_j) = \lim_{j\in J}\Map(\AOp,\BOp_j)
\end{equation*}
when the functor $j\mapsto\BOp_j$ is defined on a finite category $J$.
\end{rem}

\begin{prop}\label{prop:adjunctionrelation}
For $K\in\sSet$, we have an adjunction formula:
\begin{equation}
\label{equ:prop adjunctionrelation}
\Mor_{\sSet}(K,\Map(\AOp,\BOp))\simeq\Mor_{\dgHOpc}(\AOp,\BOp^K),
\end{equation}
where $\BOp^K$ is a dg Hopf cooperad naturally associated to the pair $(\BOp,K)$.
\end{prop}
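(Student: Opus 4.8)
The plan is to obtain the adjunction from two formal inputs --- a change-of-rings description of the simplicial set $\Map(\AOp,\BOp)$ and the expression of $K$ as a colimit of its simplices --- and then to \emph{define} the function object $\BOp^K$ as the limit that represents the functor so produced.

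First I would recast the mapping space in absolute terms. Fix $R=\Omega^*(\Delta^n)$. Extension of scalars $-\otimes_{\kk}R$ is a strong symmetric monoidal functor from dg vector spaces, equipped with $\otimes_{\kk}$, to $R$-modules, equipped with $\otimes_R$, because of the natural identification $(V\otimes R)\otimes_R(W\otimes R)\simeq(V\otimes W)\otimes R$. It therefore carries the cocomposition maps and the arity-wise commutative-algebra structure of $\AOp$ to an $R$-linear dg Hopf cooperad structure, and realizes $\AOp\otimes R$ as the free $R$-linear dg Hopf cooperad generated by $\AOp$. The resulting base-change adjunction identifies an $R$-linear morphism $\AOp\otimes R\to\BOp\otimes R$ with a morphism of dg Hopf cooperads $\AOp\to\BOp\otimes R$ over $\kk$, by restriction to $\AOp\otimes 1$ in one direction and $R$-linear extension in the other. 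This gives a natural bijection
\[
\Map(\AOp,\BOp)_n\simeq\Mor_{\dgHOpc}(\AOp,\BOp\otimes\Omega^*(\Delta^n)),
\]
and I would check that it is simplicial: the formula for $u^*\phi$ in Construction~\ref{subsubsection:construction} is exactly the $R$-linear extension of $(\id\otimes u^*)\circ(\phi|_{\AOp\otimes 1})$, so the two sides agree as simplicial sets once $\BOp\otimes\Omega^*(\Delta^{\bullet})$ is regarded as a simplicial diagram through the cosimplicial structure of $\Omega^*(\Delta^{\bullet})$.

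Next I would decompose $K$. Writing $K\simeq\colim_{(\Delta^n\to K)}\Delta^n$ as the colimit of its simplices over the category $\Delta/K$ and using that $\Mor_{\sSet}(-,X)$ sends colimits to limits, I obtain
\[
\Mor_{\sSet}(K,\Map(\AOp,\BOp))\simeq\lim_{(\Delta/K)^{op}}\Map(\AOp,\BOp)_n\simeq\lim_{(\Delta/K)^{op}}\Mor_{\dgHOpc}(\AOp,\BOp\otimes\Omega^*(\Delta^n)),
\]
where a morphism $u\colon[m]\to[n]$ over $K$ acts through $\id\otimes u^*\colon\BOp\otimes\Omega^*(\Delta^n)\to\BOp\otimes\Omega^*(\Delta^m)$. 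Since $\dgHOpc$ is complete, I can set
\[
\BOp^K:=\lim_{(\Delta/K)^{op}}\bigl(\BOp\otimes\Omega^*(\Delta^n)\bigr)
\]
and commute $\Mor_{\dgHOpc}(\AOp,-)$ through the limit to reach the stated isomorphism. Naturality in $K$ (and in $\BOp$) follows because a map $K\to K'$ induces a functor $\Delta/K\to\Delta/K'$ and hence a comparison of limits, which makes $\BOp\mapsto\BOp^K$ into the claimed function-object functor.

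The two points needing care are the base-change identification and the existence of the limit. The former is where the Hopf structure genuinely enters: one must confirm that $-\otimes R$ lands in $R$-linear Hopf cooperads and is compatible with coaugmentations and conilpotency, so that $\AOp\otimes R$ really is the relative free object and the restriction of scalars of $\BOp\otimes R$ is the $\kk$-cooperad appearing in $\Map(\AOp,\BOp)$; this is the substantive, if essentially formal, step. The latter I would settle by invoking completeness of $\dgHOpc$ from the model-categorical framework of the cited references. I would stress that the limit defining $\BOp^K$ need not be computed arity-wise in dg vector spaces, which is precisely the origin of the failure of $\BOp\mapsto\BOp^K$ to preserve arbitrary limits recorded in the remark above, but this has no bearing on the representability argument used here.
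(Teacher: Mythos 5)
Your first step has a genuine gap, and it is exactly where the cooperad setting differs from the Bousfield--Gugenheim algebra setting. You claim a base-change bijection $\Map(\AOp,\BOp)_n\simeq\Mor_{\dgHOpc}(\AOp,\BOp\otimes\Omega^*(\Delta^n))$ by ``restriction to $\AOp\otimes 1$'' and ``$R$-linear extension'', i.e.\ by treating $\BOp\otimes R$ (with $R=\Omega^*(\Delta^n)$) as an object of $\dgHOpc$ via restriction of scalars. But restriction of scalars does not exist for cooperads: the composition coproducts of the $R$-linear cooperad $\BOp\otimes R$ take values in tensor products over $R$, such as $(\BOp(k)\otimes R)\otimes_R(\BOp(l)\otimes R)$, and these are quotients of the corresponding tensor products over $\kk$, not subobjects, so there is no natural way to promote $\BOp\otimes R$ to a $\kk$-linear dg Hopf cooperad (that would require a map of the type $R\rightarrow R\otimes_{\kk}R$ compatible with the structure, which $\Omega^*(\Delta^n)$ does not carry). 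Hence the right-hand side of your bijection is not even defined, and your subsequent definition $\BOp^K:=\lim_{(\Delta/K)^{op}}\bigl(\BOp\otimes\Omega^*(\Delta^n)\bigr)$ is not a limit taken in $\dgHOpc$. This is why algebras and operads are unproblematic (a multiplication valued in the tensor product over $R$ can be precomposed with the surjection from the tensor product over $\kk$) while cooperads are not, and it is why the step you call ``essentially formal'' is in fact the crux of the proposition.

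The paper's proof is organized precisely to avoid this trap. Scalar restriction is used only at the level of dg Hopf symmetric sequences, where it is legitimate because the structure is arity-wise that of dg commutative algebras: $\Mor_{\dgHSeq^c_{\Omega^*(\Delta^{\bullet})}}(\AOp\otimes\Omega^*(\Delta^{\bullet}),\MOp\otimes\Omega^*(\Delta^{\bullet}))\simeq\Mor_{\dgHSeqc}(\AOp,\MOp\otimes\Omega^*(\Delta^{\bullet}))$. The representing object is then constructed first for cofree cooperads, $\FreeOp^c(\MOp)^{\Delta^{\bullet}}=\FreeOp^c(\MOp\otimes\Omega^*(\Delta^{\bullet}))$, which is emphatically \emph{not} $\FreeOp^c(\MOp)\otimes\Omega^*(\Delta^{\bullet})$ as a $\kk$-cooperad even though the two coincide as $\Omega^*(\Delta^{\bullet})$-cooperads, and then for arbitrary $\BOp$ by writing $\BOp$ as a reflexive equalizer of cofree objects (comonadicity of $\dgHOpc$ over $\dgHSeqc$) and taking the equalizer of the corresponding function objects. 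Your second half --- decomposing $K$ as $\colim_{\Delta/K}\Delta^n$ and passing to limits --- is formally fine and plays the same role as the reflexive-equalizer formula the paper cites to pass from $\BOp^{\Delta^{\bullet}}$ to $\BOp^K$, but it cannot start until a genuine simplicial object $\BOp^{\Delta^{\bullet}}$ of $\dgHOpc$ has been produced, and producing it is the substantive content that your argument skips.
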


\begin{proof}
This proposition follows from usual adjoint functor theorems.
We use that the category of dg Hopf cooperads is comonadic over the category of dg Hopf symmetric sequences.
To obtain our result, we only need to define a simplicial object in the category of dg Hopf cooperads $\BOp^{\Delta^{\bullet}}$ such that we have a natural bijection:
\begin{equation}\tag{*}\label{equ:adjunctionrelation:simplicial}
\Map(\AOp,\BOp)\simeq\Mor_{\dgHOpc}(\AOp,\BOp^{\Delta^{\bullet}})
\end{equation}
in the category of simplicial sets.
Indeed, as soon as we have a simplicial dg Hopf cooperad $\BOp^{\Delta^{\bullet}}$, we can determine the image of a simplicial set $K$ under our function object functor $K\mapsto\BOp^K$ by a natural reflexive equalizer formula (see~\cite[\S II.2.0]{OperadHomotopyBook}).

In a first step, we consider the case of a cofree dg Hopf cooperad $\BOp = \FreeOp^c(\MOp)$ generated by a dg Hopf symmetric sequence $\MOp\in\dgHSeqc$.
We then set:
\begin{gather*}
\FreeOp^c(\MOp)^{\Delta^{\bullet}} = \FreeOp^c(\MOp\otimes\Omega^*(\Delta^{\bullet})),
\intertext{where we consider the dg Hopf symmetric sequence such that}
(\MOp\otimes\Omega^*(\Delta^{\bullet}))(r) = \MOp(r)\otimes\Omega^*(\Delta^{\bullet}),
\end{gather*}
for each arity $r>0$.
We have an identity $\FreeOp_{\kk}^c(\MOp)\otimes\Omega^*(\Delta^{\bullet}) = \FreeOp_{\Omega^*(\Delta^{\bullet})}^c(\MOp\otimes\Omega^*(\Delta^{\bullet}))$ in the category $\dgHOp^c_{\Omega^*(\Delta^n)}$, where we use the notation $\FreeOp_{\kk}^c(-) = \FreeOp^c(-)$ for the cofree cooperad functor over the ground field $\kk$ and the notation $\FreeOp_{\Omega^*(\Delta^{\bullet})}^c(-)$ for the cofree cooperad functor over the dg algebra $\Omega^*(\Delta^{\bullet})$.
We use this relation, the cofree cooperad adjunction, and an obvious scalar extension adjunction relation in the category of Hopf symmetric seqences to get the relations:
\begin{align*}
\Map(\AOp,\FreeOp^c(\MOp) &= \Mor_{\dgHOp^c_{\Omega^*(\Delta^{\bullet})}}(\AOp\otimes\Omega^*(\Delta^{\bullet}),\FreeOp^c(\MOp)\otimes\Omega^*(\Delta^{\bullet}))\\
&\begin{aligned}
& \simeq\Mor_{\dgHOp^c_{\Omega^*(\Delta^{\bullet})}}(\AOp\otimes\Omega^*(\Delta^{\bullet}),
\FreeOp_{\Omega^*(\Delta^{\bullet})}^c(\MOp\otimes\Omega^*(\Delta^{\bullet}))) \\
& \simeq\Mor_{\dgHSeq^c_{\Omega^*(\Delta^{\bullet})}}(\AOp\otimes\Omega^*(\Delta^{\bullet}),\MOp\otimes\Omega^*(\Delta^{\bullet})) \\
& \simeq\Mor_{\dgHSeqc}(\AOp,\MOp\otimes\Omega^*(\Delta^{\bullet})).
\end{aligned}
\end{align*}
We then have:
\begin{equation*}
\Mor_{\dgHSeqc}(\AOp,\MOp\otimes\Omega^*(\Delta^{\bullet}))\simeq\Mor_{\dgHOpc}(\AOp,\FreeOp^c(\MOp\otimes\Omega^*(\Delta^{\bullet}))).
\end{equation*}
by the cofree cooperad adjunction again.
We eventually obtain that the object $\FreeOp^c(\MOp)^{\Delta^{\bullet}} = \FreeOp^c(\MOp\otimes\Omega^*(\Delta^{\bullet}))$
satisfies the requested adjunction relation:
\begin{equation*}
\Map(\AOp,\FreeOp^c(\MOp))\simeq\Mor_{\dgHOpc}(\AOp,\FreeOp^c(\MOp)^{\Delta^{\bullet}})
\end{equation*}
for a cofree cooperad $\BOp = \FreeOp^c(\MOp)$.
We use this adjunction relation and the Yoneda lemma to establish that the construction $\FreeOp^c(\MOp)^{\Delta^{\bullet}} = \FreeOp^c(\MOp\otimes\Omega^*(\Delta^{\bullet}))$ defines a functor on the full subcategory of the category of dg Hopf cooperads generated by the cofree objects $\FreeOp^c(\MOp)$.

In the second step, we use that any object $\BOp\in\dgHOpc$ is given by a reflexive equalizer of cofree objects $\BOp = \eq(\FreeOp^c(\MOp^0)\reflexiverightrightarrows\FreeOp^c(\MOp^1))$
and that this construction is functorial in $\BOp$.
We then set:
\begin{equation*}
\BOp^{\Delta^{\bullet}} = \eq(\FreeOp^c(\MOp^0)^{\Delta^{\bullet}}\reflexiverightrightarrows\FreeOp^c(\MOp^1)^{\Delta^{\bullet}}).
\end{equation*}
We just use that both sides of our adjunction relation~\eqref{equ:adjunctionrelation:simplicial} preserves the equalizers in the variable $\BOp$
to conclude that the above object $\BOp^{\Delta^{\bullet}}$ satisfies this adjunction relation,
for all $\BOp\in\dgHOpc$.
\end{proof}

\begin{rem}
The adjunction relation of this proposition implies that our function object bifunctor $(\BOp,K)\mapsto\BOp^K$
preserves the finite limits in the variable $\BOp$, like our mapping space $\Map(-)$,
but this functor does not preserve arbitrary limits either.
\end{rem}

\begin{rem}
For a pair of simplicial sets, we have a natural transformation $(\BOp^K)^L\rightarrow\BOp^{K\times L}$,
which is a weak-equivalence (at least when $\BOp$ is fibrant as a Hopf dg cooperad),
but not a genuine isomorphism.
Therefore, we deduce from the construction of Proposition~\ref{prop:adjunctionrelation}
that the category of dg Hopf cooperads is cotensored over simplicial sets
in the lax sense but not in the strong sense.

This natural transformation can be defined
from a morphism of bisimplicial objects
$(\BOp^{\Delta^{\bullet}})^{\Delta^{\bullet}}\rightarrow\BOp^{\Delta^{\bullet}\times\Delta^{\bullet}}$
by using an obvious generalization of the reflexive coequalizer argument
alluded to in the proof of Proposition~\ref{prop:adjunctionrelation}.
For a cofree cooperad $\BOp = \FreeOp^c(M)$, we have $(\BOp^{\Delta^{\bullet}})^{\Delta^{\bullet}} = \FreeOp^c(M\otimes\Omega^*(\Delta^{\bullet})\otimes\Omega^*(\Delta^{\bullet}))$, $\BOp^{\Delta^{\bullet}\times\Delta^{\bullet}} = \FreeOp^c(M\otimes\Omega^*(\Delta^{\bullet}\times\Delta^{\bullet}))$,
and this morphism of bisimplicial function objects
is induced by the codiagonal map
$\Omega^*(\Delta^k)\otimes\Omega^*(\Delta^l)\rightarrow\Omega^*(\Delta^k\times\Delta^l)$
on the Sullivan dg algebra functor (see for instance~\cite[\S II.7.1]{OperadHomotopyBook}
for the definition of this map).

Then we can rely on the pullback-corner property, which we establish next, and general model category arguments
to establish that the natural transformation $(B^K)^L\rightarrow B^{K\times L}$,
which we associate to our morphism of bisimplicial objects
$(B^{\Delta^{\bullet}})^{\Delta^{\bullet}}\rightarrow B^{\Delta^{\bullet}\times\Delta^{\bullet}}$
defines a weak-equivalence for any pair of simplicial sets $(K,L)$
when $B$ is fibrant.

In the case $K = L = \Delta^n$, we may also see that the composition operation $\circ: \Map(\COp,\BOp)\times\Map(\AOp,\COp)\rightarrow\Map(\AOp,\BOp)$,
which we associate to our mapping space bifunctor in Construction~\ref{subsubsection:construction},
can be given by the formula $f\circ g = \Delta^*(f^{\Delta^n}\circ g)$,
for any pair of morphisms $f\in\Mor(\COp,\BOp^{\Delta^n})$, $g\in\Mor(\AOp,\COp^{\Delta^n})$,
where we use the functoriality of the construction $(-)^{\Delta^n}$
and we compose the morphism $f^{\Delta^n}\circ g\in\Mor(\AOp,(\BOp^{\Delta^n})^{\Delta^n})$
with our natural transformation $(\BOp^{\Delta^n})^{\Delta^n})\rightarrow\BOp^{\Delta^n\times\Delta^n}$
and the morphism $\Delta^*: \BOp^{\Delta^n\times\Delta^n}\rightarrow B^{\Delta^n}$
induced by the diagonal map $\Delta: \Delta^n\rightarrow\Delta^n\times\Delta^n$.
\end{rem}

\begin{prop}\label{prop:pullbackcorner}
The bifunctor $(\BOp,K)\mapsto\BOp^K$ of the previous proposition satisfies the pullback-corner axiom.
To be explicit, the pullback-corner morphism $(p_*,i^*): \AOp^L\rightarrow\BOp^L\times_{\BOp^K}\AOp^K$
associated to a cofibration of simplicial sets $i: K\rightarrowtail L$
and to a fibration of dg Hopf cooperads $p: \AOp\twoheadrightarrow\BOp$
is a fibration of dg Hopf cooperads, which is also acyclic if $i$ or $p$ is so.
\end{prop}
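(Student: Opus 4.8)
The plan is to transport the statement to the category of dg cooperads and then to recast it as a pullback-corner property for the mapping spaces of simplicial sets. Write $U\colon\dgHOpc\to\dgOpc$ for the forgetful functor. This functor preserves limits, it carries a cofree Hopf cooperad $\FreeOp^c(\MOp)$ to the cofree dg cooperad built on the underlying dg symmetric sequence of $\MOp$, and it is compatible with the operation $\MOp\mapsto\MOp\otimes\Omega^*(\Delta^{\bullet})$. Hence $U$ commutes with the construction $\BOp\mapsto\BOp^{\Delta^{\bullet}}$ of Proposition~\ref{prop:adjunctionrelation} on cofree objects, and therefore, through the defining reflexive equalizers, on all objects, so that $U(\BOp^K)\simeq(U\BOp)^K$ naturally in $\BOp$ and $K$. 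Since $U$ also preserves the pullback $\BOp^L\times_{\BOp^K}\AOp^K$, it sends $(p_*,i^*)$ to the analogous pullback-corner morphism in $\dgOpc$; as the weak equivalences and fibrations of $\dgHOpc$ are created by $U$, it suffices to prove the statement for dg cooperads.

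Next I would pass to the adjoint form of the axiom. By the adjunction relation of Proposition~\ref{prop:adjunctionrelation}, a morphism $j\colon\COp\to\DOp$ of dg cooperads has the left lifting property with respect to $(p_*,i^*)$ if and only if the cofibration $i\colon K\rightarrowtail L$ has the left lifting property with respect to the pullback-corner morphism
\[
q_{j,p}\colon\Map(\DOp,\AOp)\to\Map(\DOp,\BOp)\times_{\Map(\COp,\BOp)}\Map(\COp,\AOp)
\]
of the mapping spaces. It therefore suffices to prove that $q_{j,p}$ is a Kan fibration whenever $j$ is a cofibration and $p$ a fibration, and that this Kan fibration is acyclic as soon as $j$ or $p$ is. Granting this, the three cases of the statement follow formally: if $j$ is an acyclic cofibration then $q_{j,p}$ is an acyclic Kan fibration, which lifts against the cofibration $i$ and shows that $(p_*,i^*)$ is a fibration; and if instead $i$ is an acyclic cofibration, or $p$ is acyclic, then $q_{j,p}$ lifts against $i$ for every cofibration $j$, which shows that $(p_*,i^*)$ is an acyclic fibration.

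It remains to establish the pullback-corner property for the mapping spaces, and here the real work begins. Being a Kan fibration (respectively an acyclic Kan fibration) amounts to the right lifting property against the horn inclusions $\Lambda^n_k\rightarrowtail\Delta^n$ (respectively the boundary inclusions $\partial\Delta^n\rightarrowtail\Delta^n$). Using Proposition~\ref{prop:adjunctionrelation} once more, such a lifting problem for $q_{j,p}$ translates into a lifting problem for morphisms of dg cooperads over $\Omega^*(\Delta^n)$: one must lift a morphism prescribed over $\Omega^*(\Lambda^n_k)$ (or $\Omega^*(\partial\Delta^n)$), compatibly with a morphism into $\BOp\otimes\Omega^*(\Delta^n)$ imposed after composing with $p$ and with a restriction to $\COp\otimes\Omega^*(\Delta^n)$ imposed by $j$, to a morphism over $\Omega^*(\Delta^n)$. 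The essential external inputs are that the Sullivan forms functor sends $\Lambda^n_k\rightarrowtail\Delta^n$ to an acyclic fibration and $\partial\Delta^n\rightarrowtail\Delta^n$ to a fibration of dg commutative algebras, together with the hypothesis that $p$ is a (possibly acyclic) fibration and $j$ a (possibly acyclic) cofibration.

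The hard part, which I expect to be the main obstacle, is the construction of this lift. I would build it by induction along the coradical filtration attached to the cofree presentation of the cooperads. At each stage the obstruction reduces to a lifting problem in $\dgCom$ governed jointly by the fibration $p$ and by the surjection $\Omega^*(\Delta^n)\twoheadrightarrow\Omega^*(\Lambda^n_k)$ (respectively $\Omega^*(\Delta^n)\twoheadrightarrow\Omega^*(\partial\Delta^n)$); one solves it because both maps are (acyclic) fibrations of dg commutative algebras, and one checks that the compatibility with the cooperadic cocompositions and the symmetric group actions introduces no further obstruction, the ground field having characteristic zero so that the relevant invariants and coinvariants stay exact. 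Reconciling these cooperadic and equivariant constraints with the algebra-level lifting, stage by stage along the filtration, is the technical heart of the argument.
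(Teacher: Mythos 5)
Your first two moves are sound and match the paper: the reduction to dg cooperads via the forgetful functor (which creates the fibrations and preserves the function objects, since cofree objects and equalizers of dg Hopf cooperads are created in $\dgOpc$), and the adjoint reformulation that reduces everything to the generating (acyclic) cofibrations $\partial\Delta^n\rightarrowtail\Delta^n$ and $\Lambda^n_k\rightarrowtail\Delta^n$. But the step you yourself call ``the technical heart'' is a genuine gap, and your inductive scheme cannot close it. Your induction along the coradical filtration needs, at each stage, to use the hypothesis that $p\colon\AOp\twoheadrightarrow\BOp$ is a fibration in a concrete, arity-wise and filtration-wise form (``a lifting problem governed jointly by the fibration $p$ and by the surjection $\Omega^*(\Delta^n)\twoheadrightarrow\Omega^*(\Lambda^n_k)$''). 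However, fibrations in $\dgOpc$ are defined purely by the right lifting property against acyclic cofibrations: they carry no a priori levelwise description at all --- it is even a non-obvious consequence of the theory (see the remark after Proposition~\ref{prop:f and Wf}) that they are surjective. So there is nothing concrete to feed into your stage-by-stage obstruction argument. Moreover, general conilpotent dg cooperads are not cofree, so a morphism into $\AOp$ cannot be constructed cogenerator-by-cogenerator as your sketch assumes; and the real constraint is not the symmetric group actions (characteristic zero is beside the point here) but compatibility with the cocomposition coproducts.

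The missing idea is exactly what the paper's proof supplies: a structural characterization of fibrations through bar duality. The paper shows (Lemma~\ref{lemm:barcobarfactorization}, Lemma~\ref{lem:fib1}, Proposition~\ref{prop:f and Wf}) that every (acyclic) fibration of dg cooperads is a retract of a pullback of a map $B B^c(p)$, where $p$ is an (acyclic) fibration of dg symmetric sequences. For maps of this special form the function objects are explicitly computable, $B(\ROp)^K = B(\ROp\bar{\otimes}\Omega^*(K))$ for $K$ with finitely many non-degenerate simplices (Proposition~\ref{prop:barfunctionobject}), which identifies the pullback-corner morphism with a morphism of quasi-cofree cooperads induced by a fibration of cogenerating dg symmetric sequences; that case is then handled by the fibrancy criterion of Proposition~\ref{prop:fibrancy}. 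Finally the property is propagated to all fibrations by stability under pullbacks and retracts (Lemmas~\ref{lem:pcp_pullback_stable} and~\ref{lem:pcp_retract_stable}). Some replacement for this characterization --- some theorem telling you what a fibration of dg cooperads actually looks like --- is indispensable before an induction like yours can even start.
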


We put off the proof of this proposition until the next section.
We use this proposition in the proof of the following result:

\begin{thm}\label{thm:map comparison}
The definition of Construction~\ref{subsubsection:construction} gives a model of the mapping space $\Map(\AOp,\BOp)$ associated to the objects $(\AOp,\BOp)$
in the model category of dg Hopf cooperads.
\end{thm}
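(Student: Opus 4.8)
The plan is to exploit the adjunction of Proposition~\ref{prop:adjunctionrelation} in order to realize the mapping space as a morphism object into a simplicial resolution, and then to invoke the general theory of homotopy function complexes. Recall from the relation~\eqref{equ:adjunctionrelation:simplicial} that
\[
\Map(\AOp,\BOp)\simeq\Mor_{\dgHOpc}(\AOp,\BOp^{\Delta^{\bullet}}),
\]
where $\BOp^{\Delta^{\bullet}}$ is a genuine simplicial object of the category $\dgHOpc$. The assertion that the left-hand side carries the correct homotopy type --- that it is weakly equivalent to the hom-object of the Dwyer--Kan localization of $\dgHOpc$ --- reduces, for $\AOp$ cofibrant and $\BOp$ fibrant, to the claim that $\BOp^{\Delta^{\bullet}}$ is a \emph{simplicial resolution} of $\BOp$ in the sense of Reedy-fibrant framings. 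Once this is known, the standard theory of simplicial resolutions ensures that $\Mor_{\dgHOpc}(\AOp,\BOp^{\Delta^{\bullet}})$, for $\AOp$ cofibrant, is a model of the right homotopy function complex, and hence a Kan complex having the homotopy type of the mapping space associated to $(\AOp,\BOp)$ in the model category $\dgHOpc$.

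It thus remains to check the two defining properties of a simplicial resolution for $\BOp^{\Delta^{\bullet}}$, both of which I would deduce from the pullback-corner property of Proposition~\ref{prop:pullbackcorner}. First I would note that the cotensor $\BOp^{(-)}$ with the fixed fibrant object $\BOp$ is, by the one-variable form of Proposition~\ref{prop:pullbackcorner} (obtained by taking $p$ to be the fibration $\BOp\twoheadrightarrow *$ to the terminal dg Hopf cooperad), a right Quillen functor on $\sSet^{op}$: it carries cofibrations of simplicial sets to fibrations and acyclic cofibrations to acyclic fibrations. The matching object of $\BOp^{\Delta^{\bullet}}$ in simplicial degree $n$ is identified with the cotensor $\BOp^{\partial\Delta^n}$, and the associated matching map with the morphism $\BOp^{\Delta^n}\to\BOp^{\partial\Delta^n}$ induced by the boundary inclusion $\partial\Delta^n\rightarrowtail\Delta^n$. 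Since $\partial\Delta^n$ is a finite simplicial set, this cotensor is well defined and its formation is compatible with the relevant limits by Remark~\ref{rem:finite limits}; the pullback-corner property then shows this matching map to be a fibration, so that $\BOp^{\Delta^{\bullet}}$ is Reedy fibrant. For the coaugmentation $c\BOp\to\BOp^{\Delta^{\bullet}}$, I would apply the acyclic case of the same property to the vertex inclusion $\{0\}\rightarrowtail\Delta^n$: the induced map $\BOp^{\Delta^n}\to\BOp^{\Delta^0}=\BOp$ is then an acyclic fibration, whence, by two-out-of-three applied to the retraction $\Delta^0\to\Delta^n\to\Delta^0$, each coaugmentation map $\BOp=\BOp^{\Delta^0}\to\BOp^{\Delta^n}$ is a weak equivalence. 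This gives the levelwise weak equivalence required of a simplicial resolution.

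Putting these together, $\BOp^{\Delta^{\bullet}}$ is a simplicial resolution of the fibrant object $\BOp$, and the theorem follows from the general comparison theorem for homotopy function complexes as indicated above. I expect the genuine technical obstacle to lie entirely in Proposition~\ref{prop:pullbackcorner}, whose proof is deferred to the next section; granting it, the present statement is essentially formal. The one point requiring care is that the Reedy machinery must be run using only the cotensors $\BOp^L$ with $L$ a finite simplicial set (the simplices and their boundaries), so that the absence of a full simplicial model structure --- and in particular the failure of $\BOp^{(-)}$ to commute with infinite limits or to admit a compatible tensoring --- never intervenes. This is exactly what the finite-limit compatibility of Remark~\ref{rem:finite limits} guarantees, and it is the reason that our \emph{right finitely continuous} simplicial structure suffices to pin down the correct homotopy type.
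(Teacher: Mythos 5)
Your proposal is correct and follows essentially the same route as the paper's proof: identify $\Map(\AOp,\BOp)$ with $\Mor_{\dgHOpc}(\AOp,\BOp^{\Delta^{\bullet}})$ via Proposition~\ref{prop:adjunctionrelation}, then use Proposition~\ref{prop:pullbackcorner} to show that $\BOp^{\Delta^{\bullet}}$ is a simplicial framing (Reedy-fibrant simplicial resolution) of $\BOp$, and conclude by the standard theory of homotopy function complexes. The paper states this in two lines; you have simply made explicit the verifications (matching maps via boundary inclusions, levelwise equivalences via vertex inclusions, and the role of finite-limit compatibility) that the paper leaves implicit.
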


\begin{proof}
For $K = \Delta^{\bullet}$, the result of Proposition~\ref{prop:adjunctionrelation}
implies that we have an identity of the form $\Map(\AOp,\BOp) = \Mor_{\dgHOpc}(\AOp,\BOp^{\Delta^{\bullet}})$,
and we deduce from the result of Proposition~\ref{prop:pullbackcorner}
that the object $\BOp^{\Delta^{\bullet}}$
defines a simplicial framing on $\BOp$. The conclusion follows.
\end{proof}

\begin{rem}
The construction of the simplicial enrichment of the category of dg Hopf cooperads $\dgHOpc$
and the simplicial cotensor structure given in Proposition \ref{prop:adjunctionrelation}
work as well in the category of $\ZZ$-graded dg Hopf cooperads $\dgZHOpc$.
However, we do not know whether Proposition \ref{prop:pullbackcorner} holds in this setting.
Below we will incidentally use mapping spaces in $\dgZHOpc$,
but we have to keep in mind that a priori they are not guaranteed to be weakly equivalent to the proper mapping spaces
of our model category. (We define the model structure on $\dgZHOpc$ in Appendix~\ref{sec:modelcat}.)
\end{rem}

\section{The proof of the pullback-corner property in dg cooperads}\label{sec:pullbackcorner}
\label{sec:pcp proof}
We prove the claim of Proposition~\ref{prop:pullbackcorner} in this section. In fact, we gain our result in the category of dg cooperads.
We therefore forget about the Hopf structure all along this section and we entirely work within the category of dg cooperads.

To perform this reduction, we use that the definition of the bifunctor $(\BOp,K)\mapsto\BOp^K$ in the proof of Proposition~\ref{prop:adjunctionrelation}
extends to the category of dg cooperads
\begin{equation*}
(-)^K: \dgOpc\rightarrow\dgOpc
\end{equation*}
and that the forgetful functor from dg Hopf cooperads to dg cooperads preserves our function objects
because the cofree objects and the equalizers of dg Hopf cooperads
are created in the category of dg cooperads.
The forgetful functor creates the fibrations and the acyclic fibrations in the category of dg Hopf cooperads by definition of this model category.
The verification of the pullback-corner property in the category of dg cooperads therefore implies the validity of the pullback-corner property in the category of dg Hopf cooperads.

In our verifications, we also use that the bifunctor $(\BOp,K)\mapsto\BOp^K$
still satisfies an adjunction relation of the form $\Mor(K,\Map(\AOp,\BOp))\simeq\Mor(\AOp,\BOp^K)$ in the category of cooperads,
for a mapping space $\Map(\AOp,\BOp)$ defined by forgetting about Hopf structures
in Construction~\ref{subsubsection:construction}.

We use the bar duality between dg operads and dg cooperads in our proofs.
We give brief recollections on this subject in a preliminary subsection.
We mainly use that every fibration of dg cooperads occurs as the retract of a fibration that we obtain by pulling back a fibration of the form $B B^c(p): B B^c(\COp)\rightarrow B B^c(\DOp)$,
where we consider the bar construction of dg operads $B(-)$ and the cobar construction of dg cooperads $B^c(-)$.
We devote the second subsection of this section to this result.
We check that the pullback-corner property is valid for (acyclic) fibrations of dg cooperads of this form $B B^c(p): B B^c(\COp)\rightarrow B B^c(\DOp)$
and that the class of morphisms of dg cooperads that fulfill the pullback-corner property is stable under pullbacks and retracts to obtain our result.
We devote the third subsection of the section to these verifications.

\subsection{Recollections on the bar duality of operads}\label{sec:barduality}
We just give a brief reminder on the bar duality of operads. We are going to extend results of~\cite[\S II.9.4]{OperadHomotopyBook}.
We therefore refer to the surveys of this book for more details on the background of our constructions.

The bar duality mainly asserts that we have a cobar construction functor $B^c$,
from the category of dg cooperads to the category of (augmented) dg operads,
and a bar duality functor $B$, from the category of (augmented) dg operads to dg cooperads,
which are adjoint to each other and such that the adjunction unit $\COp\rightarrow B B^c(\COp)$
and the adjunction augmentation $B^c B(\ROp)\rightarrow\ROp$ are quasi-isomorphisms.
For our purpose, we mainly need a reminder on the expression of the cobar construction $B^c$ as a quasi-free dg operad
and of the bar construction $B$ as a quasi-cofree cooperad. We recall this definition of the cobar construction first.

The cobar construction $B^c(\COp)$ of a dg cooperad $\COp$ is a dg operad such that:
\begin{equation*}
B^c(\COp) = (\FreeOp(\Sigma^{-1}\bar{\COp}),\partial),
\end{equation*}
where we take the free operad $\FreeOp(-)$ on an arity-wise desuspension $\Sigma^{-1}$ of the coaugmentation coideal of our dg cooperad $\bar{\COp}$.
In this expression, the term $\partial$ denotes a twisting derivation, determined by the composition coproducts of our cooperad $\COp$,
which is added to the internal differential of the cooperad $\COp$
in order to determine the total differential of the cobar construction.

The idea is that the collection of the partial composition coproducts of our cooperad $\circ_i^*: \COp(k+l-1)\rightarrow\COp(k)\otimes\COp(l)$
are equivalent to a map $\theta: \Sigma^{-1}\bar{\COp}\rightarrow\FreeOp_2(\Sigma^{-1}\bar{\COp})$,
where $\FreeOp_2(-)$ denotes the homogeneous component of weight $2$ of the free operad $\FreeOp(-)$.
The twisting derivation of the cobar construction $\partial$
is the unique derivation of the free operad $\partial = \partial_{\theta}: \FreeOp(\Sigma^{-1}\bar{\COp})\rightarrow\FreeOp(\Sigma^{-1}\bar{\COp})$
such that
\begin{equation*}
\partial_{\theta}|_{\Sigma^{-1}\bar{\COp}} = \theta.
\end{equation*}
The preservation of the internal differential of the cooperad $\COp$ by the partial composition coproducts of our cooperad
is equivalent to the commutation relation $\delta\partial+\partial\delta = 0$
for our twisting derivation $\partial$,
whereas the associativity of the partial composition coproducts is equivalent to the identity $\partial^2 = 0$.
These relations $\delta\partial+\partial\delta = \partial^2 = 0$
imply that the sum $\delta+\partial$ defines a differential on the free operad $\FreeOp(\Sigma^{-1}\bar{\COp})$.

The dg operad $B^c(\COp)$ is equipped with an augmentation $\epsilon: B^c(\COp)(1)\rightarrow\kk$
induced by the projection onto the component $\FreeOp_0(-) = \kk 1$
of the free operad $\FreeOp(-)$.
In what follows, we also use the notation $\bar{B}^c(\COp)$ for the augmentation ideal of this augmented dg operad $B^c(\COp)$.

In cohomological grading, the desuspension $\Sigma^{-1}$ is given by $(\Sigma^{-1}M)^* = M^{*-1}$, for any cochain complex $M$.
The cobar construction is defined for $\ZZ$-graded dg cooperads, but in the case of a non-negatively dg cooperad $\COp\in\dgOpc$,
we get that the augmentation ideal of the dg operad $B^c(\COp)$ is concentrated in positive degrees.

In the next sections, we actually consider an extension of the cobar construction to homotopy dg cooperads.
In our context, the structure of a homotopy cooperad can be defined as the structure defined by a dg symmetric sequences $\COp$
equipped with a coaugmentation $\eta: \kk\rightarrow\COp(1)$
and a twisting map
\begin{equation*}
\theta: \Sigma^{-1}\bar{\COp}\rightarrow\bigoplus_{m>1}\FreeOp_m(\Sigma^{-1}\bar{\COp})
\end{equation*}
with values in all components of weight $m>1$ of the free operad $\FreeOp_m(-)$
such that the associated derivation $\partial = \partial_{\theta}: \FreeOp(\Sigma^{-1}\bar{\COp})\rightarrow\FreeOp(\Sigma^{-1}\bar{\COp})$
satisfies the twisting relation $\delta\partial+\partial\delta+\partial^2 = 0$
on $\FreeOp(\Sigma^{-1}\bar{\COp})$.
This relation $\delta\partial+\partial\delta+\partial^2 = 0$ ensures that the sum $\delta+\partial$
satisfies the relation of differentials $(\delta+\partial)^2 = 0$.
(We again use the letter $\delta$ for the differential of the free operad induced by the internal differential of the dg symmetric sequence $\COp$.)
For our purpose, we also consider (strict) morphisms of homotopy dg cooperads,
which are morphisms of coaugmented dg symmetric sequences $f: \COp\rightarrow\DOp$
that satisfy the relation $f\theta_{\DOp} = \theta_{\COp}\FreeOp(f)$,
where $\theta_{\COp}$ and $\theta_{\DOp}$ denote the twisting maps associated to our objects $\COp$ and $\DOp$.

The bar construction of an augmented dg operad $B(\ROp)$, defined by taking the dual construction of the cobar construction,
is a quasi-cofree cooperad such that:
\begin{equation*}
B(\ROp) = (\FreeOp^c(\Sigma\bar{\ROp}),\partial),
\end{equation*}
where we now take the cofree cooperad $\FreeOp^c(-)$ on an arity-wise suspension $\Sigma$ of the augmentation ideal of our object $\bar{\ROp}$
and we consider a twisting coderivation $\partial$ determined by the composition products of the operad $\ROp$.
To be more precise, for the bar construction, we use that the partial composition products of our operad
are equivalent to a map $\theta: \FreeOp_2^c(\Sigma\bar{\ROp})\rightarrow\Sigma\bar{\ROp}$,
where we take the homogeneous component of weight $2$ of the cofree cooperad $\FreeOp_2^c(-)$,
and the twisting coderivation is the unique coderivation $\partial: \FreeOp^c(\Sigma\bar{\ROp})\rightarrow\FreeOp^c(\Sigma\bar{\ROp})$
that lifts this map to the cofree cooperad through the canonical projection $\pi: \FreeOp^c(\Sigma\bar{\ROp})\rightarrow\Sigma\bar{\ROp}$.

In cohomological grading, the suspension $\Sigma$ is given by $(\Sigma M)^* = M^{*+1}$, for any cochain complex $M$.
In particular, the bar construction of a dg operad forms a $\ZZ$-graded dg cooperad in general,
but if the augmentation of the ideal of the dg operad $\ROp$ is concentrated in positive degrees,
then $B(\ROp)$ forms a non-negatively graded dg cooperad.

The cobar construction preserves the quasi-isomorphisms of non-negatively graded dg cooperads and the bar construction preserves all quasi-isomorphisms.
We also already recalled that the unit morphism and the augmentation morphism of the cobar-bar adjunction are quasi-isomorphisms.

\subsection{The bar-cobar construction and the characterization of (acyclic) fibrations in the category dg cooperads}
The purpose of this subsection is to give a characterization of (acyclic) fibrations of dg cooperads in terms of the bar-cobar construction.

We actually consider general homotopy dg cooperads in our construction.
We mainly use that the cobar construction of a homotopy dg cooperad $B^c(\COp) = (\FreeOp(\Sigma^{-1}\bar{\COp}),\partial)$
is equipped with a descending filtration
\begin{equation*}
B^c(\COp) = F^0 B^c(\COp)\supset F^1 B^c(\COp)\supset\cdots\supset F^s B^c(\COp)\supset\cdots
\end{equation*}
such that $B^c(\COp) = \lim_s B^c(\COp)/F^s B^c(\COp)$, we have $B^c(\COp)/F^1 B^c(\COp) = \kk 1$
and the composition products behave additively with respect to the filtration order.
(We have $a\in F^s B^c(\COp)(k),b\in F^t B^c(\COp)(l)\Rightarrow a\circ_i b\in F^{s+t} B^c(\COp)(k+l-1)$.)
For this purpose, we just set $F^s B^c(\COp) = (\bigoplus_{m\geq s}\FreeOp_m(\Sigma^{-1}\bar{\COp}),\partial)$,
where we consider the components of weight $m\geq s$ of the free operad $\FreeOp(-)$.
The relation $B^c(\COp) = \lim_s B^c(\COp)/F^s B^c(\COp)$, which is only valid when the dg cooperad $\COp$ is non negatively graded,
follows from the observation that $\Sigma^{-1}\COp$ is concentrated in positive degrees
and the cochain complex $\FreeOp_m(\Sigma^{-1}\bar{\COp})$,
which consists of tensors of order $m$ in the generating symmetric sequence $\Sigma^{-1}\bar{\COp}$,
vanishes in degrees less or equal to $m$.
The relation $B^c(\COp)/F^1 B^c(\COp) = \kk 1$ is equivalent to the identity $\bar{B}^c(\COp) = F^1 B^c(\COp)$,
where we again use the notation $\bar{B}^c(\COp)$ for the augmentation ideal of the cobar construction.

When we pass to the bar construction, we get a quasi-cofree dg cooperad structure $B B^c(\COp) = (\FreeOp(\Sigma\bar{B}^c(\COp)),\partial)$
which satisfies the assumption of the following proposition.

\begin{prop}\label{prop:fibrancy}
Let $(\FreeOp^c(\MOp),\partial)$ and $(\FreeOp^c(\NOp),\partial)$ be quasi-cofree dg cooperads.
We assume that the twisting coderivation of these cooperads $\partial = \partial_{\MOp}$ and $\partial = \partial_{\NOp}$
are determined by twisting maps $\theta_{\MOp}: \FreeOp^c(\MOp)\rightarrow\MOp$ and $\theta_{\NOp}: \FreeOp^c(\NOp)\rightarrow\NOp$
such that $\theta_{\MOp}|_{\MOp} = \theta_{\NOp}|_{\NOp} = 0$.
We also assume that $\MOp$ and $\NOp$ are equipped with descending filtrations $\MOp = F^1\MOp\supset\cdots\supset F^s\MOp\supset\cdots$
and $\NOp = F^1\NOp\supset\cdots\supset F^s\NOp\supset\cdots$
such that $\MOp = \lim_s\MOp/F^s\MOp$, $\NOp = \lim_s\NOp/F^s\NOp$,
and the twisting maps $\theta_{\MOp}$ and $\theta_{\NOp}$ behave additively with respect to these filtrations.

Let $\psi_f = \FreeOp^c(f): (\FreeOp^c(\MOp),\partial)\rightarrow(\FreeOp^c(\NOp),\partial)$ be a morphism of dg cooperads
induced by a filtration preserving morphism of symmetric sequences $f: \MOp\rightarrow \NOp$
such that $f\theta_{\MOp} = \theta_{\NOp}\FreeOp^c(f)$.
If $f$ is surjective, then $\psi_f$ is a fibration.
\end{prop}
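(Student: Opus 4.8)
The plan is to verify that $\psi_f$ has the right lifting property with respect to the acyclic cofibrations of $\dgOpc$, which is exactly the defining property of a fibration in this model category. Thus I fix an acyclic cofibration $i\colon A\rightarrowtail B$ together with a commutative square having $i$ on the left and $\psi_f$ on the right, and I look for a diagonal filler $w\colon B\to(\FreeOp^c(\MOp),\partial)$. To see where the difficulty lies, consider first the untwisted case in which the twisting coderivation vanishes, so that the total differential is the internal one and $\psi_f=\FreeOp^c(f)$ is a morphism of genuinely cofree cooperads. Then the cofree cooperad adjunction $\Mor_{\dgOpc}(B,\FreeOp^c(\MOp))\simeq\Mor_{\dgSeqc}(B,\MOp)$ turns the lifting problem into the problem of lifting $i$ against $f$ in the category of dg symmetric sequences: since $f$ is surjective, hence a fibration of $\dgSeqc$, and $i$ is an acyclic cofibration (the cofibrations and weak equivalences of $\dgOpc$ being created arity-wise in $\dgVect$), such a lift exists. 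The twisting coderivation $\partial$ is precisely the obstruction to running this argument directly, and the whole point is to dissolve it by means of the filtrations.

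To this end, I use that the filtrations on $\MOp$ and $\NOp$, combined with the weight grading of the cofree cooperad functor, induce exhaustive and complete descending filtrations on the quasi-cofree cooperads $(\FreeOp^c(\MOp),\partial)$ and $(\FreeOp^c(\NOp),\partial)$, which are preserved by $\psi_f=\FreeOp^c(f)$ because $f$ preserves the filtrations of $\MOp$ and $\NOp$. The completeness of these induced filtrations follows from the relations $\MOp=\lim_s\MOp/F^s\MOp$ and $\NOp=\lim_s\NOp/F^s\NOp$ together with the tree-wise description of the cofree cooperad. The key structural observation is that, since $\theta_{\MOp}|_{\MOp}=0$ and $\theta_{\MOp}$ behaves additively with respect to the filtration, the twisting coderivation $\partial$ strictly raises the induced filtration. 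Consequently the twisting term vanishes on the associated graded, so that
\[
\mathrm{gr}(\FreeOp^c(\MOp),\partial)=\FreeOp^c(\mathrm{gr}\,\MOp)
\]
carries only the internal differential and is a genuinely cofree cooperad, and the associated graded of $\psi_f$ reduces to the cofree morphism $\FreeOp^c(\mathrm{gr}\,f)$.

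I then build the lift $w$ as the limit of a tower of partial lifts $w_s\colon B\to\FreeOp^c(\MOp)/F^s$, constructed by induction on $s$ and assembled via the completeness of the filtration. The passage from $w_s$ to $w_{s+1}$ is controlled by the associated graded map $\FreeOp^c(\mathrm{gr}^s f)$: because the twisting term contributes only through strictly lower, already-fixed filtration layers, the extension problem at layer $s$ is, after applying the cofree adjunction, exactly the problem of lifting the acyclic cofibration $i$ against $\mathrm{gr}^s f$ in $\dgSeqc$. Since $f$ is surjective and filtration preserving over a field, the maps $\mathrm{gr}^s f$ are degreewise surjective, hence fibrations of $\dgSeqc$, and the lift at each layer exists and can be chosen compatibly with the prescribed square. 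Taking the limit over $s$ yields the diagonal filler $w$.

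The main obstacle is the filtration bookkeeping underlying the reduction to the cofree associated graded pieces: one must check carefully that the additive behaviour of $\theta_{\MOp}$ makes $\partial$ strictly raise the induced filtration, that the induced filtration on the cofree cooperads is complete so that the tower of partial lifts converges, and that the layer-by-layer lifts can be chosen so as to assemble into an honest morphism of dg cooperads rather than merely a compatible family of graded maps. Once this reduction to the associated graded is secured, together with the surjectivity of the maps $\mathrm{gr}^s f$, each individual lifting step is the routine lift of an acyclic cofibration against a fibration in the category of dg symmetric sequences.
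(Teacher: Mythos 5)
Your overall strategy coincides with the paper's: the paper defers to \cite[Proposition II.9.2.10]{OperadHomotopyBook}, replacing the arity filtration by the given filtrations and the arity tower by $(\FreeOp^c(\MOp),\partial)=\lim_s(\FreeOp^c(\MOp/F^s\MOp),\partial)$, which is precisely your plan of a tower of partial lifts solved layer by layer through the cofree-cooperad adjunction. But two of your intermediate claims are wrong as stated. The lesser one: the twisting coderivation does \emph{not} strictly raise the induced filtration, and the associated graded is \emph{not} the cofree cooperad with internal differential only. Additivity of $\theta_{\MOp}$ only puts the image of a tree with vertex filtrations $s_v$ into $F^{\sum_v s_v}\MOp$, allowing a component of filtration exactly $\sum_v s_v$; in the paper's key application (Proposition~\ref{prop:barcobarfibrations}) the twisting of $B B^c(\COp)$ is given by the cobar composition products, which are exactly weight-additive, so the twisting survives on the associated graded. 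Relatedly, the ``total weight'' filtration layers of $\FreeOp^c(\MOp)$ are not coideals, so your quotients $\FreeOp^c(\MOp)/F^s$ are not dg cooperads; the correct tower objects are the $\FreeOp^c(\MOp/F^s\MOp)$. This part is repairable, because the statement your induction actually needs---and which you also assert---is the weaker, correct one: the twisting contribution to the structure equation modulo $F^s$ depends only on the data modulo $F^{s-1}$, since the relevant trees have at least two vertices, each of filtration at least $1$.

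The genuine gap is your claim that ``since $f$ is surjective and filtration preserving over a field, the maps $\mathrm{gr}^s f$ are degreewise surjective.'' This is false: take $f=\id_{\kk}$ with source filtration $\kk=F^1\supset F^2=0$ and target filtration $\kk=F^1=F^2\supset F^3=0$; then $f$ is surjective and filtration preserving, both filtrations are complete, yet $\mathrm{gr}^2 f\colon 0\rightarrow\kk$ is not surjective. Your layer-by-layer lifting requires exactly the surjectivity of the corner maps
\begin{equation*}
\MOp/F^{s+1}\MOp\longrightarrow \MOp/F^s\MOp\times_{\NOp/F^s\NOp}\NOp/F^{s+1}\NOp,
\end{equation*}
which is equivalent to surjectivity of $\mathrm{gr}^s f$, so the proof as written breaks at this step. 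What makes the argument work in the book's original version (arity filtration) and in the paper's applications (Proposition~\ref{prop:barcobarfibrations} and Lemma~\ref{lem:pcp_Wf}) is that there the filtrations are induced by gradings---the arity grading, respectively the cobar weight grading---which $f$ preserves as a grading, so that surjectivity of $f$ does give surjectivity on every graded piece. To complete your proof you must either impose this strictness of $f$ with respect to the filtrations as a hypothesis (which is how the proposition is actually used) or verify it in each application; it does not follow from surjectivity and filtration-preservation alone.
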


\begin{proof}
The proof of this proposition is essentially identical to the proof of~\cite[Proposition II.9.2.10]{OperadHomotopyBook}.
We just replace the arity filtration by the filtrations given in the assumptions of our proposition
and we consider the tower decompositions such that $(\FreeOp^c(\MOp),\partial) = \lim_s(\FreeOp^c(\MOp/F^s\MOp),\partial)$
and $(\FreeOp^c(\NOp),\partial) = \lim_s(\FreeOp^c(\NOp/F^s\NOp),\partial)$
rather than the arity-wise decomposition.
\end{proof}

\begin{rem}
This proposition is still valid for $\ZZ$-graded dg symmetric sequences, for the model structure which we define in Appendix~\ref{sec:modelcat}.
Recall however that we need to start with a non-negative graded objects in order to apply this proposition
to the bar-cobar construction of homotopy dg cooperads.
\end{rem}

We then have the following statement:

\begin{prop}\label{prop:barcobarfibrations}
The image of a morphism of homotopy dg cooperads under the bar-cobar construction $B B^c(p): B B^c(\COp)\rightarrow B B^c(\DOp)$
defines a fibration of dg cooperads as soon as the morphism $p: \COp\rightarrow\DOp$
defines a fibration in the category of dg symmetric sequences.
This fibration $B B^c(p)$ is also acyclic when $p$ is so.
\end{prop}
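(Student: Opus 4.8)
The plan is to reduce the statement to an application of Proposition~\ref{prop:fibrancy}. The bar-cobar construction $B B^c(\COp)$ is, by the recollections of the preceding subsection, a quasi-cofree dg cooperad of the form $(\FreeOp^c(\Sigma\bar{B}^c(\COp)),\partial)$, whose twisting coderivation is determined by a twisting map $\theta$ that vanishes on the cogenerators $\Sigma\bar{B}^c(\COp)$ themselves. So the first thing to check is that the generating symmetric sequences $\MOp = \Sigma\bar{B}^c(\COp)$ and $\NOp = \Sigma\bar{B}^c(\DOp)$ carry descending filtrations of the type required by Proposition~\ref{prop:fibrancy}, namely complete filtrations with $\MOp = \lim_s \MOp/F^s\MOp$ relative to which the twisting maps behave additively. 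The natural candidate is the filtration inherited from the weight filtration $F^s B^c(\COp) = (\bigoplus_{m\ge s}\FreeOp_m(\Sigma^{-1}\bar{\COp}),\partial)$ on the cobar construction, which is complete precisely because $\COp$ is assumed non-negatively graded (this is where the grading hypothesis enters, as already flagged in the remark after Proposition~\ref{prop:fibrancy}). I would transport this filtration through the arity-wise suspension $\Sigma$ and verify that the bar twisting coderivation, which is built from the composition products of $B^c(\COp)$, respects it additively — this follows from the additivity of the composition products with respect to the cobar weight filtration recorded in the previous subsection.

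Next I would analyze the map $\MOp = \Sigma\bar{B}^c(\COp) \to \Sigma\bar{B}^c(\DOp) = \NOp$ induced by $p$ on cogenerators. The point is that $B B^c(p) = \FreeOp^c(g)$ for $g = \Sigma\bar{B}^c(p)$, and this map is filtration preserving and intertwines the two twisting maps (i.e. $g\theta_{\MOp} = \theta_{\NOp}\FreeOp^c(g)$) simply by naturality of the whole bar-cobar construction in the morphism $p$. To invoke Proposition~\ref{prop:fibrancy} it then remains to show that $g$ is \emph{surjective} whenever $p$ is a fibration of dg symmetric sequences, i.e. arity-wise degreewise surjective. Since $\bar{B}^c(\COp)(r)$ is assembled from free-operad words $\FreeOp_m(\Sigma^{-1}\bar{\COp})(r)$ in the desuspended cogenerators, surjectivity of $p$ passes to surjectivity of $\FreeOp(\Sigma^{-1}\bar{p})$ termwise (the free operad functor preserves surjections over a field), and hence to surjectivity of $g$ after applying $\Sigma$. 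With the filtration and the surjectivity of $g$ in hand, Proposition~\ref{prop:fibrancy} yields that $B B^c(p)$ is a fibration of dg cooperads.

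For the acyclicity clause, I would argue separately. When $p$ is an acyclic fibration of dg symmetric sequences, $p$ is in particular a quasi-isomorphism, so I would use that the bar and cobar constructions preserve quasi-isomorphisms (the cobar construction preserves quasi-isomorphisms of non-negatively graded dg cooperads, and the bar construction preserves all quasi-isomorphisms, as recalled at the end of the bar-duality subsection). Applying these two preservation statements in succession shows that $B B^c(p)$ is itself a quasi-isomorphism; combined with the fibrancy already established, it is an acyclic fibration. The main obstacle I anticipate is the bookkeeping in the first step: one must verify that the weight filtration on the cobar construction, once suspended and fed into the bar construction, genuinely satisfies the additivity and completeness hypotheses of Proposition~\ref{prop:fibrancy}, and that these hold uniformly for homotopy dg cooperads rather than only strict ones. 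The completeness, in particular, relies essentially on the non-negative grading assumption, so I would be careful to track exactly where that hypothesis is used and to confirm it is not silently needed again in the acyclicity argument.
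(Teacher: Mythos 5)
Your proposal is correct and follows essentially the same route as the paper's proof: an application of Proposition~\ref{prop:fibrancy} to the weight filtration inherited from the cobar construction yields the fibration statement, and the preservation of quasi-isomorphisms by the cobar construction (for non-negatively graded objects) and by the bar construction yields acyclicity. The details you fill in (surjectivity of the induced map on cogenerators, additivity of the twisting maps with respect to the filtration, completeness via the non-negative grading) are exactly the verifications the paper leaves implicit.
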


\begin{proof}
The first assertion of the proposition is a direct corollary of the result of Proposition~\ref{prop:fibrancy}
since we observed that the bar-cobar construction is endowed with a filtration
which satisfies the assumptions of this lemma.
To establish the second assertion of the proposition,
we just use that the cobar construction preserves the quasi-isomorphisms of non-negatively graded homotopy dg cooperads
and that the bar constructions preserves all quasi-isomorphisms.
\end{proof}

We immediately deduce from the general properties of (acyclic) fibrations
in a model category that every morphism $q: B B^c(\COp)\times_{B B^c(\DOp)}\DOp\rightarrow\DOp$
defined by taking the pullback of an (acyclic) fibration of the form of Proposition~\ref{prop:barcobarfibrations}
along the unit morphism of the cobar-bar adjunction $\DOp\xrightarrow{\sim}B B^c(\DOp)$, for $\DOp$ a dg cooperad,
is still an (acyclic) fibration in the category of dg cooperads.
We aim to prove that all (acyclic) fibrations of the category of dg cooperads occur as retracts of pullbacks of (acyclic) fibrations of this form.
We establish this claim in a series of lemmas.
We start with the following result.

\begin{lemm}\label{lemm:barcobarfactorization}
Let $f: \COp\rightarrow\DOp$ be any morphism in the category of dg cooperads.
We pick a factorization of this morphism in the category of coaugmented dg symmetric sequences $f = p i$
such that $i: \COp\rightarrow\ZOp$ is an acyclic cofibration
and $p: \ZOp\rightarrow\DOp$ is a fibration, where $\ZOp$ denotes the middle term of this factorization.
This dg symmetric sequence $\ZOp$ can be equipped with a homotopy dg cooperad structure
such that the morphisms $i$ and $p$ define morphisms of homotopy dg cooperads.
\end{lemm}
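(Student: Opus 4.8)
The plan is to reformulate the desired homotopy dg cooperad structure as a twisting derivation of the free operad $\FreeOp(\Sigma^{-1}\bar{\ZOp})$ and to build it by obstruction theory along the weight filtration, carrying the strictness of $i$ and $p$ as constraints at every stage. Recall that such a structure is the same as a twisting map $\theta = \sum_{m\geq 2}\theta^{(m)}$ with components $\theta^{(m)}\colon \Sigma^{-1}\bar{\ZOp}\to\FreeOp_m(\Sigma^{-1}\bar{\ZOp})$, whose associated derivation $\partial = \partial_\theta$ satisfies $\delta\partial + \partial\delta + \partial^2 = 0$. Since the internal differential $\delta$ preserves weight while $\theta^{(r+1)}$ induces a derivation $\partial_r$ raising weight by $r$, this relation splits by weight into $[\delta,\partial_1] = 0$ and $[\delta,\partial_N] = -\sum_{r+s=N,\, r,s\geq 1}\partial_r\partial_s$ for $N\geq 2$. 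In these terms, asking that $i$ and $p$ be strict morphisms means that $\FreeOp(\Sigma^{-1}\bar{i})$ and $\FreeOp(\Sigma^{-1}\bar{p})$ intertwine the cobar differentials; as $\COp$ and $\DOp$ are genuine cooperads, whose twisting maps are concentrated in weight $2$, this amounts to requiring that $\theta^{(2)}$ restrict to $\theta_{\COp}^{(2)}$ along $i$ and to $\theta_{\DOp}^{(2)}$ along $p$, and that for $m>2$ the component $\theta^{(m)}$ vanish on the image of $\Sigma^{-1}\bar{i}$ and take values in $\ker\FreeOp_m(\Sigma^{-1}\bar{p})$.

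I would construct the components $\theta^{(m)}$ by induction on $m$. The weight-two piece is a plain lifting problem: the two prescriptions for $\theta^{(2)}$ are compatible exactly because $f = pi$ is a morphism of dg cooperads, so they assemble into a commutative square in $\dgSeqc$ whose left edge $\Sigma^{-1}\bar{i}$ is an acyclic cofibration and whose right edge $\FreeOp_2(\Sigma^{-1}\bar{p})$ is a fibration (the polynomial functor $\FreeOp_2$ preserves surjections over a field of characteristic zero). The lifting axiom then yields a chain map $\theta^{(2)}$ meeting both constraints, and $[\delta,\partial_1]=0$ holds since $\theta^{(2)}$ is a chain map.

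For the inductive step I would assume $\theta^{(2)},\dots,\theta^{(N)}$ constructed so that the Maurer--Cartan relation and the strictness constraints hold through weight $N$, and solve $[\delta,\theta^{(N+1)}] = -\omega_N$, where $\omega_N$ is the restriction to generators of $\sum_{r+s=N}\partial_r\partial_s$. By the standard Maurer--Cartan computation, $\omega_N$ is a cocycle in $\Hom_{\Seq}(\Sigma^{-1}\bar{\ZOp},\FreeOp_{N+1}(\Sigma^{-1}\bar{\ZOp}))$. The crucial point is that it lies in the subcomplex $\Hom_{\Seq}(\Sigma^{-1}\bar{Q},\ker\FreeOp_{N+1}(\Sigma^{-1}\bar{p}))$ cut out by the two constraints, where $Q = \operatorname{coker}(i)$: precomposing $\omega_N$ with $\FreeOp(\Sigma^{-1}\bar{i})$ and postcomposing it with $\FreeOp(\Sigma^{-1}\bar{p})$ both give zero, by the inductive strictness of the lower pieces and, in the weight-two instance, by the identities $(\partial_1^{\COp})^2 = (\partial_1^{\DOp})^2 = 0$ expressing that the cobar differentials of the genuine cooperads $\COp$ and $\DOp$ square to zero weight-by-weight. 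Because $i$ is an acyclic cofibration, its cokernel $Q$ is acyclic and hence contractible over $\kk$, so this constrained complex is contractible; thus $\omega_N$ is a coboundary in it, and any primitive provides a $\theta^{(N+1)}$ solving the obstruction equation while satisfying the constraints in weight $N+1$.

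Finally, since all objects are non-negatively graded, $\FreeOp_m(\Sigma^{-1}\bar{\ZOp})$ is concentrated in degrees $>m$, so on each homogeneous element only finitely many $\theta^{(m)}$ contribute; the sum $\theta = \sum_{m\geq 2}\theta^{(m)}$ therefore converges to a genuine homotopy dg cooperad structure on $\ZOp$ for which $i$ and $p$ are strict by construction. I expect the main obstacle to be precisely the verification that the obstruction $\omega_N$ lands in the constrained subcomplex: keeping track of the two one-sided conditions through the quadratic term, and of how the strictness of the partial structure feeds into them, is the delicate bookkeeping, whereas once this is established the contractibility of $Q$ disposes of the obstruction at once.
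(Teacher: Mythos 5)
Your proof is correct, and it shares the paper's overall strategy: construct the twisting map $\theta_{\ZOp}=\sum_{m\geq 2}\theta^{(m)}_{\ZOp}$ by induction on weight, showing at each stage that the quadratic obstruction is a cocycle that can be killed compatibly with $i$ and $p$. Where you differ is in the mechanism that kills the obstruction. The paper treats $\COp$ and $\DOp$ as general homotopy dg cooperads, packages the three partial structures into a cocycle of the fiber product $T^m_{\COp,\COp}\times_{T^m_{\COp,\ZOp}}T^m_{\ZOp,\ZOp}\times_{T^m_{\ZOp,\DOp}}T^m_{\DOp,\DOp}$, and lifts the coboundary relation satisfied by $(\theta^m_{\COp},\theta^m_{\DOp})$ through the projection onto $T^m_{\COp,\COp}\times_{T^m_{\COp,\DOp}}T^m_{\DOp,\DOp}$, which is an acyclic fibration because it is a base extension of the pullback-corner map of the pair $(i,p)$. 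You instead exploit that $\COp$ and $\DOp$ are strict cooperads, so that for $m>2$ the two side conditions become homogeneous and the obstruction lands in $\Hom(\Sigma^{-1}\bar{Q},\ker\FreeOp_m(\Sigma^{-1}\bar{p}))$ with $Q=\operatorname{coker}(i)$, a complex that is acyclic since $Q$, being acyclic over a field, is contractible; the one inhomogeneous datum, $\theta^{(2)}_{\ZOp}$, is produced by a single model-category lifting. The two arguments are close cousins --- your constrained complex is exactly the fiber of the paper's acyclic fibration --- but yours is more elementary and self-contained, at the price of not applying verbatim when $\COp$ or $\DOp$ carry genuine homotopy cooperad structures (a generality the paper's proof keeps, though the lemma as stated does not require it). One step you should make explicit: $i$, being an \emph{acyclic} cofibration of non-negatively graded complexes, is injective in all degrees, not just positive ones (acyclicity forces injectivity in degree $0$, as the paper observes in the proof of Lemma~\ref{lem:fib1}); this is what lets you identify maps vanishing on the image of $i$ with maps out of $Q$, and deduce the acyclicity of $Q$ from the short exact sequence $0\to\Sigma^{-1}\bar{\COp}\to\Sigma^{-1}\bar{\ZOp}\to\Sigma^{-1}\bar{Q}\to 0$.
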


\begin{proof}
Let $\theta_{\COp}$ and $\theta_{\DOp}$ be the twisting maps that determine the homotopy dg cooperad structure of the objects $\COp$ and $\DOp$.
In what follows, we also use the decompositions $\theta_{\COp} = \sum_{m>1}\theta_{\COp}^m$ and $\theta_{\DOp} = \sum_{m>1}\theta_{\DOp}^m$,
where $\theta_{\COp}^m$ and $\theta_{\DOp}^m$ denote the components of our twisting maps
with values in the component of weight $m>1$ of the free operad.

We build the components $\theta_{\ZOp}^m: \Sigma^{-1}\bar{\ZOp}\rightarrow\FreeOp_m(\Sigma^{-1}\bar{\ZOp})$
of our twisting map on~$\ZOp$
\begin{equation*}
\theta_{\ZOp} = \sum_{m>1}\theta_{\ZOp}^m: \Sigma^{-1}\bar{\ZOp}\rightarrow\FreeOp(\Sigma^{-1}\bar{\ZOp}) = \bigoplus_{m>1}\FreeOp_m(\Sigma^{-1}\bar{\ZOp})
\end{equation*}
by induction on the weight $m>1$.
We use that the relation $\delta\partial_{\theta_{\ZOp}}+\partial_{\theta_{\ZOp}}\delta + \partial_{\theta_{\ZOp}}\partial_{\theta_{\ZOp}} = 0$
holds as soon as we have the identity $\delta(\theta_{\ZOp}) + \partial_{\theta_{\ZOp}}\theta_{\ZOp} = 0$
in the dg-hom object
\begin{equation*}
T_{\ZOp,\ZOp} = \Hom(\Sigma^{-1}\bar{\ZOp},\FreeOp(\Sigma^{-1}\bar{\ZOp}))
\end{equation*}
with the differential such that $\delta(u) = \delta u - \pm u\delta$ for any map $u\in\Hom(\Sigma^{-1}\bar{\ZOp},\FreeOp(\Sigma^{-1}\bar{\ZOp}))$.
When we project this relation onto $\FreeOp_m(\Sigma^{-1}\bar{\ZOp})$,
we get a relation of the form
\begin{equation*}
\delta(\theta_{\ZOp}^m) + \partial_{\theta_{\ZOp}^*}\theta_{\ZOp}^* = 0,
\end{equation*}
where the composite $\partial_{\theta_{\ZOp}^*}\theta_{\ZOp}^*: \Sigma^{-1}\bar{\ZOp}\rightarrow\FreeOp_m(\Sigma^{-1}\bar{\ZOp})$
only involves the components $\theta_{\ZOp}^l$ of weight $l<m$
of our twisting map $\theta_{\ZOp}$.

We easily check that this composite $\partial_{\theta_{\ZOp}^*}\theta_{\ZOp}^*$ defines a cocycle in our dg hom object
and our aim is to prove that this cocycle is a coboundary.
We also assume by induction that the morphisms $i$ and $p$ preserve our twisting maps in weight $l<m$.
We then have a cocycle of the form
\begin{equation*}
\zeta = (\partial_{\theta_{\COp}^*}\theta_{\COp}^*,\partial_{\theta_{\ZOp}^*}\theta_{\ZOp}^*,\partial_{\theta_{\DOp}^*}\theta_{\DOp}^*)
\end{equation*}
in the fiber product $T_{\COp,\COp}^m\times_{T_{\COp,\ZOp}^m}\times T_{\ZOp,\ZOp}^m\times_{T_{\ZOp,\DOp}^m}\times T_{\DOp,\DOp}^m$,
where for $\MOp,\NOp\in\{\COp,\DOp,\ZOp\}$ we set $T_{\MOp,\NOp}^m = \Hom(\Sigma^{-1}\bar{\MOp},\FreeOp_m(\Sigma^{-1}\bar{\NOp}))$.
We consider the map
\begin{equation*}
T_{\COp,\COp}^m\times_{T_{\COp,\ZOp}^m}\times T_{\ZOp,\ZOp}^m\times_{T_{\ZOp,\DOp}^m}\times T_{\DOp,\DOp}^m
\rightarrow T_{\COp,\COp}^m\times_{T_{\COp,\DOp}^m} T_{\DOp,\DOp}^m
\end{equation*}
induced by the obvious projection of hom-objects.
We easily check that this map defines an acyclic fibration of cochain complexes, because we can identify this map with a base extension
of the pullback-corner morphism $T_{\ZOp,\ZOp}^m\rightarrow T_{\COp,\ZOp}^m\times_{T_{\COp,\DOp}^m} T_{\ZOp,\DOp}^m$
which is an acyclic fibration as soon as the map $i: \COp\rightarrow\ZOp$ is an acyclic cofibration
whereas $p: \ZOp\rightarrow\DOp$ is a fibration in the category of dg symmetric sequences.
We now have the coboundary relation
\begin{equation*}
(\delta(\theta_{\COp}^m),\delta(\theta_{\DOp}^m)) = (-\partial_{\theta_{\COp}^*}\theta_{\COp}^*,-\partial_{\theta_{\DOp}^*}\theta_{\DOp}^*)
\end{equation*}
in $T_{\COp,\COp}^m\times_{T_{\COp,\DOp}^m} T_{\DOp,\DOp}^m$.
We just lift this coboundary relation through our acyclic fibration to obtain the existence of a map $\theta_{\ZOp}^m\in T_{\ZOp,\ZOp}^m$
such that:
\begin{equation*}
\theta_{\ZOp}^m i^* = i_*\theta_{\COp}^m,
\quad p_*\theta_{\ZOp}^m = \theta_{\DOp}^m p^*,
\quad\text{and}
\quad\delta(\theta_{\ZOp}^m) = -\partial_{\theta_{\ZOp}^*}\theta_{\ZOp}^*,
\end{equation*}
where $(i^*,i_*)$ and $(p^*,p_*)$ denote the morphisms induced by our maps $i: \COp\rightarrow\ZOp$ and $p: \ZOp\rightarrow\DOp$ on hom-objects,
and this result completes our construction.
\end{proof}

\begin{lemm}\label{lem:fib1}
Let $f: \COp\rightarrow\DOp$ be a morphism of dg cooperads as in Lemma~\ref{lemm:barcobarfactorization}.
Let $\ZOp$ be the homotopy dg cooperad given by the result of this lemma.
The morphism of dg cooperads
\begin{equation*}
g: \COp\rightarrow B B^c(\ZOp)\times_{B B^c(\DOp)}\DOp
\end{equation*}
that we deduce from the pullback diagram
\begin{equation*}
\xymatrix{ \COp\ar[d]\ar@{.>}[dr]^{g}\ar@/^1em/[drr]^{f} && \\
B B^c(\COp)\ar@/_1em/[dr]_{B B^c(i)} & B B^c(\ZOp)\times_{B B^c(\DOp)}\DOp\ar@{.>>}[r]\ar@{.>}[d] & \DOp\ar[d] \\
& B B^c(\ZOp)\ar@{->}[r] & \DOp }
\end{equation*}
is an acyclic cofibration.
\end{lemm}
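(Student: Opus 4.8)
The plan is to prove that $g$ is a cofibration and a weak equivalence in $\dgOpc$, recalling that both classes are created arity-wise in $\dgVect$. The whole argument rests on the single identity $p' g = B B^c(i)\,\eta_{\COp}$, where $\eta_{\COp}: \COp\rightarrow B B^c(\COp)$ is the cobar-bar unit, $p': B B^c(\ZOp)\times_{B B^c(\DOp)}\DOp\rightarrow B B^c(\ZOp)$ is the left-hand projection of the pullback, and $q': B B^c(\ZOp)\times_{B B^c(\DOp)}\DOp\rightarrow\DOp$ is the other projection with $q' g = f$; these two relations are exactly the definition of $g$ out of the pullback square.

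For the cofibration, I would show that $g$ is arity-wise injective. The unit $\eta_{\COp}: \COp\rightarrow(\FreeOp^c(\Sigma\bar{B}^c(\COp)),\partial)$ has linear part given by the inclusion of $\bar{\COp}$ as the weight-one cogenerators, so postcomposing with the canonical cofree projection $\FreeOp^c(\Sigma\bar{B}^c(\COp))\rightarrow\Sigma\bar{B}^c(\COp)$ recovers this inclusion; hence $\eta_{\COp}$ is injective in every degree. The map $i$ is injective in all degrees as well: an acyclic cofibration in $\dgVect$ has trivial kernel, since such a kernel would be concentrated in degree zero (as $i$ is injective in positive degrees), would consist of cocycles, and would therefore inject into cohomology, contradicting that $i$ is a quasi-isomorphism. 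Because $\FreeOp(-)$ and $\FreeOp^c(-)$ preserve injections of symmetric sequences over a field of characteristic zero, $B B^c(i)$ is injective. Thus $p' g = B B^c(i)\,\eta_{\COp}$ is injective, whence $g$ is injective and, in particular, a cofibration.

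For the weak equivalence, I would use that the unit $\eta_{\COp}$ is a quasi-isomorphism and that the cobar-bar construction preserves quasi-isomorphisms of non-negatively graded homotopy dg cooperads; since $i$ is a quasi-isomorphism (by Lemma~\ref{lemm:barcobarfactorization} it is even a morphism of homotopy dg cooperads), $B B^c(i)$ is one too, so $p' g = B B^c(i)\,\eta_{\COp}$ is a weak equivalence. It then suffices to prove that $p'$ is a weak equivalence, for the two-out-of-three property will give the claim for $g$. Now $p'$ is obtained by pulling back the cobar-bar unit $\eta_{\DOp}: \DOp\rightarrow B B^c(\DOp)$, which is a weak equivalence, along the morphism $B B^c(p)$, which is a fibration by Proposition~\ref{prop:barcobarfibrations}.

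The main obstacle is precisely this last point: the homotopy invariance of the pullback defining $p'$. One cannot simply reduce to $\dgVect$, because limits of conilpotent dg cooperads are not computed arity-wise in the underlying symmetric sequences -- the naive arity-wise pullback fails to inherit a cooperad structure. I would therefore deduce that $p'$ is a weak equivalence either from right-properness of the model category of dg cooperads, or, more concretely, by lifting the contracting homotopy that witnesses $\eta_{\DOp}$ as a deformation retract through the fibration $B B^c(p)$ by its covering homotopy property, which exhibits $p'$ itself as a deformation retraction onto the pullback. This is the step that genuinely exploits the fibrancy supplied by Proposition~\ref{prop:barcobarfibrations} rather than formal manipulation, and it is where I expect the real work to lie.
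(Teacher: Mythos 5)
Your cofibration half matches the paper: the paper likewise observes that $\COp\rightarrow B B^c(\COp)\rightarrow B B^c(\ZOp)$ is injective (using that acyclicity forces $i$ to be injective in all degrees and that $B B^c(-)$ preserves injections and quasi-isomorphisms) and that this composite factors through $g$, so $g$ is a cofibration; and, as you do, the paper then reduces everything to showing that the projection $h = p': B B^c(\ZOp)\times_{B B^c(\DOp)}\DOp\rightarrow B B^c(\ZOp)$ is a quasi-isomorphism and concludes by two-out-of-three.

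The gap is exactly at the point you flag as ``where the real work lies'': neither of your two proposed justifications for $p'$ being a weak equivalence is available. Right properness of $\dgOpc$ is not established in the paper or in the cited references, and it cannot be invoked as a black box here; the fibrations of $\dgOpc$ are defined abstractly by a lifting property, and the whole section you are working in (the characterization of fibrations via $B B^c$, of which this lemma is a step) is precisely the machinery one would need to even approach such a statement -- appealing to it is essentially circular. Your second route fails for a concrete reason: $\eta_{\DOp}: \DOp\rightarrow B B^c(\DOp)$ is an acyclic cofibration, but $\DOp$ is an arbitrary dg cooperad and is \emph{not} fibrant, so $\eta_{\DOp}$ need not admit a retraction, let alone a deformation-retract structure that one could lift through the fibration $B B^c(p)$; the standard lemma that weak equivalences pull back along fibrations applies to weak equivalences \emph{between fibrant objects}, and only one of the two objects here is fibrant. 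The paper instead proves the statement by hand: it fixes a splitting $\ZOp^{\flat}\simeq\DOp^{\flat}\oplus\SOp$ of the fibration of symmetric sequences $p$, deduces $B B^c(\ZOp)^{\flat}\simeq B B^c(\DOp)^{\flat}\times\FreeOp^c(\Sigma\TOp)$ and $\bigl(B B^c(\ZOp)\times_{B B^c(\DOp)}\DOp\bigr)^{\flat}\simeq\DOp^{\flat}\times\FreeOp^c(\Sigma\TOp)$, identifies $h$ with the product of the identity on $\FreeOp^c(\Sigma\TOp)$ and the unit quasi-isomorphism $\DOp\rightarrow B B^c(\DOp)$, and then runs a spectral sequence for the (bounded) filtration by total cohomological degree in $\TOp$. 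That this filtration is bounded uses the non-negative grading in an essential way -- the paper's remark after the lemma notes the argument breaks down over $\dgZOpc$ -- which is further evidence that no soft, purely formal argument of the kind you propose can close this step.
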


\begin{proof}
We use that the cooperadic product preserves quasi-isomorphisms: if $\AOp\rightarrow\BOp$ is a quasi-isomorphism,
then the map $\AOp\times\DOp\rightarrow\BOp\times\DOp$ is a quasi-isomorphism,
for any dg cooperad $\DOp$.
This statement readily follows from the treewise description of cooperadic products (see \cite[\S C.1.16]{OperadHomotopyBook}).

The acyclicity implies that the cofibration $\COp\rightarrow\ZOp$ is injective in all degrees (not only in positive degrees).
The morphism $\COp\rightarrow B B^c(\COp)\rightarrow B B^c(\ZOp)$,
which is an acyclic cofibration since the unit morphism of the cobar-bar adjunction is so and the composite functor $B B^c(-)$ preserves the quasi-isomorphisms as well as the injective maps,
factors through the map considered in the lemma:
\begin{equation*}
\COp\xrightarrow{g} B B^c(\ZOp)\times_{B B^c(\DOp)}\DOp\rightarrow B B^c(\ZOp).
\end{equation*}
This observation implies that $g$ is injective and hence forms a cofibration.
Furthermore, to complete our verification, we are left with verifying that the map $h: B B^c(\ZOp)\times_{B B^c(\DOp)}\DOp\rightarrow B B^c(\ZOp)$
is a quasi-isomorphism.

In a first step, we give a more explicit description of the fiber product of dg cooperads $B B^c(\ZOp)\times_{B B^c(\DOp)}\DOp$.
For this purpose, we fix a splitting of the map of dg symmetric sequences $p: \ZOp\twoheadrightarrow\DOp$,
so that we can write
\begin{equation*}
\ZOp^{\flat}\simeq\DOp^{\flat}\oplus\SOp,
\end{equation*}
for some graded symmetric sequence $\SOp$.
This splitting gives rise to a splitting of graded symmetric sequences at the level of the cobar construction
\begin{equation*}
B^c(\ZOp)^{\flat}\simeq B^c(\DOp)^{\flat}\oplus\TOp,
\end{equation*}
where $\TOp$ is spanned by decorated trees such that at least one vertex carries a decoration in $\SOp$,
and we get an isomorphism
\begin{equation*}
B B^c(\ZOp)^{\flat}\simeq\FreeOp^c(\Sigma\bar{B}^c(\DOp)^{\flat}\oplus\Sigma\TOp)\simeq B B^c(\DOp)^{\flat}\times\FreeOp^c(\Sigma\TOp)
\end{equation*}
when we pass to the cobar-bar construction.
For our fiber product, we similarly have:
\begin{equation*}
B B^c(\ZOp)^{\flat}\times_{B B^c(\DOp)^{\flat}}\DOp^{\flat}\simeq\DOp^{\flat}\times\FreeOp^c(\Sigma\TOp),
\end{equation*}
and the map $h: B B^c(\ZOp)\times_{B B^c(\DOp)}\DOp\rightarrow B B^c(\ZOp)$ is identified with the product of the identity map on $\FreeOp^c(\Sigma\TOp)$
with the canonical quasi-isomorphism $\DOp\rightarrow B B^c(\TOp)$.
	
We filter both the domain and the target of this map $h$ by the total cohomological degree in $\TOp$.
This filtration is bounded.
The differential on the associated graded of the object $B B^c(\ZOp)\times_{B B^c(\ZOp)}\DOp$
reduces to the differential of the dg cooperad $\DOp$,
whereas the differential on the associated graded of $B B^c(\ZOp)$
reduces to the differential of the object $B B^c(\DOp)$.
We deduce from this inspection that the associated graded of the map $h$ is a quasi-isomorphism and, hence, so is the morphism $h$
by standard spectral sequences arguments.
This verification completes the proof of our lemma.
\end{proof}

\begin{rem}
Note that the above argument does not readily extend to the category $\dgZOpc$,
because in this context the filtration used in the proof of our lemma
is not bounded.
\end{rem}

We can now complete the proof of our description of the class of (acyclic) fibrations in the category of dg cooperads:

\begin{prop}\label{prop:f and Wf}
Every (acyclic) fibration of dg cooperads $f: \COp\rightarrow\DOp$ can be obtained as a retract of a pullback of an (acyclic) fibration
of the form $B B^c(p): B B^c(\ZOp)\rightarrow B B^c(\DOp)$,
where $p: \ZOp\rightarrow\DOp$ is as in the statement of Lemma~\ref{lemm:barcobarfactorization}.
\end{prop}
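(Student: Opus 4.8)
The plan is to assemble the preceding lemmas into the standard retract argument of model category theory; the substantive work having already been done, this proposition is a formal consequence. Let $f: \COp\rightarrow\DOp$ be an (acyclic) fibration of dg cooperads. First I would apply Lemma~\ref{lemm:barcobarfactorization} to factor $f = p i$ in the category of coaugmented dg symmetric sequences, with $i: \COp\rightarrow\ZOp$ an acyclic cofibration and $p: \ZOp\rightarrow\DOp$ a fibration, where $\ZOp$ carries the homotopy dg cooperad structure provided by that lemma so that $i$ and $p$ become morphisms of homotopy dg cooperads. I then take as candidate the pullback
\begin{equation*}
\QOp := B B^c(\ZOp)\times_{B B^c(\DOp)}\DOp,
\end{equation*}
equipped with the projection $q: \QOp\rightarrow\DOp$ obtained by pulling back $B B^c(p)$ along the unit morphism $\DOp\rightarrow B B^c(\DOp)$ of the cobar-bar adjunction.

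Next I would verify that $q$ is of the required type. Since $p$ is a fibration of dg symmetric sequences, Proposition~\ref{prop:barcobarfibrations} shows that $B B^c(p)$ is a fibration of dg cooperads, and its pullback $q$ is therefore a fibration as well. In the acyclic case, the morphisms $i$ and $f = p i$ are both weak equivalences, so $p$ is a weak equivalence by two-out-of-three; hence $p$ is an acyclic fibration of dg symmetric sequences, the second assertion of Proposition~\ref{prop:barcobarfibrations} makes $B B^c(p)$ an acyclic fibration, and its pullback $q$ is again acyclic. Thus $q$ is an (acyclic) fibration in either case.

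It then remains to realize $f$ as a retract of $q$. By the universal property of the pullback, the morphism $g: \COp\rightarrow\QOp$ of Lemma~\ref{lem:fib1} satisfies $q g = f$. Since that lemma tells us $g$ is an acyclic cofibration and $f$ is a fibration, the lifting axiom of the model category of dg cooperads furnishes a dotted lift $r$ in
\begin{equation*}
\xymatrix{
\COp\ar[r]^{\id}\ar[d]_{g} & \COp\ar[d]^{f} \\
\QOp\ar[r]_{q}\ar@{.>}[ur]^{r} & \DOp
}
\end{equation*}
so that $r g = \id_{\COp}$ and $f r = q$. These two identities assemble into the retract diagram
\begin{equation*}
\xymatrix{
\COp\ar[r]^{g}\ar[d]_{f} & \QOp\ar[r]^{r}\ar[d]^{q} & \COp\ar[d]^{f} \\
\DOp\ar@{=}[r] & \DOp\ar@{=}[r] & \DOp
}
\end{equation*}
whose horizontal composites are the respective identities, displaying $f$ as a retract of $q$, which is exactly the assertion of the proposition. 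I expect no genuine obstacle here: the only point requiring care is the acyclic bookkeeping of the second paragraph (the two-out-of-three step and the acyclicity clause of Proposition~\ref{prop:barcobarfibrations}), while the remainder is the formal retract argument built on Lemmas~\ref{lemm:barcobarfactorization} and~\ref{lem:fib1}.
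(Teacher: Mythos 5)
Your proof is correct and takes essentially the same route as the paper's: factor $f$ via Lemma~\ref{lemm:barcobarfactorization}, form the pullback of $B B^c(p)$ along the unit $\DOp\rightarrow B B^c(\DOp)$, invoke Lemma~\ref{lem:fib1} to get the acyclic cofibration $g$ with $q g = f$, and lift $f$ against $g$ to produce the retraction exhibiting $f$ as a retract of $q$. The only cosmetic difference is that you verify inside the proof (via Proposition~\ref{prop:barcobarfibrations} and two-out-of-three) that $q$ is an (acyclic) fibration, whereas the paper records this observation in the paragraph preceding the proposition.
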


\begin{proof}
The fibration $f$ fits into the following diagram (solid arrows), in which the right-hand square is a pullback square:
\begin{equation*}
\xymatrix{ \COp\ar[d]_{=}\ar@{>->}[]!R+<4pt,0pt>;[r]^-{\sim} & B B^c(\ZOp)\times_{B B^c(\DOp)}\DOp\ar[d]\ar[r]\ar@{.>}[dl] & B B^c(\ZOp)\ar[d] \\
\COp\ar@{->>}[r] & \DOp\ar[r] & B B^c(\DOp) }.
\end{equation*}
We use the result of Lemma~\ref{lem:fib1} to check that the upper left horizontal arrow of this diagram is an acyclic cofibration
and we use that our fibration $p: \COp\rightarrow\DOp$
has the right lifting property with respect to this acyclic cofibration to pick the dotted diagonal arrow in our diagram,
which exhibits our map $p: \COp\rightarrow\DOp$ as a retract of the middle vertical arrow $B B^c(\ZOp)\times_{B B^c(\DOp)}\DOp\rightarrow\DOp$.
\end{proof}

\begin{rem}
The result of Proposition~\ref{prop:f and Wf} implies that every fibration in the category of dg cooperads is surjective,
because one can check that this is the case for the map $B B^c(\ZOp)\times_{B B^c(\DOp)}\DOp\rightarrow\DOp$,
associated to a fibration of dg symmetric sequences $p: \ZOp\twoheadrightarrow\DOp$,
which we produce in our construction.
\end{rem}

\subsection{The bar-cobar construction and the proof of the pullback-corner property for function objects}
We carry out the verification of the pullback-corner property in the category of dg cooperads in this subsection:

\begin{claim}\label{claim:pullbackcorner}
The pullback-corner morphism $(p_*,i^*): \COp^L\rightarrow\DOp^L\times_{\DOp^K}\COp^K$
associated to a cofibration of simplicial sets $i: K\rightarrowtail L$
and to a fibration of dg cooperads $p: \COp\twoheadrightarrow\DOp$
is a fibration of dg cooperads, which is also acyclic if $i$ or $p$ is so.
\end{claim}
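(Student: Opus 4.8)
The plan is to reduce the general pullback-corner property to the special case of (acyclic) fibrations of the form $BB^c(p)$ by exploiting the structural results established in the previous subsection. Concretely, Proposition~\ref{prop:f and Wf} tells us that every (acyclic) fibration $p\colon\COp\twoheadrightarrow\DOp$ of dg cooperads is a retract of a pullback of a fibration $BB^c(q)\colon BB^c(\ZOp)\to BB^c(\DOp)$, where $q\colon\ZOp\twoheadrightarrow\DOp$ is a (acyclic) fibration of dg symmetric sequences. So the strategy has two movements: first verify that the class of fibrations of dg cooperads satisfying the pullback-corner property against a fixed cofibration $i\colon K\rightarrowtail L$ is closed under the operations of taking pullbacks and retracts; second, establish the pullback-corner property directly for the generating fibrations $BB^c(q)$. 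Since the function-object bifunctor $(\BOp,K)\mapsto\BOp^K$ preserves finite limits in the $\BOp$-variable (by the adjunction relation of Proposition~\ref{prop:adjunctionrelation} and the remark following it), the operation $\BOp\mapsto\BOp^L\times_{\BOp^K}\DOp^K$ commutes with the pullbacks and retracts that express a general fibration in terms of the $BB^c(q)$, so the closure properties follow from the general model-category fact that fibrations and acyclic fibrations are stable under pullback and retract.

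First I would make the closure step precise. Given the pullback square exhibiting $\COp=BB^c(\ZOp)\times_{BB^c(\DOp)}\DOp$ (and a subsequent retract), I apply $(-)^L$ and $(-)^K$ and use that these functors send the pullback square to a pullback square; a diagram chase then identifies the pullback-corner morphism $(p_*,i^*)\colon\COp^L\to\DOp^L\times_{\DOp^K}\COp^K$ as itself a pullback (respectively retract) of the pullback-corner morphism associated to $BB^c(q)$ and $i$. Because a pullback or retract of an (acyclic) fibration is again an (acyclic) fibration, this reduces Claim~\ref{claim:pullbackcorner} to the case $p=BB^c(q)$. I would spell out the compatibility of $(-)^L$ with finite limits carefully here, invoking Remark~\ref{rem:finite limits} and the adjunction~\eqref{equ:prop adjunctionrelation}.

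The core of the argument, and the main obstacle, is the verification for the generating fibrations $BB^c(q)\colon BB^c(\ZOp)\to BB^c(\DOp)$. Here I would compute the function objects explicitly using the cofree description. Since $BB^c(\COp)=(\FreeOp^c(\Sigma\bar B^c(\COp)),\partial)$ is a quasi-cofree cooperad, its function object $BB^c(\COp)^{\Delta^n}$ is built from the cofree formula $\FreeOp^c(\MOp)^{\Delta^\bullet}=\FreeOp^c(\MOp\otimes\Omega^*(\Delta^\bullet))$ of Proposition~\ref{prop:adjunctionrelation} (twisted by the bar-cobar differential), so the pullback-corner morphism $(BB^c(q))^L\to(BB^c(q))^K\times\cdots$ is induced by a map of generating symmetric sequences of the form $\Sigma\bar B^c(\ZOp)\otimes\Omega^*(L)\to(\text{pullback of }\Sigma\bar B^c(\cdots)\otimes\Omega^*(K))$. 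The key point is that this map fits the hypotheses of the fibrancy criterion Proposition~\ref{prop:fibrancy}: the function objects inherit descending filtrations from the bar-cobar weight filtration $F^s BB^c(\COp)$, the twisting maps behave additively with respect to them, and the generating map is a filtration-preserving surjection precisely when $q\otimes\Omega^*$ is suitably surjective. The surjectivity and acyclicity of the pullback-corner map at the level of generators then reduces to the analogous statement for the Sullivan dg algebra, namely that $\Omega^*(i)\colon\Omega^*(L)\to\Omega^*(K)$ is an acyclic cofibration of cochain complexes when $i\colon K\rightarrowtail L$ is a cofibration of simplicial sets, combined with the surjectivity/acyclicity of $q$; this is where the pullback-corner property of $\Omega^*$ over cochain complexes feeds into Proposition~\ref{prop:fibrancy} to yield a fibration of dg cooperads.

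The hard part will be disentangling the interaction between the twisting coderivation of the bar-cobar construction and the tensor factor $\Omega^*(\Delta^\bullet)$ in the function-object formula, so that Proposition~\ref{prop:fibrancy} genuinely applies: one must check that the relevant generating map is a surjection of \emph{filtered} symmetric sequences whose twisting maps vanish on the cogenerators, and that acyclicity of the pullback-corner morphism follows. I expect to handle this by a spectral-sequence argument with respect to the weight filtration, reducing the acyclicity to the underlying cochain-level pullback-corner property for $\Omega^*$ (valid because $\Omega^*(i)$ is an acyclic cofibration and surjections split over a field of characteristic zero), exactly as the analogous filtration argument was used in Lemma~\ref{lem:fib1}. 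Once the generating case is settled, the closure step completes the proof of the claim for all (acyclic) fibrations $p$.
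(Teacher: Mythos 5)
Your overall architecture is exactly the paper's: use Proposition~\ref{prop:f and Wf} to write every (acyclic) fibration as a retract of a pullback of a morphism $B B^c(q)$, prove stability of the pullback-corner property under pullbacks and retracts via finite-limit preservation of $(-)^K$ (the paper's Lemmas~\ref{lem:pcp_pullback_stable} and~\ref{lem:pcp_retract_stable}), and settle the generating case through the quasi-cofree description of function objects together with the fibrancy criterion of Proposition~\ref{prop:fibrancy}. However, the core step contains a genuine error: you assert that $\Omega^*(i)\colon\Omega^*(L)\to\Omega^*(K)$ is an \emph{acyclic cofibration} of cochain complexes whenever $i\colon K\rightarrowtail L$ is a cofibration of simplicial sets. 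This is false: for $i\colon\partial\Delta^n\rightarrowtail\Delta^n$ the map $\Omega^*(\Delta^n)\to\Omega^*(\partial\Delta^n)$ is a surjection onto a dg algebra whose cohomology is $H^*(S^{n-1})$, hence it is not a quasi-isomorphism (and, being surjective with a kernel in positive degrees, it is not a cofibration either). The correct input, which is the one the paper uses, is that $\Omega^*(i)$ is a surjection, i.e.\ a \emph{fibration}, which is moreover a quasi-isomorphism exactly when $i$ is a weak equivalence; feeding this into the pullback-corner property of the tensor bifunctor on cochain complexes, together with the fact that $B^c(-)$ preserves (acyclic) fibrations of dg symmetric sequences, makes the corner map on cogenerators an arity-wise fibration that is acyclic when $i$ or $q$ is so. Note that if your stated input were true, your argument would show that \emph{every} pullback-corner morphism is acyclic, contradicting the statement you are proving.

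There is a second gap: your identification of $(B B^c(q))^L\to B B^c(\DOp)^L\times_{B B^c(\DOp)^K}B B^c(\ZOp)^K$ with a morphism of quasi-cofree cooperads induced on cogenerators rests on the formula $\BOp^K\simeq(\FreeOp^c(\MOp\otimes\Omega^*(K)),\partial)$, which is valid only when $K$ has finitely many non-degenerate simplices: the cotensor $(-)^K$ commutes with finite limits only, and $\MOp\otimes(-)$ does not commute with the infinite limit defining $\Omega^*(K)$ for a general simplicial set $K$. The paper therefore first reduces, by the standard adjunction/lifting-property argument, to the generating cofibrations $\partial\Delta^n\rightarrowtail\Delta^n$ and generating acyclic cofibrations $\Lambda_k^n\rightarrowtail\Delta^n$ (this is the opening move of Lemma~\ref{lem:pcp_Wf}), and only then computes the function objects via Proposition~\ref{prop:barfunctionobject} and a limit-interchange argument. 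Your proposal omits this reduction and applies the explicit formula to an arbitrary cofibration $K\rightarrowtail L$, where it does not hold; this step must be added before Proposition~\ref{prop:fibrancy} can be invoked.
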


We prove in a first step that the pullback-corner property is valid for the (acyclic) fibrations of dg cooperads
of the form $B B^c(p): B B^c(\ZOp)\rightarrow B B^c(\DOp)$,
where we consider a morphism of homotopy dg cooperads $p: \ZOp\rightarrow\DOp$
that defines an (acyclic) fibration in the category of dg symmetric sequences.
We rely on the following computation of function objects for the bar construction of augmented dg operads:

\begin{prop}\label{prop:barfunctionobject}
For the bar construction of an augmented dg operad $\BOp = B(\ROp)$, we have an identity
\begin{equation*}
B(\ROp)^{\Delta^{\bullet}} = B(\ROp\bar{\otimes}\Omega^*(\Delta^{\bullet})),
\end{equation*}
where we consider the dg operad such that
\begin{equation*}
(\ROp\bar{\otimes}\Omega^*(\Delta^{\bullet}))(r) = \begin{cases} \kk\oplus\bar{\ROp}(1)\otimes\Omega^*(\Delta^{\bullet}), & \text{for $r=1$}, \\
\ROp(r)\otimes\Omega^*(\Delta^{\bullet}), & \text{for $r>1$}. \end{cases}
\end{equation*}
\end{prop}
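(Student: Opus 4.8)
The plan is to verify the defining property of the function object directly, rather than to compute the reflexive equalizer of Proposition~\ref{prop:adjunctionrelation}, and then to conclude by uniqueness. Since we work throughout in the category of dg cooperads, it suffices to produce, for every dg cooperad $\COp$, a natural isomorphism of simplicial sets
\begin{equation*}
\Map(\COp,B(\ROp))\simeq\Mor_{\dgOpc}(\COp,B(\ROp\bar{\otimes}\Omega^*(\Delta^{\bullet}))).
\end{equation*}
The function object $B(\ROp)^{\Delta^{\bullet}}$ of Proposition~\ref{prop:adjunctionrelation} represents, in each simplicial degree, the same functor $\COp\mapsto\Map(\COp,B(\ROp))_n$; hence the Yoneda lemma will force the identity $B(\ROp)^{\Delta^n}\simeq B(\ROp\bar{\otimes}\Omega^*(\Delta^n))$, compatibly with the simplicial operators.

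First I would unfold the left-hand side. Writing $R=\Omega^*(\Delta^n)$, its degree-$n$ part is $\Mor_{\dgOp^c_R}(\COp\otimes R,B(\ROp)\otimes R)$. The key preliminary observation is that the bar construction commutes with scalar extension, that is $B(\ROp)\otimes R\simeq B_R(\ROp\otimes R)$ as dg cooperads over $R$, where $B_R$ denotes the bar construction over the ground dg algebra $R$: this follows from the cofree cooperad identity $\FreeOp^c_{\kk}(\Sigma\bar{\ROp})\otimes R=\FreeOp^c_R(\Sigma\bar{\ROp}\otimes R)$ already used in the proof of Proposition~\ref{prop:adjunctionrelation}, together with the fact that the twisting coderivation, being determined by the composition products of $\ROp$, extends $R$-linearly. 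Applying the bar--cobar adjunction over $R$ (which is valid over any unital commutative dg ground ring) and the companion scalar-extension identity $B^c_R(\COp\otimes R)=B^c(\COp)\otimes R$ for the cobar construction, the left-hand side becomes $\Mor_{\dgOp_R}(B^c(\COp)\otimes R,\ROp\otimes R)$ in augmented $R$-operads. Since $B^c(\COp)\otimes R$ is quasi-free, such a morphism is determined by its $R$-linear restriction to the generators $\Sigma^{-1}\bar{\COp}\otimes R$, hence by a degree-zero $\kk$-linear map $\Sigma^{-1}\bar{\COp}\to\bar{\ROp}\otimes R$ into the augmentation ideal subject to the Maurer--Cartan equation coming from the cobar differential.

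Next I would unfold the right-hand side, where the bar--cobar adjunction over $\kk$ gives at once $\Mor_{\dgOpc}(\COp,B(\ROp\bar{\otimes}R))\simeq\Mor_{\dgOp}(B^c(\COp),\ROp\bar{\otimes}R)$ in augmented dg operads, and a morphism out of the quasi-free operad $B^c(\COp)$ is again determined by a degree-zero map $\Sigma^{-1}\bar{\COp}\to\overline{\ROp\bar{\otimes}R}=\bar{\ROp}\otimes R$ satisfying the same type of Maurer--Cartan equation. The heart of the matter is that the two sets of Maurer--Cartan data coincide: in both cases the differential on the target augmentation ideal is the total differential of $\bar{\ROp}\otimes R$ (combining the internal differential of $\ROp$ with that of $R$), and the quadratic term of the Maurer--Cartan equation uses only the partial composition products of elements of $\bar{\ROp}\otimes R$, which are literally the same for the two targets. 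As all of these identifications are natural in $\COp$ and compatible with the cosimplicial structure of $\Omega^*(\Delta^{\bullet})$, they assemble into the required isomorphism of simplicial sets.

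I expect the only genuine subtlety to be the bookkeeping in arity one, which is exactly what forces the reduced tensor product $\bar{\otimes}$, with the operadic unit kept equal to $\kk$ rather than to $\Omega^*(\Delta^{\bullet})$, to appear in the statement. Restricting the $R$-operad $\ROp\otimes R$ to a $\kk$-operad adds the extra arity-one elements along the unit line $1\otimes R$, whereas $\ROp\bar{\otimes}R$ discards them; I must check that these never intervene, neither in a twisting morphism nor in the products entering its Maurer--Cartan equation, both of which land in the augmentation ideal $\bar{\ROp}\otimes R$, where the two operad structures agree --- using in particular that $\bar{\ROp}(1)\circ_1\bar{\ROp}(1)\subseteq\bar{\ROp}(1)$ since the augmentation is multiplicative. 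Carrying out this matching carefully, rather than the formal adjunction manipulations, is the crux of the proof.
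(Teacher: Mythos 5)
Your proposal is correct and follows essentially the same route as the paper's proof: both rest on the scalar-extension identities $B(\ROp)\otimes\Omega^*(\Delta^n)=B_{\Omega^*(\Delta^n)}(\ROp\otimes\Omega^*(\Delta^n))$ and $B^c_{\Omega^*(\Delta^n)}(\COp\otimes\Omega^*(\Delta^n))=B^c(\COp)\otimes\Omega^*(\Delta^n)$, the bar--cobar adjunction applied once over $\Omega^*(\Delta^n)$ and once over $\kk$, the careful separation of the unit summand in arity one (which is exactly where $\bar{\otimes}$ enters), and a final appeal to representability. The only difference is presentational: where the paper invokes an abstract restriction/extension-of-scalars relation $\Mor_{\dgOp_R^+}(B^c(\COp)\otimes R,\ROp\otimes R)\simeq\Mor_{\dgOp^+}(B^c(\COp),\ROp\bar{\otimes}R)$, you make the same identification explicit by unwinding both morphism sets into twisting-morphism (Maurer--Cartan) data on the cobar generators.
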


\begin{proof}
We go back to the definition of the object $\BOp^{\Delta^{\bullet}}$ by the adjunction relation $\Map(\AOp,\BOp) = \Mor(\AOp,B(\ROp)^{\Delta^{\bullet}})$.
We have an identity
\begin{equation*}
B(\ROp)\otimes\Omega^*(\Delta^{\bullet}) = B_{\Omega^*(\Delta^{\bullet})}(\ROp\otimes\Omega^*(\Delta^{\bullet})),
\end{equation*}
where on the right-hand side we consider the bar construction of the object such that $(\ROp\otimes\Omega^*(\Delta^{\bullet}))(r) = \ROp(r)\otimes\Omega^*(\Delta^{\bullet})$
in the category of dg operads over $\Omega^*(\Delta^{\bullet})$.

In what follows, we use the notation $\dgOp_R^+$ for the category of augmented dg operads over a dg commutative algebra $R$,
which we take to be either the dg algebra $R = \Omega^*(\Delta^{\bullet})$ (as above) or the ground ring $R = \kk$,
in which case we set $\dgOp^+ = \dgOp_{\kk}^+$ for short.
The bar duality implies that we have an adjunction relation:
\begin{multline*}
\Mor_{\dgOp^c_{\Omega^*(\Delta^{\bullet})}}(\AOp\otimes\Omega^*(\Delta^{\bullet}),B(\ROp)\otimes\Omega^*(\Delta^{\bullet}))\\
\begin{aligned}
& \simeq\Mor_{\dgOp^c_{\Omega^*(\Delta^{\bullet})}}(\AOp\otimes\Omega^*(\Delta^{\bullet}),B_{\Omega^*(\Delta^{\bullet})}(\ROp\otimes\Omega^*(\Delta^{\bullet}))) \\
& \simeq\Mor_{\dgOp_{\Omega^*(\Delta^{\bullet})}^+}(B^c_{\Omega^*(\Delta^{\bullet})}(\AOp\otimes\Omega^*(\Delta^{\bullet})),
\ROp\otimes\Omega^*(\Delta^{\bullet})),
\end{aligned}
\end{multline*}
where $B^c_{\Omega^*(\Delta^{\bullet})}(-)$ denotes the cobar construction on the category of dg cooperads over $\Omega^*(\Delta^{\bullet})$
and we consider the set of morphisms in the category of augmented dg operads over $\Omega^*(\Delta^{\bullet})$.
We still have an identity $B^c_{\Omega^*(\Delta^{\bullet})}(\AOp\otimes\Omega^*(\Delta^{\bullet})) = B^c(\AOp)\otimes\Omega^*(\Delta^{\bullet})$
and, by an immediate scalar extension relation, we have:
\begin{equation*}
\Mor_{\dgOp_{\Omega^*(\Delta^{\bullet})}^+}(B^c(\AOp)\otimes\Omega^*(\Delta^{\bullet}),\ROp\otimes\Omega^*(\Delta^{\bullet}))
\simeq\Mor_{\dgOp^+}(B^c(\AOp),\ROp\bar{\otimes}\Omega^*(\Delta^{\bullet})),
\end{equation*}
where we consider the augmented operad of the proposition $\ROp\bar{\otimes}\Omega^*(\Delta^{\bullet})$.
(We just have to take care that the summand of the operadic unit is apart when we use the extension of scalars.)

By applying the bar duality yet again, we eventually obtain:
\begin{equation*}
\Mor_{\dgOp^+}(B^c(\AOp),\ROp\bar{\otimes}\Omega^*(\Delta^{\bullet}))
\simeq\Mor_{\dgOpc}(\AOp,B(\ROp\bar{\otimes}\Omega^*(\Delta^{\bullet}))),
\end{equation*}
and the conclusion of the proposition follows.
\end{proof}

\begin{rem}
The result of this Proposition implies that the object $B(\ROp)^{\Delta^{\bullet}}$
is given by a quasi-cofree dg cooperad such that $B(\ROp)^{\Delta^{\bullet}} = (\FreeOp^c(\Sigma\bar{\ROp}\otimes\Omega^*(\Delta^{\bullet})),\partial)$.
In fact, this statement is more general.
To be more precise, for a quasi-cofree dg cooperad $\BOp = (\FreeOp^c(\MOp),\partial)$ with a twisting coderivation determined by a map $\theta: \FreeOp^c(\MOp)\rightarrow\MOp$,
we have an identity $\BOp^{\Delta^{\bullet}} = (\FreeOp^c(\MOp\otimes\Omega^*(\Delta^{\bullet}),\partial)$,
where the twisting coderivation is determined by a map $\theta': \FreeOp^c(\MOp\otimes\Omega^*(\Delta^{\bullet}))\rightarrow\MOp\otimes\Omega^*(\Delta^{\bullet})$
defined by using the map $\theta$ and the products of the dg algebra $\Omega^*(\Delta^{\bullet})$.
\end{rem}

The construction of the previous proposition is actually used in~\cite[\S II.9.4]{OperadHomotopyBook} (without the correspondence with our mapping space)
in order to define simplicial frames in the category of dg cooperads.
We can now establish the following statement.

\begin{lemm}\label{lem:pcp_Wf}
The pullback-corner property of Claim~\ref{claim:pullbackcorner} is satisfied for the morphisms of dg cooperads
of the form $B B^c(p): B B^c(\ZOp)\rightarrow B B^c(\DOp)$,
where $p: \ZOp\rightarrow\DOp$ is a morphism of homotopy dg cooperads that defines a fibration
or an acyclic fibration in the category of dg symmetric sequences.
\end{lemm}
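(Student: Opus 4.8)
The goal is to verify the pullback-corner property for the specific class of fibrations $BB^c(p)\colon BB^c(\ZOp)\to BB^c(\DOp)$, where $p\colon\ZOp\to\DOp$ is a (possibly acyclic) fibration in the category of dg symmetric sequences. The plan is to reduce the entire pullback-corner condition for function objects to a pullback-corner condition at the level of the \emph{generating symmetric sequences}, using the explicit description of function objects on bar constructions afforded by Proposition~\ref{prop:barfunctionobject} and the remark following it. The decisive structural input is that for a quasi-cofree dg cooperad $\BOp=(\FreeOp^c(\MOp),\partial)$ one has $\BOp^{\Delta^\bullet}=(\FreeOp^c(\MOp\otimes\Omega^*(\Delta^\bullet)),\partial')$, so that taking the function object $(-)^K$ on a bar construction amounts, at the level of generators, to tensoring the generating sequence with the dg cochains $\Omega^*(K)$ of the simplicial set. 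This will let me express the relevant pullback-corner morphism as $\FreeOp^c$ applied to a pullback-corner morphism of symmetric sequences, at which point Proposition~\ref{prop:fibrancy} converts surjectivity (respectively, acyclicity) of the latter into the fibration (respectively, acyclic fibration) property of the former.

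\textbf{The main steps.} First I would fix simplicial sets and work with the simplicial function objects $(-)^{\Delta^\bullet}$, recovering $(-)^K$ for general $K$ by the reflexive-equalizer formula of Proposition~\ref{prop:adjunctionrelation}; since $\FreeOp^c$ and the relevant limits are compatible, it suffices to treat the corner morphism on the $\Delta^\bullet$-level and then pass to $K,L$ via the functor $\Omega^*(-)$ sending simplicial sets to dg commutative algebras. Second, using Proposition~\ref{prop:barfunctionobject} and its remark, I would identify, for the bar construction $\BOp=BB^c(\ZOp)=(\FreeOp^c(\MOp),\partial)$ with $\MOp=\Sigma\bar{B}^c(\ZOp)$, the function object $\BOp^K$ with the quasi-cofree dg cooperad on the generating sequence $\MOp\otimes\Omega^*(K)$. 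Third, I would compute the pullback-corner target $\DOp^L\times_{\DOp^K}\COp^K$—here written in terms of $BB^c(\DOp)$ and $BB^c(\ZOp)$—as a quasi-cofree cooperad whose generating sequence is the pullback corner
\begin{equation*}
\MOp_{\ZOp}\otimes\Omega^*(L)\ \longrightarrow\ \bigl(\MOp_{\DOp}\otimes\Omega^*(L)\bigr)\times_{\MOp_{\DOp}\otimes\Omega^*(K)}\bigl(\MOp_{\ZOp}\otimes\Omega^*(K)\bigr),
\end{equation*}
using that $\FreeOp^c$ preserves the finite limits involved. The key point is that this last map is itself $\FreeOp^c$ of a pullback-corner morphism built from the cofibration $\Omega^*(L)\to\Omega^*(K)$ of cochain algebras and the fibration $p$ of symmetric sequences, tensored arity-wise.

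\textbf{The crux and the expected obstacle.} The heart of the argument is the observation that the generating-level corner map above is surjective (respectively, a surjective quasi-isomorphism) precisely because $i\colon K\rightarrowtail L$ is a cofibration of simplicial sets—hence $\Omega^*(i)$ is a surjection of cochain complexes, indeed an acyclic surjection if $i$ is a weak equivalence—and $p$ is an (acyclic) fibration of symmetric sequences. One then invokes the pullback-corner axiom already available at the level of cochain complexes (the monoidal model structure on $\dgVect$) to see that tensoring these two corner data produces a surjection that is acyclic whenever either factor is. Applying Proposition~\ref{prop:fibrancy}—whose filtration and additivity hypotheses hold for the bar construction, as noted just before its statement—then yields that $\FreeOp^c$ of this surjection is a fibration of dg cooperads, acyclic in the appropriate cases. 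The step I expect to demand the most care is the precise bookkeeping in the third step: one must check that the description of $\BOp^{\Delta^\bullet}$ from the remark after Proposition~\ref{prop:barfunctionobject} is genuinely natural in $\BOp$ and compatible with the fiber product defining the corner object, so that the fiber product of function objects really is the quasi-cofree cooperad on the generating-sequence fiber product rather than merely weakly equivalent to it; and one must verify that the twisting coderivations match under these identifications, so that the corner morphism is filtration-preserving and intertwines the twisting maps exactly as Proposition~\ref{prop:fibrancy} requires. Once this strict naturality is in hand, the acyclicity claim follows from the monoidal pullback-corner property in $\dgVect$ together with the fact that $BB^c$ and the function-object construction preserve the relevant quasi-isomorphisms.
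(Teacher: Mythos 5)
Your core strategy coincides with the paper's: identify the pullback-corner morphism with $\FreeOp^c$ applied to a corner morphism of generating dg symmetric sequences built from $p_*\colon\Sigma\bar{B}^c(\ZOp)\rightarrow\Sigma\bar{B}^c(\DOp)$ and $i^*\colon\Omega^*(L)\rightarrow\Omega^*(K)$, check that this generating-level map is an (acyclic) fibration of dg symmetric sequences using the pullback-corner property of $-\otimes\Omega^*(-)$ in cochain complexes together with the fact that $B^c(-)$ preserves (acyclic) fibrations, and then invoke Proposition~\ref{prop:fibrancy} with the filtration inherited from the cobar construction. However, there is a genuine gap in your first step, where you assert that it suffices to treat the corner morphism at the $\Delta^{\bullet}$-level and then pass to general $K,L$ via $\Omega^*(-)$ because ``$\FreeOp^c$ and the relevant limits are compatible.''

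The identification on which everything rests, namely $\BOp^K = (\FreeOp^c(\MOp\otimes\Omega^*(K)),\partial)$ for $\BOp = B B^c(\ZOp)$ (equivalently $B(\ROp)^K = B(\ROp\bar{\otimes}\Omega^*(K))$ in the form of Proposition~\ref{prop:barfunctionobject}), is only valid when $K$ has finitely many non-degenerate simplices. For a general simplicial set, $\BOp^K$ is defined by an (infinite) limit of the objects $\BOp^{\Delta^n}$ over the simplices of $K$, and since $\MOp\otimes(-)$ does not commute with infinite limits, this limit is \emph{not} $\FreeOp^c(\MOp\otimes\Omega^*(K))$; this is precisely the caveat recorded in the remark following Construction~\ref{subsubsection:construction}, which says the function objects are compatible with finite limits only. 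So for an arbitrary cofibration $i\colon K\rightarrowtail L$ (say with $L$ infinite) your identification of the corner morphism as $\FreeOp^c$ of a generating-level corner map fails, and Proposition~\ref{prop:fibrancy} cannot be applied. The missing idea is the preliminary reduction to the generating cofibrations $i\colon\partial\Delta^n\rightarrowtail\Delta^n$ and generating acyclic cofibrations $i\colon\Lambda_k^n\rightarrowtail\Delta^n$: since fibrations of dg cooperads are characterized by the right lifting property against acyclic cofibrations, the adjunction $\Mor_{\sSet}(K,\Map(\AOp,\BOp))\simeq\Mor_{\dgOpc}(\AOp,\BOp^K)$ of Proposition~\ref{prop:adjunctionrelation} transposes the relevant lifting problems to simplicial sets, where the class of cofibrations $i$ satisfying the corner condition is saturated and hence determined by the generating ones. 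Once $K$ and $L$ are finite, the explicit quasi-cofree description holds (the paper checks it for $\partial\Delta^n$ and $\Lambda_k^n$ by a limit-interchange argument, using that the bar construction preserves limits), and the rest of your argument goes through exactly as in the paper; without this reduction, it does not.
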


\begin{proof}
We use that we can reduce the proof of the pullback-corner property to the case of generating cofibrations $i: \partial\Delta^n\rightarrow\Delta^n$
and of generating acyclic cofibrations $i: \Lambda_k^n\rightarrow\Delta^n$
in the category of simplicial sets.
We assume that $i: K\rightarrow L$ is such a morphism.
In the case where $K = \partial\Delta^n,\Lambda_k^n$, we can extend the result of Proposition~\ref{prop:barfunctionobject}
to establish that the function object $B(\ROp)^K$
associated to the bar construction of an augmented dg operad $\BOp = B(\ROp)$
has the form
\begin{equation*}
B(\ROp)^K = B(\ROp\bar{\otimes}\Omega^*(K)),
\end{equation*}
where we again set:
\begin{equation*}
(\ROp\bar{\otimes}\Omega^*(K))(r) = \begin{cases} \kk\oplus\bar{\ROp}(1)\otimes\Omega^*(\Delta^{\bullet}), & \text{for $r=1$}, \\
\ROp(r)\otimes\Omega^*(K), & \text{for $r>1$}. \end{cases}
\end{equation*}
In the case $K = \partial\Delta^n$, this observation follows from the relations:
\begin{align*}
& \partial\Delta^n = \colim_{\substack{u: \underline{k}\rightarrow\underline{n}\\k<n}}\Delta^k,
\quad\Omega^*(\partial\Delta^n) = \lim_{\substack{u: \underline{k}\rightarrow\underline{n}\\k<n}}\Omega^*(\Delta^k),
\intertext{and from the limit interchange formula:}
& B(\ROp)^{\partial\Delta^n} = \lim_{\substack{u: \underline{k}\rightarrow\underline{n}\\k<n}}B(\ROp)^{\Delta^k}
\simeq\lim_{\substack{u: \underline{k}\rightarrow\underline{n}\\k<n}} B(\ROp\bar{\otimes}\Omega^*(\Delta^k))
\simeq B(\ROp\bar{\otimes}\lim_{\substack{u: \underline{k}\rightarrow\underline{n}\\k<n}}\Omega^*(\Delta^k))
\end{align*}
(see the proof of~\cite[Proposition II.9.4.3]{OperadHomotopyBook}).
We use a similar argument line in the case $K = \Lambda_k^n$.
(The identity $B(\ROp)^K = B(\ROp\bar{\otimes}\Omega^*(K))$ actually holds as soon as our simplicial set $K$ has finitely many non degenerate simplices.)

For a morphism of dg operads $\phi: \ROp\rightarrow\SOp$,
we also have the identity
\begin{equation*}
B(\ROp)^L\times_{B(\SOp)^K} B(\SOp)^K = B(\ROp\bar{\otimes}\Omega^*(L)\times_{\ROp\bar{\otimes}\Omega^*(K)}\SOp\bar{\otimes}\Omega^*(K)),
\end{equation*}
because the bar construction preserves limits by adjunction.
Hence, the morphism of the lemma is identified with a morphism of quasi-cofree cooperads of the form
\begin{equation}\tag{*}\label{eqn:barcobarpullbackcornermorphism}
\FreeOp^c(p_*,i^*): (\FreeOp^c(\MOp\otimes\Omega^*(L)),\partial)
\rightarrow(\FreeOp^c(\NOp\otimes\Omega^*(L)\times_{\NOp\otimes\Omega^*(K)}\MOp\otimes\Omega^*(K)),\partial),
\end{equation}
where we set $\MOp = \Sigma\bar{B}^c(\ZOp)$ and $\NOp = \Sigma\bar{B}^c(\DOp)$ for short
and we consider the pullback-corner of the morphisms $p_*: \Sigma\bar{B}^c(\ZOp)\rightarrow\Sigma\bar{B}^c(\DOp)$ and $i^*: \Omega^*(L)\rightarrow\Omega^*(K)$
in the category of dg symmetric sequences.
The functor $B^c(-)$ trivially preserves fibrations in the category of symmetric sequences,
as well as the acyclic fibrations
since the cobar construction also preserves the quasi-isomorphisms
of non-negatively graded dg cooperads (see Section~\ref{sec:barduality}).
This result implies that the pullback-corner morphism $(p_*,i^*)$ is a fibration of cochain complexes arity-wise,
which is also acyclic as soon as $p$ or $i$ is so,
because the tensor product with the dg commutative algebras $\Omega^*(K)$
define a bifunctor that fulfills the pullback-corner property in the category of cochain complexes (adapt the observations
of~\cite[\S II.7.1]{OperadHomotopyBook}).

Both dg symmetric sequences $\SOp = \MOp\otimes\Omega^*(L)$ and $\TOp = \NOp\otimes\Omega^*(L)\times_{\NOp\otimes\Omega^*(K)}\MOp\otimes\Omega^*(K)$
inherit an appropriate filtration from the cobar construction, so that we can apply the result of Proposition~\ref{prop:fibrancy}
to conclude that our morphism of quasi-cofree cooperads~\eqref{eqn:barcobarpullbackcornermorphism}
is a fibration of dg cooperads.
Then we just use that the bar construction preserves the quasi-isomorphisms to obtain that this fibration is also acyclic
whenever $p$ or $i$ is so.
\end{proof}

We then have the following stability results.

\begin{lemm}\label{lem:pcp_pullback_stable}
If the pullback-corner property of Claim~\ref{claim:pullbackcorner} holds for a given (acyclic) fibration of dg cooperads $p: \COp\twoheadrightarrow\DOp$,
then this result is also valid for the (acyclic) fibration $q: \COp\times_{\DOp}\BOp\rightarrow\BOp$
which we obtain by pulling back our morphism $p$
along a morphism of dg cooperads $h: \BOp\rightarrow\DOp$.
\end{lemm}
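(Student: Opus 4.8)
The plan is to recognize the pullback-corner morphism attached to $q$ and $i$ as a base change of the pullback-corner morphism attached to $p$ and $i$, and then to appeal to the stability of (acyclic) fibrations under pullback in the model category of dg cooperads. Write $\EOp := \COp\times_{\DOp}\BOp$, so that $q: \EOp\to\BOp$ is the projection onto the second factor and the projection onto $\COp$ realizes $p$ pulled back along $h$.

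First I would record that the function object functor $(-)^K$ preserves finite limits in the cooperad variable (the remark following Proposition~\ref{prop:adjunctionrelation}) and that a fibered product is a finite limit. Applied to the defining pullback of $\EOp$, this yields natural isomorphisms $\EOp^K\simeq\COp^K\times_{\DOp^K}\BOp^K$ and $\EOp^L\simeq\COp^L\times_{\DOp^L}\BOp^L$, compatible with the induced maps $p_*$, $h_*$ and $i^*$. Consequently the target of the pullback-corner morphism for $q$ becomes
\[
\BOp^L\times_{\BOp^K}\EOp^K \simeq \BOp^L\times_{\BOp^K}\bigl(\COp^K\times_{\DOp^K}\BOp^K\bigr),
\]
whereas the target for $p$ is $\DOp^L\times_{\DOp^K}\COp^K$.

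The key step --- and the one I expect to require the most care --- is to produce a comparison map $\gamma$ between these two targets and to check that the pullback-corner morphism for $q$ is precisely the base change of the pullback-corner morphism $(p_*,i^*): \COp^L\to\DOp^L\times_{\DOp^K}\COp^K$ along it. I would define $\gamma: \BOp^L\times_{\BOp^K}\EOp^K\to\DOp^L\times_{\DOp^K}\COp^K$ by the two components given by $h_*$ composed with the projection to $\BOp^L$ and by the projection of $\EOp^K$ to $\COp^K$; the naturality relation $i^*h_*=h_*i^*$, together with the defining relation of the fibered product $\EOp^K$, shows that these two components agree over $\DOp^K$, so that $\gamma$ is well defined. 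A direct inspection of the universal properties of the iterated fibered products then identifies the pullback $\COp^L\times_{\DOp^L\times_{\DOp^K}\COp^K}\bigl(\BOp^L\times_{\BOp^K}\EOp^K\bigr)$ with $\COp^L\times_{\DOp^L}\BOp^L\simeq\EOp^L$, in such a way that the resulting projection onto $\BOp^L\times_{\BOp^K}\EOp^K$ coincides with the pullback-corner morphism $(q_*,i^*)$. Concretely, the $\COp^K$-coordinate of a point of this pullback is forced to equal $i^*$ of its $\COp^L$-coordinate, and eliminating it leaves exactly the data of a point of $\EOp^L$ together with the map $(q_*,i^*)$.

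With this identification in hand, the conclusion follows formally. In the fibration version, $(p_*,i^*)$ is a fibration by the pullback-corner property of Claim~\ref{claim:pullbackcorner} assumed for $p$, hence so is its base change $(q_*,i^*)$; and when $i$ is an acyclic cofibration the same property makes $(p_*,i^*)$ acyclic, so $(q_*,i^*)$ is an acyclic fibration as a base change of one. In the acyclic-fibration version, $p$ is an acyclic fibration, so the assumed property makes $(p_*,i^*)$ acyclic for every cofibration $i$; its base change $(q_*,i^*)$ is then acyclic as well, which is exactly what is needed, since $q$ is itself an acyclic fibration as a base change of $p$. The whole argument hinges on the finite-limit preservation of $(-)^K$ used in the first step: it is this that makes $(q_*,i^*)$ a genuine base change of $(p_*,i^*)$ rather than merely a morphism related to it.
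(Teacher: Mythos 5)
Your proof is correct and follows essentially the same route as the paper's: both use the finite-limit preservation of the function object bifunctor $(-)^K$ in the cooperad variable to identify $(\COp\times_{\DOp}\BOp)^L$ and the target of the pullback-corner morphism for $q$, recognize $(q_*,i^*)$ as a base change of $(p_*,i^*)$, and conclude by stability of (acyclic) fibrations under pullback. The only difference is that you spell out the comparison map $\gamma$ and the universal-property verification, which the paper leaves implicit under ``standard properties of fiber products.''
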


\begin{proof}
Recall that our function object bifunctor $(\DOp,K)\mapsto\DOp^K$ preserves the finite limits in the variable $\DOp$. We accordingly have:
\begin{equation*}
(\COp\times_{\DOp}\BOp)^L\simeq\COp^L\times_{\DOp^L}\BOp^L
\quad\text{and}
\quad\BOp^L\times_{\BOp^K}(\COp\times_{\DOp}\BOp)^K \simeq\COp^K\times_{\DOp^K}\BOp^L,
\end{equation*}
by standard properties of fiber products and we can identify the morphism
\begin{equation*}
(\COp\times_{\DOp}\BOp)^L\rightarrow\BOp^L\times_{\BOp^K}(\COp\times_{\DOp}\BOp)^K
\end{equation*}
with the image of the pullback-corner morphism $\COp^L\rightarrow\DOp^L\times_{\DOp^K}\COp^K$
under the base extension $-\times_{\DOp^L}\BOp^L$.
The result of the lemma follows.
\end{proof}

\begin{lemm}\label{lem:pcp_retract_stable}
If the pullback-corner property of Claim~\ref{claim:pullbackcorner} holds for a given (acyclic) fibration of dg cooperads $q: \COp\twoheadrightarrow\DOp$,
then this result is also valid for any retract of this (acyclic) fibration $f: \AOp\twoheadrightarrow\BOp$.
\end{lemm}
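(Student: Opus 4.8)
The plan is to realise the pullback-corner morphism attached to $f$ as a retract, in the arrow category of dg cooperads, of the pullback-corner morphism attached to $q$, and then to invoke the stability of (acyclic) fibrations under retracts. The whole argument is formal: it uses only the functoriality of the function object construction $(-)^K$ in the cooperad variable, together with the fact, recorded after Proposition~\ref{prop:adjunctionrelation}, that $(-)^K$ preserves the finite limits in this variable.

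First I would record the retract datum. Saying that $f$ is a retract of $q$ amounts to a commutative diagram of dg cooperads
\begin{equation*}
\xymatrix{
\AOp \ar[r]^{a} \ar[d]_{f} & \COp \ar[r]^{a'} \ar[d]^{q} & \AOp \ar[d]^{f} \\
\BOp \ar[r]_{b} & \DOp \ar[r]_{b'} & \BOp
}
\end{equation*}
in which the horizontal composites $a'a$ and $b'b$ are the respective identities. I would then fix a cofibration of simplicial sets $i: K\rightarrowtail L$ and apply the two functors $(-)^L$ and $(-)^K$ to this diagram. Since both are functors, they send the retract datum above to retract data on function objects, and since they preserve finite limits in the cooperad variable they send the corner objects $\BOp^L\times_{\BOp^K}\AOp^K$ and $\DOp^L\times_{\DOp^K}\COp^K$ to objects receiving compatible induced maps. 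The pullback-corner morphism being natural in its arrow variable for fixed $i$, I obtain a commutative diagram
\begin{equation*}
\xymatrix{
\AOp^L \ar[r] \ar[d]_{(f_*,i^*)} & \COp^L \ar[r] \ar[d]^{(q_*,i^*)} & \AOp^L \ar[d]^{(f_*,i^*)} \\
\BOp^L\times_{\BOp^K}\AOp^K \ar[r] & \DOp^L\times_{\DOp^K}\COp^K \ar[r] & \BOp^L\times_{\BOp^K}\AOp^K
}
\end{equation*}
whose horizontal composites are again identities. Hence $(f_*,i^*)$ is a retract of $(q_*,i^*)$ in the arrow category of dg cooperads.

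Finally I would conclude by the closure of fibrations and of acyclic fibrations under retracts. If $q$ is a fibration with the pullback-corner property, then $(q_*,i^*)$ is a fibration for every cofibration $i$, and it is acyclic whenever $i$ is an acyclic cofibration; its retract $(f_*,i^*)$ therefore inherits the same properties. If instead $q$ is an acyclic fibration with the pullback-corner property, then $(q_*,i^*)$ is an acyclic fibration for every $i$; moreover $f$, being a retract of the weak equivalence $q$, is itself an acyclic fibration, and $(f_*,i^*)$, as a retract of an acyclic fibration, is an acyclic fibration as well. In both regimes the pullback-corner property of Claim~\ref{claim:pullbackcorner} holds for $f$, as required.

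I do not expect a genuine obstacle, since the statement is purely formal. The only point requiring some care is the assertion that applying $(-)^L$ and $(-)^K$ to a retract of arrows yields a retract of pullback-corner morphisms; this rests on verifying that the corner-object construction $\BOp\mapsto\BOp^L\times_{\BOp^K}\AOp^K$ is functorial in the arrow $f$, which in turn follows from the functoriality of $(-)^L$ and $(-)^K$ and their preservation of the finite limits entering the definition of the corner object.
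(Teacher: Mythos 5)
Your proof is correct and takes essentially the same approach as the paper: the paper's own (one-sentence) argument likewise identifies the pullback-corner morphism $(f_*,i^*): \AOp^L\rightarrow\BOp^L\times_{\BOp^K}\AOp^K$ as a retract of $(q_*,i^*): \COp^L\rightarrow\DOp^L\times_{\DOp^K}\COp^K$ and concludes by the closure of (acyclic) fibrations under retracts. Your write-up simply makes explicit the functoriality and finite-limit-preservation details that the paper leaves implicit.
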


\begin{proof}
This lemma follows from the fact that the pullback-corner morphism $(f_*,i^*): \AOp^L\rightarrow\BOp^L\times_{\BOp^K}\AOp^K$
can be identified with a retract of the pullback-corner morphism $(q_*,i^*): \COp^L\rightarrow\DOp^L\times_{\DOp^K}\COp^K$
associated to our given fibration $q: \COp\twoheadrightarrow\DOp$.
\end{proof}

We can now conclude that:

\begin{prop}
The pullback-corner property of Claim~\ref{claim:pullbackcorner} holds for every (acyclic) fibration of dg cooperads $p: \COp\rightarrow\DOp$.
\end{prop}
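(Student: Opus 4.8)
The plan is to assemble the structural results of this subsection together with Proposition~\ref{prop:f and Wf}. The guiding observation is that the class of (acyclic) fibrations of dg cooperads which satisfy the pullback-corner property of Claim~\ref{claim:pullbackcorner} is closed under exactly the two operations — pullback and retract — by which Proposition~\ref{prop:f and Wf} expresses an arbitrary (acyclic) fibration in terms of the bar-cobar fibrations $B B^c(p)$. So once those closure statements are available, no further analysis of the function objects is needed.

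Concretely, I would argue as follows. Let $f\colon\COp\to\DOp$ be any (acyclic) fibration of dg cooperads. By Proposition~\ref{prop:f and Wf}, we may present $f$ as a retract of a morphism obtained by pulling back an (acyclic) fibration of the form $B B^c(p)\colon B B^c(\ZOp)\to B B^c(\DOp)$ along the unit morphism $\DOp\to B B^c(\DOp)$, where $p\colon\ZOp\to\DOp$ is the (acyclic) fibration of dg symmetric sequences produced by the factorization of Lemma~\ref{lemm:barcobarfactorization}. Lemma~\ref{lem:pcp_Wf} establishes that $B B^c(p)$ itself satisfies the pullback-corner property; Lemma~\ref{lem:pcp_pullback_stable} propagates this property to the pullback; and Lemma~\ref{lem:pcp_retract_stable} propagates it further to the retract. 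Chaining these three stability statements yields the conclusion for $f$. The acyclic variant is handled in parallel at each step, since every cited result treats fibrations and acyclic fibrations simultaneously.

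The main point is that essentially no genuinely new work remains at this final stage: the difficulty of the pullback-corner property has already been absorbed into the earlier results. The substantive steps were the bar-cobar factorization of Lemma~\ref{lemm:barcobarfactorization}, where one promotes a dg-symmetric-sequence factorization to a homotopy cooperad structure by an inductive obstruction argument in the deformation complex $T_{\ZOp,\ZOp}$; the verification in Lemma~\ref{lem:fib1} that the resulting comparison map is an acyclic cofibration, via a bounded filtration by cohomological degree; and above all the explicit computation of Lemma~\ref{lem:pcp_Wf}, which, after applying Proposition~\ref{prop:barfunctionobject} and Proposition~\ref{prop:fibrancy}, reduces the pullback-corner property to the corresponding (and elementary) property for cochain complexes tensored with $\Omega^*(K)$. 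Granting those, the present statement is a purely formal consequence of closure under pullback and retract, so I expect the assembly itself to be routine rather than the locus of any obstacle.
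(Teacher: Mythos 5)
Your proposal is correct and follows exactly the paper's own argument: invoke Proposition~\ref{prop:f and Wf} to present the given (acyclic) fibration as a retract of a pullback of a bar-cobar fibration $B B^c(p)$, then chain Lemma~\ref{lem:pcp_Wf}, Lemma~\ref{lem:pcp_pullback_stable}, and Lemma~\ref{lem:pcp_retract_stable}. The assembly is indeed purely formal, just as in the paper.
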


\begin{proof}
We just that any (acyclic) fibration of dg cooperads $p: \COp\rightarrow\DOp$ is a retract of an (acyclic) fibration
of the form $q: B B^c(\ZOp)\times_{B B^c(\DOp)}\DOp$
by Proposition~\ref{prop:f and Wf} and we use the results of Lemma~\ref{lem:pcp_Wf}-\ref{lem:pcp_retract_stable}
to establish that this retract satisfies the pullback-corner property of our claim.
\end{proof}

The proof of this proposition completes the verification of the analogous pullback-corner property in the category of dg Hopf cooperads in Proposition~\ref{prop:pullbackcorner}
since we observed that the forgetful functor from dg Hopf cooperads to dg cooperads
preserves all the structures involved in our constructions.
The proof of Proposition~\ref{prop:pullbackcorner} is therefore complete.\qed

\section{Homotopy automorphism spaces}\label{sec:hautspaces}
Recall that the homotopy automorphism space of an object $X$ in a simplicial model category $\CCat$
is defined to be the simplicial monoid:
\begin{equation*}
\Aut^h(X)\sim\Map_{\CCat}(RX,RX)^{\htimes},
\end{equation*}
where $RX$ is a cofibrant and fibrant replacement of $X$ and $\Map_{\CCat}(-)^{\htimes}\subset\Map_{\CCat}(-)$
consists of the connected components of the mapping space
associated to homotopy invertible morphisms $\phi: RX\xrightarrow{\sim} RX$.

We refer to \cite[\S II.2.2]{OperadHomotopyBook} for a detailed account on this definition and for a proof that this simplicial monoid
does not depend on the choice of the cofibrant replacement up to weak equivalence
in the category of simplicial monoids.
We should also note that the object $\Aut^h(X)$ does not depend on the choice of the simplicial structure that we associate to our category $\CCat$.
This result follows from~\cite[Proposition 4.8]{DwyerKan},
where it is proved that the mapping spaces of a simplicial model category are weakly equivalent to the hom-objects of a simplicial localization
whose definition only depends on the choice of the class of weak equivalences in $\CCat$.
The weak equivalences between the mapping spaces of the simplicial model structure and the hom-objects of the simplicial localization
preserve the composition of morphisms by construction, and hence, preserve homotopy automorphism spaces.

Though we do not have a full simplicial model structure on our category of dg Hopf cooperads, one can check that our right finitely continuous simplicial structure
suffices for the proofs of the cited references to go through and yield well-defined homotopy automorphism spaces.

In the proof of the main theorem of this article, we use a general recognition result for homotopy automorphism spaces
and a description of the homotopy automorphism spaces of the rationalization of operads
in terms of the model in the category of dg Hopf cooperads. We devote this section to the verification of these statements.

In the above definition of the space of homotopy automorphisms of an object $X$, we use the notation $\Map_{\CCat}(-)^{\htimes}\subset\Map_{\CCat}(-)$
for the connected components of a mapping space associated to homotopy invertible morphisms.
In the case of the mapping space $\Map_{\CCat}(RX,RX)$ associated to a cofibrant-fibrant object $RX$,
every weak equivalence $u: RX\xrightarrow{\sim} RX$
automatically defines a homotopy equivalence.
More care is necessary in what follows, because we apply the subspace construction $\Map_{\CCat}(X,Y)^{\htimes}\subset\Map_{\CCat}(X,Y)$
to objects $(X,Y)$ which are not necessarily both cofibrant and fibrant. We then assume that $\Map_{\CCat}(X,Y)^{\htimes}$
consists of the connected components of the mapping space $\Map_{\CCat}(X,Y)$
that are associated to weak equivalences $u: X\xrightarrow{\sim} Y$.
Note simply that every morphism $v$ in the same connected component of a weak equivalence $u: X\xrightarrow{\sim} Y$
is also a weak equivalence,
because the function object $Y^{\Delta^1}$ satisfies the relation $Y\sim Y^{\Delta^1}$
and hence defines a path-object for $Y$
in the model category $\CCat$.

\subsection{Recognition of homotopy automorphism spaces}
In a general simplicial model category $\CCat$, the recognition result which we use in our computations reads as follows:

\begin{prop}\label{prop:hAut recognition}
Let $f: X\xrightarrow{\sim}Y$ be a weak equivalence in a simplicial model category $\CCat$, with $X$ cofibrant and $Y$ fibrant.
Let $G_{\bullet}$ be a simplicial monoid acting on $X$, in the sense that we have a map of simplicial monoids $G_{\bullet}\rightarrow\Map_{\CCat}(X,X)^{\htimes}$.
If the composition with $f$ induces a weak equivalence of simplicial sets
\begin{equation*}
G_{\bullet}\xrightarrow{\sim}\Map_{\CCat}(X,Y)^{\htimes},
\end{equation*}
then $G_{\bullet}$ is weakly equivalent to the homotopy automorphism space of $X$ as a simplicial monoid:
\begin{equation*}
G_{\bullet}\sim\Aut^h_{\CCat}(X).
\end{equation*}
\end{prop}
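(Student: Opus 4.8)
The plan is to reduce the statement to a \emph{principal action} (bitorsor) recognition for grouplike simplicial monoids. First I would replace $Y$ by a cofibrant-fibrant model of $X$ extracted from $f$: factor $f$ in $\CCat$ as $X\xrightarrow{j}\tilde Y\xrightarrow{q}Y$, where $j$ is an acyclic cofibration and $q$ is an acyclic fibration. Since $X$ is cofibrant and $j$ is a cofibration, $\tilde Y$ is cofibrant; since $Y$ is fibrant and $q$ is a fibration, $\tilde Y$ is fibrant. Thus $\tilde Y$ is a cofibrant-fibrant replacement of $X$, and $\Aut^h_{\CCat}(X)\sim\mathcal M:=\Map_{\CCat}(\tilde Y,\tilde Y)^{\htimes}$ by definition. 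Set $E:=\Map_{\CCat}(X,\tilde Y)^{\htimes}$. Post-composition equips $E$ with a left action of $\mathcal M$, and pre-composition through $\phi$ equips $E$ with a right action of $G_\bullet$; these two actions commute, and the common basepoint $e_0:=j\in E_0$ is the image of the unit of $\mathcal M$ under $\alpha\mapsto\alpha\circ j$ and of the unit of $G_\bullet$ under $g\mapsto j\circ\phi(g)$.

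Next I would check that both orbit maps of $e_0$ are weak equivalences of simplicial sets. The left-orbit map is $j^{*}\colon\mathcal M\to E$, $\alpha\mapsto\alpha\circ j$. Applying the pullback-corner property of the mapping spaces (the simplicial axiom SM7, which follows from Proposition~\ref{prop:pullbackcorner}) to the acyclic cofibration $j$ and the fibration $\tilde Y\twoheadrightarrow *$, the map $\Map_{\CCat}(\tilde Y,\tilde Y)\to\Map_{\CCat}(X,\tilde Y)$ is an acyclic fibration, and it restricts to a weak equivalence $j^{*}\colon\mathcal M\xrightarrow{\sim}E$ on the invertible components (which correspond under $\pi_0$ because $j$ is a weak equivalence). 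For the right-orbit map $\psi:=j_{*}\circ\phi\colon G_\bullet\to E$, $g\mapsto j\circ\phi(g)$, I would use that post-composition with the weak equivalence $q$ between the fibrant objects $\tilde Y,Y$ induces a weak equivalence $q_{*}\colon E\xrightarrow{\sim}\Map_{\CCat}(X,Y)^{\htimes}$ (homotopy invariance of $\Map_{\CCat}(X,-)$ on fibrant objects, $X$ cofibrant). Since $f=q j$, the given weak equivalence $G_\bullet\xrightarrow{\sim}\Map_{\CCat}(X,Y)^{\htimes}$ equals $q_{*}\circ\psi$; by two-out-of-three, $\psi$ is a weak equivalence. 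Both orbit maps of $e_0$ are therefore weak equivalences, so $E$ is a homotopy bitorsor under the commuting actions of $\mathcal M$ and $G_\bullet$.

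The remaining, and hardest, step is to promote the resulting chain of weak equivalences of simplicial \emph{sets} $\mathcal M\xrightarrow{\sim}E\xleftarrow{\sim}G_\bullet$ into a zig-zag of weak equivalences of simplicial \emph{monoids}. To this end I would pass to the simplicial monoid $\End_{G_\bullet}(E)$ of right-$G_\bullet$-equivariant self-maps of $E$ (defined through the internal hom of the projective model category of right $G_\bullet$-modules, using cofibrant-fibrant replacements so that it becomes homotopy invariant). The left $\mathcal M$-action furnishes a monoid map $L\colon\mathcal M\to\End_{G_\bullet}(E)$, $\alpha\mapsto(e\mapsto\alpha\circ e)$; the module weak equivalence $\psi$ induces a weak equivalence $\End_{G_\bullet}(E)\sim\End_{G_\bullet}(G_\bullet)$; and $\End_{G_\bullet}(G_\bullet)\simeq G_\bullet$ as simplicial monoids via left translation. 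Composing $L$ with evaluation at $e_0$ recovers the left-orbit map $j^{*}$, while the corresponding evaluation on $\End_{G_\bullet}(G_\bullet)\simeq G_\bullet$ recovers $\psi$; a two-out-of-three argument in this square then shows $L$ is a weak equivalence. This yields the desired zig-zag $\mathcal M\sim\End_{G_\bullet}(E)\sim G_\bullet$ of simplicial monoids, and since $\mathcal M$ and $G_\bullet$ are grouplike these monoid weak equivalences are detected on underlying simplicial sets; hence $G_\bullet\sim\Aut^h_{\CCat}(X)$.

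The main obstacle is precisely the homotopy invariance of the equivariant endomorphism monoid $\End_{G_\bullet}(-)$ along $\psi$, which requires a careful choice of cofibrant-fibrant replacements in right $G_\bullet$-modules that is compatible with the monoid structure and with the evaluation maps used above. I expect everything else (the factorization, the SM7 application, and the two-out-of-three arguments) to be routine. As an alternative to the explicit endomorphism-monoid construction, one may instead invoke the abstract principal-fibration recognition for grouplike simplicial monoids underlying the treatment of homotopy automorphism spaces in \cite[\S II.2.2]{OperadHomotopyBook}, which packages exactly this bitorsor comparison.
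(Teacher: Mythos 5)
Your reduction and your simplicial-set-level comparisons coincide with the paper's own proof: you factor $f$ as an acyclic cofibration $j$ followed by an acyclic fibration $q$, identify $\Aut^h_{\CCat}(X)$ with $\mathcal{M} = \Map_{\CCat}(\tilde Y,\tilde Y)^{\htimes}$ for the cofibrant-fibrant middle object $\tilde Y$, and use the pullback-corner axiom, the $\pi_0$-correspondence of components, and two-out-of-three to show that both orbit maps $j^*\colon\mathcal{M}\to E$ and $\psi\colon G_{\bullet}\to E$ are weak equivalences. Up to that point the argument is correct and is exactly what the paper does.

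The gap is the step you yourself flag as the hardest one, and it is a genuine gap, not a routine verification. For $\End_{G_{\bullet}}(E)$ to have the derived homotopy type your argument needs, $E$ must be projectively cofibrant as a right $G_{\bullet}$-module, and it is not: projective cofibrancy essentially means being built from free $G_{\bullet}$-cells, and the action of $G_{\bullet}$ on $E$ by precomposition through $\phi$ has no such freeness. If you take a cofibrant replacement $QE\rightarrow E$ in right $G_{\bullet}$-modules, the strict left $\mathcal{M}$-action does not transport to $QE$; lifting a strict monoid action along a trivial fibration of modules is a coherence (rigidification) problem, so your monoid map $L\colon\mathcal{M}\to\End_{G_{\bullet}}(E)$ only exists into the underived object, whose homotopy type is uncontrolled. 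Likewise, $\End_{G_{\bullet}}(G_{\bullet})\simeq G_{\bullet}$ (left translations) is an identification of \emph{underived} endomorphisms; it computes the derived object only if $G_{\bullet}$ is cofibrant \emph{and} fibrant as a module, and fibrancy (a Kan condition on $G_{\bullet}$) is not among the hypotheses. Even the "weak equivalence $\End_{G_{\bullet}}(E)\sim\End_{G_{\bullet}}(G_{\bullet})$ induced by $\psi$" is a priori only a zig-zag through the non-monoid $\Hom_{G_{\bullet}}(G_{\bullet},E)$. The paper bypasses all of this with an elementary device: form the strict pullback $H_{\bullet} = G_{\bullet}\times_{E}\mathcal{M}$ along $\psi$ and $j^*$. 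Since the intertwining condition $\alpha\circ j = j\circ\phi(g)$ defining $H_{\bullet}$ is multiplicative, $H_{\bullet}$ is a simplicial monoid on the nose and both projections are monoid maps; the projection $H_{\bullet}\to G_{\bullet}$ is an acyclic fibration (pullback of the acyclic fibration $j^*$), and the projection $H_{\bullet}\to\mathcal{M}\sim\Aut^h_{\CCat}(X)$ is a weak equivalence by two-out-of-three. This yields the zig-zag of weak equivalences of simplicial monoids directly. Your closing fallback --- invoking \cite[\S II.2.2]{OperadHomotopyBook} --- is precisely this pullback argument, but as written your own route does not constitute a proof.
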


\begin{proof}
We follow \cite[\S II.2.2]{OperadHomotopyBook}. We first show the statement when $Y$ is both fibrant and cofibrant and $f$ is a cofibration.
We then consider the pullback diagram in simplicial sets
\begin{equation*}
\xymatrix{ H_{\bullet}\ar@{.>}[r]\ar@{.>}[d] & \Map_{\CCat}(Y,Y)^{\htimes}\sim\Aut^h_{\CCat}(X)\ar@{->>}[d]_{f_*}^{\sim} \\
G_{\bullet}\ar[r]^-{\sim} & \Map_{\CCat}(X,Y)^{\htimes} }.
\end{equation*}

The right-hand side vertical arrow of this diagram is an acyclic fibration by the pullback-corner property for the mapping space bifunctor $\Map_{\CCat}(-)$
and because the map $f_*: u\mapsto f u$ induces a bijection between the set of connected components $\Map_{\CCat}(Y,Y)_u\subset\Map_{\CCat}(Y,Y)$
associated to the homotopy classes of weak-equivalences $u: Y\xrightarrow{\sim}Y$
and the set of connected components $\Map_{\CCat}(X,Y)_v\subset\Map_{\CCat}(X,Y)$
associated to the homotopy classes of weak-equivalences $v: X\xrightarrow{\sim}Y$.
(Recall that we have an identity $[S,T]\simeq\pi_0\Map_{\CCat}(S,T)$, whenever $S$ is cofibrant and $T$ is fibrant,
where $[-,-]$ denotes the set of homotopy classes of morphisms in $\CCat$,
and observe that the morphism $v$ is a weak-equivalence if and only if $u$ is a weak-equivalence when we have the relation $v\sim f u$.)

We now use that the class of acyclic fibrations is preserved by pullbacks and the two-out-of-three axiom
to conclude that all morphisms in our pullback diagrams are weak equivalences.
We also check (as in \cite[Lemma II.2.2.3]{OperadHomotopyBook}) that $H_{\bullet}$ is equipped with the structure of a simplicial monoid
so that our weak equivalences $G_{\bullet}\xleftarrow{\sim} H_{\bullet}\xrightarrow{\sim}\Map_{\CCat}(Y,Y)^{\htimes}$
define weak equivalences of simplicial monoids.
We therefore get the result of the lemma in the special case where $Y$ is fibrant and cofibrant and $f$ is a cofibration.
	
In the general case, we choose a factorization of $f$ into an acyclic cofibration followed by a fibration, which is automatically acyclic by the two-out-of-three property:
\begin{equation*}
X\trivialrightarrowtail RY\trivialtwoheadrightarrow Y.
\end{equation*}
The object $RY$ is (automatically) fibrant and cofibrant.
But then we can factor the morphism $G_{\bullet}\xrightarrow{\sim}\Map_{\CCat}(X,Y)^{\htimes}$
given in our proposition
as:
\begin{equation*}
G_{\bullet}\rightarrow\Map_{\CCat}(X,RY)^{\htimes}\xrightarrow{\sim}\Map_{\CCat}(X,Y)^{\htimes},
\end{equation*}
and the second arrow of this composite is a weak equivalence.
The first arrow $G_{\bullet}\rightarrow\Map_{\CCat}(X,RY)^{\htimes}$
is also a weak equivalence by the two-out-of-three property. Hence, we can apply the special case of our claim to the object $RY$
and to the morphism $g: X\trivialrightarrowtail RY$
to conclude that we have a weak equivalence of simplicial monoids $G_{\bullet}\sim\Aut^h_{\CCat}(X)$
as stated in our Proposition.
\end{proof}

\begin{rem}
Note that this proof still goes through in our case $\CCat = \dgHOpc$ though we do not have a full model structure in then.
Hence, the result of this proposition is still valid for $\CCat = \dgHOpc$
and our mapping space construction.
\end{rem}

\subsection{The homotopy automorphism space of the rationalization of a good operad}
Recall that an operad $\POp\in\sSetOp$ is $\QQ$-good if the natural map
\begin{equation*}
H^*(\POp,\QQ)\rightarrow H^*(\POp^{\QQ},\QQ)
\end{equation*}
is an isomorphism (see~\cite[\S II.10.2.3]{OperadHomotopyBook}).
In this situation, we have the following statement:

\begin{prop}\label{prop:good and Aut}
If $\POp\in\sSetOp$ is a $\QQ$-good cofibrant operad, then we have a weak equivalence of simplicial monoids:
\begin{equation*}
\Aut^h_{\sSetOp}(\POp^{\QQ})\sim\Aut^h_{\dgHOpc}(\Omega_{\sharp}^*(\POp)),
\end{equation*}
where we consider the model of our operad in the category of dg Hopf cooperads $\Omega_{\sharp}^*(\POp)\in\dgHOpc$.
\end{prop}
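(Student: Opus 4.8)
The plan is to deduce the statement from the recognition result of Proposition~\ref{prop:hAut recognition}, applied inside $\sSetOp$, so that the only substantial point becomes a single weak equivalence of mapping spaces. Concretely, I would fix a cofibrant--fibrant resolution $f\colon X\xrightarrow{\sim}Y$ of $\POp^{\QQ}$ in $\sSetOp$ and take as candidate acting monoid $G_{\bullet} = \Aut^h_{\dgHOpc}(\Omega_{\sharp}^*(\POp))$. The crux is then to produce an action $G_{\bullet}\to\Map_{\sSetOp}(X,X)^{\htimes}$ together with a proof that the composite $G_{\bullet}\to\Map_{\sSetOp}(X,Y)^{\htimes}$ is a weak equivalence of simplicial sets; granting this, Proposition~\ref{prop:hAut recognition} yields $G_{\bullet}\sim\Aut^h_{\sSetOp}(X)\sim\Aut^h_{\sSetOp}(\POp^{\QQ})$, which is exactly the asserted equivalence of simplicial monoids.

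The action of $G_{\bullet}$ on $\POp^{\QQ}$ arises from the derived functor $LG$ of the contravariant Quillen adjunction $G\colon\dgHOpc\rightleftarrows\sSetOp^{op}\colon\Omega_{\sharp}^*$: a homotopy automorphism of the model $\Omega_{\sharp}^*(\POp)$ is carried to a homotopy automorphism of $LG\,\Omega_{\sharp}^*(\POp) = \POp^{\QQ}$, and I would assemble this into a map of simplicial monoids by using the simplicial framings of both sides, correcting for the contravariance of $LG$ by precomposing with the inversion on the grouplike monoid $G_{\bullet}$. The same derived adjunction, evaluated on homotopy function complexes, supplies a natural weak equivalence
\begin{equation*}
\Map_{\sSetOp}(\POp^{\QQ},\POp^{\QQ})\simeq\Map_{\dgHOpc}(\Omega_{\sharp}^*(\POp),R\Omega_{\sharp}^*(\POp^{\QQ})),
\end{equation*}
where $R\Omega_{\sharp}^*$ denotes the right derived functor. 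On both sides these are the homotopically correct mapping spaces, by Theorem~\ref{thm:map comparison} in $\dgHOpc$ and by the simplicial model structure of~\cite{OperadHomotopyBook} in $\sSetOp$.

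The $\QQ$-goodness hypothesis enters precisely to trivialize the right-hand term. Since $\Omega_{\sharp}^*$ computes rational cohomology arity-wise and $\POp$ is $\QQ$-good, the derived unit $\Omega_{\sharp}^*(\POp)\to R\Omega_{\sharp}^*(\POp^{\QQ})$ realizes the isomorphism $H^*(\POp,\QQ)\xrightarrow{\cong}H^*(\POp^{\QQ},\QQ)$ in each arity and is therefore a weak equivalence of dg Hopf cooperads. Post-composition with this weak equivalence of fibrant objects induces a weak equivalence on mapping spaces, by the pullback-corner property of Proposition~\ref{prop:pullbackcorner}, so that $\Map_{\dgHOpc}(\Omega_{\sharp}^*(\POp),R\Omega_{\sharp}^*(\POp^{\QQ}))\simeq\Map_{\dgHOpc}(\Omega_{\sharp}^*(\POp),\Omega_{\sharp}^*(\POp))$. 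Chaining these equivalences identifies $G_{\bullet} = \Map_{\dgHOpc}(\Omega_{\sharp}^*(\POp),\Omega_{\sharp}^*(\POp))^{\htimes}$ with the homotopy-invertible components of $\Map_{\sSetOp}(\POp^{\QQ},\POp^{\QQ})$, which is exactly the input the recognition result requires.

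The main obstacle I anticipate is the bookkeeping of the simplicial monoid structures across the contravariant derived adjunction: one must check that the comparison of homotopy function complexes is compatible with composition up to passing to opposite monoids, and that the inversion correction turns the resulting anti-homomorphism into a genuine map of simplicial monoids landing in the homotopy-invertible components. A secondary technical point is that our simplicial structure on $\dgHOpc$ is only right finitely continuous, so I would need to confirm that the standard principle \emph{a Quillen adjunction induces maps of homotopy function complexes} still applies with the framings produced by Theorem~\ref{thm:map comparison}; this is where the finite-limit compatibility of Remark~\ref{rem:finite limits} and the pullback-corner axiom do the essential work.
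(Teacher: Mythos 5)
Your reduction to Proposition~\ref{prop:hAut recognition} requires, as input, a \emph{strict} map of simplicial monoids $G_{\bullet}\rightarrow\Map_{\sSetOp}(X,X)^{\htimes}$ with $G_{\bullet}=\Aut^h_{\dgHOpc}(\Omega_{\sharp}^*(\POp))$, and this is exactly where the proposal breaks down; what you flag at the end as ``bookkeeping'' is in fact fatal to the construction as described. First, the ``inversion correction'' is not available: $G_{\bullet}$ is a grouplike simplicial monoid (its components are homotopy-invertible morphisms, so $\pi_0 G_{\bullet}$ is a group), but it is not a simplicial group, so there is no simplicial map $g\mapsto g^{-1}$ to precompose with; choosing homotopy inverses coherently would force you to replace $G_{\bullet}$ by a weakly equivalent simplicial group, which destroys the strictness of whatever action you have built. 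Second, the anti-homomorphism you want to correct does not exist strictly either: $LG$ is defined only up to homotopy, and, as the paper stresses at the start of its own proof, $(G,\Omega_{\sharp}^*)$ is \emph{not} a simplicial Quillen adjunction --- what the adjunction provides is a weak equivalence of mixed mapping spaces $\Map_{\dgHOpc}(\AOp,\Omega_{\sharp}^*(\POp))\rightarrow\Map_{\sSetOp}(\POp,G(\AOp))$ that is not an isomorphism, and it carries no compatibility with composition of endomorphisms. So neither ingredient of your proposed monoid map can be produced, and the recognition result cannot be invoked. (In the paper's later, legitimate application of Proposition~\ref{prop:hAut recognition}, the acting monoid $\QQ^{\times}\ltimes Z_{\bullet}(\GC_n^2)$ acts \emph{strictly}, by exponentiated Lie derivations; that is why the hypothesis can be met there but not here.)

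The homotopy-theoretic core of your argument does match the paper: $\QQ$-goodness is used to show that the derived unit $\psi\colon\AOp\rightarrow\Omega_{\sharp}^*(\ROp)$ is a weak equivalence (where $\AOp$ is a cofibrant-fibrant replacement of $\Omega_{\sharp}^*(\POp)$ and $\ROp$ a cofibrant replacement of $G(\AOp)=\POp^{\QQ}$), and the pullback-corner property gives that post-composition with $\psi$ induces a weak equivalence of mapping spaces. But the paper then sidesteps the contravariance problem entirely: following Theorem~II.2.2.5 of~\cite{OperadHomotopyBook}, it forms the pullback $U_{\bullet}$ of the diagram $\Map(\AOp,\AOp)^{\htimes}\rightarrow\Map(\ROp,G(\AOp))^{\htimes}\twoheadleftarrow\Map(\ROp,\ROp)^{\htimes}$, where the left-hand map is $\psi_*$ followed by the adjunction weak equivalence and the right-hand map $\phi_*$ is an acyclic fibration, checks (as in Lemma~II.2.2.3 of the same reference) that $U_{\bullet}$ is itself a simplicial monoid, and obtains a zig-zag of weak equivalences of simplicial monoids $\Map(\ROp,\ROp)^{\htimes}\xleftarrow{\sim}U_{\bullet}\xrightarrow{\sim}\Map(\AOp,\AOp)^{\htimes}$. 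No direct map between the two automorphism monoids, and no inversion, is ever needed. This pullback construction is the missing idea you would need to repair your write-up.
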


\begin{proof}
This proposition essentially follows from generalities on simplicial model categories,
though we have to mind that the adjoint functors $(G,\Omega_{\sharp}^*)$
do not give rise to a simplicial Quillen adjunction in our case.
More precisely, for a cofibrant operad as in our statement $\POp\in\sSetOp$ and a cofibrant dg Hopf cooperad $\AOp\in\dgHOpc$,
we have a map of simplicial sets
\begin{equation*}
\Map_{\dgHOpc}(\AOp,\Omega_{\sharp}^*(\POp))\rightarrow\Map_{\sSetOp}(\POp,G(\AOp)),
\end{equation*}
which is a weak equivalence by Quillen adjunction, but not an isomorphism.
We pick a factorization $\Com^c\rightarrowtail\AOp\trivialtwoheadrightarrow\Omega_{\sharp}^*(\POp)$
of the initial map $\Com^c\rightarrow\Omega_{\sharp}^*(\POp)$
in $\dgHOpc$ (where $\Com^c$ denotes the commutative cooperad, which forms the initial object in the category of dg Hopf cooperads).
The dg Hopf cooperad $\Omega_{\sharp}^*(\POp)$ is fibrant as long as the operad $\POp$ is cofibrant since $\Omega_{\sharp}^*: \sSetOp^{op}\rightarrow\dgHOpc$
is a right Quillen functor. Hence, our dg Hopf cooperad $\AOp$ is both cofibrant and fibrant and defines a cofibrant-fibrant replacement
of the object $\Omega_{\sharp}^*(\POp)$ in $\dgHOpc$.
We similarly pick a cofibrant object $\ROp$ together with a an acyclic fibration $\IOp\rightarrowtail\ROp\trivialtwoheadrightarrow G(\AOp)$ in $\sSetOp$
(where $\IOp$ is the initial object of the category of operads in simplicial sets).
We again get that $\ROp$ is both cofibrant and fibrant and defines a cofibrant-fibrant replacement of the object $G(\AOp)$ in $\sSetOp$.
We use the notation $\phi: \ROp\xrightarrow{\sim} G(\AOp)$ for the acyclic fibration that we obtain from this factorization process.
We consider the associated adjoint morphism $\psi: \AOp\rightarrow\Omega_{\sharp}^*(\ROp)$
in $\dgHOpc$. This morphism $\psi$ is a weak equivalence because we assume that the operad $\POp$ is $\QQ$-good.
(Recall that $\POp^{\QQ} = LG(\Omega_{\sharp}^*(\POp)) = G(\AOp)$.)

We have (essentially by definition):
\begin{equation*}
\Aut^h(\POp^{\QQ})\sim\Map(\ROp,\ROp)^{\htimes}
\quad\text{and}\quad\Aut^h(\Omega_{\sharp}^*(\POp))\sim\Map(\AOp,\AOp)^{\htimes}.
\end{equation*}
We aim to prove that these objects are weakly equivalent as simplicial monoids.
We essentially follow the proof of \cite[Theorem II.2.2.5]{OperadHomotopyBook}.
We form the pullback diagram:
\begin{equation*}
\xymatrix{ U_{\bullet}\ar@{.>}[dd]\ar@{.>}[r] & \Map(\AOp,\AOp)^{\htimes}\ar[d]_-{\psi_*}^-{\sim} \\
& \Map(\AOp,\Omega_{\sharp}^*(\ROp))^{\htimes}\ar[d]^-{\sim} \\
\Map(\ROp,\ROp)^{\htimes}\ar@{->>}[r]^-{\phi_*}_-{\sim} & \Map(\ROp,G(\AOp))^{\htimes} },
\end{equation*}
where the lower horizontal arrow is an acyclic fibration by the pullback-corner property for the mapping space
of operads in simplicial sets $\Map(-) = \Map_{\sSetOp}(-)$ (see~\cite[Lemma 2.2.4]{OperadHomotopyBook}),
and because, as in the proof of Proposition~\ref{prop:hAut recognition},
the mapping $\phi_*: u\mapsto\phi u$
induces a bijection between the set of connected components $\Map(\ROp,\ROp)_u\subset\Map(\ROp,\ROp)$
associated to the homotopy classes of weak-equivalences $u: \ROp\xrightarrow{\sim}\ROp$
and the set of connected components $\Map(\ROp,G(\AOp))_v\subset\Map(\ROp,G(\AOp))$
associated to the homotopy classes of weak-equivalences $v: \ROp\xrightarrow{\sim}G(\AOp)$.
The upper horizontal arrow of our diagram is also an acyclic fibration (since the class of acyclic fibrations in a model category is stable under pullbacks)
and the left-hand vertical arrow of our diagram is a weak equivalence too
by the two-out-of-three property.
We again check as in \cite[Lemma II.2.2.3]{OperadHomotopyBook} that $U_{\bullet}$ is equipped with the structure of a simplicial monoid
so that our weak equivalences $\Map(\ROp,\ROp)^{\htimes}\xleftarrow{\sim} U_{\bullet}\xrightarrow{\sim}\Map(\AOp,\AOp)^{\htimes}$
define weak equivalences of simplicial monoids.
We therefore get the claim of the proposition.
\end{proof}

Recall that any operad defined by a collection of nilpotent spaces of finite $\QQ$-type
is $\QQ$-good (see \cite[\S II.7.3.11 and \S II.10.2.3]{OperadHomotopyBook}).
The little discs operads $\DOp_n$, which satisfy this assumption for $n\geq 3$, are $\QQ$-good therefore.
The operad $\DOp_2$ is not $\QQ$-good (by~\cite{QBadSpace}).
Nevertheless, the second author proved by hand computation that the result of the previous proposition is still valid
in this case (see~\cite[\S III.5.3]{OperadHomotopyBook}).
Hence, for the little discs operads, we have the following general statement:

\begin{prop}\label{prop:Aut E2}
We have a weak equivalence of simplicial monoids:
\begin{equation*}
\Aut^h(\DOp_n^{\QQ})\sim\Aut^h(\Omega_{\sharp}^*(\DOp_n))
\end{equation*}
for all $n\geq 2$.
\end{prop}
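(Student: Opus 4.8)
The plan is to reduce Proposition~\ref{prop:Aut E2} to the already-established result of Proposition~\ref{prop:good and Aut} whenever the hypothesis of $\QQ$-goodness is available, and to invoke the hand computation of the literature in the single remaining case. First I would split the statement according to the value of $n$. For $n \geq 3$, the little discs operads $\DOp_n$ are built from a collection of spaces $\DOp_n(r)$ that are nilpotent and of finite $\QQ$-type, so they are $\QQ$-good by the cited criterion (\cite[\S II.7.3.11 and \S II.10.2.3]{OperadHomotopyBook}). I would then apply Proposition~\ref{prop:good and Aut} directly to $\POp = \ROp_n$, a cofibrant operad in simplicial sets with $|\ROp_n| \sim \DOp_n$, to obtain the weak equivalence of simplicial monoids
\begin{equation*}
\Aut^h_{\sSetOp}(\DOp_n^{\QQ}) \sim \Aut^h_{\dgHOpc}(\Omega_{\sharp}^*(\DOp_n)),
\end{equation*}
noting that by our convention $\Omega_{\sharp}^*(\DOp_n) = \Omega_{\sharp}^*(\ROp_n)$ and $\DOp_n^{\QQ} = LG\,\Omega_{\sharp}^*(\ROp_n)$.

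The delicate case is $n = 2$. Here $\DOp_2$ fails to be $\QQ$-good (by~\cite{QBadSpace}), so Proposition~\ref{prop:good and Aut} does not apply as stated, since its proof genuinely used the $\QQ$-goodness hypothesis at the point where the adjoint morphism $\psi: \AOp \rightarrow \Omega_{\sharp}^*(\ROp)$ was shown to be a weak equivalence. I therefore cannot run the pullback-square argument of that proposition directly. Instead I would appeal to the independent verification carried out by the second author for the little $2$-discs operad (\cite[\S III.5.3]{OperadHomotopyBook}), which establishes precisely that the conclusion of Proposition~\ref{prop:good and Aut} still holds for $\DOp_2$ despite the failure of $\QQ$-goodness. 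The main obstacle is thus not a computation but the recognition that the formal argument breaks exactly in arity-wise rational type and must be replaced by this external input; once that input is granted, the case $n=2$ yields the same weak equivalence of simplicial monoids as above.

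Assembling the two cases completes the proof: for every $n \geq 2$ we obtain the asserted weak equivalence $\Aut^h(\DOp_n^{\QQ}) \sim \Aut^h(\Omega_{\sharp}^*(\DOp_n))$ in the category of simplicial monoids. I would present this proof quite briefly, since the substance lives in Proposition~\ref{prop:good and Aut} and in the cited hand computation for $\DOp_2$; the role of the present proposition is merely to record that the two ingredients cover the full range $n \geq 2$ uniformly. The only point deserving emphasis is the clean separation between the generic case, handled by the general machinery, and the exceptional case $n=2$, where one must explicitly fall back on the prior result rather than the general criterion.
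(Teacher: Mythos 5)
Your proposal is correct and follows essentially the same route as the paper: for $n\geq 3$ one applies Proposition~\ref{prop:good and Aut} using that $\DOp_n$ is $\QQ$-good (being arity-wise nilpotent of finite $\QQ$-type), and for $n=2$, where $\QQ$-goodness fails, one invokes the hand computation of~\cite[\S III.5.3]{OperadHomotopyBook} showing the conclusion nonetheless holds. Nothing is missing.
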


Recall that we set $\Omega_{\sharp}^*(\DOp_n) := \Omega_{\sharp}^*(\ROp_n)$ for the choice of a cofibrant operad in simplicial sets $\ROp_n$
such that $|\ROp_n|\sim\DOp_n$.

\section{The computation of homotopy automorphism spaces through deformation complexes}\label{sec:deformationcomplexes}
In this section, we explain our approach to computing homotopy automorphism spaces through deformation complexes.
The general idea is to represent the morphisms between two objects $\phi: \AOp\rightarrow\BOp$
as the solutions of a Maurer--Cartan equation in an $L_{\infty}$-algebra $\galg$,
whereas the homotopy relations between morphisms correspond to gauge equivalences in $\galg$.
This $L_{\infty}$-algebra can also be used to control the deformations of an object,
and therefore, we generally refer to such an $L_{\infty}$-algebra
as a deformation complex.

In~\cite[Section 3]{FTW}, a deformation complex is defined for dg Hopf cooperads of a particular form
(namely, in this reference, we assume that the source object
is given by the Chevalley--Eilenberg cochain complex
of an operad in Lie algebras).
The first purpose of this section is to explain the definition of a deformation complex for dg Hopf cooperads of a more general form,
which we consider in our computations of homotopy automorphism spaces.
In a first step, we revisit the definition of $L_{\infty}$-algebras by using functors of points with values in graded commutative algebras.
We treat this subject in the first subsection of this section.
We explain the definition of our deformation complex of dg Hopf cooperads afterwards, in the second subsection of the section.
We examine the definition of the differential of this deformation complex more thoroughly in a third subsection.

Then we explain the applications of deformation complexes for the study of homotopy automorphism spaces.
We make explicit a mapping between the deformation complex and the homotopy automorphism space of a dg Hopf cooperad in a fourth subsection
and we study a extension of this mapping to general mapping spaces
in a fifth subsection.
We eventually review a general homotopy invariance theorem for Maurer--Cartan spaces that we use in our computations.
We devote a sixth subsection to this survey.

\subsection{The schematic view of $L_{\infty}$-algebras}\label{sec:schematic Lie}
We explain the definition of $L_{\infty}$-algebras by using generalized point functors in this subsection.
We make this approach explicit in the next paragraph.
We revisit the definition of $L_{\infty}$-morphisms afterwards and we review the definition of the twisting of $L_{\infty}$-algebra structures
to complete our account.

\subsubsection{On the definition of $L_{\infty}$-algebras}\label{sec:Linfinity}
We consider an $L_{\infty}$-algebra $\galg$, whose structure is defined by $L_{\infty}$-operations of the form
\begin{equation*}
\mu_n: S_n(\galg[1])\rightarrow\galg[1],\quad n\geq 1,
\end{equation*}
where $S_n(-)$ denotes the homogeneous component of weight $n$ of the symmetric algebra $S(-)$
and $\galg[1]$ denotes the degree shift operation $\galg[1]^* = \galg^{*+1}$.
We assume that $\galg$ is equipped with a complete descending filtration
$\galg = F^1\galg\supset F^2\galg\supset\cdots\supset F^s\galg\supset\cdots$
such that
$\mu_n(F^{p_1}\galg,\dots,F^{p_n}\galg)\subset F^{p_1+\dots+p_n}\galg$,
for all $n\geq 1$. This condition ensures the convergence of the power series
\begin{equation}\label{equ:Mdef}
M(x) := \sum_{n=1}^{\infty}\frac{1}{n!}\mu_n(x,\dots,x)
\end{equation}
which collects our $L_{\infty}$-operations. The Maurer--Cartan equation in our $L_{\infty}$-algebra can then be expressed as $M(x) = 0$,
for any homogeneous element $x\in\galg^1$.

Let now $R$ be a graded commutative algebra. The completed tensor product $\galg\hat{\otimes}R$ is again an $L_{\infty}$-algebra
equipped with a compatible complete descending filtration.
By $R$-linear extension of the power series~\eqref{equ:Mdef}, we obtain the power series function
\begin{equation*}
M^R : (\galg\hat{\otimes}R)^1\rightarrow(\galg\hat{\otimes}R)^2
\end{equation*}
and $M^R(x) = 0$ represents the Maurer--Cartan equation in the extended $L_{\infty}$-algebra $\galg\hat{\otimes}R$.
The functions $M^R$ are functorial in $R$, in the sense that for a morphism of graded commutative algebras $\phi: R\rightarrow S$ we have a commutative diagram
\begin{equation*}
\begin{tikzcd}
(\galg\hat{\otimes}R)^1 \ar{r}{M^R} \ar{d}{\galg\hat{\otimes}\phi} & (\galg\hat{\otimes}R)^2 \ar{d}{\galg\hat{\otimes}\phi} \\
(\galg\hat{\otimes}S)^1 \ar{r}{M^S} & (\galg\hat{\otimes}S)^2
\end{tikzcd}.
\end{equation*}
The $L_{\infty}$-operations $\mu_n$ can also be recovered from the collection of power series $M^R$ by graded polarization.
For a collection of homogeneous elements in our $L_{\infty}$-algebra $x_1,\dots,x_n\in\galg$,
we consider the graded algebra $R = \QQ[\epsilon_1,\dots,\epsilon_n]$
generated by variables $\epsilon_i$ of degree $1-|x_i|$.
Then $\pm\mu_n(x_1,\dots,x_n)$ is the coefficient of the monomial $\epsilon_1\cdots\epsilon_n$ in $M^R(\sum_i\epsilon_i x_i)$.

In fact, one can easily check that there is a one-to-one correspondence between the collections of operations $\{\mu_n\}_n$
and functorial collections of power series $\{M^R\}_R$.
Furthermore, the $L_{\infty}$-equations for the operations $\mu_n$ can be fully expressed in terms of the function $M^R(x)$
and read:
\begin{equation}\label{equ:new Linfty}
D M^R(x)(M^R(x)) = 0,
\end{equation}
where the notation $D M^R(x)(h)$ refers to the differential of the function $M^R$ at position $x$
applied to $h$.
(This operation $F(x)\{G(x)\} = DF(x)(G(x))$ actually represents the pre-Lie product on the vector space of power series.)

\subsubsection{On the definition of $L_{\infty}$-morphisms}
We can also use the construction of the previous paragraph to encode $L_{\infty}$-morphisms.
Indeed, to an $L_{\infty}$-morphism between filtered complete $L_{\infty}$-algebras $U: \galg\rightarrow\halg$,
defined by operations
\begin{equation*}
U_n: S_n(\galg[1])\rightarrow\halg[1],\quad n\geq 1,
\end{equation*}
we associate the power series
\begin{equation}\label{equ:Udef}
U(x) = \sum_{n=1}^\infty\frac{1}{n!} U_n(x,\dots,x)
\end{equation}
and we consider again the function
\begin{equation*}
U^R: (\galg\hat{\otimes}R)^1\rightarrow(\halg\hat{\otimes}R)^1
\end{equation*}
defined by taking the obvious scalar extension of the map~\eqref{equ:Udef}.
The collection of power series $\{U^R\}_R$ is again functorial in $R$ and we can still retrieve all the operations $U_n$ from $U^R$
by graded polarization.
The $L_{\infty}$-equations for our morphism $U$
are equivalent to the relation:
\begin{equation}\label{equ:new Linfty mor}
D U^R(x)(M_{\galg}(x)) = M_{\halg}(U(x)),
\end{equation}
where we again use the notation $D U^R(x)(h)$ for the differential of the function $U^R$ at position $x$
applied to $h$.

In what follows, we use the representation of $L_{\infty}$-algebras and of $L_{\infty}$-morphisms
given in the previous paragraphs.
Equations~\eqref{equ:new Linfty} and~\eqref{equ:new Linfty mor}
have the advantage of being compact and completely sign-free
in comparison to the standard definition.
If no confusion can arise, then we drop the graded commutative algebra $R$ from our notation,
and we just set $M(x) = M^R(x)$ and $U(x) = U^R(x)$
for short.

\subsubsection{Maurer--Cartan elements and twisted structures}\label{sec:twisted_MC}
To an $L_{\infty}$-algebra $\galg$, we associate the set of Maurer--Cartan elements:
\begin{equation*}
\MC(\galg) := \{x\in\galg^1\mid M(x) = 0\}.
\end{equation*}
For $m\in\MC(\galg)$, we define the twisted $L_{\infty}$-algebra $\galg^m$ as the object defined by the same underlying graded vector space as $\galg$,
but with the $L_{\infty}$-operations such that:
\begin{equation*}
\mu_n^m(x_1,\dots,x_n) = \sum_{j\geq 0} \frac{1}{j!}\mu_{n+j}(\underbrace{m,\dots,m}_{j\times},x_1,\dots,x_n).
\end{equation*}
In our representation, this structure is associated to the power series $M_{\galg^m}^R$
such that:
\begin{equation}\label{equ:twisted M}
M_{\galg^m}^R(x) = M_{\galg}^R(x+m).
\end{equation}
For an $L_{\infty}$-morphism $U: \galg\rightarrow\halg$, we similarly have a twisted morphism $U_m: \galg^m\rightarrow\halg^{U(m)}$,
which, in our representation, is associated to the power series such that:
\begin{equation*}
(U^m)^R(x) = U^R(x+m).
\end{equation*}

\subsubsection{The pro-algebraic graded variety of Maurer--Cartan elements}\label{sec:schematic MC}
To an $L_{\infty}$-algebra $\galg$, we can also associate the functor $\MC(\galg\hat{\otimes}-): \g\ComCat\rightarrow\Set$
such that:
\begin{equation*}
\MC(\galg\hat{\otimes}R) := \{x\in(\galg\hat{\otimes}R)^1\mid M^R(x) = 0\},
\end{equation*}
where we consider the set of Maurer--Cartan elements in the $R$-linear extension of our $L_{\infty}$-algebra.
This set $\MC(\galg\hat{\otimes}R)$ can be regarded as the set of $R$-points
of a pro-affine graded variety naturally associated to $\galg$ (since the equation $M^R(x) = 0$
is polynomial modulo $F^s\galg$, for any $s\geq 1$).
The representation of the previous paragraphs reflects this correspondence between $L_{\infty}$-algebras and pro-affine graded varieties.

In what follows, we also consider an extension of the definition of the Maurer--Cartan set $\MC(\galg\hat{\otimes}R)$
in the case where $R$ is a dg commutative algebra.
In this case, we have to add the internal differential of our object $\delta: R\rightarrow R$
to the operation $\mu_1: \galg\rightarrow\galg$ of our $L_{\infty}$-algebra structure
when we form the power series $M^R(x)$, for $x\in(\galg\hat{\otimes}R)^1$.
In particular, we set $\MC_{\bullet}(\galg) = \MC(\galg\hat{\otimes}\Omega^*(\Delta^{\bullet}))$,
and we refer to this simplicial set $\MC_{\bullet}(\galg)$
as the Maurer--Cartan space associated to the $L_{\infty}$-algebra $\galg$.

\subsubsection*{Remark}
There is a generalization of $L_{\infty}$-algebras, called curved $L_{\infty}$-algebras, in which the presence
of a null-ary operation $\mu_0\in \galg^2$ (referred to as the curvature)
is allowed.
One can readily extend our formalism to curved $L_{\infty}$-algebras and $L_{\infty}$-morphisms.
For this purpose, we just let the series~\eqref{equ:Mdef} and~\eqref{equ:Udef} start at $n=0$ instead of $n=1$.


\subsection{Morphism sets and deformation complexes of dg Hopf cooperads}\label{sec:defcomplex}
The morphisms sets $\Mor_{\dgHOp^c_R}(\AOp\otimes R,\BOp\otimes R)$ associated to dg Hopf cooperads $\AOp$ and $\BOp$
define a functor from the category of graded commutative algebras
to the category of sets.
The goal of this subsection is to prove that, under suitable conditions on $\AOp$ and $\BOp$,
this functor can be represented by an $L_{\infty}$-algebra
in the sense of the previous subsection.
To be explicit, we make the following assumptions
all along this section:
\begin{enumerate}
\item\label{sec:defcomplex:sourcecondition}
We assume that the dg Hopf cooperad $\AOp$ forms a quasi-free dg commutative algebra arity-wise, so that we have an identity
\begin{equation*}
\AOp(r)=(S(\MOp(r)),\partial)
\end{equation*}
for each arity $r>0$, for a given generating graded symmetric sequence $\MOp = \MOp(1),\MOp(2),\dots$.
In addition, we assume that $\MOp$ is equipped with an exhaustive filtration
\begin{equation*}
0 = F^0\MOp\subset F^1\MOp\subset F^2\MOp\subset\cdots\subset F^s\MOp\subset\cdots\subset\MOp,
\end{equation*}
such that the differential $\partial: S(\MOp(r))\rightarrow S(\MOp(r))$ carries $F^s\MOp(r)$ into $\bar{S}(F^s\MOp(r))$,
where $\bar{S}(-)$ denotes the augmentation ideal of the symmetric algebra $S(-)$,
and the composition coproducts of the cooperad structure $\circ^*: \AOp(k+l-1)\rightarrow\AOp(k)\otimes\AOp(l)$
carry the graded vector space $F^s\MOp(k+l-1)\subset\AOp(k+l-1)$
into $\sum_{p+q=s} F^p S(\MOp(k))\otimes F^q S(\MOp(l))$,
where we take:
\begin{equation*}
\qquad\qquad F^s S(\MOp(r)) = \sum_{\substack{p_1+\cdots+p_m\leq s\\m\geq 0}}\im\Bigl(F^{p_1}\MOp(r)\otimes\dots\otimes F^{p_m}\MOp(r)\rightarrow S(\MOp(r))\Bigr),
\end{equation*}
for each $r>0$.
We also assume that the graded vector space $E_0^s\MOp(r) = F^s\MOp(r)/F^{s-1}\MOp(r)$ is finite dimensional for each arity $r>0$
and for each filtration degree $s\geq 0$.
\item\label{sec:defcomplex:targetcondition}
We assume that the dg Hopf cooperad $\BOp$ is quasi-cofree as a dg cooperad,
so that we have an identity $\BOp = (\FreeOp^c(\NOp),\partial)$,
for some graded symmetric sequence $\NOp$.
\end{enumerate}
Note that the assumptions on the dg Hopf cooperad $\AOp$ imply that this object is equipped with an augmentation $\epsilon: \AOp\rightarrow\Com^c$ induced by the map $\MOp\rightarrow 0$,
where $\Com^c$ denotes again the commutative cooperad, defined by $\Com^c(r) = \kk$, for each arity $r>0$.

Then we have the following result:

\begin{prop}\label{prop:pre_defcx}
Let $R$ be a graded commutative algebra. Let $\g\Hopf\Op^c_R$ denote the category of graded Hopf cooperads over $R$.
Let $\g\Seq_R$ denote the category of graded symmetric sequences over $R$.
If the dg Hopf cooperads $\AOp$ and $\BOp$ satisfy the above conditions (\ref{sec:defcomplex:sourcecondition}-\ref{sec:defcomplex:targetcondition}),
then we have a bijection
\begin{equation*}
\Mor_{\g\Hopf\Op^c_R}(\AOp^{\flat}\otimes R,\BOp^{\flat}\otimes R)\xrightarrow{\simeq}\Mor_{\g\Seq^c_R}(\MOp\otimes R,\NOp\otimes R),
\end{equation*}
given by the mapping $\psi\mapsto\pi\psi\iota$
such that:
\begin{itemize}
\item
we take the composition of morphisms with the inclusion of generators $\iota: \MOp\hookrightarrow S(\MOp)$
on the source $\AOp^{\flat}\otimes R = S(\MOp)\otimes R$
\item
and the composition with the projection onto cogenerators $\pi: \FreeOp^c(\NOp)\rightarrow\NOp$
on the target $\BOp^{\flat}\otimes R = \FreeOp^c(\NOp)\otimes R$.
\end{itemize}
\end{prop}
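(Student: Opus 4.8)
The plan is to deduce the bijection from the two universal properties that hypotheses (\ref{sec:defcomplex:sourcecondition}) and (\ref{sec:defcomplex:targetcondition}) put at our disposal: the freeness of the source $\AOp$ as an arity-wise commutative algebra, and the cofreeness of the target $\BOp$ as a cooperad. Before combining them I would record that both $S(-)$ and the (conilpotent) cofree cooperad functor $\FreeOp^c(-)$ are built from direct sums and tensor products, so that they commute with the scalar extension $-\otimes R$; this lets us identify $\AOp^{\flat}\otimes R = S(\MOp\otimes R)$ arity-wise and $\BOp^{\flat}\otimes R = \FreeOp^c(\NOp\otimes R)$ as a cooperad over $R$ (the completeness and exhaustiveness of the filtration, and the finiteness of the layers $E_0^s\MOp(r)$ that will be needed for representability later on, guarantee that all the tree-sums occurring below are finite and behave well under $-\otimes R$). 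Granting this, the forward map $\psi\mapsto\pi\psi\iota$ is manifestly well defined, and the whole content of the statement is its bijectivity, which I would establish by exhibiting an explicit inverse.

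Taken separately, the two halves are honest adjunctions. On the one hand, the free commutative algebra adjunction, applied arity-wise and equivariantly, identifies a morphism of graded Hopf cooperads $\psi$ (which in each arity is a morphism of commutative algebras $S(\MOp(r))\otimes R\to\BOp^{\flat}(r)\otimes R$) with its restriction to generators $f=\psi\iota\colon\MOp\otimes R\to\BOp^{\flat}\otimes R$, a morphism of graded symmetric sequences. On the other hand, the universal property of the cofree cooperad $\FreeOp^c(\NOp)$ identifies the morphism of cooperads underlying $\psi$ with $h=\pi\psi\colon\AOp^{\flat}\otimes R\to\NOp\otimes R$, a morphism of graded symmetric sequences out of the whole algebra. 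A morphism of graded Hopf cooperads is precisely one that is simultaneously of both types, and the image under the map of the proposition is $g=\pi\psi\iota=h\iota=\pi f$.

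The crux is to reconcile these two descriptions, i.e.\ to show that such a $\psi$ is both determined by and freely prescribable through the single map $g\colon\MOp\otimes R\to\NOp\otimes R$. Here I would use the decisive consequence of hypothesis (\ref{sec:defcomplex:sourcecondition}): since $F^0\MOp=0$, the reduced composition coproducts, those with values in $\bar S(\MOp(k))\otimes\bar S(\MOp(l))$, carry $F^s\MOp(k+l-1)$ into $\sum_{p+q=s,\,p,q\geq 1} F^p S(\MOp(k))\otimes F^q S(\MOp(l))$, so that each tensor factor sits in strictly lower filtration degree. For a generator $m\in F^s\MOp$, the component of $f(m)=\psi(m)\in\FreeOp^c(\NOp)$ on a tree $T$ with at least two vertices is given by the cofree cooperad formula $\pi_T\psi=h^{\otimes V(T)}\circ\Delta_T$, where $\Delta_T$ is the iterated composition coproduct of $\AOp$ along $T$; its output lands in strictly lower filtration, where $h$ is already known. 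This sets up an induction on the filtration degree $s\geq 1$: the corolla component of $f$ is $g$ by construction, while all higher tree-components of $f$ on $F^s\MOp$ are forced by the values of $h$ on $F^{<s}$ obtained at earlier stages. Because each generator lies in a finite stage and the filtration-lowering bounds the number of vertices of the contributing trees, these sums are finite and the recursion is well founded.

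Reading the recursion forwards produces, from an arbitrary $g$, a symmetric sequence map $f$ whose arity-wise algebraic extension $\psi$ satisfies the cofree cooperad identities and is hence a cooperad map, thus a morphism of graded Hopf cooperads over $R$ with $\pi\psi\iota=g$; reading it backwards shows any $\psi$ is recovered from $g=\pi\psi\iota$, so the two composites are identities. The main obstacle is exactly this reconciliation, namely checking that the free-algebra extension of $f$ and the cofree-cooperad lift of $h$ coincide and are mutually consistent as a single Hopf cooperad morphism; the filtration hypothesis (\ref{sec:defcomplex:sourcecondition}) is precisely the device that decouples the two structures into a well-founded induction and makes them agree, the compatibility with $-\otimes R$ being controlled by the finiteness of the layers $E_0^s\MOp(r)$.
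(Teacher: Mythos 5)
Your overall strategy---recursively reconstructing $\psi$ from $g=\pi\psi\iota$ by combining the arity-wise freeness of $\AOp$ as an algebra, the cofreeness of $\BOp$ as a cooperad, and the filtration on $\MOp$---is the same as the paper's. However, there is a genuine gap in the well-foundedness of your recursion. You claim that for $m\in F^s\MOp(r)$ the tree components $\psi(m)_T=h^{\otimes V(T)}\Delta_T(m)$ (for $T$ with at least two vertices) only involve factors of strictly lower filtration, because the ``reduced'' coproduct lands in $\bar{S}(\MOp(k))\otimes\bar{S}(\MOp(l))$. This is where the argument breaks. The cofree-cooperad component formula uses the treewise coproduct with values in $\FreeOp_T^c(\bar{\AOp})$, where $\bar{\AOp}$ is the \emph{cooperadic} coaugmentation coideal, which removes the unit only in arity $1$: the algebra units $1\in F^0 S(\MOp(r_w))\subset\AOp(r_w)$ for $r_w\geq 2$ remain as admissible factors. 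Moreover, $h=\pi\psi$ need not vanish on these units: the units of $\BOp$ assemble into the coaugmentation $\Com^c\rightarrow\BOp=\FreeOp^c(\NOp)$, a cooperad morphism whose corolla components $\pi(1_{\BOp(r)})\in\NOp(r)$, $r\geq 2$, can be nonzero---nothing in the quasi-cofreeness hypothesis~(\ref{sec:defcomplex:targetcondition}) normalizes them away. So if you interpret $\Delta_T$ as the fully reduced coproduct (units removed in all arities), your component formula is wrong because it drops these terms; if you interpret it correctly, the factors do not all sit in strictly lower filtration, and your induction on the filtration degree alone is not well founded.

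Concretely, when $\Delta_T(m)$ has a factor $x_v$ of filtration exactly $s$, the complementary factors are units $1\in F^0 S(\MOp(r_w))$ with $r_w\geq 2$ (arity-one units die in $\bar{\AOp}$), and then $x_v$ has strictly smaller \emph{arity} $r_v<r$. At stage $s$ your recursion therefore needs values of $h$ on filtration-$s$ elements of lower arity, which your induction has not yet produced. The repair is exactly what the paper does: run a double induction on filtration degree \emph{and} arity, observing that every factor of the reduced treewise coproduct either drops in filtration or, keeping filtration $s$, drops in arity. With that correction, the rest of your argument (uniqueness because every step is forced by $f=\pi\psi\iota$ and the Hopf compatibilities, then verification that the recursively defined map preserves products and composition coproducts) goes through as in the paper.
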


We remind the reader that $(-)^{\flat}$ refers to the forgetting of differentials in any category of dg objects.

\begin{proof}
Note that we completely forget about the differentials of our dg Hopf cooperads in the construction of our statement.

Let $f\in\Mor_{\g\Seq^c_R}(\MOp\otimes R,\NOp\otimes R)$ be given.
We construct the corresponding morphism $\psi_f\in\Mor_{\g\Hopf\Op^c_R}(\AOp^{\flat}\otimes R,\BOp^{\flat}\otimes R)$.
We prove along the way that this morphism is unique.
We proceed by induction, using the given filtration on $\MOp$ and the induced filtration on $\AOp = S(\MOp)$.
To start the induction, we note that the map
\begin{equation*}
\psi_f: F^0 S(\MOp) = \Com^c\rightarrow\BOp
\end{equation*}
is uniquely determined, since algebra units have to be mapped to algebra units in a dg Hopf cooperad.
We now assume that $\psi_f$ is defined on $F^{s-1}\AOp$ and we aim to extend our map to $F^s\AOp$.
We have $F^s S(\MOp) = F^s\MOp\oplus\SOp$, where $\SOp$ consists of products of elements of lower filtration $F^p\MOp$, $p<s$, within the symmetric algebra $S(\MOp)$.
Hence $\psi_f$ is determined on the summand $\SOp$ by compatibility of the morphism with the products.
We use that $\BOp$ is quasi-cofree to determine our map $\psi_f$ on the summand $F^s\MOp$ from $f$.

To be more explicit, by the construction of cofree cooperads, we have
\begin{gather*}
\BOp^{\flat}(r) = \bigoplus_{T\in\Tree(r)}\FreeOp_T^c(\NOp)(r),
\intertext{where the expansion runs over the set of trees with $r$ ingoing leaves $T\in\Tree(r)$ and}
\FreeOp_T^c(\NOp)(r) = \bigotimes_{v\in VT}\NOp(r_v)
\end{gather*}
denotes the tensor product of the graded symmetric sequence $\NOp$
over the set of vertices of the tree $T$.
In this expression, the arity of a factor $r_v$ is given by the number of ingoing edges of the vertex $v$.
For $x\in F^s\MOp(r)$, we consider the reduced tree-wise coproducts of our element
\begin{equation*}
\Delta_T(x) = \sum_{(x)}\Bigl(\bigotimes_{v\in VT} x_v\Bigr)\in\FreeOp_T^c(\bar{\AOp})(r),
\end{equation*}
where $\bar{\AOp}$ denotes the coaugmentation coideal of the dg cooperad $\AOp$.
By assumption on our filtration, the factors of this tree-wise coproducts satisfy $x_v\in F^{s_v} S(\MOp(r_v))$ and we have $\sum_v s_v = s$.
If we have $s_v = s$ for some vertex $v$, then we necessarily have $s_w = 0$ and hence $F^{s_w} S(\MOp(r_w)) = F^0 S(\MOp(r_w)) = \Com^c(r_w)$
for the factors associated to the other vertices $w$,
but as we withdraw the term $\Com^c(1) = \kk 1$ from the coaugmentation coideal $\bar{\AOp}$,
this relation implies $r_w>1$ and hence we have $r_v<r$
in this case.
Thus, as soon as the tree $T$ has at least two vertices, the reduced tree-wise coproduct $\Delta_T(x) = \sum_{(x)} x_v$
consists either of factors of lower filtration $x_v\in F^{s_v} S(\MOp(r_v))$, $s_v<s$,
or of factors of lower arity $x_v\in F^{s_v} S(\MOp(r_v))$, $r_v<r$.
In this context, we can use the induction to determine the image of our element in $\FreeOp_T^c(\NOp)(r)$
by the formula
\begin{equation*}
\psi_f(x)_T = \sum_{(x)}\Bigl(\bigotimes_{v\in VT}\pi\psi_f(x_v)\Bigr)
\end{equation*}
when $T$ has at least two vertices,
where we take the image of $x_v\in F^{s_v} S(\MOp(r_v))$ under the composite of the recursively defined map $\psi_f: F^{s_v} S(\MOp(r_v))\rightarrow\FreeOp^c(\NOp)(r_v)$
with the canonical projection $\pi: \FreeOp^c(\NOp)(r_v)\rightarrow\NOp(r_v)$,
and we merely take:
\begin{equation*}
\psi_f(x)_Y = f(x)
\end{equation*}
when $T = Y$ is a corolla (a tree with a single vertex).

This recursively defined morphism $\psi_f$ automatically preserves the products by construction. We easily check that $\psi_f$ preserves the composition coproducts too (use the compatibility between the products and the composition coproducts in Hopf cooperads,
together with the coassociativity relation of tree-wise coproducts
in cooperads).
Note that our construction is forced by the constraint $f = \pi \psi_f\iota$ and by these compatibility relations.
Hence, our morphism $\psi_f$ is uniquely determined by the map $f$.
\end{proof}

We can interpret the result of this proposition in terms of the scheme theoretic picture of the previous section.
For this purpose, we observe that $S(R) = \Mor_{\g\Hopf\Op^c_R}(\AOp^{\flat}\otimes R,\BOp^{\flat}\otimes R)$
can be regarded as the set of $R$-points
of a pro-algebraic subvariety of the affine space $V(R) = \Mor_{\g\Seq^c_R}(\AOp^{\flat}\otimes R,\BOp^{\flat}\otimes R)$.
We use that the vector space $W(R) = \Mor_{\g\Seq^c_R}(\MOp\otimes R,\NOp\otimes R)$
forms an affine pro-algebraic variety too.
We make the structure of these pro-algebraic varieties explicit in the next proposition:

\begin{prop}
For any graded commutative algebra $R$, we have identities
\begin{align*}
V(R) & = \Mor_{\g\Seq^c_R}(\AOp^{\flat}\otimes R,\BOp^{\flat}\otimes R) = (\Hom_{\g\Seq^c}(\AOp^{\flat},\BOp^{\flat})\hat{\otimes}R)^0\\
W(R) & = \Mor_{\g\Seq^c_R}(\MOp\otimes R,\NOp\otimes R) = (\Hom_{\g\Seq^c}(\MOp,\NOp)\hat{\otimes}R)^0,
\end{align*}
where $T = \Hom_{\g\Seq^c}(-,-)$ denotes the natural graded hom-object bifunctor associated to the category of graded symmetric sequences.
These graded vector spaces $T = \Hom_{\g\Seq^c}(\SOp,\TOp)$, where $(\SOp,\TOp) = (\MOp,\NOp)$ or $(\SOp,\TOp) = (\AOp^{\flat},\BOp^{\flat})$,
are equipped with the complete descending filtration
such that:
\begin{equation*}
F^s T = \ker\Bigl(\Hom_{\g\Seq^c}(\SOp,\TOp)\rightarrow\prod_{p+q-1=s}\Hom_{\g\Seq^{\leq q}}(F^p\SOp,\TOp)\Bigr),\quad s\geq 0,
\end{equation*}
where we consider the filtration $F^p\SOp$ of the symmetric sequences $\SOp = \MOp$ or $\SOp = S(\MOp)$
and $\g\Seq^{\leq q}$ denotes the category of $q$-truncated symmetric sequences,
which is formed by forgetting about the components of arity $r>q$
in the definition of a symmetric sequence.

The object $S(R) = \Mor_{\g\Hopf\Op^c_R}(\AOp^{\flat}\otimes R,\BOp^{\flat}\otimes R)$ forms a pro-algebraic subvariety of $V(R) = T\hat{\otimes}R$,
in the sense that $S(R)$ is defined by polynomial equations in $T/F^s T$,
for each filtration degree $s\geq 0$.
\end{prop}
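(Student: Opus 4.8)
The plan is to treat the three assertions in turn: the two identifications of $V(R)$ and $W(R)$ with degree-zero parts of completed tensor products, the fact that the indicated filtration on $T$ is complete and descending, and finally the pro-algebraic subvariety structure on $S(R)$, which is where the real work lies.

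First I would establish the identifications. Both cases are formally identical, so I treat $(\SOp,\TOp)=(\MOp,\NOp)$; the case $(\SOp,\TOp)=(\AOp^\flat,\BOp^\flat)=(S(\MOp),\FreeOp^c(\NOp))$ is the same with $S(\MOp)$ in place of $\MOp$. Since $\SOp(r)\otimes R$ is free as a graded $R$-module, an $R$-linear $\Sigma_r$-equivariant map $\SOp(r)\otimes R\rightarrow\TOp(r)\otimes R$ is the same as a $\kk$-linear $\Sigma_r$-equivariant map $\SOp(r)\rightarrow\TOp(r)\otimes R$, and assembling over $r$ identifies $\Mor_{\g\Seq^c_R}$ with the degree-zero equivariant maps $\SOp\rightarrow\TOp\otimes R$. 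The finiteness hypothesis of condition~(\ref{sec:defcomplex:sourcecondition}) gives that each $F^p\SOp(r)$ is finite dimensional (for $\SOp=S(\MOp)$ one uses that a monomial of filtration weight $\le p$ has at most $p$ factors, since $F^0\MOp=0$, each factor drawn from a finite dimensional $F^{p_i}\MOp(r)$). Hence $\Hom_{\Sigma_r}(F^p\SOp(r),\TOp(r)\otimes R)=\Hom_{\Sigma_r}(F^p\SOp(r),\TOp(r))\otimes R$, and passing to the limit over the exhaustive filtration $\SOp(r)=\colim_p F^p\SOp(r)$ turns the ordinary tensor product into the completed one. This yields $W(R)=(\Hom_{\g\Seq^c}(\MOp,\NOp)\hat{\otimes}R)^0$ and $V(R)=(\Hom_{\g\Seq^c}(\AOp^\flat,\BOp^\flat)\hat{\otimes}R)^0$.

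Next I would unwind the filtration. A direct inspection of the defining kernel shows that $\phi\in F^s T$ if and only if the restriction $\phi|_{F^p\SOp(r)}$ vanishes whenever $p+r-1\le s$; the monotonicity $F^{s+1}T\subset F^s T$ is then immediate, so the filtration is descending. Exhaustiveness of $F^p\SOp$ gives $\bigcap_s F^s T=0$, so the filtration is separated. For completeness I would reconstruct an element of $T$ from a compatible system of classes in $T/F^s T$: for each fixed arity $r$ the constraint $p+r-1\le s$ admits every $p$ once $s$ is large, so the system determines a compatible family of maps on the $F^p\SOp(r)$, which glue to a map on $\colim_p F^p\SOp(r)=\SOp(r)$; this produces the required preimage and shows $T=\lim_s T/F^s T$.

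Finally I would analyse $S(R)\subset V(R)$, the locus of those symmetric sequence morphisms that preserve the commutative products in each arity, the cooperadic composition coproducts, and the (co)unit and coaugmentation. The unit and coaugmentation conditions are linear, while multiplicativity and comultiplicativity are bilinear, hence quadratic, in $\phi$. The key point, and the main obstacle, is that each such equation is already visible modulo a finite stage $F^sT$. This rests on the filtration-compatibility built into condition~(\ref{sec:defcomplex:sourcecondition}): the products satisfy $F^pS(\MOp(r))\cdot F^qS(\MOp(r))\subset F^{p+q}S(\MOp(r))$, and the composition coproducts, being algebra maps that respect the filtration on generators, carry $F^p\AOp(k+l-1)$ into $\sum_{a+b\le p}F^a\AOp(k)\otimes F^b\AOp(l)$. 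Consequently the multiplicativity equation restricted to $F^p\AOp(r)\otimes F^{p'}\AOp(r)$ involves only the coordinates $\phi|_{F^{p+p'}\AOp(r)}$, $\phi|_{F^p\AOp(r)}$, $\phi|_{F^{p'}\AOp(r)}$, all recorded modulo $F^sT$ once $p+p'+r-1\le s$; similarly the comultiplicativity equation indexed by $(p,k,l)$ is recorded modulo $F^sT$ once $p+k+l-2\le s$. Thus for each $s$ these conditions cut out a subscheme of the affine space $((T/F^sT)\otimes R)^0$ defined by polynomial equations, and by the completeness established above $S(R)=\lim_s S_s(R)$ is the corresponding pro-algebraic subvariety of $V(R)=T\hat{\otimes}R$. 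I expect the bookkeeping of which coordinates are seen at level $s$, together with the verification that the coproduct respects the filtration on the full symmetric algebra and not merely on generators, to be the most delicate part.
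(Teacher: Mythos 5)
Your proposal is correct and takes essentially the same route as the paper's (much terser) proof: the identifications of $V(R)$ and $W(R)$ come from the scalar-extension adjunction together with the finite dimensionality of the filtration stages $F^p\SOp(r)$, and the pro-algebraic structure on $S(R)$ comes from the fact that the graded Hopf cooperad morphism conditions are polynomial equations, each visible modulo a finite stage $F^s T$. The paper dispatches both points in a few lines, so your extra bookkeeping (completeness of the filtration on $T$, and the extension of the coproduct filtration-compatibility from the generators $\MOp$ to all of $S(\MOp)$) simply fills in details the paper leaves implicit.
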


\begin{proof}
Let $(\SOp,\TOp)$ be as in the proposition.
The identity $\Mor_{\g\Seq^c_R}(\SOp\otimes R,\TOp\otimes R) = (\Hom_{\g\Seq^c}(\SOp,\TOp)\hat{\otimes}R)^0$
follows from the relation
\begin{equation*}
\Mor_{\g\Seq^{\leq q}_R}(F^p\SOp\otimes R,\TOp\otimes R)
\simeq\Mor_{\g\Seq^{\leq q}}(F^p\SOp,\TOp\otimes R)
\simeq\Mor_{\g\Seq^{\leq q}}(F^p\SOp,\TOp)\otimes R,
\end{equation*}
where, to get the second isomorphism, we use that $F^p\SOp(r)$ is finite dimensional in each arity $r>0$
for both $\SOp(r) = \MOp(r)$ and $\SOp(r) = S(\MOp(r))$.
The claim that $S(R)$ forms a pro-algebraic subvariety of $V(R)$ follows from the fact that the morphisms of graded Hopf cooperads
are defined by polynomial relations.
\end{proof}

We can now make the following observation:

\begin{prop}
The bijection
\begin{multline*}
\Mor_{\g\Seq^c_R}(\MOp\otimes R,\NOp\otimes R)\xrightarrow[\simeq]{\Phi}\Mor_{\g\Hopf\Op^c_R}(\AOp^{\flat}\otimes R,\BOp^{\flat}\otimes R)\\
\subset\Mor_{\g\Seq^c_R}(\AOp^{\flat}\otimes R,\BOp^{\flat}\otimes R),
\end{multline*}
provided by the correspondence of Proposition~\ref{prop:pre_defcx},
is given by a power series function:
\begin{equation*}
\Phi(x) = \sum_{n\geq 0}\Phi_n(x,\dots,x),
\end{equation*}
defined by a scalar extension to $R$ of operations involving the unit, the products and the composition coproducts of our Hopf cooperads,
independently of the graded commutative algebra $R$.
\end{prop}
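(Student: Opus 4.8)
The plan is to unfold the recursive construction of $\psi_f = \Phi(f)$ carried out in the proof of Proposition~\ref{prop:pre_defcx} and to read the power series structure off directly, organizing the terms according to the number of times the variable $f$ is applied.

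First I would isolate the ingredients of that recursion. In the inductive formula of Proposition~\ref{prop:pre_defcx}, the morphism $\psi_f$ is assembled from exactly four operations, all of which are defined over the ground field $\kk$ and are therefore $R$-independent up to scalar extension: the composition coproducts of $\AOp$, which produce the reduced tree-wise coproducts $\Delta_T$; the products of the Hopf cooperad $\BOp$, used to evaluate $\psi_f$ on products of generators since $\psi_f$ is an algebra morphism; the cofree cooperad structure of $\BOp = \FreeOp^c(\NOp)$, used to assemble the tree-indexed components $\psi_f(x)_T$ into an element of $\FreeOp^c(\NOp)$; and the unit, which fixes the value on $F^0 S(\MOp) = \Com^c$. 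The variable $f$ itself enters only through the corolla case $\psi_f(x)_Y = f(x)$.

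Next I would make the dependence on $f$ explicit. Unfolding the recursion, for $a \in S(\MOp(r))$ the value $\psi_f(a)$ is a sum of terms, each obtained by taking an iterated tree-wise coproduct of $a$, applying $f$ to the resulting corolla-level components, and reassembling the outputs through the products and the cofree cooperad structure of $\BOp$. I would then grade these terms by the number $n$ of applications of $f$ and let $\Phi_n$ be the multilinear operation obtained by collecting the terms in which $f$ occurs exactly $n$ times. By construction each $\Phi_n$ is a $\kk$-defined operation built from the structure maps listed above, so that $\Phi^R = \sum_{n\geq 0}\Phi_n(-,\dots,-)\otimes R$ is the scalar extension of operations defined over $\kk$; this yields both the asserted power series form and the $R$-independence, in agreement with the functoriality of the morphism sets already recorded.

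The main point to verify — and the place where the filtration hypotheses of~(\ref{sec:defcomplex:sourcecondition}) are essential — is the convergence of the series $\Phi(x) = \sum_{n\geq 0}\Phi_n(x,\dots,x)$ in the complete filtered space $T\hat{\otimes}R$. Here I would argue that the recursion is well-founded: in the multi-vertex case the factors $x_v$ of the reduced coproduct $\Delta_T(x)$ lie in strictly lower filtration degree or strictly lower arity than $x$, so that modulo any fixed filtration level $F^s$ of the target the construction terminates after finitely many steps and $\psi_f$ reduces to a polynomial in $f$. Consequently only finitely many $\Phi_n$ contribute modulo $F^s$, which is exactly the convergence needed for $\{\Phi^R\}_R$ to define a morphism of pro-algebraic graded varieties in the sense of the previous subsection. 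I expect this bookkeeping — tracking simultaneously the filtration degree, the arity, and the number of applications of $f$, so that the grading by $n$ is compatible with the completions — to be the main technical obstacle, the underlying algebraic identities themselves being forced by the compatibility of products with composition coproducts already exploited in Proposition~\ref{prop:pre_defcx}.
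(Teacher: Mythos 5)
Your proposal is correct and takes essentially the same route as the paper: the paper's entire proof is that the statement ``follows from an immediate inspection of the construction of our map $f\mapsto\psi_f$ in the proof of Proposition~\ref{prop:pre_defcx}'', and your unfolding of that recursion---identifying the unit, the products, the composition coproducts and the cofree structure as the only ingredients, with $f$ entering only at corollas---is precisely that inspection carried out explicitly. The extra bookkeeping you supply (grading by the number of occurrences of $f$ and convergence modulo the filtration) is detail the paper leaves implicit rather than a different argument.
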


\begin{proof}
This proposition follows from an immediate inspection of the construction of our map $f\mapsto\psi_f$
in the proof of Proposition~\ref{prop:pre_defcx}.
\end{proof}

We use the above proposition in the following statement:

\begin{thm}\label{thm:defcx}
The graded vector space $\galg = \Hom_{\g\Seq^c}(\MOp,\NOp)$ inherits an $L_{\infty}$-algebra structure
such that:
\begin{equation*}
\MC(\Hom_{\g\Seq^c}(\MOp,\NOp)\hat{\otimes}R) = \Mor_{\dg\Hopf\Op^c_R}(\AOp\otimes R,\BOp\otimes R),
\end{equation*}
where we consider the morphisms between the objects $\AOp\otimes R$ and $\BOp\otimes R$
in the category of dg Hopf cooperads.

This $L_{\infty}$-algebra structure is associated to the power series function such that:
\begin{equation*}
M(x) := \pi(\partial_{\BOp}\Phi(x)-\Phi(x)\partial_{\AOp})\iota,
\end{equation*}
for any $x\in\Mor_{\g\Seq^c_R}(\MOp\otimes R,\NOp\otimes R)$, where $\partial = \partial_{\AOp}$ and $\partial = \partial_{\BOp}$
are the maps induced by the differential of the dg Hopf cooperads $\AOp$ and $\BOp$
inside the graded vector space $T = \Hom_{\g\Seq^c_R}(\AOp^{\flat}\otimes R,\BOp^{\flat}\otimes R)$.
\end{thm}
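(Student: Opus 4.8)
The plan is to read the formula $M(x)=\pi(\partial_{\BOp}\Phi(x)-\Phi(x)\partial_{\AOp})\iota$ as the coordinate expression of a homological vector field obtained by transporting the internal differential of a hom-complex along the parametrization $\Phi$ of Proposition~\ref{prop:pre_defcx}. First I would work on the ambient graded vector space $T=\Hom_{\g\Seq^c}(\AOp^{\flat},\BOp^{\flat})$ and equip it with the operator $\mathcal{D}(\psi)=[\partial,\psi]:=\partial_{\BOp}\psi-(-1)^{|\psi|}\psi\partial_{\AOp}$. Since $\partial_{\AOp}^2=\partial_{\BOp}^2=0$, the usual computation $[\partial,[\partial,\psi]]=0$ gives $\mathcal{D}^2=0$, so $\mathcal{D}$ is a linear homological (odd) vector field on the affine graded space $V(R)=T\hat{\otimes}R$, functorially in the graded commutative algebra $R$. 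My aim is to show that $M(x)=\pi\mathcal{D}(\Phi(x))\iota$ is exactly the pullback $\Phi^{*}(\mathcal{D}|_{S(R)})$ of $\mathcal{D}$ along the chart $\Phi\colon W(R)\xrightarrow{\simeq}S(R)$, viewed as a vector field on $W(R)=\galg\hat{\otimes}R$, where $S(R)\subset V(R)$ denotes the subvariety of graded Hopf cooperad morphisms.

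The real content of the argument is the tangency of $\mathcal{D}$ to $S(R)$. For $\psi\in S(R)$ a graded Hopf cooperad morphism, I would check by the Leibniz rule that $\mathcal{D}(\psi)=\partial_{\BOp}\psi-\psi\partial_{\AOp}$ is simultaneously a $\psi$-derivation for the arity-wise commutative algebra structure and a $\psi$-coderivation for the composition coproducts: this follows because $\partial_{\AOp}$ and $\partial_{\BOp}$ are themselves derivations and coderivations while $\psi$ respects both structures, so the two error terms cancel pairwise. Such simultaneous derivation-coderivations are precisely the tangent vectors to $S(R)$ at $\psi$, whence $\mathcal{D}(\psi)\in T_{\psi}S(R)$ and $\mathcal{D}|_{S(R)}$ is a well-defined vector field on $S(R)$, still squaring to zero because $(\mathcal{D}^2)|_{S(R)}=0$.

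Next I would identify $M$ with the pullback. From Proposition~\ref{prop:pre_defcx} we have the identity $\pi\Phi(x)\iota=x$; differentiating in $x$ yields $\pi\,D\Phi(x)(h)\,\iota=h$, so the map $D\mapsto\pi D\iota$ is a left inverse of $D\Phi(x)$ and therefore inverts the chart differential on the tangent space $T_{\Phi(x)}S(R)$. Combined with the tangency just established, this gives $M(x)=\pi\mathcal{D}(\Phi(x))\iota=\bigl(D\Phi(x)\bigr)^{-1}\mathcal{D}(\Phi(x))=\Phi^{*}(\mathcal{D}|_{S(R)})(x)$. Since the pullback of a homological vector field along the isomorphism $\Phi$ is again homological, $M$ satisfies $DM^{R}(x)(M^{R}(x))=0$, which by the schematic dictionary of \S\ref{sec:Linfinity}, equation~\eqref{equ:new Linfty}, is exactly the statement that the functorial collection $\{M^{R}\}_{R}$ encodes an $L_{\infty}$-structure on $\galg$; functoriality in $R$ is immediate from the construction.

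Finally, for the Maurer--Cartan identification I would use that $\pi(-)\iota$ is injective on $T_{\Phi(x)}S(R)$, so $M^{R}(x)=0$ if and only if $\mathcal{D}(\Phi(x))=0$, that is, if and only if $\Phi(x)$ commutes with the differentials and hence defines a morphism in $\dg\Hopf\Op^c_R$; together with the bijection of Proposition~\ref{prop:pre_defcx} this gives the asserted equality of sets. To treat a dg ground algebra $R$ I would replace $\mathcal{D}$ by $\mathcal{D}+[\delta_{R},-]$; the operator $1\otimes\delta_R$ is again a derivation-coderivation, so the same tangency and pullback arguments apply verbatim, and its linearization contributes precisely the internal differential added to $\mu_1$ in the convention of \S\ref{sec:schematic MC}. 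The main obstacle I anticipate is making the tangency step and the attendant ``pullback of vector fields'' rigorous in the pro-algebraic, filtered setting of Proposition~\ref{prop:pre_defcx}: one must run the same induction on the exhaustive filtration of $\MOp$, using the quasi-freeness of $\AOp$ and quasi-cofreeness of $\BOp$ together with the arity-wise finiteness hypotheses, to ensure that the tangent spaces $T_{\psi}S(R)$ are correctly described as simultaneous derivation-coderivations and that all series converge modulo each filtration layer. Once this is in place, both the $L_{\infty}$-relation and the Maurer--Cartan identification reduce to the sign-free formalism of \S\ref{sec:schematic Lie}.
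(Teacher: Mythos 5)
Your proposal is correct and is essentially the paper's own argument, recast in the language of homological vector fields: the paper likewise uses Proposition~\ref{prop:pre_defcx} over the dual numbers $\kk[\epsilon]/(\epsilon^2)$ to identify biderivations of $\psi=\Phi(x)$ (your tangent vectors to $S(R)$) with elements $h=\pi\theta\iota$ via $\theta=D\Phi(x)(h)$, observes that $\partial_{\BOp}\Phi(x)-\Phi(x)\partial_{\AOp}$ is such a biderivation, deduces $\partial_{\BOp}\Phi(x)-\Phi(x)\partial_{\AOp}=D\Phi(x)(M(x))$, and obtains $DM(x)(M(x))=0$ from $\partial_{\AOp}^2=\partial_{\BOp}^2=0$, with the Maurer--Cartan identification following exactly as in your last step. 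The only cosmetic difference is that the paper runs the chain-rule computation directly through the linear map $\pi(-)\iota$ (so no second-derivative term ever appears), rather than invoking the general fact that pullbacks of homological vector fields along isomorphisms are homological.
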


\begin{proof}
From the result of Proposition~\ref{prop:pre_defcx}, we also obtain that a biderivation $\theta: \AOp^{\flat}\rightarrow\BOp^{\flat}$
of a morphism of graded Hopf cooperads $\psi = \psi_f: \AOp^{\flat}\rightarrow\BOp^{\flat}$
is uniquely determined by a map such that:
\begin{equation*}
h := \pi\theta\iota\in\Hom_{\g\Seq^c}(\MOp,\NOp).
\end{equation*}
Indeed, giving such a biderivation amounts to giving a morphism of graded Hopf cooperads
of the form
$\psi+\epsilon\theta: \AOp^{\flat}\otimes R\rightarrow\BOp^{\flat}\otimes R$,
where we take $R = \kk[\epsilon]/(\epsilon^2)$.
To retrieve the biderivation $\theta = \theta_h$ from the corresponding map $h = \pi\theta\iota$,
we use the formula:
\begin{equation*}
\psi + \epsilon\theta = \Phi(x+\epsilon h) = \Phi(x) + \epsilon D\Phi(x)(h)\Rightarrow\theta = D\Phi(x)(h),
\end{equation*}
where $D\Phi$ is again the differential and $x$ is such that $\Phi(x) = \psi$.

Note that $\partial(\Phi(x)) = \partial_{\BOp}\Phi(x)-\Phi(x)\partial_{\AOp}$
is a biderivation of the map $\psi = \Phi(x)$.
Hence, from the above formula, we get the identity
\begin{equation*}
\partial_{\BOp}\Phi(x)-\Phi(x)\partial_{\AOp} = D\Phi(x)(M(x))
\end{equation*}
where $M(x)$ is the power series function defined in our theorem $M(x) = \pi(\delta_{\BOp}\Phi(x)- \Phi(x)\delta_{\AOp})\iota$.
From this identity, we obtain the vanishing relation:
\begin{align*}
DM(X)(M(X)) & = \pi(\partial_{\BOp}D\Phi(x)(M(x))- D\Phi(x)(M(x))\partial_{\AOp})\iota\\
& = \pi(\partial_{\BOp}(\partial_{\BOp}\Phi(x)-\Phi(x)\partial_{\AOp}) - (\partial_{\BOp}\Phi(x)-\Phi(x)\partial_{\AOp})\partial_{\AOp})\iota = 0,
\end{align*}
which proves that our power series does define an $L_{\infty}$-structure.

Then we just use that $M(x) = \pi\partial(\Phi(x))\iota$ fully determines the biderivation $\partial(\Phi(x)) = \partial_{\BOp}\Phi(x)-\Phi(x)\partial_{\AOp}$
to conclude that the Maurer--Cartan equation $M(x) = 0$ is equivalent to the relation $\partial_{\BOp}\Phi(x) - \Phi(x)\partial_{\AOp}$
in $\Mor_{\g\Seq^c_R}(\AOp^{\flat}\otimes R,\BOp^{\flat}\otimes R)$
and hence, to conclude that the set of solution of the Maurer--Cartan equation $M(x) = 0$
in $\Hom_{\g\Seq^c}(\MOp,\NOp)\hat{\otimes}R = \Mor_{\g\Seq^c_R}(\MOp\otimes R,\NOp\otimes R)$
is in bijection with the set of morphisms of graded Hopf cooperads $\psi = \Phi(x)$
that preserve the differentials.
The identity of the theorem $\MC(\galg\hat{\otimes}R) = \Mor_{\dg\Hopf\Op^c_R}(\AOp\otimes R,\BOp\otimes R)$
follows.
\end{proof}

\begin{defn}
We adopt the notation $\Def(\AOp,\BOp) = \Hom_{\g\Seq^c}(\MOp,\NOp)$ for the $L_{\infty}$-algebra of Theorem~\ref{thm:defcx}.
We may note that this $L_{\infty}$-algebra depends on structures, such as the choice of the graded symmetric sequence $\MOp$ and $\NOp$,
which we forget in this notation.
The important fact for us is the isomorphism of Theorem~\ref{thm:defcx}:
\begin{equation*}
\Mor_{\dg\Hopf\Op^c_R}(\AOp\otimes R,\BOp\otimes R)\simeq\MC(\Def(\AOp,\BOp)\hat{\otimes}R),
\end{equation*}
which holds for every graded commutative algebra $R$. This relation actually extends to the case where $R$ is a dg commutative algebra.
We then add the internal differential of our object $\delta: R\rightarrow R$ to the operation $\mu_1: \galg\rightarrow\galg$
of the $L_{\infty}$-algebra $\galg = \Def(\AOp,\BOp)$
when we form our set of Maurer--Cartan elements (see Paragraph~\ref{sec:schematic MC}).

Let $x\in\Def(\AOp,\BOp)$ be the Maurer--Cartan element that corresponds to a morphism of dg Hopf cooperads $\psi: \AOp\rightarrow\BOp$.
Then we use the notation
\begin{equation*}
\Def(\AOp\xrightarrow{\psi}\BOp) := \Def(\AOp,\BOp)^x
\end{equation*}
for the twisted $L_{\infty}$-algebra $\Def(\AOp,\BOp)^x$ (see Paragraph~\ref{sec:twisted_MC}).
This object can be identified with the complex of biderivations of the morphism $\psi$ in \cite[Section 3]{FTW}:
\begin{equation*}
\Def(\AOp\xrightarrow{\psi}\BOp)\simeq\BiDer(\AOp\xrightarrow{\psi}\BOp).
\end{equation*}
\end{defn}

\subsection{The differential of the deformation complex}\label{sec:explicit differential}
The $L_{\infty}$-algebra structure on the deformation complexes considered in the previous subsection is combinatorially relatively complicated.
Nevertheless, under some simplifying assumption that will be satisfied in our examples,
one can at least derive relatively explicit formulas for the differential $\partial = \mu_1$
of this $L_{\infty}$-algebra.

We fix a dg Hopf cooperad $\AOp = (S(\MOp),\partial)$ as in the last section and we consider a morphism of dg Hopf cooperads $\psi: \AOp\rightarrow\BOp$
with values in another non-negatively graded dg Hopf cooperad $\BOp$.
We then take a fibrant resolution of $\BOp$ given by the Hopf cooperadic $W$ construction $W^c(\BOp)$ (see \cite[Section 5.2]{FTW}).
Recall that $W^c(\BOp)$ is quasi-cofree as a cooperad.
In \cite[Section 5.2]{FTW}, the notation $\mathring{W}^c(\BOp)$ is used for a dg symmetric sequence
such that $W^c(\BOp)^{\flat} = \FreeOp^c(\Sigma\mathring{W}^c(\BOp)^{\flat})$.
We can actually identify this cogenerating dg symmetric sequence $\mathring{W}^c(\BOp)$
with the augmentation ideal of an augmented dg operad $\mathring{W}^c(\BOp)_1$
such that $W^c(\BOp) = B(\mathring{W}^c(\BOp)_1)$,
where we consider the operadic bar construction (see \cite[Lemma 5.4]{FTW}).
For simplicity, we generally omit the suspension in the expression of the cogenerating dg symmetric sequence
of the dg Hopf cooperad $W^c(\BOp)$
in what follows.
We consider the deformation complex such that:
\begin{equation*}
\Def(\AOp\xrightarrow{\psi}W^c(\BOp)) = (\Hom_{\g\Seq^c}(\MOp,\mathring{W}^c(\BOp)^{\flat}),\partial),
\end{equation*}
where we denote the composition of the map $\psi$ with the inclusion $\BOp\rightarrow W^c(\BOp)$
by $\psi$ as well, abusing notation.
The goal of this subsection is to make explicit the differential of this deformation complex.

Recall that the elements of this deformation complex $h\in\Hom_{\g\Seq^c}(\MOp,\mathring{W}^c(\BOp)^{\flat})$
correspond to biderivations $\theta = \theta_h$
of the morphism of dg Hopf cooperads $\psi: \AOp\rightarrow W^c(\BOp)$.
We have by definition:
\begin{equation*}
\partial(h) = \pi(\partial_{W^c(\BOp)}\theta_h - \pm\theta_h\partial_{\AOp})\iota
\end{equation*}
where $\pi: W^c(\BOp)^{\flat}\rightarrow\mathring{W}^c(\BOp)^{\flat}$ is the projection onto the cogenerators of the dg Hopf cooperad $W^c(\BOp)$
while $\iota: \MOp\rightarrow\AOp^{\flat}$ is the inclusion of the generators of the dg Hopf cooperad $\AOp$.
We set:
\begin{equation*}
\partial'(h) = \pi\partial_{W^c(\BOp)}\theta_h\iota
\quad\text{and}
\quad\partial''(h) = -\pm\pi\theta_h\partial_{\AOp}\iota
\end{equation*}
so that $\partial(h) = \partial'(h)+\partial''(h)$.
In what follows, we refer to $\partial'(h)$ as the operadic part of the differential $\partial(h)$
and to $\partial''(h)$ as the algebra part of the differential.

We get the following statement:

\begin{prop}\label{prop:explicit differential}
Let $x\in\MOp(r)$. Let $\circ_i^*(x) = \sum_{\circ_i^*(x)} x_1'\cdots x_m'\otimes x_1''\cdots x_n''$
denote the expansion of the composition coproducts
of this element in $\AOp(k)\otimes\AOp(l) = S(\MOp(k))\otimes S(\MOp(l))$.
We have the formula:
\begin{multline}\label{equ:dprime}
\partial'(h)(x) = \partial_{\mathring{W}^c(\BOp)} h(x)\\
+ \sum_{\circ_i^*(x)}\Bigl(\sum_{j=1}^m\pm(\psi(x_1')\cdots h(x_j')\cdots\psi(x_m'))\circ_i\pi\psi(x_1''\cdots x_n'')\Bigr.\\
+ \Bigl.\sum_{j=1}^l\pm\pi\psi(x_1'\cdots x_m')\circ_i(\psi(x_1'')\cdots h(x_j'')\cdots\psi(x_n''))\Bigr),
\end{multline}
where $\partial_{\mathring{W}^c(\BOp)}$ denotes the internal differential of the cogenerating dg symmetric sequence of the dg Hopf cooperad $W^c(\BOp)$,
while the external sum runs over all the composition coproducts or our element $x$
in the dg Hopf cooperad $\AOp$.
We then use that the image of our morphism $\psi$ lies in $\BOp\subset W^c(\BOp)$ and that the objects $\mathring{W}^c(\BOp)(r)$
inherit both the structure of a bimodule over the dg commutative algebras $\BOp(r)$
and operadic composition products $\circ_i: \mathring{W}^c(\BOp)(k)\otimes\mathring{W}^c(\BOp)(l)\rightarrow\mathring{W}^c(\BOp)(k+l-1)$
(we refer to~\cite[Section 5.2]{FTW}).

Let similarly $\partial_{\AOp}(x) = \sum x_1\cdots x_m$ denote the differential of our element $x\in\MOp(r)$
in the dg commutative algebra $\AOp(r) = (S(\MOp(r)),\partial_{\AOp})$.
We have the formula:
\begin{equation}\label{equ:dprimeprime}
\partial''(h)(x) = \sum\Bigl(\sum_{j=1}^m\pm\psi(x_1)\cdots h(x_j)\cdots\psi(x_m)\Bigr),
\end{equation}
where we again use the action of the dg commutative algebra $\BOp(r)$
on the graded vector space $\mathring{W}^c(\BOp)(r)$.
\end{prop}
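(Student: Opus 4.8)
The plan is to evaluate everything on a generator $x\in\MOp(r)$ (which is all that is needed, since the differential is computed after precomposition with $\iota:\MOp\hookrightarrow\AOp^{\flat}$) and to exploit the cofree cooperad decomposition $W^c(\BOp)^{\flat}(r)=\bigoplus_{T\in\Tree(r)}\bigotimes_{v\in VT}\mathring{W}^c(\BOp)(r_v)$ together with the fact that $W^c(\BOp)=B(\mathring{W}^c(\BOp)_1)$ is a bar construction. First I would decompose the differential as $\partial_{W^c(\BOp)}=\partial_{\mathrm{int}}+\partial_{\mathrm{bar}}$, where $\partial_{\mathrm{int}}$ is the internal differential induced by $\partial_{\mathring{W}^c(\BOp)}$ on each tensor factor and $\partial_{\mathrm{bar}}$ is the twisting part that contracts a single internal edge of a tree by means of the operadic composition products $\circ_i$ of $\mathring{W}^c(\BOp)$. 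Since $\partial_{\mathrm{int}}$ preserves the shape of trees while $\partial_{\mathrm{bar}}$ lowers the number of vertices by one, the projection $\pi$ onto the cogenerators (the corolla summand) only retains the action of $\partial_{\mathrm{int}}$ on the one-vertex component of $\theta_h(x)$ and the action of $\partial_{\mathrm{bar}}$ on its two-vertex components. This reduction is the conceptual heart of the computation.

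Next I would make these two components explicit. By the recursion of Proposition~\ref{prop:pre_defcx}, the corolla component of $\theta_h(x)$ equals $h(x)$, so $\pi\partial_{\mathrm{int}}\theta_h(x)=\partial_{\mathring{W}^c(\BOp)}h(x)$, which produces the first term of~\eqref{equ:dprime}. For the two-vertex trees, the same recursion expresses the component indexed by the composition coproduct $\circ_i^*$ as $\sum\pi\psi(x_1'\cdots x_m')\otimes\pi\psi(x_1''\cdots x_n'')$, and linearising the assignment $f\mapsto\psi_f$ (that is, passing to $\theta_h=D\Phi(x)(h)$ as in the proof of Theorem~\ref{thm:defcx}) replaces exactly one of the two tensor factors $\psi$ by $\theta_h$. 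Using that $\theta_h$ is a derivation for the arity-wise commutative algebra structure and that $\psi$ takes values in $\BOp\subset W^c(\BOp)$, each such factor unfolds as $\pi\psi(x_1'\cdots x_m')$ with one letter $\psi(x_j')$ replaced by $h(x_j')$; here I would invoke the compatibility of $\pi$ with the $\BOp(r)$-bimodule structure on $\mathring{W}^c(\BOp)(r)$ recorded in~\cite[Section 5.2]{FTW} in order to move $\pi$ inside the product.

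Applying $\partial_{\mathrm{bar}}$ to these two-vertex components contracts the connecting edge through the operad composition $\circ_i$ of $\mathring{W}^c(\BOp)$, which turns $(\text{upper vertex})\otimes(\text{lower vertex})$ into $(\text{upper})\circ_i(\text{lower})$. Summing over which of the two vertices carries the factor $h$ --- over the letters $x_j'$ of the upper coproduct factor and over the letters $x_j''$ of the lower one --- then yields precisely the two sums of~\eqref{equ:dprime}. The algebra part $\partial''(h)$ is simpler: since $\partial_{\AOp}(x)=\sum x_1\cdots x_m$ stays in arity $r$, only the derivation property of $\theta_h$ inside the single $\BOp(r)$-module $\mathring{W}^c(\BOp)(r)$ intervenes, and projecting gives~\eqref{equ:dprimeprime} directly, with no edge contraction involved.

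The routine but delicate part will be the sign bookkeeping, which I would organise by carrying the Koszul signs through the desuspension $\Sigma$ omitted from the notation and through the reordering of the commutative-algebra factors; these are exactly the signs abbreviated by $\pm$ in the statement. The only genuinely structural input beyond Proposition~\ref{prop:pre_defcx} is the identification of the two pieces $\partial_{\mathrm{int}}$ and $\partial_{\mathrm{bar}}$ of the differential of the bar construction $W^c(\BOp)$, together with the bimodule and operad-composition compatibilities of the projection $\pi$, all of which are supplied by the description of $W^c(\BOp)$ in~\cite[Section 5.2, Lemma 5.4]{FTW}.
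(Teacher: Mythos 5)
Your proposal is correct and follows essentially the same route as the paper's proof: both compute the biderivation $\theta_h$ via the recursion of Proposition~\ref{prop:pre_defcx}, exploit that $\psi$ factors through $\BOp\subset W^c(\BOp)$ so that $\pi$ commutes with multiplication by elements $\psi(\alpha)$ (the paper's relation~\eqref{equ:prod_simple}), unfold $\pi\theta_h$ on products by the derivation property, and use the bar-construction description of $W^c(\BOp)$ to identify the projected differential. Your explicit splitting of $\partial_{W^c(\BOp)}$ into internal and edge-contraction parts, with the observation that only the one- and two-vertex tree components survive the projection $\pi$, is exactly the reasoning the paper compresses into the step ``we use that $W^c(\BOp)$ is identified with a bar construction on $\mathring{W}^c(\BOp)$.''
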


\begin{proof}
The main combinatorial difficulty is computing the biderivation $\theta = \theta_h$ such that $\pi\theta\iota = h$.
This can be done by following the proof of Proposition \ref{prop:pre_defcx},
albeit with the simplification that we are working with biderivations
as opposed to Hopf cooperad morphisms.

The key observation that makes $\theta$ explicitly computable is that our morphism $\psi$ factors through the map $\BOp\rightarrow W^c(\BOp)$.
Equivalently, the elements $\psi(\alpha)\in W^c(\BOp)$
correspond to decorated trees whose edges are labelled by the trivial differential form $1\in\Omega^0(\Delta^1)$
(we refer to the definition of the object $W^c(\BOp)$ in~\cite[Section 5.2]{FTW}).
This observation implies that we have the commutation relation with respect to the product in $W^c(\BOp)(r)$:
\begin{equation}\label{equ:prod_simple}
\pi(\psi(\alpha)\cdot\omega) = \psi(\alpha)\cdot\pi(\omega),
\end{equation}
for all $\alpha\in\AOp(r)$ and $\omega\in W^c(\BOp)(r)$,
where on the right-hand side, we consider the action of the dg commutative algebra $\BOp(r)$ on $\mathring{W}^c(\BOp)(r)$.

In a first step, we determine the composite map $\pi\theta$ from $h = \pi\theta\iota$.
For $x_1,\dots,x_m\in\MOp(r)$, we merely get:
\begin{equation}\label{equ:proj_derivation}
(\pi\theta)(x_1\cdots x_m)
\begin{aligned}[t]
& = \pi(\sum_i\pm \psi(x_1)\cdots\theta(x_i)\cdots\psi(x_m))\\
& = \sum_i\pm\psi(x_1)\cdots\pi\theta(x_i)\cdots\psi(x_m)\\
& = \sum_i \pm \psi(x_1)\cdots h(x_i)\cdots\psi(x_m)
\end{aligned}
\end{equation}
using that $\theta$ is a derivation with respect to the product and relation~\eqref{equ:prod_simple}.

Then, for $\alpha\in\AOp(r)$, we can compute the components $\theta_T(\alpha)\in\FreeOp_T^c(\mathring{W}^c(\BOp))$
of the element $\theta(\alpha)\in W^c(\BOp)$
in the summands of the cofree cooperad $W^c(\BOp)^{\flat} = \FreeOp^c(\mathring{W}^c(\BOp))$
by:
\begin{equation}\label{equ:expression_coderivation}
\theta_T(\alpha) = \sum_i\pm(\pi\psi\otimes\cdots\otimes\pi\theta\otimes\cdots\otimes\pi\psi)\Delta_T(\alpha),
\end{equation}
where we take the reduced treewise composition coproduct of our element $\Delta_T(\alpha)\in\FreeOp_T^c(\bar{\AOp})(r)$
(as in the proof of Proposition~\ref{prop:pre_defcx}),
we apply the map $\pi\theta$ to one factor of this treewise tensor product
and the map $\pi\psi$ to the other factors.

For an element $\alpha = x\in\MOp(r)$ such that $\partial_{\AOp}(x) = \sum x_1\cdots x_m$, the application of the formula~\eqref{equ:proj_derivation}
to $\partial''(h)(x) = \pi\theta(\partial_{\AOp}x)$
gives the formula of the proposition, \emph{i.e.}~\eqref{equ:dprimeprime}.
For an element $\alpha = x\in\MOp(r)$ such that $\circ_i^*(x) = \sum_{\circ_i^*(x)} x_1'\cdots x_m'\otimes x_1''\cdots x_n''$,
we use that $W^c(\BOp)$
is identified with a bar construction on $\mathring{W}^c(\BOp)$,
to get an identity
\begin{multline}
\partial'(h)(x) = \partial_{\mathring{W}^c(\BOp)}(x) + \sum_{\circ_i^*(x)}\Bigl(\pm\pi\theta(x_1'\cdots x_m')\circ_i\pi\psi(x_1''\cdots x_n'')\Bigr.\\
+ \Bigl.\pm\pi\psi(x_1'\cdots x_m')\circ_i\pi\theta(x_1''\cdots x_n'')\Bigr)
\end{multline}
and we apply the derivation formula~\eqref{equ:proj_derivation} again to obtain the expression of the proposition, \emph{i.e.}~\eqref{equ:dprime}.
\end{proof}

Note that the deformation complexes discussed in this subsection are just variants of the deformation bicomplexes of dg Hopf cooperads
described in \cite[Section 1.3-1.4]{FW}.

\subsection{Construction: dg Lie actions and automorphisms}
\label{sec:dgLieactions}
We now assume that $\galg$ is a pro-nilpotent dg Lie algebra, equipped with a complete filtration
\begin{equation*}
\galg = F^1\galg\supset F^2\galg\supset\cdots\supset F^s\galg\supset\cdots
\end{equation*}
such that
\begin{equation*}
[F^s\galg,F^t\galg]\subset F^{s+t}\galg,
\end{equation*}
for any $s,t\geq 1$.
To such a dg Lie algebra $\galg$, we associate the discrete group of degree zero cocycles:
\begin{equation*}
Z^0(\galg) = \{\xi\in\galg^0|\delta(\xi)=0\}
\end{equation*}
with the Baker-Campbell-Hausdorff product as the group multiplication, so that we have $\exp(x*y) = \exp(x)\exp(y)$.
We more generally set:
\begin{equation*}
Z_{\bullet}(\galg) := Z(\galg\hat{\otimes}\Omega^*(\Delta^{\bullet})),
\end{equation*}
where we take the completed tensor product of our dg Lie algebra $\galg$ with the simplicial dg commutative algebra $R = \Omega^*(\Delta^{\bullet})$.
This construction returns a simplicial group naturally associated to $\galg$.

We have the following observation:

\begin{prop}
Let $\AOp$ be a dg Hopf cooperad on which $\galg$ acts by biderivations such that for each element $\alpha\in\AOp(r)$,
we have $\ad(F^s\galg)(\alpha) = 0$ for $s\gg 1$,
where `$\ad$' denotes the action of the dg Lie algebra $\galg$
on $\AOp$.
Then we have a map of simplicial monoids
\begin{equation}\label{equ:ZtoAut}
Z_{\bullet}(\galg)\rightarrow\Map(\AOp,\AOp)^{\htimes} =
\Mor_{\dg\Hopf\Op^c_{\Omega^*(\Delta^{\bullet})}}(\AOp\otimes\Omega^*(\Delta^{\bullet}),\AOp\otimes \Omega^*(\Delta^{\bullet}))^{\htimes},
\end{equation}
which carries any cocycle $\xi\in Z_{\bullet}(\galg)$
to the morphism $\exp\ad(\xi): \AOp\otimes\Omega^*(\Delta^{\bullet})\rightarrow\AOp\otimes\Omega^*(\Delta^{\bullet})$
such that:
\begin{equation}
\exp\ad(\xi)(\alpha) := \sum_{k=0}^\infty \frac{1}{k!}\ad(\xi)^k(\alpha),
\end{equation}
for any $\alpha\in\AOp(r)\otimes\Omega^*(\Delta^{\bullet})$.
\end{prop}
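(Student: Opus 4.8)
The plan is to verify, in order, that $\xi\mapsto\exp\ad(\xi)$ lands in $\Map(\AOp,\AOp)^{\htimes}$, that it is a simplicial map, and that it intertwines the Baker--Campbell--Hausdorff multiplication on $Z_{\bullet}(\galg)$ with the composition of morphisms. First I would fix a cocycle $\xi\in Z(\galg\hat\otimes\Omega^*(\Delta^n))^0$ and make sense of the operator $\exp\ad(\xi)$. The nilpotence hypothesis $\ad(F^s\galg)(\alpha)=0$ for $s\gg1$, together with the completeness of the filtration on $\galg$, is exactly what guarantees that $\ad(\xi)$ acts locally nilpotently on $\AOp\otimes\Omega^*(\Delta^n)$, so that for each $\alpha$ the series $\sum_{k\geq0}\tfrac{1}{k!}\ad(\xi)^k(\alpha)$ reduces to a finite sum and defines an $\Omega^*(\Delta^n)$-linear endomorphism of $\AOp\otimes\Omega^*(\Delta^n)$.

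Next I would check that this endomorphism is a morphism of dg Hopf cooperads over $\Omega^*(\Delta^n)$. Since $\xi$ acts by a biderivation, $\ad(\xi)$ is simultaneously a derivation for the arity-wise commutative products and a coderivation for the cooperadic composition coproducts; the exponential of a derivation is an algebra morphism and the exponential of a coderivation is a cooperad morphism, so $\exp\ad(\xi)$ preserves the whole Hopf cooperad structure, the only remaining point being compatibility with the differential. This is where the cocycle condition is used: writing $\partial$ for the total differential on $\galg\hat\otimes\Omega^*(\Delta^n)$ and using that $\ad$ is a morphism of dg Lie algebras, one has $[\partial,\ad(\xi)]=\ad(\partial\xi)=0$ because $\xi$ is closed of degree zero, whence $\exp\ad(\xi)$ is a chain map. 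Thus $\exp\ad(\xi)\in\Map(\AOp,\AOp)_n$, and since $\exp\ad(-\xi)$ is a strict two-sided inverse, $\exp\ad(\xi)$ is an isomorphism, a fortiori homotopy invertible, so it lands in the subspace $\Map(\AOp,\AOp)^{\htimes}$.

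It then remains to treat the simplicial and monoidal compatibilities, which I expect to be routine. Simpliciality is naturality in the form variable: the operator $u^*$ associated with $u\colon\underline m\to\underline n$ acts through $\id\otimes u^*$, and since the $\galg$-action and the exponential are natural, $u^*\exp\ad(\xi)=\exp\ad(u^*\xi)$, matching the simplicial structure on $Z_{\bullet}(\galg)$. For the monoid structure I would use that $\ad\colon\galg\rightarrow\Der(\AOp)$ is a morphism of complete filtered Lie algebras, so that it intertwines the Baker--Campbell--Hausdorff products; combined with the operator identity $\exp(a)\exp(b)=\exp(a\ast b)$ this yields $\exp\ad(\xi)\circ\exp\ad(\eta)=\exp\ad(\xi\ast\eta)$, while $\exp\ad(0)=\id$ sends the unit to the identity morphism. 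The main obstacle is the very first step: one must argue carefully that the layer-wise vanishing hypothesis on the $\galg$-action forces the operator $\ad(\xi)$ to be locally nilpotent (equivalently, that the exponential series terminates on each element), since $\AOp\otimes\Omega^*(\Delta^n)$ need not be complete and convergence cannot be taken for granted; everything else follows from standard formal properties of exponentials of (co)derivations and from the naturality of the construction.
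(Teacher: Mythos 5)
Your proposal takes essentially the same route as the paper's own proof, which is only a sketch: the paper likewise notes that the exponential converts the derivation relation for the products and the coderivation relation for the composition coproducts into the statement that $\exp\ad(\xi)$ is a morphism of dg Hopf cooperads, and it obtains membership in $\Map(\AOp,\AOp)^{\htimes}$ from the fact that $Z_{\bullet}(\galg)$ is a simplicial group, so that $\exp\ad(\xi)$ has the strict inverse $\exp\ad(-\xi)$. Your additional verifications --- the chain-map property via $[\partial,\ad(\xi)]=\ad(\partial\xi)=0$, simpliciality via naturality in the form variable, and the Baker--Campbell--Hausdorff compatibility $\exp\ad(\xi)\circ\exp\ad(\eta)=\exp\ad(\xi\ast\eta)$ --- are exactly the ``standard arguments'' the paper leaves implicit, and they are correct.

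The one point that does not hold up is the step you yourself flag as the main obstacle. The hypothesis $\ad(F^s\galg)(\alpha)=0$ for $s\gg1$ guarantees that $\ad(\xi)$ is a \emph{well-defined} operator (only finitely many filtration layers of $\xi$ act nontrivially on a given element), but it does not by itself force $\ad(\xi)$ to be locally nilpotent: nothing controls the vanishing threshold of $\ad(\xi)(\alpha)$ in terms of that of $\alpha$, and the hypothesis is even vacuous when $F^2\galg=0$. Concretely, take $\galg=\QQ D$ abelian with $F^2\galg=0$, where $D$ is the Euler (grading) biderivation of a dg Hopf cooperad with zero differential, say $\AOp=H^*(\DOp_n)$; the hypothesis is satisfied, $D$ is a closed degree-zero biderivation, yet $\exp(D)(\alpha)=\bigl(\sum_{k}(\deg\alpha)^k/k!\bigr)\,\alpha$ is meaningless over $\QQ$. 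What actually makes the series terminate in the intended application is a stronger property of the action: $\GC_n^2$ acts on $\Graphs_n^2$ by contracting subgraphs, so each application of $\ad(\xi)$ strictly lowers the number of edges, and more generally one should require that the action lowers an exhaustive, bounded-below filtration of $\AOp$. To be fair, this imprecision is inherited from the paper, whose statement and proof pass over convergence in silence; your attempt is more careful in raising the issue, but the justification you offer (``the stated hypothesis is exactly what guarantees local nilpotence'') is not literally correct and would need to be replaced by such a strengthened hypothesis.
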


\begin{proof}
This proposition follows from standard arguments. In brief, we mainly use that the exponential converts the derivation relation
with respect to the product $\ad(\xi)(\alpha\cdot\beta) = \ad(\xi)(\alpha)\cdot\beta + \alpha\cdot\ad(\xi)(\beta)$
into the product formula $\exp\ad(\xi)(\alpha\cdot\beta) = \exp\ad(\xi)(\alpha)\cdot\exp\ad(\xi)(\beta)$,
and similarly when we deal with the coderivation relation
with respect to the composition coproducts
of our our cooperad structure.
Recall that we use the superscript `$\htimes$' to denote the sum of the connected components
that are associated to weak-equivalences in the mapping space $\Map(-)$.
We trivially have $\exp\ad(\xi)\in\Map(\AOp,\AOp)^{\htimes}$, for any $\xi\in Z_{\bullet}(\galg)$, since the object $Z_{\bullet}(\galg)$
forms a simplicial group.
\end{proof}

Let us also note that the simplicial group $Z_{\bullet}(\galg)$ acts on the simplicial operad $G(\AOp)$
by essentially the same formula as in the proposition,
where we consider the simplicial realization functor $G: \dgHOpc\rightarrow\sSetOp^{op}$
of our adjunction between dg Hopf cooperads
and operads in simplicial sets.

\begin{rem}
Note that the dg Lie algebra $\galg$ and the dg Hopf cooperad $\AOp$ may be (cohomologically) $\ZZ$-graded.
Below we shall also need to pass to non-negatively graded dg Hopf cooperads.
To this end, we consider the truncation functor $\tau$ on dg Lie algebras such that
\begin{equation*}
(\tau\galg)^k = \begin{cases} Z^0(\galg) & \text{for $k=0$}, \\
\galg^k & \text{for $k<0$}.
\end{cases}
\end{equation*}
We easily check that the (non-positively graded) dg Lie algebra $\tau\galg$ acts on the non-negatively graded dg Hopf cooperad $\tau_{\sharp}\AOp$
by biderivations. (We refer to the Appendix for the definition of the truncation functor $\tau_{\sharp}$ on dg Hopf cooperads.)
We may hence apply the construction of the above proposition to obtain a map of simplicial monoids
\begin{equation*}
Z_{\bullet}(\tau\galg) = Z_{\bullet}(\galg)\rightarrow\Map_{\bullet}(\tau_{\sharp}\AOp,\tau_{\sharp}\AOp)^{\htimes}.
\end{equation*}
\end{rem}

\subsection{On dg Lie actions and $L_{\infty}$-morphisms}\label{sec:Def with Lie action}
Let now $\AOp$ and $\BOp$ be a pair of dg Hopf cooperads. The simplicial monoid $\Map(\AOp,\AOp)$ acts on the simplicial set $\Map(\AOp,\BOp)$.
For a dg Lie algebra as in the previous section $\galg$, we hence obtain an action of $Z_{\bullet}(\galg)$ on $\Map(\AOp,\BOp)$
by pullback of the morphism \eqref{equ:ZtoAut}.

If we fix a morphism of dg Hopf cooperads $\psi: \AOp\rightarrow\BOp$, then we obtain a map of simplicial sets
\begin{equation}\label{equ:ZtoMapCD}
Z_{\bullet}(\galg)\rightarrow\Map(\AOp,\BOp).
\end{equation}
If $\AOp$ and $\BOp$ satisfy the conditions of Section~\ref{sec:defcomplex}, then we may write:
\begin{equation*}
\Map(\AOp,\BOp) = \MC_{\bullet}(\Def(\AOp,\BOp)) = \MC_{\bullet}(\Def(\AOp\xrightarrow{\psi}\BOp)),
\end{equation*}
where we consider the Maurer--Cartan space associated to the $L_{\infty}$-algebra $\Def(\AOp\xrightarrow{\psi}\BOp)$ (see Paragraph~\ref{sec:schematic MC}).
Furthermore, we have an identity
\begin{equation*}
Z_{\bullet}(\galg) = \MC_{\bullet}(\galg[-1]),
\end{equation*}
where we consider the shifted graded vector space $\galg[-1]^* = \galg^{*-1}$
which we regard as an abelian $L_{\infty}$-algebra.
We now have the following proposition:

\begin{prop}\label{prop:U Def trivialization}
The above map \eqref{equ:ZtoMapCD} is identified with the map of Maurer--Cartan spaces
\begin{equation*}
Z_{\bullet}(\galg) = \MC_{\bullet}(\galg[-1])\rightarrow\MC_{\bullet}(\Def(\AOp,\BOp))
\end{equation*}
induced by an $L_{\infty}$-morphism $U : \galg[-1]\rightarrow\Def(\AOp\xrightarrow{\psi}\BOp)$,
which, in the language of Section~\ref{sec:schematic Lie}, we associate to the power series function such that
\begin{equation*}
U(\xi) = \pi\psi(\exp\ad(\xi)-\id)\iota,
\end{equation*}
for $\xi\in\galg[-1]\otimes R$, where the letter $\iota$ denotes the inclusion of generators into $\AOp$
and $\pi$ denotes the projection onto cogenerators of the dg Hopf cooperad $\BOp$.
\end{prop}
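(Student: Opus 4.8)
The plan is to work entirely inside the sign-free schematic formalism of Section~\ref{sec:schematic Lie}, so that the whole statement reduces to checking that the explicit power series $U(\xi)=\pi\psi(\exp\ad(\xi)-\id)\iota$ is a well-defined functorial series satisfying the $L_\infty$-morphism equation~\eqref{equ:new Linfty mor}. Recall that $\galg[-1]$ is regarded as an abelian $L_\infty$-algebra, so its structure series is simply $M_{\galg[-1]}(\xi)=\delta(\xi)$ (the internal differential), and $\MC_\bullet(\galg[-1])=Z_\bullet(\galg)$ as stated. On the target side, the twisting relation~\eqref{equ:twisted M} gives $M_{\Def(\AOp\xrightarrow{\psi}\BOp)}(z)=M_{\Def(\AOp,\BOp)}(z+x_\psi)$, where $x_\psi=\pi\psi^\flat\iota$ is the Maurer--Cartan element attached to $\psi$ by Theorem~\ref{thm:defcx}. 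Convergence of $\exp\ad(\xi)$, hence well-definedness of $U$ as a functorial power series valued in $\Def(\AOp\xrightarrow{\psi}\BOp)$, follows from the local nilpotence hypothesis $\ad(F^s\galg)(\alpha)=0$ for $s\gg 1$.

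I would first check that, under these identifications, the map~\eqref{equ:ZtoMapCD} is the function $\xi\mapsto U(\xi)$. A morphism $\phi\colon\AOp\otimes R\to\BOp\otimes R$ corresponds under Theorem~\ref{thm:defcx} to the Maurer--Cartan element $\pi\phi^\flat\iota$ of $\Def(\AOp,\BOp)$, hence to $\pi\phi^\flat\iota-x_\psi$ after twisting by $x_\psi$. The map~\eqref{equ:ZtoMapCD} sends $\xi$ to the morphism $\psi\circ\exp\ad(\xi)$ obtained by letting $\exp\ad(\xi)\in\Map(\AOp,\AOp)^{\htimes}$ act on $\psi$ by precomposition, with underlying graded morphism $\psi^\flat\exp\ad(\xi)$. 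Here I use the uniqueness part of Proposition~\ref{prop:pre_defcx}: both $\psi^\flat\exp\ad(\xi)$ and $\Phi(\pi\psi^\flat\exp\ad(\xi)\iota)$ are morphisms of graded Hopf cooperads with the same restriction to generators, so they agree. Its associated twisted Maurer--Cartan element is therefore $\pi\psi^\flat\exp\ad(\xi)\iota-x_\psi=\pi\psi^\flat(\exp\ad(\xi)-\id)\iota=U(\xi)$, as claimed.

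It then remains to verify~\eqref{equ:new Linfty mor}, that is $DU(\xi)(\delta\xi)=M_{\Def(\AOp\xrightarrow{\psi}\BOp)}(U(\xi))$. Since $U(\xi)+x_\psi=\pi\psi^\flat\exp\ad(\xi)\iota$, the right-hand side equals $M_{\Def(\AOp,\BOp)}(\pi\psi^\flat\exp\ad(\xi)\iota)$, which by Theorem~\ref{thm:defcx} together with $\Phi(\pi\psi^\flat\exp\ad(\xi)\iota)=\psi^\flat\exp\ad(\xi)$ is $\pi(\partial_\BOp\,\psi^\flat\exp\ad(\xi)-\psi^\flat\exp\ad(\xi)\,\partial_\AOp)\iota$. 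As $\psi$ is a chain map, $\partial_\BOp\psi^\flat=\psi^\flat\partial_\AOp$, so this simplifies to $\pi\psi^\flat\,[\partial_\AOp,\exp\ad(\xi)]\,\iota$, whereas the left-hand side is $\pi\psi^\flat\,D(\exp\ad)(\xi)(\delta\xi)\,\iota$. Thus the entire proposition comes down to the single operator identity
\[
D(\exp\ad)(\xi)(\delta\xi)=[\partial_\AOp,\exp\ad(\xi)]
\]
on $\AOp^\flat\otimes R$.

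The main obstacle is exactly this last identity, which is also the only place where the hypotheses on the action genuinely enter. I expect to prove it from the compatibility of the action with the differentials: because $\galg$ acts on $\AOp$ by biderivations in the differential graded sense, the map $\ad\colon\galg\to\BiDer(\AOp)$ is a chain map, giving $[\partial_\AOp,\ad(\xi)]=\ad(\delta\xi)$ on $\AOp^\flat\otimes R$. Expanding $\exp\ad(\xi)=\sum_n\frac{1}{n!}\ad(\xi)^n$ and applying the graded Leibniz rule for $[\partial_\AOp,-]$ telescopes $[\partial_\AOp,\ad(\xi)^n]$ into $\sum_k\ad(\xi)^k\,\ad(\delta\xi)\,\ad(\xi)^{n-1-k}$, which is precisely the term-by-term directional derivative $D(\exp\ad)(\xi)(\delta\xi)$, using $D\ad(\xi)(h)=\ad(h)$ by linearity of $\ad$. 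The sign bookkeeping in this telescoping is exactly what the sign-free form of~\eqref{equ:new Linfty} and~\eqref{equ:new Linfty mor} is designed to absorb, so I anticipate the computation to be routine once the chain-map identity is in hand. The resulting functorial power series then defines a genuine $L_\infty$-morphism by the one-to-one correspondence of Paragraph~\ref{sec:Linfinity}, and by the second paragraph the induced map on Maurer--Cartan spaces is the map~\eqref{equ:ZtoMapCD}, which completes the argument.
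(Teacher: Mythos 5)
Your proof is correct and follows essentially the same route as the paper: both reduce the statement to verifying the $L_{\infty}$-morphism equation \eqref{equ:new Linfty mor} in the sign-free formalism, using the twisting relation \eqref{equ:twisted M}, the identity $\Phi(\pi\psi e^{\ad(\xi)}\iota)=\psi e^{\ad(\xi)}$ coming from the uniqueness clause of Proposition~\ref{prop:pre_defcx}, the chain-map property $\partial_{\BOp}\psi=\psi\partial_{\AOp}$, and the identification of the commutator of $\partial_{\AOp}$ with $\exp\ad(\xi)$ as the directional derivative $D\exp\ad(\xi)(\delta\xi)$. You merely make explicit two points the paper treats as automatic — the identification of \eqref{equ:ZtoMapCD} with $\xi\mapsto U(\xi)$ and the telescoping proof of $[\partial_{\AOp},\exp\ad(\xi)]=D\exp\ad(\xi)(\delta\xi)$ from $[\partial_{\AOp},\ad(\xi)]=\ad(\delta\xi)$ — and your ordering $\partial_{\AOp}e^{\ad(\xi)}-e^{\ad(\xi)}\partial_{\AOp}$ of that commutator is the algebraically consistent one (the paper's display has the opposite order, a harmless sign slip absorbed by the sign-free conventions).
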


\begin{proof}
This proposition is immediate. Though this property is automatic, we just check the $L_{\infty}$-equations in the form~\eqref{equ:new Linfty mor}
for the morphism given in this proposition:
\begin{align*}
M_{\Def(\AOp\xrightarrow{\psi}\BOp)}(U(\xi)) & = \pi(\partial_{\BOp}\Phi(\pi\psi e^{\ad(\xi)}\iota)
- \Phi(\pi\psi e^{\ad(\xi)}\iota)\partial_{\AOp})\iota \\
& = \pi(\partial_{\BOp}\psi e^{\ad(\xi)} -  \psi e^{\ad(\xi)}\partial_{\AOp})\iota \\
& = \pi(\psi(\underbrace{e^{\ad(\xi)}\partial_{\AOp} - \partial_{\AOp}e^{\ad(\xi)}}_{= D\exp\ad(\xi)(\delta\xi)})
+ \underbrace{(\partial_{\BOp}\psi - \psi\partial_{\AOp})}_{= 0} e^{\ad(\xi)})\iota \\
& = D U(\xi)(\delta\xi) = D U(\xi)(M_{\galg[-1]}(\xi)).
\end{align*}
Here the power series of the left-hand side $M_{\Def(\AOp\xrightarrow{\psi}\BOp)}(-)$,
which encodes the $L_{\infty}$-structure on $\Def(\AOp\xrightarrow{\psi}\BOp)$,
is the one from Theorem~\ref{thm:defcx},
twisted via \eqref{equ:twisted M}.
This computation shows tat the map $U$ does define an $L_{\infty}$-morphism.
\end{proof}

\subsection{A version of the Goldman-Millson Theorem}
To any $L_{\infty}$-morphism of filtered complete $L_{\infty}$-algebras $U: \galg\rightarrow\halg$,
we associate a map of Maurer--Cartan spaces $\MC_{\bullet}(U): \MC_{\bullet}(\galg)\rightarrow\MC_{\bullet}(\halg)$.
The Goldman-Millson Theorem asserts that this map is a weak-equivalence
when $U$ induces a quasi-isomorphism on the associated graded
(see~\cite{DolgushevRogers}).
We shall need a slight generalization of this result, due to S. Schwarz:

\begin{thm}[{\cite{Schwarz}}]\label{thm:GM}
Let $\galg$ and $\halg$ be $L_{\infty}$-algebras with complete descending filtrations
\begin{equation*}
\galg = F^1\galg\supset F^2\galg\supset\cdots\quad\text{and}\quad\halg = F^1\halg\supset F^2\halg\supset\cdots
\end{equation*}
which we assume to be compatible with the $L_{\infty}$-structure (as in Section~\ref{sec:Linfinity}).
Let $U: \galg\rightarrow\halg$ be an $L_{\infty}$ morphism, compatible with the filtrations,
and assume that $U$ induces a quasi-isomorphism on the second page
of the spectral sequences (see below).
Suppose further that the following cohomologies vanish:
\begin{equation}\label{equ:vancond}
H^1(\galg/F^2\galg) = 0,\quad H^1(\halg/F^2\halg) = 0,\quad H^0(\halg/F^2\halg) = 0.
\end{equation}
Then the map $U$ induces a weak equivalence on the Maurer--Cartan-spaces:
\begin{equation}\label{equ:GM}
\MC_{\bullet}(U): \MC_{\bullet}(\galg)\xrightarrow{\sim}\MC_{\bullet}(\halg).
\end{equation}
\end{thm}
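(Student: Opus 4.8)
The plan is to establish the weak equivalence~\eqref{equ:GM} by analysing the homotopy groups of both Maurer--Cartan spaces directly, rather than by comparing the two towers of nilpotent quotients stage by stage. The reason for this choice is that, since the filtrations are complete, we have towers of Kan fibrations $\MC_\bullet(\galg) = \lim_s\MC_\bullet(\galg/F^{s+1}\galg)$ and $\MC_\bullet(\halg) = \lim_s\MC_\bullet(\halg/F^{s+1}\halg)$, and the fibre of each transition map is an abelian Maurer--Cartan space built from the graded piece $\mathrm{gr}^s$ of the filtration; under the mere $E_2$-hypothesis the map $U$ need not induce equivalences on these fibres, so a naive stage-by-stage comparison of the two towers fails. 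Instead, I would use the standard identification $\pi_n(\MC_\bullet(\galg),m)\simeq H^{1-n}(\galg^m)$ for $n\geq 1$ and every basepoint $m\in\MC(\galg)$, where $\galg^m$ denotes the twisting of $\galg$ at $m$ (Paragraph~\ref{sec:twisted_MC}), and thereby reduce the higher homotopy groups to a quasi-isomorphism statement about twisted complexes.

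For the higher homotopy groups the key point is a \emph{twist-invariance lemma}: if $U$ is a quasi-isomorphism on $E_2$, then so is the twisted morphism $U^m\colon\galg^m\rightarrow\halg^{U(m)}$, for every $m\in\MC(\galg)$. Since $m$ lies in $F^1\galg$, twisting perturbs the differential by terms that strictly raise the filtration, so it leaves the $E_0$-page and its differential unchanged and modifies the $E_1$-differential only by the bracket with the leading class of $m$. I would check that the comparison $E_1(U)$ intertwines these modified $E_1$-differentials, so that the perturbation is itself filtered and a secondary (perturbation-type) spectral-sequence argument propagates the $E_2$-quasi-isomorphism from the untwisted to the twisted complexes. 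Convergence of the spectral sequences of complete filtered complexes then upgrades this to an honest quasi-isomorphism $U^m\colon\galg^m\xrightarrow{\sim}\halg^{U(m)}$, hence to isomorphisms on $\pi_n$ for all $n\geq 1$ and all basepoints.

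It remains to treat $\pi_0$, that is, to show that $U$ induces a bijection between gauge-equivalence classes of Maurer--Cartan elements. This is the main obstacle, since here the weakening from $E_1$ to $E_2$ is felt most strongly and must be compensated by the vanishing hypotheses~\eqref{equ:vancond}. I would argue by obstruction theory along the two towers: given a Maurer--Cartan element of $\halg$, one lifts it, up to gauge, stepwise through the quotients $\halg/F^{s+1}\halg$ to an element in the image of $U$ (surjectivity), and given $m_0,m_1$ with $U(m_0)$ gauge equivalent to $U(m_1)$ one lifts the gauge stepwise to a gauge between $m_0$ and $m_1$ (injectivity). The successive obstructions and indeterminacies lie in cohomology groups of the $\mathrm{gr}$-pieces that are governed by the $E_2$-quasi-isomorphism, and the base cases are exactly guaranteed by~\eqref{equ:vancond}: the conditions $H^1(\galg/F^2\galg)=H^1(\halg/F^2\halg)=0$ ensure that both towers start from connected Maurer--Cartan spaces, while $H^0(\halg/F^2\halg)=0$ makes the bottom of the $\halg$-tower simply connected, so that the inductive lifting of elements and of gauges is unobstructed at the first stage and can be propagated upward.

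Finally, I would assemble these facts: the bijection on $\pi_0$ together with the isomorphisms on all higher homotopy groups based at every Maurer--Cartan element shows that $\MC_\bullet(U)$ is a weak equivalence of the Kan complexes $\MC_\bullet(\galg)$ and $\MC_\bullet(\halg)$, which is the assertion~\eqref{equ:GM}. I expect the twist-invariance lemma of the second step and the $\pi_0$-obstruction argument of the third step to carry essentially all of the difficulty; when the hypothesis is strengthened to a quasi-isomorphism on the $E_1$-page (the associated graded), the stagewise comparison of fibres becomes available and the argument collapses to the classical Goldman--Millson/Dolgushev--Rogers statement cited before the theorem.
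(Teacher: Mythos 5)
Your proposal takes a completely different route from the paper, which does not prove this theorem at all: the paper's proof is a citation of the case $r=2$ of Schwarz's Theorem~1.1, plus the observation that the extra hypothesis $H^0(\galg/F^2\galg)=0$ imposed in \emph{loc.\ cit.} is only used in an induction step that is vacuous for $r=2$. Measured as an actual proof, your attempt has a genuine gap at its central step, the ``twist-invariance lemma''. As you state it --- an $E_2$-quasi-isomorphism stays an $E_2$-quasi-isomorphism after twisting by \emph{every} $m\in\MC(\galg)$ --- the lemma is false. Counterexample: let $\galg=\kk a$ with $a$ of cohomological degree $1$ and filtration weight $1$, abelian with zero differential; let $\halg$ have basis $a$ (degree $1$, weight $1$), $z$ (degree $0$, weight $2$), $w$ (degree $1$, weight $3$), with $dz=w$, $[a,z]=-w$, all other brackets and differentials zero; let $U$ be the inclusion. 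Then $E_2(\galg)=E_2(\halg)=\kk a$, so $E_2(U)$ is an isomorphism, and $a$ is a Maurer--Cartan element; but the twisted differential of $\halg^{a}$ vanishes identically (since $d^a z=w-w=0$), so $H^0(\halg^{a})=\kk z\neq 0=H^0(\galg^{a})$, $E_2(U^{a})$ is not an isomorphism, and indeed $\pi_1(\MC_{\bullet}(\halg),a)\neq\pi_1(\MC_{\bullet}(\galg),a)$. The same example shows why your proposed mechanism cannot work: since $m$ has weight $1$ and degree $1$, the operator $\mathrm{ad}_{\bar m}$ raises the filtration weight by exactly $1$, i.e.\ it has the \emph{same} bidegree as $d_1$; hence $d_1+\mathrm{ad}_{\bar m}$ is not a filtered perturbation of $d_1$ and there is no ``secondary'' spectral sequence comparing $H(E_1,d_1)$ with $H(E_1,d_1+\mathrm{ad}_{\bar m})$. (Contrast the Dolgushev--Rogers $E_1$-case, where twisting does not change $d_0$ at all, so no vanishing hypotheses are needed.) The lemma only becomes true after invoking $H^1(\galg/F^2\galg)=0$ from \eqref{equ:vancond}: one first gauges $m$ into $F^2\galg$, after which twisting disturbs neither $E_1$, nor $d_1$, nor the map of $E_2$-pages. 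Your plan, however, explicitly reserves \eqref{equ:vancond} for the $\pi_0$-step only, so the treatment of all higher homotopy groups is broken as written.

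Two further steps are asserted rather than proved, and they carry the remaining difficulty. First, even for $m\in F^2\galg$, passing from ``$E_2(U^m)$ is an isomorphism'' to isomorphisms on $H^{\leq 0}$ of the twisted algebras is not the formality your appeal to ``convergence'' suggests: the filtrations are complete but of infinite length, so the spectral sequences are only conditionally convergent, and an isomorphism on a finite page yields an isomorphism on cohomology only after controlling Boardman's derived $E_\infty$-term (strong convergence genuinely fails for complete filtered complexes in general). Second, your $\pi_0$-argument is a stagewise obstruction-lifting along the towers, yet, as you yourself note in your opening paragraph, under the mere $E_2$-hypothesis $U$ is not an equivalence on the stagewise fibres: the obstruction and indeterminacy groups at stage $s$ are cohomologies of the $\mathrm{gr}^s$-pieces, i.e.\ $E_1$-level groups on which $U$ is \emph{not} an isomorphism, so the claim that these are ``governed by the $E_2$-quasi-isomorphism'' is exactly what needs to be proved. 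Organising that induction --- transporting page-$2$ information into page-$1$ obstruction groups while tracking gauges, and identifying precisely where each condition in \eqref{equ:vancond} enters --- is the actual content of Schwarz's proof (his Lemma~3.4 is the step the paper's remark refers to). So the two steps you flag as carrying ``essentially all of the difficulty'' are, respectively, false as stated and missing.
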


\begin{proof}
This is essentially the special case $r=2$ of \cite[Theorem 1.1]{Schwarz}.
There is a minor difference in that in \emph{loc. cit.} one also asks for the condition $H^0(\galg/F^2\galg) = 0$ to hold.
However, this condition is in fact never used: it appears non-trivially only in the proof of \cite[Lemma 3.4]{Schwarz}
and therein only in an induction step that is not invoked for $r=2$.
\end{proof}

\section{Graph complexes}\label{sec:graph_complexes}
In this section, we recall the definition of one particular model of the rational cochains of the little discs operads due to Kontsevich \cite{Kontsevich}.
We only sketch the construction. (For details, we refer for example to \cite[Section 7]{FTW} or to \cite[Section 2.2ff]{FW}.)

\subsection{The Kontsevich cooperad $\Graphs_n$}
An undirected graph with $r$ (external) vertices numbered $1,2,\dots, r$ and with $k$ unnumbered (internal) vertices is called admissible if it satisfies the following two conditions.
\begin{enumerate}
	\item Every internal vertex has valency at least 2.
	\item Every connected component contains at least one external vertex.
\end{enumerate}

In pictures we draw external vertices white and internal vertices black, as in the following example.
\begin{equation*}
\begin{tikzpicture}
\node[ext] (v1) at (0,0) {1};
\node[ext] (v2) at (0.5,0) {2};
\node[ext] (v3) at (1,0) {3};
\node[ext] (v4) at (1.5,0) {4};
\node[ext] (v5) at (2,0) {5};
\node[int] (w1) at (0.5,.7) {};
\node[int] (w2) at (1.0,.7) {};
\draw (v1) edge (v2) edge (w1) (v2) edge (w1) edge (w2) (v3) edge (w1) edge (w2) (v4) edge (w2) (w1) edge (w2);
\end{tikzpicture}\, .
\end{equation*}

Given a natural number $n$ we assign to each graph a (cohomological) degree wich is
\begin{equation*}
(n-1)(\#\text{edges}) - n(\#\text{internal vertices}).
\end{equation*}
An orientation on an admissible graph is the following data, depending on the parity of $n$.
\begin{enumerate}
	\item For even $n$ an orientation is an ordering of the set of edges.
	\item For odd $n$ an orientation is an ordering of the set of half-edges and the set of internal vertices.
\end{enumerate}
We say that an admissible graph with an orientation is an oriented graph.

Let $\Graphs^2_n(r)$ be the graded vector space spanned by oriented graphs with $r$ external vertices, modulo the following identification: If two oriented graphs are isomorphic we set them equal. If two oriented graphs are isomorphic up to changing the orientation, we set them equal up to the sign of the permutation needed to change the orientation.

The graded vector spaces $\Graphs^2_n(r)$ carry a right action of the symmetric groups $\Sigma_r$ by relabeling external vertices. They furthermore assemble into a dg Hopf cooperad $\Graphs^2_n$. Let us briefly sketch the combinatorial form of the operations.
First, the differential is obtained by edge contraction.
\begin{align*}
\delta
\begin{tikzpicture}[baseline=-.65ex]
\node[ext] (v) at (0,0) {$\scriptstyle j$};
\node[int](w) at (0,.3) {};
\draw (v) edge +(-.5,.5) edge +(.5,.5) edge (w) (w) edge +(-.2,.5) edge +(.2,.5);
\end{tikzpicture}
&=
\begin{tikzpicture}[baseline=-.65ex]
\node[ext] (v) {$\scriptstyle j$};
\draw (v) edge +(-.5,.5) edge +(-.2,.5) edge +(.2,.5) edge +(.5,.5);
\end{tikzpicture}
&
\delta
\begin{tikzpicture}[baseline=-.65ex]
\node[int] (v) at (0,0) {};
\node[int](w) at (0,.3) {};
\draw (v) edge +(-.5,.5) edge +(.5,.5) edge (w) (w) edge +(-.2,.5) edge +(.2,.5);
\end{tikzpicture}
&=
\begin{tikzpicture}[baseline=-.65ex]
\node[int] (v) {};
\draw (v) edge +(-.5,.5) edge +(-.2,.5) edge +(.2,.5) edge +(.5,.5);
\end{tikzpicture}
\end{align*}
The commutative algebra structure is given by gluing graphs along external vertices.
\begin{equation*}
\begin{tikzpicture}[baseline=-.65ex]
\node[ext] (v1) at (0,0) {1};
\node[ext] (v2) at (0.5,0) {2};
\node[ext] (v3) at (1,0) {3};
\node[ext] (v4) at (1.5,0) {4};
\node[int] (w1) at (0.5,.7) {};
\draw (v1) edge (v2) edge (w1) (v2) edge (w1) (v3) edge (w1) ;
\end{tikzpicture}
\wedge
\begin{tikzpicture}[baseline=-.65ex]
\node[ext] (v1) at (0,0) {1};
\node[ext] (v2) at (0.5,0) {2};
\node[ext] (v3) at (1,0) {3};
\node[ext] (v4) at (1.5,0) {4};
\node[int] (w2) at (1.0,.7) {};
\draw   (v2)  edge (w2) (v3)  edge (w2) (v4) edge (w2);
\end{tikzpicture}
=
\begin{tikzpicture}[baseline=-.65ex]
\node[ext] (v1) at (0,0) {1};
\node[ext] (v2) at (0.5,0) {2};
\node[ext] (v3) at (1,0) {3};
\node[ext] (v4) at (1.5,0) {4};
\node[int] (w1) at (0.5,.7) {};
\node[int] (w2) at (1.0,.7) {};
\draw (v1) edge (v2) edge (w1) (v2) edge (w1) edge (w2) (v3) edge (w1) edge (w2) (v4) edge (w2);
\end{tikzpicture}
\end{equation*}
Finally the cooperadic cocompositions are defined by subgraph contraction.
\begin{equation*}
\begin{tikzpicture}[baseline=-.65ex]
\node[ext] (v1) at (0,0) {1};
\node[ext] (v2) at (0.5,0) {2};
\node[ext] (v3) at (1,0) {3};
\node[ext] (v4) at (1.5,0) {4};
\node[ext] (v5) at (2,0) {5};
\node[int] (w1) at (0.5,.7) {};
\node[int] (w2) at (1.5,.7) {};
\draw (v1) edge (v2) edge (w1) (v2) edge (w1)  (v3) edge (w1) edge (w2) (v4) edge (w2) (v5) edge (w2);
\end{tikzpicture}
\mapsto
\begin{tikzpicture}[baseline=-.65ex]
\node[ext] (v1) at (0,0) {1};
\node[ext] (v2) at (0.5,0) {2};
\node[ext] (v3) at (1,0) {*};
\node[int] (w1) at (0.5,.7) {};
\draw (v1) edge (v2) edge (w1) (v2) edge (w1)  (v3) edge (w1);
\end{tikzpicture}
\otimes
\begin{tikzpicture}[baseline=-.65ex]
\node[ext] (v3) at (1,0) {3};
\node[ext] (v4) at (1.5,0) {4};
\node[ext] (v5) at (2,0) {5};
\node[int] (w2) at (1.5,.7) {};
\draw (v3) edge (w2) (v4) edge (w2) (v5) edge (w2);
\end{tikzpicture}
\end{equation*}

We also consider the quotient
\begin{equation}\label{equ:Graphs2quotient}
	\Graphs_n \twoheadleftarrow \Graphs_n^2
\end{equation}
by sending to zero all diagrams with bivalent internal vertices.
It is shown in \cite{Willwacher} that the map \eqref{equ:Graphs2quotient} is a quasi-isomorphism.
Furthermore, we note that $\Graphs_n$ is non-negatively graded for $n\geq 3$.

For $n\geq 2$ there is furthermore a natural map of dg Hopf cooperads
\begin{equation}
\label{equ:graphsen}
\Graphs_n \rightarrow \eop_n^c := H^*(\DOp_n)
\end{equation}
into the cohomology of the little $n$-discs operad, sending all diagrams with internal vertices to zero, and sending an edge between external vertices $i$ and $j$ to an algebra generator $\omega_{ij}\in H^{n-1}(\DOp_n)$.
It is shown by Kontsevich \cite{Kontsevich} and Lambrechts-Voli\'c \cite{LambrechtsVolic} that the map \eqref{equ:graphsen} is a quasi-isomorphism. By the rational formality of the little $n$-discs operads we may hence take $\Graphs_n$ or $\Graphs_n^2$ as a dg Hopf cooperad model of the little $n$-discs operad.

One important observation is that the graph algebras are quasi-free dg commutative algebras:
\begin{equation*}
\Graphs_n(r) = S(\IG_n(r)),
\quad
\Graphs_n^2(r) = S(\IG_n^2(r)).
\end{equation*}
The generating symmetric sequences $\IG_n$ and $\IG_n^2$ are spanned by the internally connected graphs, \emph{i.e.}, by those graphs that remain connected after deleting all external vertices.
As a consequence one obtains the following two results.
\begin{prop}
The dg Hopf cooperads $\Graphs_n$ and $\Graphs_n^2$ are cofibrant objects in $\dgZHOpc$.
\end{prop}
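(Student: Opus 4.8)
The plan is to show that the initial morphism $\Com^c\to\Graphs_n$ (and likewise $\Com^c\to\Graphs_n^2$) has the left lifting property with respect to all acyclic fibrations of $\dgZHOpc$. Recall that this model structure is created through the forgetful functor to dg cooperads, so that its weak equivalences and fibrations, hence its acyclic fibrations, are detected arity-wise (see Appendix~\ref{sec:modelcat}); in particular an acyclic fibration $p\colon\XOp\twoheadrightarrow\YOp$ is an arity-wise surjective quasi-isomorphism of dg commutative algebras. The key structural input is the arity-wise freeness $\Graphs_n(r)=S(\IG_n(r))$ recalled above, which lets me reduce the construction of a Hopf cooperad morphism out of $\Graphs_n$ to the construction of a map on the generating symmetric sequence $\IG_n$, exactly as in the inductive argument of the proof of Proposition~\ref{prop:pre_defcx}.

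First I would equip $\IG_n$ with the exhaustive filtration $F^s\IG_n$ by the number of edges, with $F^0\IG_n=0$ since an internally connected graph carries at least one edge. I claim this filtration satisfies the triangularity conditions of Section~\ref{sec:defcomplex}. The differential, given by edge contraction, strictly decreases the number of edges and may produce graphs that decompose as products, so that $\partial$ carries $F^s\IG_n$ into $\bar S(F^{s-1}\IG_n)$. The reduced cocomposition, given by subgraph contraction, sends $F^s\IG_n(k+l-1)$ into $\sum_{p+q=s}F^p S(\IG_n(k))\otimes F^q S(\IG_n(l))$, where \emph{both} $p$ and $q$ are at least $1$: each original edge is assigned to exactly one of the two factors (so $p+q=s$ on the nose, as in the example above), the contracted subgraph contributes a nontrivial factor in the coaugmentation coideal, and the quotient of a connected graph by a connected subgraph is again connected on at least two vertices, hence has at least one edge. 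Thus $p,q\le s-1$, and the reduced cocomposition of a generator in filtration $s$ lands in products of generators of strictly lower filtration. Finally $E_0^s\IG_n(r)=F^s\IG_n(r)/F^{s-1}\IG_n(r)$ is finite dimensional in each arity, since the admissibility bound (internal valence $\ge 2$) limits the number of internal vertices by the number of edges. The same verifications apply verbatim to $\Graphs_n^2$, the only change being that bivalent internal vertices are now allowed, which does not affect the finiteness of $E_0^s\IG_n^2(r)$.

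With this filtration in hand, I would present $\Graphs_n$ as the sequential colimit of the sub-Hopf-cooperads $\AOp_s$ generated by $F^s\IG_n$, and verify the lifting property by induction on $s$. At the $s$-th stage the cogenerators $E_0^s\IG_n$ must be sent into $\XOp$ so as to lift the prescribed map to $\YOp$ and to be compatible with the differential and the reduced cocomposition; by the triangularity of the previous step both constraints take values in $\AOp_{s-1}$, which has already been lifted, so the problem reduces to a lifting problem for the symmetric sequence $E_0^s\IG_n$ against the arity-wise surjective quasi-isomorphism $p$. This is solvable because we work over a field of characteristic zero, where every dg symmetric sequence is cofibrant. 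Equivalently, one repackages the induction as exhibiting each inclusion $\AOp_{s-1}\hookrightarrow\AOp_s$ as a pushout of a generating cofibration of $\dgZHOpc$, so that $\Graphs_n$ becomes a cell complex and is therefore cofibrant.

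The main obstacle I anticipate is the compatibility of the inductive lift with the \emph{cooperad} cocomposition, as opposed to merely the arity-wise commutative algebra structure: a Hopf cooperad morphism out of $\Graphs_n$ is not freely determined by its restriction to generators, since the cocomposition imposes coherence constraints. The content of the strict triangularity established above is precisely that these constraints involve only strictly lower filtration data, so that they can be met without obstruction at each stage. Making this rigorous amounts to organizing the lift inside a fiber product of hom-complexes of symmetric sequences, in the spirit of the obstruction-theoretic argument of Lemma~\ref{lemm:barcobarfactorization}, and verifying it against the precise transferred model structure on $\dgZHOpc$ of Appendix~\ref{sec:modelcat}; the boundedness and finiteness properties of the edge filtration are what guarantee that this inductive procedure converges.
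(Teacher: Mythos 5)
Your filtration is exactly the one the paper uses (graphs with at most $s$ edges), and your verification of its properties is essentially the verification the paper needs; but the direct lifting argument you build on it has a genuine gap at the step you yourself flag as the main obstacle, and the strict triangularity you establish does not close it. First, it is not true that the stage-$s$ constraints involve only strictly lower filtration data: your bound $p,q\geq 1$ concerns the \emph{reduced} cocompositions, whereas a lift must be compatible with the \emph{full} cocompositions, and these contain unit terms that keep all $s$ edges in a single factor. Concretely, for the one-edge generator $e_{13}\in\IG_n(3)$ (the edge between external vertices $1$ and $3$) one has $\circ_1^*(e_{13}) = e_{12}\otimes 1$, so already at stage $s=1$ the equations couple the unknown lifts of same-stage generators across different arities: $\circ_1^*\tilde\phi(e_{13})$ must equal $\tilde\phi(e_{12})\otimes 1$. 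This is precisely why the cofibrancy criterion of Lemma~\ref{lem:cofib_criterion} is formulated with the subobject $S(F^{s-1}\MOp)+F^s\MOp$ rather than $S(F^{s-1}\MOp)$. Second, and more fundamentally, the cocomposition constraint is an equation on $\circ_i^*(\tilde\phi(x))$ computed inside the target of the acyclic fibration, and a general dg Hopf cooperad is neither quasi-cofree nor arity-wise augmented: there is no well-defined ``reduced part'' of its cocomposition, so the extension problem cannot be organized as a lifting problem (or fiber product of hom-complexes) in dg symmetric sequences. The obstruction-theoretic argument of Lemma~\ref{lemm:barcobarfactorization} that you invoke depends on both sides being quasi-(co)free with structure encoded by twisting maps, and is moreover developed in the non-negatively graded setting, while the present statement concerns $\dgZHOpc$, where the paper explicitly cautions that the technology of Section~\ref{sec:pullbackcorner} is unavailable. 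Note also that your argument only ever uses that acyclic fibrations are arity-wise surjective quasi-isomorphisms; in the transferred model structure this is a necessary property, not a characterization, and a proof of the left lifting property must at some point invoke the actual definition of the fibrations.

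The paper's proof sidesteps all of this by feeding the same edge filtration into Lemma~\ref{lem:cofib_criterion}. The proof of that lemma writes $\Graphs_n=\colim_s(S(F^s\IG_n),\partial)$ and exhibits each inclusion $(S(F^{s-1}\IG_n),\partial)\hookrightarrow(S(F^s\IG_n),\partial)$ as a pushout of $\Sym(i)$, where $i$ is the arity-wise injection of coaugmented dg cooperads $(S(F^{s-1}\IG_n),\partial)\hookrightarrow(S(F^{s-1}\IG_n)+F^s\IG_n,\partial)$; such a morphism $\Sym(i)$ is a cofibration by the very definition of the transferred model structure on $\dgZHOpc$, and cofibrancy follows from closure of cofibrations under pushouts and transfinite compositions. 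All lifting is thereby delegated, via the adjunction $\Sym\colon\dgZOpc\rightleftarrows\dgZHOpc\colon\omega$, to the model category axioms of $\dgZOpc$, where acyclic fibrations have the right lifting property against arity-wise injections --- which is exactly the coherence problem your induction attempts, and fails, to solve by hand. The missing idea is thus not a better filtration but the reduction through the $\Sym\dashv\omega$ adjunction; with your filtration analysis in place, the paper's Lemma~\ref{lem:cofib_criterion} finishes the proof immediately.
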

\begin{proof}
	We apply the cofibrancy criterion of Lemma \ref{lem:cofib_criterion}. In the notation of the Lemma, we merely take for $\MOp$ the internally connected graphs ($\IG_n$ or $\IG_n^2$), and for $F^p\MOp\subset\MOp$ the graphs with at most $p$ edges.
Since the composition coproducts preserve, and the differential only removes edges, the conditions of Lemma~\ref{lem:cofib_criterion} are evidently satisfied. Hence cofibrancy of our dg Hopf cooperads follows.
\end{proof}

\begin{cor}
The truncations $\tau_{\sharp}\Graphs_n$ and $\tau_{\sharp}\Graphs_n^2$ are cofibrant objects in $\dgHOpc$ weakly equivalent to $H^*(\DOp_n)$.
\end{cor}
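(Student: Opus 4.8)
The plan is to deduce both assertions from the properties of the truncation functor $\tau_{\sharp}\colon\dgZHOpc\to\dgHOpc$ set up in Appendix~\ref{sec:modelcat}, applied to the cofibrancy statement of the previous proposition together with the known identification of the cohomology of the graph cooperads. Throughout I write $\eop_n^c = H^*(\DOp_n)$ and I use that this object is non-negatively graded, so that $\tau_{\sharp}\eop_n^c = \eop_n^c$.

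For the cofibrancy, first I would record that $\tau_{\sharp}$ is the left adjoint of the inclusion $\dgHOpc\hookrightarrow\dgZHOpc$ and forms the left member of a Quillen adjunction (see Appendix~\ref{sec:modelcat}). As a left Quillen functor, $\tau_{\sharp}$ preserves cofibrant objects. Since $\Graphs_n$ and $\Graphs_n^2$ are cofibrant in $\dgZHOpc$ by the previous proposition, their truncations $\tau_{\sharp}\Graphs_n$ and $\tau_{\sharp}\Graphs_n^2$ are therefore cofibrant in $\dgHOpc$. (Alternatively one may re-run the cofibrancy criterion of Lemma~\ref{lem:cofib_criterion} directly on the truncated generating sequences, but invoking left Quillenness is cleaner.)

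For the weak equivalence, the input is that the quasi-isomorphisms $\Graphs_n^2\xrightarrow{\sim}\Graphs_n\xrightarrow{\sim}\eop_n^c$ of \cite{Willwacher,Kontsevich,LambrechtsVolic} hold already in $\dgZHOpc$, so that in particular the cohomology $H^*(\Graphs_n) = H^*(\Graphs_n^2) = \eop_n^c$ is concentrated in non-negative degrees. The point I would exploit is that this concentration makes $\Graphs_n$ and $\Graphs_n^2$ local objects for the reflection $\tau_{\sharp}$: the derived unit of the adjunction $\Graphs_n\to\tau_{\sharp}\Graphs_n$ is a weak equivalence in $\dgZHOpc$ precisely because $H^{<0}(\Graphs_n) = 0$ and $\Graphs_n$ is cofibrant (so that $\tau_{\sharp}$ already computes the derived truncation). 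Composing this unit with the quasi-isomorphism $\Graphs_n\xrightarrow{\sim}\eop_n^c$ yields a zigzag $\tau_{\sharp}\Graphs_n\xleftarrow{\sim}\Graphs_n\xrightarrow{\sim}\eop_n^c$ in $\dgZHOpc$ between non-negatively graded objects; since the inclusion $\dgHOpc\hookrightarrow\dgZHOpc$ is fully faithful and reflects weak equivalences, this descends to a weak equivalence $\tau_{\sharp}\Graphs_n\sim\eop_n^c = H^*(\DOp_n)$ in $\dgHOpc$, and likewise for $\Graphs_n^2$.

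I expect the main obstacle to be the justification that the unit $\Graphs_n\to\tau_{\sharp}\Graphs_n$ is a weak equivalence. The naive arity-wise truncation of a commutative dg algebra does not preserve degree-zero cohomology (it alters $H^0$ by the image $\partial(\AOp^{-1})$), so this step cannot be read off from a termwise computation; it rests instead on the model-categorical set-up of Appendix~\ref{sec:modelcat}, in which $\tau_{\sharp}$ is arranged so that its derived functor agrees with the object itself on the local objects, namely on those with cohomology in non-negative degrees. The only genuinely external ingredient feeding into this verification is the cohomological concentration, which is supplied by the formality quasi-isomorphisms cited above.
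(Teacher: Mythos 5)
Your overall route is the same as the paper's: cofibrancy follows because $\tau_{\sharp}$ is a left Quillen functor and $\Graphs_n$, $\Graphs_n^2$ are cofibrant in $\dgZHOpc$, and the weak equivalence is obtained by showing that the adjunction unit $\Graphs_n\to\tau_{\sharp}\Graphs_n$ (resp.\ for $\Graphs_n^2$) is a quasi-isomorphism and combining this with the formality quasi-isomorphisms $\Graphs_n^2\xrightarrow{\sim}\Graphs_n\xrightarrow{\sim}H^*(\DOp_n)$. The cofibrancy half of your argument is correct and identical to the paper's.

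The gap is in your justification that the unit is a weak equivalence. You assert that the appendix arranges $\tau_{\sharp}$ so that its derived functor is the identity on ``local objects, namely those with cohomology in non-negative degrees,'' i.e.\ that cofibrancy plus $H^{<0}=0$ suffices. That is not what the appendix proves. The relevant statement, Proposition~\ref{prop:tru_unit}, requires, for a dg commutative algebra: cofibrancy, an augmentation over the ground field, $H^{<0}=0$ \emph{and} cohomological connectedness $H^0=\kk$; its proof is a minimal-model argument producing a quasi-free model with generators concentrated in degrees $\geq 1$, which is exactly where the connectedness and the augmentation enter, and nothing in the paper establishes your weaker criterion. So to close the argument you must check the full set of hypotheses arity-wise for $\Graphs_n(r)$ and $\Graphs_n^2(r)$: these algebras are quasi-free (hence cofibrant), augmented, and their cohomology is $H^*(\DOp_n(r))$, which is concentrated in non-negative degrees with $H^0=\QQ$ because the little discs spaces are connected; Proposition~\ref{prop:tru_unit} then applies arity by arity. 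Relatedly, your closing remark that this step ``cannot be read off from a termwise computation'' has it backwards: since $\tau_{\sharp}$ on dg Hopf cooperads is defined by applying the algebra-level truncation arity-wise, the paper's verification \emph{is} precisely a termwise application of Proposition~\ref{prop:tru_unit}; what the termwise argument needs is not a global model-categorical locality principle but the connectivity and augmentation hypotheses just listed.
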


\begin{proof}
The functor $\tau_{\sharp}$ is a left Quillen functor and hence sends cofibrant objects to cofibrant objects.
Next, our objects fit into a commutative diagram
\[
\begin{tikzcd}
\Graphs_n^2 \ar{r}{\sim} \ar{d} & \Graphs_n \ar{r}{\sim} \ar{d} & H^*(\DOp_n) \ar[-]{d}{=} \\
\tau_{\sharp}\Graphs_n^2 \ar{r} & \tau_{\sharp}\Graphs_n \ar{r} & \tau_{\sharp}H^*(\DOp_n)
\end{tikzcd}\, ,
\]
where the lower row is obtained from the upper by applying $\tau_\sharp$, and the downward morphisms are the natural projections to a quotient, or alternatively, given by the adjunction unit.
Since both $\Graphs_n$ and $\Graphs_n^2$ are (arity-wise) augmented and cohomologically connected we can apply Proposition \ref{prop:tru_unit} to see that the vertical arrows are weak equivalences. Hence so are all arrows.
\end{proof}

We note finally that for $n\geq 3$ we have $\tau_{\sharp}\Graphs_n=\Graphs_n$.

\subsection{Graph complex and action}
\label{sec:graph action}
Besides cofibrancy, the important feature of the model $\Graphs_n^2$ for the little discs operad for our purposes is that it is acted upon by a large dg Lie algebra of symmetries, which is again given by graphs.

Let us define the relevant (Kontsevich) graph complex $\GC_n^2$.
We say that an undirected graph is good if is satisfies the following properties:
\begin{enumerate}
	\item It is connected.
	\item It is 1-vertex irreducible, \emph{i.e.}, the graph cannot be split into multiple connected components by removing one vertex.
	\item Every vertex has valence at least 2.
\end{enumerate}
We shall draw such graphs with black vertces only
\begin{equation*}
 \begin{tikzpicture}[baseline=-.65ex, scale=.5]
\node[int] (v1) at (0:1) {};
\node[int] (v2) at (90:1) {};
\node[int] (v3) at (-90:1) {};
\node[int] (v4) at (180:1) {};
\draw (v1) edge (v2) edge (v3) edge (v4)
(v2) edge (v3) edge (v4)
(v3) edge (v4);
\end{tikzpicture}
\end{equation*}
Fixing a natural number $n$ we associate a graph with the cohomological degree
\begin{equation*}
-n(\#\text{vertices}-1) + (n-1)(\#\text{edges})\, .
\end{equation*}
We furthermore define the notion of orientation on such a graph as before.
We define the graph complex $\GG_n^2$ as consisting of linear combinations of oriented good graphs, modulo the identification of isomorphic graphs, and orientations up to sign.
The space $\GG_n^2$ is naturally a dg Lie coalgebra.
The differential is again given by edge contraction, and the Lie cobracket is defined by subgraph contraction.
As before we define a quotient dg Lie coalgebra
\begin{equation*}
\GG_n\twoheadleftarrow \GG_n^2
\end{equation*}
by setting to zero all graphs with bivalent vertices.

For convenience we shall also define the dual dg Lie algebras $\GC_n^2=(\GG_n^2)^*$ and $\GC_n=(\GG_n)^*$.
The differential is the dual operation to edge contraction, which is vertex splitting.
In particular, the loop order provides a (complete) grading on $\GC_n$, $\GC_n^2$, which can also be checked to be compatible with the bracket (in the sense that the bracket of a $p$-loop graph and a $q$-loop graph is a linear combination of $p+q$-loop graphs).
We can hence consider the semidirect product
\begin{equation*}
\QQ L\ltimes \GC_n^2,
\end{equation*}
where $L$ acts as the generator of the loop order grading, \emph{i.e.},
\begin{equation*}
[L,\gamma] = (\#\text{loops}) \gamma
\end{equation*}
for a graph $\gamma\in\GC_n^2$.

We shall furthermore recall from \cite{FW, Willwacher} that the dg Lie algebra $\GC_n^2$ acts on the dg Hopf cooperad $\Graphs_n^2$ by biderivations, \emph{i.e.}, derivations with respect to the commutative algebra structure and coderivations with respect to the cooperad structure.
This action can furthermore be extended to a dg Lie algebra action of $\QQ L\ltimes \GC_n^2$ on $\Graphs_n^2$, where we let the grading generator act on a graph $\Gamma \in \Graphs_n^2$ by the formula
\begin{equation*}
L\cdot\Gamma = ((\#\text{edges})-(\#\text{internal vertices}))\Gamma
\end{equation*}
We finally note that the 1-vertex-irreducibility condition we impose on the graphs in $\GC_n^2$ is necessary in order for the action on $\Graphs_n^2$ to be a derivation with respect to the commutative algebra structure. Nevertheless, omitting this condition yields a quasi-isomorphic complex.

\subsection{On dg Lie actions and automorphisms - graph case}\label{sec:graphaction}
We next apply the construction of Section~\ref{sec:dgLieactions} to the dg Lie algebra $\QQ L\ltimes \GC_n^2$
acting on the dg Hopf cooperad $\Graphs_n^2$.
One needs to be slightly careful since the dg Lie algebra $\QQ L\ltimes \GC_n^2$ is not pro-nilpotent, only $\GC_n^2$ is. We shall hence handle the non-pro-nilpotent piece separately and define the simplicial group
\begin{equation*}
G_{n\bullet}:=\QQ^\times \ltimes Z_{\bullet}(\GC_n^2).
\end{equation*}
Here the semidirect product is taken such that an element $\lambda\in \QQ^\times$ acts on $X\in Z_{\bullet}(\GC_n^2)$ as $\lambda^L$, \emph{i.e.}, by multiplying a graph $\Gamma$ of loop order $k$ by $\lambda^k$.

Similarly, we may easily extend the construction of subsection \ref{sec:dgLieactions} to obtain an map of simplicial monoids
\begin{equation*}
\QQ^\times \times Z_{\bullet}(\GC_n^2)
\rightarrow \Map(\Graphs_n^2, \Graphs_n^2)^{\htimes}.
\end{equation*}
Here the action of $\QQ^\times$ is induced from the action of $\QQ^\times$ on $\Graphs_n^2$ such that $\lambda\in \QQ^\times$ acts on a graph $\Gamma\in \Graphs_n^2(r)$ by
\begin{equation*}
\lambda\cdot \Gamma = \lambda^{(\#\text{edges})-(\#\text{internal vertices})}\Gamma.
\end{equation*}
Similarly we obtain a truncated version of the map above,
\begin{equation*}
\QQ^\times \times Z_{\bullet}(\GC_n^2)
=\QQ^\times \times Z_{\bullet}(\tau\GC_n^2)
\rightarrow \Map(\tau_{\sharp}\Graphs_n^2, \tau_{\sharp}\Graphs_n^2)^{\htimes}.
\end{equation*}

\section{The homotopy automorphisms of the little discs operads and proof of Theorem \ref{thm:main_intro}}\label{sec:aut thm proof}
The goal of this section is to show Theorem \ref{thm:main_intro}.
Let us denote by $\EOp_n$ a cofibrant simplicial operad whose realization is weakly equivalent to the little $n$-discs operad $\DOp_n$.
The first weak equivalence of Theorem \ref{thm:main_intro} follows directly from Proposition \ref{prop:good and Aut} for $n\geq 3$
and from Proposition \ref{prop:Aut E2} for $n=2$.

For the second equality, first recall from~\cite[\S II.14]{OperadHomotopyBook} and~\cite{FW} that the little discs operads are rationally formal.
Hence we know that $\Omega_{\sharp}^*(\EOp_n)\sim\eop_n^c = H^*(\DOp_n)$, so that
\begin{equation*}
\Aut^h_{\dgHOpc}(\Omega_{\sharp}^*(\EOp_n))
\sim
\Aut^h_{\dgHOpc}(\eop_n^c).
\end{equation*}
Furthermore, we know from Section~\ref{sec:graphaction} that the simplicial group $\QQ^\times \ltimes Z_{\bullet}(\GC_n^2)$ acts on the cofibrant model $\tau_{\sharp}\Graphs_n^2\sim \eop_n^c$.
Let us fix the canonical quasi-isomorphism $\phi:\Graphs_n^2 \rightarrow W^c(\eop_n^c)$ given by the composition
\begin{equation}\label{equ:phidef8}
\begin{tikzcd}
\Graphs_n^2 \ar{r}{\sim} \ar[bend right]{rr}{\phi}& \eop_n^c \ar{r}{\sim} & W^c(\eop_n^c)
\end{tikzcd},
\end{equation}
where the first horizontal arrow is given by \eqref{equ:Graphs2quotient} and \eqref{equ:graphsen}.
By Proposition \ref{prop:hAut recognition} we can hence reduce the second weak equivalence of Theorem \ref{thm:main_intro} to the following result.
\begin{thm}\label{thm:main2}
The map
\begin{equation}\label{equ:thmmain2}
\QQ^\times \ltimes Z_{\bullet}(\GC_n^2)
\rightarrow
\Map(\tau_{\sharp}\Graphs_n^2,W^c(\eop_n^c))^{\htimes}
\end{equation}
induced by the action on the morphism $\phi$ of \eqref{equ:phidef8} is a weak equivalence of simplicial sets.
\end{thm}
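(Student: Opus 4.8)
The plan is to express both sides as Maurer--Cartan spaces and to reduce the claim, after splitting off the non-pro-nilpotent factor $\QQ^\times$, to the Goldman--Millson--Schwarz comparison of Theorem~\ref{thm:GM}. To set up the deformation complex, note that $\tau_{\sharp}\Graphs_n^2=(S(\tau_{\sharp}\IG_n^2),\partial)$ is arity-wise quasi-free, and the filtration of $\tau_{\sharp}\IG_n^2$ by number of edges satisfies condition~(\ref{sec:defcomplex:sourcecondition}) of Section~\ref{sec:defcomplex} (the graphs of fixed arity and fixed edge number being finite in number), while $W^c(\eop_n^c)$ is quasi-cofree as a cooperad and so meets condition~(\ref{sec:defcomplex:targetcondition}). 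I may therefore invoke Theorem~\ref{thm:defcx} and Section~\ref{sec:Def with Lie action} to identify
\begin{equation*}
\Map(\tau_{\sharp}\Graphs_n^2,W^c(\eop_n^c))=\MC_{\bullet}(\Def(\tau_{\sharp}\Graphs_n^2\xrightarrow{\phi}W^c(\eop_n^c))),
\end{equation*}
with the Maurer--Cartan element $0$ corresponding to the basepoint $\phi$.

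By Proposition~\ref{prop:U Def trivialization}, the restriction of~\eqref{equ:thmmain2} to the pro-nilpotent factor is $\MC_{\bullet}(U)$ for the explicit $L_{\infty}$-morphism
\begin{equation*}
U:\GC_n^2[-1]\rightarrow\Def(\tau_{\sharp}\Graphs_n^2\xrightarrow{\phi}W^c(\eop_n^c)),\qquad U(\xi)=\pi\phi(\exp\ad(\xi)-\id)\iota .
\end{equation*}
The factor $\QQ^\times$ corresponds to the scaling (Euler) direction $\QQ L$, which contributes the one extra cohomology class of the deformation complex that is \emph{not} in the image of $U$; since $L$ acts semisimply it cannot be exponentiated within the pro-nilpotent formalism, so I treat it as a discrete reductive factor. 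Concretely, I would observe that a homotopy-invertible map has a well-defined weight $\lambda\in\QQ^\times$ (the scalar by which it acts on $H^{n-1}$), that $\QQ^\times$ permutes the weight-components of $\Map^{\htimes}$ simply transitively, and that~\eqref{equ:thmmain2} is $\QQ^\times$-equivariant. This reduces the theorem to showing that $\MC_{\bullet}(U)$ is a weak equivalence of $Z_{\bullet}(\GC_n^2)$ onto the weight-$1$ (basepoint) components of $\Map^{\htimes}$.

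The crux, and the only genuinely hard input, is to compute the cohomology of $\Def(\tau_{\sharp}\Graphs_n^2\xrightarrow{\phi}W^c(\eop_n^c))$ and to verify that, after removing the scaling line $\QQ L[-1]$, the morphism $U$ induces an isomorphism on the second page of the filtration spectral sequences, as required by Theorem~\ref{thm:GM}. I would use the explicit formulas for the differential from Proposition~\ref{prop:explicit differential} and run a spectral sequence filtering the graphs by the number of external vertices (and then by edge or vertex count). On the first pages the algebra part $\partial''$ and the disconnecting terms of the operadic part $\partial'$ should collapse, cutting the complex down to internally connected graphs with a single external output; on the next page the surviving differential is vertex splitting, so that one recovers the Kontsevich graph complex $\GC_n^2$ together with the single scaling class, up to the degree shift $[-1]$. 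This is precisely the graph-complex computation of Willwacher~\cite{Willwacher} and Fresse--Turchin--Willwacher~\cite{FTW,FW}, which I would adapt to the present $W^c(\eop_n^c)$-resolution.

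Finally, the vanishing hypotheses~\eqref{equ:vancond} of Theorem~\ref{thm:GM} are checked by a direct low-degree inspection of $\GC_n^2[-1]$ and of the deformation complex. Granting the $E_2$-isomorphism, Theorem~\ref{thm:GM} yields the weak equivalence of $Z_{\bullet}(\GC_n^2)$ onto the weight-$1$ components, and transporting along the free $\QQ^\times$-action of the second step extends this to a weak equivalence onto all of $\Map(\tau_{\sharp}\Graphs_n^2,W^c(\eop_n^c))^{\htimes}$, which proves the statement.
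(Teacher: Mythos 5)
Your overall strategy coincides with the paper's proof: identify the mapping space with the Maurer--Cartan space of the deformation complex of Section~\ref{sec:defcomplex}, present the map as $\MC_{\bullet}(U)$ for the $L_{\infty}$-morphism of Proposition~\ref{prop:U Def trivialization}, split off the reductive factor $\QQ^\times$ (the paper does this via a fiber sequence over $\QQ^\times = \pi_0\Aut(\eop_n^c)$, which is your weight argument), and conclude by Theorem~\ref{thm:GM} together with the graph-complex computations of \cite{FW,Willwacher}. A cosmetic difference: the paper disposes of the truncation at the outset through the adjunction identity $\Map(\tau_{\sharp}\Graphs_n^2,W^c(\eop_n^c))=\Map(\Graphs_n^2,W^c(\eop_n^c))$, rather than arguing that $\tau_{\sharp}\Graphs_n^2$ is itself quasi-free on $\tau_{\sharp}\IG_n^2$ (a claim that needs justification, since $\tau_{\sharp}$ does not commute with $S(-)$ in general).

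The crux step, however, has two genuine gaps as written. First, ``removing the scaling line $\QQ L[-1]$'' must be implemented at the level of filtered $L_{\infty}$-algebras, not of cohomology: Theorem~\ref{thm:GM} produces a weak equivalence onto the Maurer--Cartan space of whatever target it is applied to, so the target has to be an actual $L_{\infty}$-algebra whose $\MC_{\bullet}$ is exactly the weight-one part of the mapping space. The paper constructs this as a codimension-one $L_{\infty}$-subalgebra $\Def'(\Graphs_n^2\xrightarrow{\phi}W^c(\eop_n^c))\subset\Def(\Graphs_n^2\xrightarrow{\phi}W^c(\eop_n^c))$, whose MC space is the set of components $\Map(\Graphs_n^2,W^c(\eop_n^c))^{\htimes}_1$ of morphisms inducing the identity on cohomology, and applies Theorem~\ref{thm:GM} to $U$ viewed as a map into $\Def'$; your $U$ lands in the full deformation complex, for which the $E_2$-hypothesis is false precisely because of the scaling class. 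Second, Theorem~\ref{thm:GM} requires the $E_2$-isomorphism and the vanishing conditions~\eqref{equ:vancond} for the \emph{same} complete filtrations that are compatible with the $L_{\infty}$-structures. Your proposed filtration by the number of external vertices is not compatible in the required sense: under the cooperad coproducts arities satisfy $k+l=r+1$ rather than adding, so $\mu_n(F^{p_1},\dots,F^{p_n})\not\subset F^{p_1+\cdots+p_n}$, and an $E_2$-computation for that filtration does not verify the hypothesis of Theorem~\ref{thm:GM}. The paper instead filters both sides by the number of edges (which is additive under coproducts and products); on the first page one then finds the hairy graph complex $(\HGC_{n,n}^2)'$ --- internally connected graphs all of whose external vertices are univalent, with the one-edge graph removed --- not ``internally connected graphs with a single external output'' with a vertex-splitting differential, and the comparison map is the hair-attachment map, a quasi-isomorphism by \cite{FW}. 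The same edge filtration makes the vanishing conditions immediate: the tadpole sits in degree $1-n\neq 0$, and no one-edge hairy graphs remain in $\Def'$.
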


Note that by adjunction
\begin{equation*}
\Map(\tau_{\sharp}\Graphs_n^2,W^c(\eop_n^c))^{\htimes} = \Map(\Graphs_n^2,W^c(\eop_n^c))^{\htimes}.
\end{equation*}
To show Theorem \ref{thm:main2} we furthermore need to separate the reductive part $\QQ^\times$ from our simplicial group on the left-hand side of \eqref{equ:thmmain2}.
To this end we consider the diagram of fiber-sequences
\begin{equation}\label{equ:fibdiag}
\begin{tikzcd}
\Map( \Graphs_n^2, W^c(\eop_n^c))^{\htimes}_1 \ar{r} &
\Map( \Graphs_n^2, W^c(\eop_n^c))^{\htimes}  \ar{r}{\pi}&
\QQ^\times = \pi_0\Aut(\eop_n^c) \\
Z_{\bullet}(\GC_n^2) \ar{r}\ar{u}&
G_{n\bullet}\ar{r}\ar{u}
&
\QQ^\times \ar{u}{=}
\end{tikzcd},
\end{equation}
where $\Map( \Graphs_n^2, W^c(\eop_n^c))^{\htimes}_1$ consists of the connected components that correspond to morphisms inducing the identity map on cohomology.
To show Theorem \ref{thm:main2} it is clearly sufficient to show that the left-hand vertical arrow (on fibers) in \eqref{equ:fibdiag} is a weak equivalence.
Note that
\begin{equation*}
Z_{\bullet}(\GC_n^2) = \MC_{\bullet}(\GC_n^2[-1]),
\end{equation*}
with $\GC_n^2[-1]$ considered as abelian $L_{\infty}$-algebra.
Furthermore the mapping space
\begin{equation*}
\Map_{\bullet}( \Graphs_n^2, W^c(\eop_n^c))^{\htimes}_1
=\MC_{\bullet}(\Def'(\Graphs_n^2\xrightarrow{\phi} W^c(\eop_n^c)))
\end{equation*}
can be realized as the Maurer--Cartan space of a codimension one $L_{\infty}$-subalgebra $\Def'(\Graphs_n^2\xrightarrow{\phi} W^c(\eop_n^c))\subset \Def(\Graphs_n^2\xrightarrow{\phi} W^c(\eop_n^c))$ of the deformation complex.
Furthermore, from Section~\ref{sec:Def with Lie action} we see that the map
\begin{equation*}
Z_{\bullet}(\GC_n^2)
\rightarrow
\Map_{\bullet}( \Graphs_n^2, W^c(\eop_n^c))^{\htimes}_1
\end{equation*}
is induced by an $L_{\infty}$-morphism
\begin{equation}\label{equ:U in main proof}
U: \GC_n^2[-1] \rightarrow \Def'(\Graphs_n^2\xrightarrow{\phi} W^c(\eop_n^c))
\end{equation}
(see Proposition~\ref{prop:U Def trivialization}).
By the Goldman-Millson Theorem \ref{thm:GM}, we can hence reduce Theorem \ref{thm:main2} to the following statement.

\begin{prop}\label{prop:main_computation}
The $L_{\infty}$-morphism $U$ of \eqref{equ:U in main proof} satisfies the assumptions of the Goldman-Millson Theorem \ref{thm:GM}, with the filtrations inherited from the filtration by number of edges on $\Graphs_n^2$.
Concretely, this means that:
\begin{enumerate}
\item\label{prop:main_computation:linear_part}
The linear part of $U$,
\begin{equation}\label{equ:main_computation_U1}
U_1: \GC_n^2[-1] \rightarrow \Def'(\Graphs_n^2\xrightarrow{\phi} W^c(\eop_n^c)),
\end{equation}
induces a quasi-isomorphism on the second page of the spectral sequences.
\item\label{prop:main_computation:cohomology}
We have that
\begin{equation}\label{equ:H1van0}
H^{0}(\GC_n^2 /F^2\GC_n^2)=0.
\end{equation}
and
\begin{equation}\label{equ:H1van}
H^i(\Def'(\Graphs_n^2\xrightarrow{\phi} W^c(\eop_n^c))/F^2\Def'(\Graphs_n^2\xrightarrow{\phi} W^c(\eop_n^c)))=0
\end{equation}
for $i=0,1$.
\end{enumerate}
\end{prop}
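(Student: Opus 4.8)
The plan is to make the linear part $U_1$ explicit, run the spectral sequences associated with the edge filtration on both sides, and import the graph-cohomology computation of \cite{FTW,FW,Willwacher} for the one hard step. From the power series of Proposition~\ref{prop:U Def trivialization}, the linear term is
\begin{equation*}
U_1(\xi) = \pi\phi\,\ad(\xi)\,\iota,
\end{equation*}
where $\iota\colon\IG_n^2\hookrightarrow\Graphs_n^2$ is the inclusion of the internally connected graphs (the algebra generators) and $\pi$ is the projection of $W^c(\eop_n^c)$ onto its cogenerators $\mathring{W}^c(\eop_n^c)$. Since $\phi$ factors through $\eop_n^c\subset W^c(\eop_n^c)$ and $\ad(\xi)$ is the graph-insertion biderivation of Section~\ref{sec:graph action}, $U_1$ admits a transparent combinatorial description: it records the effect of inserting the symmetry graph $\xi$ into an internally connected graph.

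For assertion~\ref{prop:main_computation:linear_part} I would compare the spectral sequences of the edge filtration on the two sides. On the source, $\GC_n^2[-1]$ is an abelian $L_\infty$-algebra, so its sole operation is the vertex-splitting differential, which raises the edge count by exactly one; thus $d_0=0$, the differential $d_1$ is the full vertex-splitting differential, the sequence degenerates, and $E_2\simeq H(\GC_n^2)[-1]$. On the target I would organize the differential of $\Def'(\Graphs_n^2\xrightarrow{\phi}W^c(\eop_n^c))$ through the operadic and algebraic parts $\partial',\partial''$ of Proposition~\ref{prop:explicit differential} and pass to the associated graded. The essential input is that $W^c(\eop_n^c)=B(\mathring{W}^c(\eop_n^c)_1)$ is a bar resolution, so its internal direction is acyclic; after killing these auxiliary directions the $E_1$-page reduces to a graph complex whose induced $d_1$ is again the vertex-splitting differential, giving $E_2\simeq H(\GC_n^2)[-1]$ and showing that $U_1$ induces the identity under this identification. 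This is the content, in the present $\GC_n^2$-formulation, of the deformation-complex computations of \cite[Sections 5 and 7]{FTW} and \cite[Sections 1--2]{FW}, which I would adapt rather than reproduce.

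For assertion~\ref{prop:main_computation:cohomology} I would first note that, after the degree shift $\GC_n^2[-1]^*=(\GC_n^2)^{*-1}$, the three vanishing conditions required by Theorem~\ref{thm:GM} for $\galg=\GC_n^2[-1]$ and $\halg=\Def'$ are exactly \eqref{equ:H1van0} together with the two cases $i=0,1$ of \eqref{equ:H1van}. Each of these is a finite check in low filtration degree. The quotient $\GC_n^2/F^2\GC_n^2$ is spanned by good graphs with a single edge; the good-graph axioms force such a graph to have $V=1$, $E=1$ and hence cohomological degree $-n(V-1)+(n-1)E=n-1\neq 0$ for $n\geq 2$, so \eqref{equ:H1van0} holds. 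The complex $\Def'/F^2\Def'$ is likewise a small explicit object whose cohomology in degrees $0$ and $1$ I would compute by direct inspection; here the point of passing to the codimension-one subalgebra $\Def'$ is precisely that it removes the component detected by $\QQ^\times$ in the fiber sequence \eqref{equ:fibdiag}, which is the class that would otherwise obstruct the vanishing of $H^0$.

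The main obstacle is the target-side $E_1$-computation in assertion~\ref{prop:main_computation:linear_part}: identifying the cohomology of the associated graded deformation complex as $H(\GC_n^2)[-1]$ and checking that $U_1$ realizes this isomorphism. This is where the genuine graph-complex input enters and where the bar-resolution structure of $W^c(\eop_n^c)$ must be used to discard the acyclic auxiliary directions; everything else is either formal or a finite verification.
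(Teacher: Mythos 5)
Your overall skeleton matches the paper's proof: run the edge-count filtration spectral sequences on both sides of \eqref{equ:main_computation_U1}, import the hard graph-complex input from \cite{FTW,FW}, and dispose of \eqref{equ:H1van0} by observing that the tadpole is the only one-edge good graph and sits in nonzero degree (your degree $n-1$ versus the paper's $1-n$ is just a dualization convention slip and is immaterial). The source-side analysis ($d_0=0$, $E_2\simeq H(\GC_n^2)[-1]$) and the reduction of the Goldman--Millson hypotheses to \eqref{equ:H1van0} and \eqref{equ:H1van} are also correct.

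However, there is a genuine gap in your target-side computation, which is the mathematical core of assertion~(\ref{prop:main_computation:linear_part}). You claim that because $W^c(\eop_n^c)=B(\mathring{W}^c(\eop_n^c)_1)$ is a bar resolution, ``its internal direction is acyclic,'' so that the target's $E_1$-page collapses to $\GC_n^2$ with the vertex-splitting differential and $U_1$ induces the identity. This is not what happens, and the argument as stated would fail: on the associated graded the morphism $\phi$ degenerates to the trivial map factoring through $\Com^c$, so $E_0$ becomes $\Def(\Graphs_n^2,W^c(\eop_n^c))\simeq\ICG_n^2\hat{\otimes}_{\Sigma}\mathring{W}^c(\eop_n^c)$, which computes the $E_n$-homology of the infinitesimal $\Com$-bimodule $\ICG_n^2$ --- a genuine computation (Arone--Turchin, \cite[Theorem 2.3.3]{FW}), not a consequence of acyclicity of anything. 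Its output is the \emph{hairy} graph complex $(\HGC_{n,n}^2)'$ (external vertices of valence one, with the one-edge hair removed to account for the codimension-one subspace $\Def'$), carrying the differential $\delta_{\ICG}+\delta_{\rm attach}$. The map induced by $U_1$ on this $E_1$-page is the ``attach one hair'' map $\GC_n^2[-1]\rightarrow(\HGC_{n,n}^2)'$; it is emphatically not an isomorphism or the identity, and the fact that it is a quasi-isomorphism is precisely \cite[Proposition 2.2.9]{FW}, which is the theorem your proof must invoke. This misidentification also undermines your treatment of \eqref{equ:H1van}: the quotient $\Def'/F^2\Def'$ is not a ``small explicit object'' amenable to direct inspection, since it involves the cogenerators of the full $W$-construction; the paper instead deduces \eqref{equ:H1van} as a corollary of the $E_1$-identification, because the only one-edge hairy graph has been removed in $(\HGC_{n,n}^2)'$, so the filtration-degree-one cohomology vanishes identically.
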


\begin{proof}
We begin with statement~(\ref{prop:main_computation:linear_part}).
This statement has essentially already been shown in the literature, see for example \cite[Theorem 2.3.7]{FW} or \cite{FTW}, albeit in a difficult-to-reference form, using different notation and conventions.
We shall guide the reader through the computation again, in our notation and language, but refer to the literature for the proof of some sub-statements.

First, take the spectral sequence associated to the descending complete filtrations by the number of edges in graphs on both sides of \eqref{equ:main_computation_U1}.
We will also temporarily forget the $(-)'$ on the right-hand side of \eqref{equ:main_computation_U1}, and work with $\Def(\Graphs_n^2\xrightarrow{\phi} W^c(\eop_n^c))$ instead.

Recall from Section~\ref{sec:explicit differential} that the differential on the right-hand side has the form $\delta=\delta'+\delta''$
with the ``algebra'' piece $\delta''$ as in \eqref{equ:dprimeprime}
and the ``operadic'' piece $\delta'$ as in \eqref{equ:dprime}
(see Proposition~\ref{prop:explicit differential}).
Now, on the associated graded we ``see'' only those pieces of the differential that do not create edges.
Concretely, this means that the map $\psi = \phi$ appearing in \eqref{equ:dprime} and in \eqref{equ:dprimeprime} can be replaced by the trivial map $\Graphs_n^2\xrightarrow{*} W^c(\eop_n^c)$ factoring through $\Com^c$:
\begin{equation*}
\begin{tikzcd}
\Graphs_n^2 \ar{r}\ar[bend right]{rrr}{*} & \Com^c \ar{r} & \eop_n^c \ar{r} & W^c(\eop_n^c)
\end{tikzcd}.
\end{equation*}
This means that on the 0-th page of our spectral sequence the complex $\Def(\Graphs_n^2\xrightarrow{\phi} W^c(\eop_n^c))$ becomes the same as $\Def(\Graphs_n^2\xrightarrow{*} W^c(\eop_n^c))=\Def(\Graphs_n^2,W^c(\eop_n^c))$.
In (\ref{equ:dprime}-\ref{equ:dprimeprime}) all terms in which $\psi$ is applied non-trivially drop out.
The resulting complex
\begin{equation}\label{equ:endefICG}
(\Hom_{\gSeqc}(\IG_n^2, \mathring{W}^c(\eop_n^c)), \delta)
\simeq
\underbrace{\left( \prod_{r\geq 2}\ICG_n^2(r) \hat{\otimes}_{\Sigma_r}\mathring{W}^c(\eop_n^c)(r),\delta\right)}_{=: (\ICG_n^2 \hat{\otimes}_{\Sigma}\mathring{W}^c(\eop_n^c),\delta)}
\end{equation}
computes the $E_n$-homology of the infinitesimal $\Com$-bimodule $\ICG_n^2 := (\IG_n^2)^*$.
This has been computed at various places in the literature,
see \emph{e.g.} \cite[Theorem 2.3.3]{FW} or the earlier reference \cite{AroneTurchin}. It agrees with the hairy graph complex
\begin{equation*}
\HGC_{n,n}^2 := (\ICG_n^2)^{=1}\hat{\otimes}_{\Sigma}\Com\{n\},
\end{equation*}
where $(\ICG_n^2)^{=1}\subset\ICG_n^2$ consists of the graphs all of whose external vertices have valency exactly one.
\begin{equation*}
\begin{tikzpicture}[scale=.5]
\node[int] (v1) at (-1,0){};\node[int] (v2) at (0,1){};\node[int] (v3) at (1,0){};\node[int] (v4) at (0,-1){};
\draw (v1)  edge (v2) edge (v4) -- +(-1.3,0) (v2) edge (v4) (v3) edge (v2) edge (v4) -- +(1.3,0);
\end{tikzpicture}
\end{equation*}
For the differentials on the next page of our spectral sequence we have to extract from (\ref{equ:dprime}-\ref{equ:dprimeprime})
all terms in which $\psi$ is applied non-trivially exactly once, and to a graph with exactly one edge.
The only nontrivial such terms come from the internal differential $\delta_{\ICG}$ on $\ICG_n^2$ and from the very last term in \eqref{equ:dprime},
with the latter (say $\delta_{\rm attach}$) acting by attaching one additional hair to a hairy graph, in all possible ways.
Now we also need to remember that on the right-hand side of \eqref{equ:main_computation_U1} we have a codimension one subspace of the deformation complex.
This is reflected on the current page of our spectral sequences by replacing the hairy graph complex $\HGC_{n,n}$ by the codimension 1 subspace
\begin{equation*}
(\HGC_{n,n}^2)' \subset \HGC_{n,n}^2
\end{equation*}
consisting of series of graphs with at least two edges.
In other words we discard multiples of the one-edge hairy graph
\begin{equation*}
\begin{tikzcd}
\draw (0,0) -- (.7,0);
\end{tikzcd}
\end{equation*}
from our complex.
The map resulting from \eqref{equ:main_computation_U1} on the $E^1$-page
\begin{equation*}
(\GC_n^2[-1],\delta) \rightarrow  ((\HGC_{n,n}^2)',\delta_{\ICG}+\delta_{\rm attach})
\end{equation*}
just adds one hair to a non-hairy graph.
It has been computed in \cite[Proposition 2.2.9]{FW} that this map is a quasi-isomorphism,
thus finishing our proof of property (\ref{prop:main_computation:linear_part}).
	
Now turn to the statement~(\ref{prop:main_computation:cohomology}) of the Proposition.
We first note that in the graph complex $\GC_n^2$ there is only one graph with a single edge, which is the tadpole graph $\begin{tikzpicture}[baseline=-0.5ex]
	\node[int](v) at(0,0) {};
	\draw (v) to [ out=120, in=60, loop] (v);
	\end{tikzpicture}$, living in degree $1-n\neq 0$. Hence \eqref{equ:H1van0} follows.	
	
	Now consider the second claim \eqref{equ:H1van}. Note that the above computation identified the cohomology of the associated graded with respect to the filtration by number of edges with the codimension one subspace of the hairy graph complex, removing the unique hairy graph with one edge.
	Hence, there are no further hairy graphs with one edge left and we conclude in particular that \eqref{equ:H1van} must hold.
\end{proof}

\section{Unitary versions of the main results}\label{sec:Lambdaoperadmodel}
For the moment, we have formulated our results in the categories of operads and cooperads without nullary (co)operations.
In particular, the $E_n$ operads so far had no nullary operations. We claim that they can be incorporated with minimal changes.
In particular, Theorem~\ref{thm:main_intro} remains valid in this ``unitary'' setting without change.
We explain this enhancement of our constructions in this section.

\subsection{On $\Lambda$ operads and $\Lambda$ cooperads}
We still assume that our symmetric sequences are concentrated in positive arities.
We use the formalism of $\Lambda$ structures to incorporate a single nullary operation in our objects.
We refer to \cite{OperadHomotopyBook} and \cite[Section 4]{ExtendedRHT}
for details on this formalism, developed by the first author.
We shall just recall that a simplicial $\Lambda$-operad $\POp$ is a simplicial operad as above, together with a collection of maps
\begin{equation*}
u^*: \POp(l)\rightarrow\POp(k),
\end{equation*}
that formally encode the composition with a nullary operation,
and which we associate to any injective order preserving map $u: \{1<\cdots<k\}\rightarrow\{1<\cdots<l\}$.
The $\Lambda$-operations have to satisfy natural compatibility conditions with the operadic structure.
The idea is that the object
\begin{equation*}
\POp_+(r) =
\begin{cases}
\POp(r), & \text{for $r>0$}, \\
*, & \text{for $r=0$},
\end{cases}
\end{equation*}
inherits an operad structure when we use the $\Lambda$-structure to extend the operadic compositions to those involving the new nullary element $*$.
We denote the category of simplicial $\Lambda$-operads by $\sLaOp$.

Dually, we consider dg $\Lambda$ cooperads and dg Hopf $\Lambda$ cooperads, which are $\Lambda$ cooperads in the category of dg commutative algebras.
In this setting the $\Lambda$-structure consists of maps
\begin{equation}\label{equ:CLa}
u_*: \COp(k)\rightarrow\COp(l)
\end{equation}
that formally encode the composition coproducts with a nullary cooperation.
There are again compatibility conditions ensuring that the $\Lambda$-structure are the restrictions
of the composition coproducts of a larger dg cooperad $\COp_+$,
obtained by adding a nullary term $\COp_+(0) = \kk$
to $\COp$.
We just have to take care that a dg $\Lambda$ cooperad is always required to be coaugmented over the commutative cooperad $\Com^c$
in the sense that we assume the existence of a morphism
\begin{equation*}
\Com^c\rightarrow\COp
\end{equation*}
that preserves $\Lambda$-structures. This morphism represents the composition coproducts $\COp_+(0)\rightarrow\COp_+(r)\otimes\COp_+(0)^{\otimes r}$,
where we put the nullary term $\COp_+(0) = \kk$
at all positions.
We omit to mention this extra structure in our terminology (in contrast to~\cite[\S II.11]{OperadHomotopyBook}).
In the case of a dg Hopf $\Lambda$ cooperad $\AOp$, the coaugmentation is actually given by the unit morphisms $\eta: \kk\rightarrow\AOp(r)$
of the dg commutative algebras $\AOp(r)$, where we use the identity $\Com^c(r) = \kk$.

By convention, we still assume that our dg $\Lambda$ cooperads are conilpotent in the sense that the underlying dg cooperad
of a dg $\Lambda$ cooperad is conilpotent in the ordinary sense
when we forget the $\Lambda$-structure.
We denote the category of dg $\Lambda$ cooperads by $\dgLaOpc$ (by $\dgZLaOpc$ when we deal with $\ZZ$-graded objects).
We denote the category of dg Hopf $\Lambda$ cooperads by $\dgLaHOpc$ (by $\dgZLaHOpc$ when we deal with $\ZZ$-graded objects).

\subsection{Simplicial model structure}
We refer to~\cite[\S II.8.4, \S II.11.1]{OperadHomotopyBook} and to \cite[Section 4]{ExtendedRHT}
for the definition of model structures on the categories $\sLaOp$, $\dgLaOpc$
and $\dgLaHOpc$.
We shall just need to recall that the fibrations of $\dgLaOpc$ and $\dgLaHOpc$ are created in $\dgOpc$.
In other words, a morphism $\phi: \AOp\rightarrow\BOp$ in either of the aforementioned categories
is a fibration if and only if this morphism defines a fibration in $\dgOpc$.

We now extend our construction of the right finitely continuous simplicial model structure of Section~\ref{sec:simplicial model structure} to the category $\dgLaHOpc$.
We define the simplicial mapping spaces again by \eqref{equ:Mapping space def}:
\begin{equation}
\label{equ:Mapping space def La}
\Map(\AOp,\BOp)_n= \Mor_{\dgLaHOpc/\Omega^*(\Delta^n)}
(\AOp\otimes\Omega^*(\Delta^n),
\BOp\otimes\Omega^*(\Delta^n)),
\end{equation}
where $\dgLaHOpc/\Omega^*(\Delta^n)$ denotes the category of dg Hopf $\Lambda$-cooperads defined over the ground dg algebra $\Omega^*(\Delta^n)$.
Next one notes that for a dg Hopf $\Lambda$ cooperad $\BOp$
the dg Hopf cooperad $\BOp^K$ of Proposition \ref{prop:adjunctionrelation}
naturally inherits a $\Lambda$ structure
and satisfies the adjunction relation
\begin{equation}
\label{equ:adjunctionrelation La}
\Mor(K,\Map(\AOp,\BOp)) = \Mor_{\dgLaHOpc}(A,\BOp^K)
\end{equation}
in the category of dg Hopf $\Lambda$ cooperads.
To check this claim, we use that the forgetful functor from the category of $\Lambda$ cooperads
to the category of ordinary cooperads
creates cofree objects
and that limits in the category of Hopf $\Lambda$ cooperads are taken in the category of ordinary dg cooperads (see~\cite[\S II.11.1]{OperadHomotopyBook}).

Since the fibrations in $\dgLaHOpc$ are created in $\dgOpc$, we then conclude that the pullback-corner-property, Proposition \ref{prop:pullbackcorner}, continues to hold in the category of Hopf dg $\Lambda$ cooperads.
The $\Lambda$-analogue of Theorem \ref{thm:map comparison} is then a formal consequence, whose proof goes through without change.
Summarizing, we have the following result.

\begin{prop}
The category $\dgLaHOpc$ is simplicially enriched via \eqref{equ:Mapping space def La} and lax cotensored over finite simplicial sets,
so that the adjunction relation \eqref{equ:adjunctionrelation La} and  the pullback-corner-property (see Proposition \ref{prop:pullbackcorner})
hold in $\dgLaHOpc$.
	
Furthermore, the object~\eqref{equ:Mapping space def La} provides a model of the mapping spaces in the model category of dg Hopf $\Lambda$ cooperads.
\end{prop}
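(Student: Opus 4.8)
The plan is to transport to $\dgLaHOpc$ the three results established for ordinary dg Hopf cooperads, namely the adjunction relation of Proposition~\ref{prop:adjunctionrelation}, the pullback-corner property of Proposition~\ref{prop:pullbackcorner}, and the mapping space comparison of Theorem~\ref{thm:map comparison}. At each step I would exploit that the relevant structures in $\dgLaHOpc$ are created by the forgetful functors down to $\dgHOpc$ and ultimately to $\dgOpc$, so that the verifications reduce to facts already proved.

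First I would construct the simplicial object $\BOp^{\Delta^{\bullet}}$ attached to a dg Hopf $\Lambda$ cooperad $\BOp$ by repeating the two-step argument of the proof of Proposition~\ref{prop:adjunctionrelation}. For a cofree object I set $\FreeOp^c(\MOp)^{\Delta^{\bullet}} = \FreeOp^c(\MOp\otimes\Omega^*(\Delta^{\bullet}))$; since the forgetful functor from $\Lambda$ cooperads to ordinary cooperads creates cofree objects, this assignment lands in $\dgLaHOpc$ and the cofree-cooperad adjunction computation carries over verbatim. For a general $\BOp$, I would use its reflexive equalizer presentation by cofree objects and set $\BOp^{\Delta^{\bullet}}$ to be the corresponding equalizer, invoking that limits in $\dgLaHOpc$ are computed in the underlying category of dg cooperads, so that the equalizer inherits a $\Lambda$ structure and both sides of the adjunction relation preserve this equalizer in the variable $\BOp$. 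This yields the adjunction relation~\eqref{equ:adjunctionrelation La} in $\dgLaHOpc$, and the function object bifunctor $(\BOp,K)\mapsto\BOp^K$ is then recovered by the reflexive equalizer formula exactly as in the non-$\Lambda$ case.

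Second, for the pullback-corner property I would reduce to $\dgOpc$, just as in Section~\ref{sec:pcp proof}. The key point is that the underlying dg cooperad of $\BOp^K$ coincides with the function object of the underlying dg cooperad of $\BOp$, which holds because both the cofree objects and the equalizers defining $\BOp^K$ are created by the forgetful functor to $\dgOpc$. Consequently the pullback-corner morphism $(p_*,i^*): \AOp^L\rightarrow\BOp^L\times_{\BOp^K}\AOp^K$ attached to $i: K\rightarrowtail L$ and a fibration $p: \AOp\twoheadrightarrow\BOp$ in $\dgLaHOpc$, when regarded in $\dgOpc$, is precisely the pullback-corner morphism treated in Claim~\ref{claim:pullbackcorner}, already shown there to be a fibration of dg cooperads that is acyclic whenever $i$ or $p$ is so. Since the fibrations and acyclic fibrations of $\dgLaHOpc$ are by definition created in $\dgOpc$, this gives the pullback-corner property in $\dgLaHOpc$. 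Taking $K = \Delta^{\bullet}$ in the adjunction relation then gives $\Map(\AOp,\BOp) = \Mor_{\dgLaHOpc}(\AOp,\BOp^{\Delta^{\bullet}})$ and shows that $\BOp^{\Delta^{\bullet}}$ is a simplicial framing on $\BOp$, so the mapping space comparison follows exactly as in the proof of Theorem~\ref{thm:map comparison}.

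The only point that requires genuine care, and hence the step I expect to be the main obstacle, is the verification that $\BOp^{\Delta^{\bullet}}$, and more generally $\BOp^K$, really carries a well-defined $\Lambda$ structure compatible with the mandatory coaugmentation $\Com^c\rightarrow\BOp$, and that the adjunction~\eqref{equ:adjunctionrelation La} holds at the level of $\Lambda$ cooperad morphisms rather than merely cooperad morphisms. Everything else transports mechanically through the forgetful functors. This compatibility is guaranteed by the cited facts that cofree objects are created by the forgetful functor to ordinary cooperads and that limits in $\dgLaHOpc$ are taken in the category of dg cooperads: the $\Lambda$ structure on a cofree function object is forced by functoriality, the $\Lambda$ structure on the equalizer is inherited from its defining diagram, and the coaugmentation is preserved throughout, so the whole construction is automatically $\Lambda$-equivariant.
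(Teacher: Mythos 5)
Your proposal is correct and follows essentially the same route as the paper: it constructs the function object $\BOp^K$ in the $\Lambda$ setting using exactly the two facts the paper invokes (the forgetful functor to ordinary cooperads creates cofree objects, and limits of Hopf $\Lambda$ cooperads are computed in ordinary dg cooperads), then deduces the pullback-corner property from the fact that fibrations of $\dgLaHOpc$ are created in $\dgOpc$, and obtains the mapping-space comparison as a formal consequence. The point you single out as the main obstacle (the $\Lambda$ structure and coaugmentation on $\BOp^K$) is precisely the point the paper checks, by the same creation arguments.
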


We also note that the constructions of the model structure on $\dgZHOpc$ and $\dgZHOpc$ (see Appendix~\ref{sec:modelcat})
extends to the $\ZZ$-graded versions of the categories of $\Lambda$ cooperads $\dgZLaOpc$ and $\dgZLaHOpc$
without changes.
However, we again cannot show the analogous version of Proposition \ref{prop:pullbackcorner} in this setting
(see also the remark at the end of Section~\ref{sec:simplicial model structure}).

\subsection{Cofibrancy criterion}
While the fibrations in our categories of $\Lambda$ cooperads are inherited from ordinary cooperads, the cofibrations are more complicated.
We first recall from \cite[Section 4]{ExtendedRHT} that there are Quillen adjunctions
\begin{equation*}
\begin{tikzcd}
  \dgOpc  \ar[shift left]{r}{\FreeOp_{\Lambda}} & \dgLaOpc \ar[shift left]{l}\ar[shift left]{r}{\Sym} & \dgLaHOpc \ar[shift left]{l}
 \end{tikzcd}\,,
\end{equation*}
with the arrows from right-to-left being the obvious forgetful functors. We will denote, abusing notation, $\Sym_{\Lambda}=\Sym\circ \FreeOp_{\Lambda}$.
We furthermore have analogous adjunctions in the $\ZZ$-graded situation.

Now the cofibrancy criterion Lemma \ref{lem:cofib_criterion} goes through with the following modifications.

\begin{lemm}\label{lem:cofib_criterionLa}
Let $\AOp= (S(\MOp),\partial)$ be a dg Hopf $\Lambda$ cooperad (either $\ZZ$-graded or non-negatively graded).
We assume that $\AOp$ is a quasi-free dg commutative algebra arity-wise, for a generating graded symmetric sequence $\MOp$
which is in turn a free $\Lambda$-module generated by $\MOp_0$.
We write $S(\MOp) = S_{\Lambda}(\MOp_0)$.
We suppose that $\MOp_0$ is equipped with an exhaustive filtration $0 = F^0\MOp_0\subset F^1\MOp_0\subset\cdots\subset F^s\MOp_0\subset\cdots\subset\MOp$
such that the symmetric sequence
$S_{\Lambda}(F^{s-1}\MOp_0) + F^s\MOp_0\subset S_{\Lambda}(\MOp_0)$
is preserved by the differential and the composition coproducs of the cooperad $\AOp$, for every $s\geq 1$.
Then $\AOp$ defines a cofibrant object in the category of dg Hopf $\Lambda$ cooperads ($\dgLaHOpc$ or $\dgZLaHOpc$.)
\end{lemm}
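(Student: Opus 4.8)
The plan is to reproduce the argument for the non-$\Lambda$ cofibrancy criterion (Lemma~\ref{lem:cofib_criterion}), substituting throughout the free commutative-algebra and cooperad functors by their $\Lambda$-analogues. The essential device is that, although the cofibrations of $\dgLaHOpc$ admit no simple description, we have at our disposal the left Quillen functors $\FreeOp_\Lambda\colon\dgOpc\to\dgLaOpc$ and $\Sym\colon\dgLaOpc\to\dgLaHOpc$, hence $\Sym_\Lambda=\Sym\circ\FreeOp_\Lambda\colon\dgOpc\to\dgLaHOpc$, all of which preserve cofibrations. Since cofibrations in $\dgOpc$ are created arity-wise in $\dgVect$ and every cochain complex is cofibrant over the field $\kk$, any cell we adjoin will be (relatively) cofibrant before we feed it into $\Sym_\Lambda$, and we are left only with organising $\AOp$ out of such free pieces.

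First I would set $\AOp_s:=S_\Lambda(F^s\MOp_0)$ and check, using the preservation hypothesis on the symmetric sequence $S_\Lambda(F^{s-1}\MOp_0)+F^s\MOp_0$, that each $\AOp_s$ is a sub-dg-Hopf-$\Lambda$-cooperad of $\AOp$: closure under $\partial$ holds because $\partial$ is a biderivation and the hypothesis controls it on the pure generators $F^s\MOp_0$, while closure under the composition coproducts follows from their multiplicativity together with $\Lambda$-compatibility. Exhaustiveness of the filtration yields a tower $\Com^c=\AOp_0\hookrightarrow\AOp_1\hookrightarrow\cdots$ with $\colim_s\AOp_s=\AOp$. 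As cofibrations are stable under sequential composition, it then suffices to prove that each inclusion $\AOp_{s-1}\hookrightarrow\AOp_s$ is a cofibration.

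For a fixed $s$ I would read off the induced structure on the cell $E_0^s\MOp_0=F^s\MOp_0/F^{s-1}\MOp_0$. The preservation condition forces the differential of the new pure generators to be the sum of a term in $\AOp_{s-1}$ (the attaching part) and a linear term, so $\partial$ descends to a differential $\bar\partial$ on $E_0^s\MOp_0$; similarly the reduced composition coproducts of the new generators land in $\AOp_{s-1}$, so $E_0^s\MOp_0$ carries the trivial reduced cooperad structure modulo the lower stage. Regarding $(E_0^s\MOp_0,\bar\partial)$ as a trivial dg cooperad, I would then exhibit $\AOp_{s-1}\hookrightarrow\AOp_s$ as the pushout of a free cofibration $\Sym_\Lambda(\COp')\hookrightarrow\Sym_\Lambda(\COp'')$ along an attaching map $\Sym_\Lambda(\COp')\to\AOp_{s-1}$, where $\COp'\hookrightarrow\COp''$ is an arity-wise injective (hence cofibration of dg cooperads) map built from $E_0^s\MOp_0$ by a mapping-cylinder construction encoding $\bar\partial$ and the attaching differential. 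Because $\Sym_\Lambda$ is left Quillen and cofibrations are stable under cobase change, the step map is a cofibration, and the tower argument concludes.

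The main obstacle will be the $\Lambda$-bookkeeping in this last step: one must verify that applying $\Sym_\Lambda$ to the cell built from the pure generators $F^s\MOp_0$ reproduces $\AOp_s=S_\Lambda(F^s\MOp_0)$ over $\AOp_{s-1}$, i.e.\ that the free $\Lambda$-module functor supplies exactly the $\Lambda$-translates of the new generators (and their products) present in $S_\Lambda(\MOp_0)$, and nothing more. This is precisely where the hypotheses that $\MOp$ be a \emph{free} $\Lambda$-module on $\MOp_0$ and that the preserved subobject be written as $S_\Lambda(F^{s-1}\MOp_0)+F^s\MOp_0$ (rather than the full $\Lambda$-span of $F^s\MOp_0$) are used; unwinding the composite $\Sym_\Lambda=\Sym\circ\FreeOp_\Lambda$ against this two-step filtration requires care. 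Everything else is a routine transcription of the non-$\Lambda$ case.
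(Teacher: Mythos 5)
Your global strategy---exhausting $\AOp$ by the subobjects $F^s\AOp = (S_{\Lambda}(F^s\MOp_0),\partial)$, exhibiting each inclusion $F^{s-1}\AOp\hookrightarrow F^s\AOp$ as a pushout of a cofibration of the form $\Sym_{\Lambda}(i)$, and invoking closure of cofibrations under pushouts and transfinite compositions---is exactly the paper's. The gap is in your choice of cell. You build $\COp'\hookrightarrow\COp''$ out of the associated graded $E_0^s\MOp_0$, with the cooperad structure on the new generators trivialized and only the differential data retained via a mapping cylinder. But colimits of Hopf $\Lambda$ cooperads are created arity-wise in the underlying category of dg commutative algebras, so in the pushout $F^{s-1}\AOp\sqcup_{\Sym_{\Lambda}(\COp')}\Sym_{\Lambda}(\COp'')$ the composition coproducts of elements coming from $\COp''$ are simply the push-forwards of their coproducts inside $\COp''$: if the new generators are primitive in $\COp''$, they remain primitive in the pushout. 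In $F^s\AOp$, however, the new generators $x\in F^s\MOp_0$ have nontrivial reduced coproducts with values in expressions built from $S_{\Lambda}(F^{s-1}\MOp_0)+F^s\MOp_0$, and the attaching map $\Sym_{\Lambda}(\COp')\rightarrow F^{s-1}\AOp$ cannot supply this information, since it only prescribes the image of $\COp'$. (Your intermediate assertion that the reduced coproducts of the new generators land in $\AOp_{s-1}$ is also not what the hypothesis gives: the tensor factors may again involve elements of $F^s\MOp_0$.) So your pushout produces a Hopf $\Lambda$ cooperad with the wrong coproducts, and the step map is not identified with $F^{s-1}\AOp\hookrightarrow F^s\AOp$. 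Note that this defect is not a matter of ``$\Lambda$-bookkeeping'': it would already ruin the same construction in the non-$\Lambda$ setting.

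The paper's cell---here and, contrary to your recollection, already in the non-$\Lambda$ Lemma~\ref{lem:cofib_criterion}---avoids this by containing the entire lower stage: one takes $\COp = (S_{\Lambda}(F^{s-1}\MOp_0),\partial)$ and $\DOp = (S_{\Lambda}(F^{s-1}\MOp_0)+F^s\MOp_0,\partial)$, which are sub-dg-cooperads of $\AOp$ precisely by the hypothesis of the lemma (this is why the hypothesis is phrased as preservation of $S_{\Lambda}(F^{s-1}\MOp_0)+F^s\MOp_0$ rather than as a condition on the associated graded), so that the differentials \emph{and} the coproducts of the new generators are encoded in the cell itself. One then applies $\Sym_{\Lambda}$ to the inclusion $i\colon\COp\hookrightarrow\DOp$ (a cofibration, since $i$ is arity-wise injective and $\Sym_{\Lambda}$ is left Quillen) and attaches along the counit $\Sym_{\Lambda}(\COp)\rightarrow F^{s-1}\AOp$; the pushout is then $F^s\AOp$. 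With this replacement of your cell, the rest of your tower argument goes through and coincides with the paper's proof.
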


\begin{proof}
We first write $\AOp$ as a colimit of subobjects $F^s\AOp\subset\AOp$ such that $F^s\AOp = (S_{\Lambda}(F^s\MOp_0),\partial)$.
We moreover have a pushout diagram in the category of dg Hopf $\Lambda$ cooperads:
\begin{equation*}
\xymatrix{ \Sym_{\Lambda}(S_{\Lambda}(F^{s-1}\MOp_0),\partial)\ar@{^{(}->}[]!D-<0pt,2pt>;[d]\ar[r] & F^{s-1}\AOp\ar@{.>}[d] \\
\Sym_{\Lambda}(S_{\Lambda}(F^{s-1}\MOp_0) + F^s\MOp_0,\partial)\ar@{.>}[r] & F^s\AOp },
\end{equation*}
for each $s\geq 1$, where we consider coaugmented dg $\Lambda$ cooperads such that $\COp = (S_{\Lambda}(F^{s-1}\MOp_0),\partial)$,
$\DOp = (S_{\Lambda}(F^{s-1}\MOp_0) + F^s\MOp_0,\partial)$,
and we take the morphism of dg Hopf cooperads obtained by applying the $\Lambda$ cooperadic symmetric algebra functor $\Sym_{\Lambda}(-)$
to the inclusion of these coaugmented dg $\Lambda$ cooperads $i: \COp\hookrightarrow\DOp$.
Then the morphism $\Sym_{\Lambda}(i): \Sym_{\Lambda}(\COp)\hookrightarrow\Sym_{\Lambda}(\DOp)$ is a cofibration
by definition of the model structure of dg Hopf $\Lambda$ cooperads
by adjunction from the model structure of coaugmented dg $\Lambda$ cooperads.
We just use that the class of cofibrations in a model category is closed under pushouts and transfinite compositions
to get the conclusion of the lemma.
\end{proof}

\subsection{Deformation complexes}
One can also extend the definition of the deformation complex (and of the deformation $L_{\infty}$-algebra) $\Def(\AOp,\BOp)$ from Section~\ref{sec:defcomplex} to the Hopf $\Lambda$-cooperadic setting.
We merely need to require from $\AOp$ not only that $\AOp(r)=S(\MOp(r))$ is a free dg commutative algebra, but also that the generators $\MOp=\FreeOp_{\Lambda} \MOp_0$ are a free $\Lambda$-module.
Then the analogue of the result of Proposition \ref{prop:pre_defcx} reads:
\begin{prop}\label{prop:pre_defcxLa}
	Given dg Hopf cooperads $\AOp$, $\BOp$ as above and a graded commutative algebra $R$ the map
	\begin{equation*}
	\Mor_{\g\Hopf\Lambda\Op^c_R}(\AOp^{\flat}\otimes R,\BOp^{\flat}\otimes R)
	\rightarrow
	\Mor_{\g\Seq^c_R}(\MOp_0\otimes R,\NOp\otimes R)
	\end{equation*}
	given by precomposition with the inclusion of generators $\MOp_0\rightarrow\AOp$ and by postcomposition with the projection
onto cogenerators $\BOp\rightarrow\NOp$ is a bijection.
\end{prop}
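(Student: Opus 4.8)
The plan is to factor the asserted bijection as a composite of two bijections: one that forgets the $\Lambda$-structure and invokes Proposition~\ref{prop:pre_defcx}, and one that accounts for the $\Lambda$-equivariance through the freeness of the generating module $\MOp=\FreeOp_\Lambda\MOp_0$.

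First I would apply Proposition~\ref{prop:pre_defcx} to the underlying (non-$\Lambda$) graded Hopf cooperads $\AOp^\flat$ and $\BOp^\flat$, which satisfy the hypotheses of Section~\ref{sec:defcomplex} by assumption. This yields a bijection
\begin{equation*}
\Mor_{\g\Hopf\Op^c_R}(\AOp^\flat\otimes R,\BOp^\flat\otimes R)\xrightarrow{\simeq}\Mor_{\g\Seq^c_R}(\MOp\otimes R,\NOp\otimes R),\qquad \psi\mapsto\pi\psi\iota.
\end{equation*}
Since a morphism of graded Hopf $\Lambda$-cooperads is exactly a morphism of the underlying graded Hopf cooperads that commutes with the $\Lambda$-operations $u_*$, the left-hand side of the proposition is identified with the subset of those $\psi$ that are $\Lambda$-equivariant. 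The crux of the argument is then to translate this $\Lambda$-equivariance into a condition on $f=\pi\psi\iota$.

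The key step is to equip the cogenerating symmetric sequence $\NOp$ with the structure of a $\Lambda$-module inherited from $\BOp$ (taking the component of the $\Lambda$-operations that survives under the cogenerator projection $\pi$), and to show that under the bijection above a morphism $\psi$ is $\Lambda$-equivariant if and only if $f\colon\MOp\to\NOp$ is a morphism of $\Lambda$-modules. Granting this, I would invoke the free $\Lambda$-module adjunction: since $\MOp=\FreeOp_\Lambda\MOp_0$, restriction along the inclusion of generators $\MOp_0\hookrightarrow\MOp$ induces, $R$-linearly, a bijection
\begin{equation*}
\{\,\Lambda\text{-module maps }\MOp\otimes R\to\NOp\otimes R\,\}\xrightarrow{\simeq}\Mor_{\g\Seq^c_R}(\MOp_0\otimes R,\NOp\otimes R).
\end{equation*}
Composing the two bijections gives the desired one, and by construction the composite is precisely precomposition with $\MOp_0\hookrightarrow\AOp$ followed by postcomposition with $\pi\colon\BOp\to\NOp$, as in the statement.

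I expect the main obstacle to be exactly the verification in the previous paragraph, namely the compatibility of the $\Lambda$-structure with the cogenerator projection. The difficulty is that the $\Lambda$-operations $u_*\colon\BOp(k)\to\BOp(l)$ on a quasi-cofree cooperad need not preserve the decomposition of $\FreeOp^c(\NOp)$ by the number of cogenerators, so the induced $\Lambda$-action on $\NOp$ is defined only modulo treewise terms of higher weight. To handle this I would run the same induction as in the proof of Proposition~\ref{prop:pre_defcx}, using the exhaustive filtration $F^s\MOp_0$ together with the compatibility of the differential and the composition coproducts with $S_\Lambda(F^{s-1}\MOp_0)+F^s\MOp_0$ recorded in Lemma~\ref{lem:cofib_criterionLa}: at each filtration stage the higher-weight corrections involve only strictly lower filtration or strictly lower arity, so that $\Lambda$-equivariance determines $f$ on $F^s\MOp$ from its values on $F^s\MOp_0$ and the already-constructed lower stages, yielding both existence and uniqueness of $\psi$.
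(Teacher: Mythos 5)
Your proposal is correct and, in its operative form, coincides with the paper's proof: the paper disposes of this proposition with the single remark that the proof is ``essentially identical'' to that of Proposition~\ref{prop:pre_defcx}, i.e.\ one reruns the same filtration-and-arity induction, with $\Lambda$-equivariance and the freeness $\MOp=\FreeOp_{\Lambda}\MOp_0$ determining $\psi$ on all of $\MOp$ from its values on $\MOp_0$. Your self-identified obstacle is also the genuine one---there is no honest induced $\Lambda$-module structure on the cogenerators $\NOp$, since $\pi u_*\psi(x_0)$ depends on all treewise components of $\psi(x_0)$ and not just on $f_0(x_0)$, so the clean factorization through $\Lambda$-module maps $\MOp\to\NOp$ is not literally available---and your fallback, running the induction in which these higher-weight corrections are supplied by strictly lower filtration or lower arity stages, is exactly the paper's ``essentially identical'' argument.
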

The proof is essentially identical.
The remainder of the construction of the deformation complex also remains the same in the Hopf $\Lambda$ cooperad case.

We will denote the resulting deformation $L_{\infty}$-algebra by $\Def_{\Lambda}(\AOp,\BOp)$, and its twisted version $\Def_{\Lambda}(\AOp\xrightarrow{\psi}\BOp)$, to distinguish them from the non-$\Lambda$ variants.

\subsection{Graphs}
The dg Hopf cooperads $\Graphs_n$ and $\Graphs_n^2$ of Section~\ref{sec:graph_complexes} have a natural $\Lambda$ structure. The $\Lambda$ operations \eqref{equ:CLa} are obtained by merely adding external vertices of valency zero to the graph.
We have seen before that $\Graphs_n^2(r)=S(\IG_n^2(r))$ and $\Graphs_n(r)=S(\IG_n(r))$ are quasi-free dg commutative algebras, generated by the internally connected graphs. Furthermore, the generators $\IG_n^2=\FreeOp_{\Lambda} (\IG_n^2)'$ and $\IG_n=\FreeOp_{\Lambda}  \IG_n'$ are free $\Lambda$ modules. The $\Lambda$ generators $(\IG_n^2)'$ and $\IG_n'$ are given by graphs all of whose external vertices have valency at least 1.

One can hence apply Lemma \ref{lem:cofib_criterionLa} and conclude the following.
\begin{prop}
	The dg Hopf $\Lambda$ cooperads $\Graphs_n$ and $\Graphs_n^2$ are cofibrant objects in $\dgZLaHOpc$.
	
	The truncations $\tau_{\sharp}\Graphs_n$ and $\tau_{\sharp}\Graphs_n^2$ are cofibrant in $\dgLaHOpc$.
\end{prop}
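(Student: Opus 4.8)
The plan is to derive the first assertion directly from the cofibrancy criterion of Lemma~\ref{lem:cofib_criterionLa}, adapting the proof of the non-$\Lambda$ proposition, and to obtain the second assertion from the left Quillen property of the truncation functor $\tau_{\sharp}$. For the first assertion I would take $\MOp = \IG_n^2$ (respectively $\MOp = \IG_n$) as the generating graded symmetric sequence of the quasi-free dg commutative algebra $\Graphs_n^2 = S(\IG_n^2)$ (respectively $\Graphs_n = S(\IG_n)$), and I would use the identifications $\IG_n^2 = \FreeOp_{\Lambda}(\IG_n^2)'$ and $\IG_n = \FreeOp_{\Lambda}\IG_n'$ recorded just above the statement to exhibit $\MOp$ as a free $\Lambda$-module on $\MOp_0 = (\IG_n^2)'$ (respectively $\MOp_0 = \IG_n'$), so that $S(\MOp) = S_{\Lambda}(\MOp_0)$ as required by the hypotheses of the lemma.

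I would then equip $\MOp_0$ with the filtration by number of edges, setting $F^s\MOp_0$ to be the span of those internally connected graphs with all external vertices of valency at least one and with at most $s$ edges. This filtration is exhaustive, and since the $\Lambda$-operations merely adjoin external vertices of valency zero, it is compatible with the free $\Lambda$-module structure. The content of the verification is then to check that the symmetric sequence $S_{\Lambda}(F^{s-1}\MOp_0) + F^s\MOp_0$ is preserved by the differential and by the composition coproducts. For the differential this is immediate, because edge contraction strictly lowers the number of edges: the differential carries a generator with at most $s$ edges into the $\Lambda$-symmetric algebra generated by graphs with at most $s-1$ edges, and the Leibniz rule keeps $S_{\Lambda}(F^{s-1}\MOp_0)$ inside itself. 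For the composition coproducts $\circ^*$ I would use that subgraph contraction does not increase the number of edges of either tensor factor, while the $\Lambda$-cooperations leave the edge count unchanged. With these observations the hypotheses of Lemma~\ref{lem:cofib_criterionLa} hold, and $\Graphs_n$ and $\Graphs_n^2$ are cofibrant in $\dgZLaHOpc$.

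For the second assertion I would invoke that the truncation functor $\tau_{\sharp}$ defines a left Quillen functor from $\dgZLaHOpc$ to $\dgLaHOpc$, exactly as in the non-$\Lambda$ corollary where $\tau_{\sharp}$ sends cofibrant objects to cofibrant objects (this $\Lambda$-counterpart being available once the constructions of the Appendix are extended to the $\Lambda$-setting, as noted earlier in this section). Since a left Quillen functor preserves cofibrant objects, the cofibrancy of $\Graphs_n$ and $\Graphs_n^2$ in $\dgZLaHOpc$ established above immediately yields the cofibrancy of $\tau_{\sharp}\Graphs_n$ and $\tau_{\sharp}\Graphs_n^2$ in $\dgLaHOpc$.

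The main obstacle I anticipate is not a deep one but a bookkeeping point peculiar to the unitary setting: one must filter the $\Lambda$-generators $\MOp_0$ rather than all the generators $\MOp$, and confirm that the edge filtration remains stable under the composition coproducts once the $\Lambda$-structure is present. The key fact making this routine is that the $\Lambda$-operations add only valency-zero external vertices and therefore do not change the number of edges, so that the stability argument reduces to the same edge-counting used in the non-unitary case.
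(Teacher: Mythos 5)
Your proposal is correct and follows essentially the same route as the paper: the paper likewise applies Lemma~\ref{lem:cofib_criterionLa} with $\Lambda$-generators $(\IG_n^2)'$ (respectively $\IG_n'$), the internally connected graphs whose external vertices all have valency at least one, filtered by the number of edges, using that the differential only removes edges while the coproducts and $\Lambda$-operations do not increase them. The second assertion is obtained in both cases from the fact that $\tau_{\sharp}$ is a left Quillen functor in the $\Lambda$-setting and hence preserves cofibrant objects.
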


Furthermore, the action of $\QQ\ltimes \GC_n^2$ of Section~\ref{sec:graph action} is obviously compatible with the $\Lambda$-structure.

\subsection{On the space of homotopy automorphisms of $E_n$-operads in the $\Lambda$ operad setting}
Finally one can check that Theorem \ref{thm:main_intro} still holds in the $\Lambda$ operad setting.

\begin{thm}\label{thm:main_La}
	For $n\geq 2$ there are weak equivalences of simplicial monoids
	\begin{equation*}
	\Aut^h_{\sLaOp}(\DOp_n^{\QQ}) \sim
	\Aut^h_{\dgLaHOpc}(\Omega_{\sharp}^*(\DOp_n))
	\sim
	\QQ^\times \ltimes Z_{\bullet}(\GC_n^2)\, .
	\end{equation*}
\end{thm}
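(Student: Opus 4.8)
The plan is to follow the proof of Theorem~\ref{thm:main_intro} given in Section~\ref{sec:aut thm proof} step by step, replacing each ingredient by the $\Lambda$-counterpart established in the present section. First I would obtain the left-hand weak equivalence $\Aut^h_{\sLaOp}(\DOp_n^{\QQ})\sim\Aut^h_{\dgLaHOpc}(\Omega_{\sharp}^*(\DOp_n))$ by invoking the $\Lambda$-analogues of Proposition~\ref{prop:good and Aut} and Proposition~\ref{prop:Aut E2}. The proof of Proposition~\ref{prop:good and Aut} only uses the Quillen adjunction $(G,\Omega_{\sharp}^*)$ together with the right finitely continuous simplicial model structure and the recognition machinery of Proposition~\ref{prop:hAut recognition}; since the Quillen adjunction extends to the $\Lambda$-setting (see~\cite{OperadHomotopyBook,ExtendedRHT}) and the simplicial model structure on $\dgLaHOpc$ has just been constructed, the argument carries over for $n\geq 3$, where $\DOp_n$ is $\QQ$-good, while for $n=2$ one appeals to the corresponding by-hand verification exactly as for Proposition~\ref{prop:Aut E2}.

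For the right-hand weak equivalence I would first use rational formality to reduce to the cohomology cooperad, so that $\Omega_{\sharp}^*(\DOp_n)\sim\eop_n^c$ as dg Hopf $\Lambda$ cooperads and hence $\Aut^h_{\dgLaHOpc}(\Omega_{\sharp}^*(\DOp_n))\sim\Aut^h_{\dgLaHOpc}(\eop_n^c)$. The object $\tau_{\sharp}\Graphs_n^2$ is cofibrant in $\dgLaHOpc$ and weakly equivalent to $\eop_n^c$ by the results of this section, and $W^c(\eop_n^c)$ is fibrant and inherits a $\Lambda$-structure, so that Proposition~\ref{prop:hAut recognition} (valid in $\dgLaHOpc$ by the same argument as in the remark following it) reduces the claim to proving that the map
\begin{equation*}
\QQ^\times\ltimes Z_{\bullet}(\GC_n^2)\rightarrow\Map(\tau_{\sharp}\Graphs_n^2,W^c(\eop_n^c))^{\htimes}
\end{equation*}
induced by the action on the $\Lambda$-cooperad map $\phi$ of~\eqref{equ:phidef8} is a weak equivalence. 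As in the proof of Theorem~\ref{thm:main2}, I would split off the reductive factor $\QQ^\times$ by a diagram of fiber sequences and reduce to the fibers, identifying $\Map(\tau_{\sharp}\Graphs_n^2,W^c(\eop_n^c))^{\htimes}_1$ with $\MC_{\bullet}(\Def_{\Lambda}'(\Graphs_n^2\xrightarrow{\phi}W^c(\eop_n^c)))$ using Proposition~\ref{prop:pre_defcxLa} and the $\Lambda$-deformation complex. The map on fibers is then induced by an $L_{\infty}$-morphism $U:\GC_n^2[-1]\rightarrow\Def_{\Lambda}'(\Graphs_n^2\xrightarrow{\phi}W^c(\eop_n^c))$ exactly as in Proposition~\ref{prop:U Def trivialization}, and I would invoke the Goldman--Millson Theorem~\ref{thm:GM}.

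The main obstacle is the $\Lambda$-version of the computation in Proposition~\ref{prop:main_computation}: I must check that the linear part of $U$ is a quasi-isomorphism on the second page of the edge-number spectral sequence and that the vanishing conditions~\eqref{equ:H1van0}--\eqref{equ:H1van} hold. The new feature is that the $\Lambda$-deformation complex is built from the $\Lambda$-generators $(\IG_n^2)'$ (internally connected graphs all of whose external vertices have valency at least $1$) rather than from all internally connected graphs $\IG_n^2$. Consequently the associated graded of $\Def_{\Lambda}(\Graphs_n^2\xrightarrow{\phi}W^c(\eop_n^c))$ computes the $E_n$-homology of the \emph{reduced} infinitesimal $\Com$-bimodule $(\ICG_n^2)'$, and the crux is to verify that this restriction does not alter the relevant cohomology.

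The key point in my favor is that the hairy graph complex $\HGC_{n,n}^2=(\ICG_n^2)^{=1}\hat{\otimes}_{\Sigma}\Com\{n\}$ appearing on the next page is built from graphs with external vertices of valency \emph{exactly} one, and these already satisfy the valency-at-least-one constraint imposed by the $\Lambda$-generators. I expect that the contributions involving external vertices of valency zero split off as an acyclic piece, so that the quasi-isomorphism $(\GC_n^2[-1],\delta)\xrightarrow{\sim}((\HGC_{n,n}^2)',\delta_{\ICG}+\delta_{\rm attach})$ of~\cite[Proposition 2.2.9]{FW} applies unchanged; granting this, the vanishing conditions again concern only the one-edge graphs and follow exactly as in the non-$\Lambda$ case, whence the Goldman--Millson theorem yields the desired weak equivalence and, through the recognition principle, the asserted equivalences of simplicial monoids.
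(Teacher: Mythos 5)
Your proposal is correct and follows essentially the same route as the paper: both arguments run the proof of Theorem~\ref{thm:main_intro} verbatim, observing that every step except the first part of Proposition~\ref{prop:main_computation} is formal, and then replace the deformation complex by the Hopf $\Lambda$-cooperadic one $\Def_{\Lambda}(\Graphs_n^2\rightarrow W^c(\eop_n^c))\subset\Def(\Graphs_n^2\rightarrow W^c(\eop_n^c))$ built from the $\Lambda$-generators $(\IG_n^2)'$. The single point you leave as an expectation---that discarding the contributions of valency-zero external vertices does not change the cohomology of the associated graded---is precisely the point the paper settles, by identifying $(\ICG_n^2)^{\Lambda}\hat{\otimes}_{\Sigma}\eop_n^c\{n\}$ as the normalized subcomplex of an $E_n$-Hochschild complex, whose inclusion into the full non-normalized complex is a quasi-isomorphism by the standard normalization argument.
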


For the proof, one follows the proof of Theorem~\ref{thm:main_intro} of Section~\ref{sec:aut thm proof}.
Most steps therein are formal and just go through without changes.
The only place that contains a difference is for the first statement of Proposition \ref{prop:main_computation}:
one now works with a different $L_{\infty}$-algebra on the right-hand side, namely the Hopf $\Lambda$ cooperadic deformation complex
$\Def_{\Lambda}(\Graphs_n^2,W^c(\eop_n^c))$ instead of $\Def(\Graphs_n^2,W^c(\eop_n^c))$ as before.
However, we can naturally see
\begin{equation}\label{equ:DefLatoDefGra}
\Def_{\Lambda}(\Graphs_n^2\rightarrow W^c(\eop_n^c))\subset \Def(\Graphs_n^2\rightarrow W^c(\eop_n^c))
\end{equation}
as a subcomplex.
Following through with the proof of Proposition \ref{prop:main_computation}, we arrive at
\begin{equation*}
(\ICG_n^2)^{\Lambda} \hat{\otimes}_{\Sigma} \eop_n^c\{n\}
\end{equation*}
instead of $\ICG_n^2 \hat{\otimes}_{\Sigma}\eop_n^c\{n\}$ as in \eqref{equ:endefICG}, with $(\ICG_n^2)^{\Lambda}:=((\IG_n^2)')^*$.
This is hence just the normalized subcomplex of an $E_n$-Hochschild complex, which has identical cohomology as the full non-normalized complex.
From that point on, the proof of Proposition \ref{prop:main_computation} goes through identically, and we finally arrive at \ref{thm:main_La}.

\begin{appendix}

\section{The model category of $\ZZ$-graded dg (Hopf) cooperads}\label{sec:modelcat}
The main purpose of this appendix is to extend results of~\cite[Section 1]{ExtendedRHT},
where model structures on the categories of dg cooperads $\dgOpc$
and of dg Hopf cooperads $\dgHOpc$
are constructed.
We check that the same constructions can be performed for the category of $\ZZ$-graded dg cooperads $\dgZOpc$
and for the category of $\ZZ$-graded dg Hopf cooperads $\dgZHOpc$.
We devote the first subsection of the appendix to this subject.
We study truncation functors between $\ZZ$-graded dg (Hopf) cooperads and non-negatively graded dg (Hopf) cooperads in a second subsection.
To complete our study, we make explicit a cofibrancy criterion for dg Hopf cooperads. We devote the third subsection of the appendix to this topic.

\subsection{Model structure}
We define the model structure on $\dgZOpc$ as follows:
\begin{enumerate}
\item The class of weak equivalences consists of the arity-wise quasi-isomorphisms.
\item The class of cofibrations is the class of arity-wise injective morphisms.
\item The fibrations are the morphisms that have the right lifting property with respect to the class acyclic cofibrations.
\end{enumerate}
For comparison, let us mention that the definition of the model category structure on $\dgOpc$ is the same,
except that the cofibrations are only required to be injective in positive degrees (see~\cite{ExtendedRHT}).

We check that:

\begin{prop}
The above definitions provide the category $\dgZOpc$ with a well-defined model category structure.
This model structure is cofibrantly generated.
The generating cofibrations can be taken to be cofibrations between objects of overall finite dimension,
and the generating acyclic cofibrations to be acyclic cofibrations between overall countably dimensional objects that are concentrated in bounded arities.
\end{prop}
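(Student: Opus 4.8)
The plan is to obtain the model structure from the standard recognition theorem for cofibrantly generated model categories, transporting the construction of \cite[Section~1]{ExtendedRHT} from $\dgOpc$ to $\dgZOpc$ and checking that each step survives the passage to unbounded cohomological degree. First I would record that $\dgZOpc$ is bicomplete: the forgetful functor $U\colon\dgZOpc\to\dgZSeqc$ is comonadic (as in the proof of Proposition~\ref{prop:adjunctionrelation}), so $U$ creates all colimits and in particular colimits of $\ZZ$-graded dg cooperads are computed arity-wise in $\dgZVect$, while limits exist because $\dgZSeqc$ is locally presentable and the cofree comonad is accessible. The class $W$ of arity-wise quasi-isomorphisms then satisfies two-out-of-three and is closed under retracts, since both hold arity-wise in $\dgZVect$.

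Second, I would fix the generating cofibrations. I would take $I$ to be a set of representatives for the injective morphisms between $\ZZ$-graded dg cooperads of overall finite dimension. Because every object of $\dgZOpc$ is conilpotent it is the filtered union of its finite-dimensional sub-cooperads, and as colimits are arity-wise this exhibits every arity-wise injection as a transfinite composite of pushouts of maps in $I$; conversely pushouts, transfinite composites and retracts of arity-wise injections remain arity-wise injections, so $I\text{-cof}$ is exactly the class of cofibrations. The only change from \cite{ExtendedRHT} is that injectivity is now demanded in all degrees rather than only in positive degrees, which is harmless over a field. A lifting argument along the conilpotency filtration, splitting in each arity and each degree $n\in\ZZ$ since we work over $\kk$, then identifies $I\text{-inj}$ with the arity-wise surjective quasi-isomorphisms, i.e.\ the prospective acyclic fibrations. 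This yields the asserted finite-dimensional generating cofibrations.

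Third, I would produce the generating acyclic cofibrations. Here I would build $J$ from the cofree cooperads $\FreeOp^c(-)$ on the elementary acyclic (``disk'') cochain complexes $D^n$, $n\in\ZZ$, placed in one bounded range of arities at a time. Since arity-one cooperations are permitted, the tree expansion $\FreeOp^c(\MOp)(r)=\bigoplus_{T\in\Tree(r)}\bigotimes_v\MOp(r_v)$ produces towers of unary vertices and makes these cooperads countably — but not finitely — dimensional, while the output arities stay bounded, matching the asserted description of the generating acyclic cofibrations. The recognition of quasi-cofree surjections as fibrations, which lets me check that the maps in $J$ behave as acyclic cofibrations, rests on Proposition~\ref{prop:fibrancy}, valid in the $\ZZ$-graded setting by the Remark following it. With $I$, $J$ and $W$ fixed, the axioms reduce to the recognition conditions: the small object argument applies because the domains are finitely (resp.\ $\aleph_1$-) presentable; $J\text{-cof}\subseteq W\cap I\text{-cof}$ follows from the arity-wise closure of acyclic injections under pushout and transfinite composition; and $I\text{-inj}\subseteq W\cap J\text{-inj}$ from the previous paragraph.

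The hard part will be the remaining acyclicity comparison — that every cofibration which is a weak equivalence lies in $J\text{-cof}$, equivalently that a fibration which is a quasi-isomorphism is arity-wise surjective and hence an acyclic fibration. In the non-negatively graded case of \cite{ExtendedRHT} the surjectivity of fibrations is read off from the cobar-bar retract description (our Proposition~\ref{prop:f and Wf}), but that description is unavailable here: its proof passes through Lemma~\ref{lem:fib1}, whose filtration is unbounded in $\dgZOpc$ (see the Remark after that lemma). I would therefore avoid the cobar-bar weight filtration entirely and argue along the conilpotency filtration instead, which is indexed by co-composition complexity rather than by cohomological degree and so remains exhaustive and complete irrespective of the unboundedness of the grading. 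Working arity by arity, each associated-graded layer is finite-dimensional over $\kk$, so the relevant short exact sequences split and the inductive lifting that produces the $J$-cell structure — and with it the surjectivity of fibrations — goes through. Securing the convergence of this filtration argument in the unbounded setting is the one genuinely new point relative to \cite{ExtendedRHT}; once it is in place, the recognition theorem delivers the model structure together with the stated descriptions of the generating cofibrations and generating acyclic cofibrations.
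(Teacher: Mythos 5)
Your proposal does not follow the paper's argument, and two of its key steps fail. The paper's proof is a single observation: the construction in \cite[Theorem 1.4]{ExtendedRHT} (local finiteness of conilpotent cooperads, the small object argument for the stated generating sets, and a cardinality argument producing the generating acyclic cofibrations) nowhere uses that complexes are concentrated in non-negative degrees, hence applies verbatim to $\dgZOpc$. Against this, your second paragraph asserts that $I\text{-inj}$ equals the class of arity-wise surjective quasi-isomorphisms. This is false: the model structure is left-induced (injective-type), and a lift against a cofibration must be a map of cooperads, a non-linear constraint that degree-wise splitting over $\kk$ cannot arrange. Concretely, already in arity $1$, let $\AOp=\kk 1\oplus A'$ with $A'$ an acyclic two-term complex ($a$ in degree $0$, $da$ in degree $1$) with vanishing reduced cocomposition, and let $p:\AOp\to\kk 1$ be the counit, a surjective quasi-isomorphism. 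Let $\COp=\kk 1\oplus\langle c,dc\rangle$ with vanishing reduced cocomposition, and let $\DOp\supset\COp$ be obtained by adjoining elements $d,dd$ with reduced cocompositions $\bar\Delta(d)=c\otimes c$ and $\bar\Delta(dd)=dc\otimes c+c\otimes dc$. Then $\COp\hookrightarrow\DOp$ is a cofibration between finite-dimensional objects and $c\mapsto a$ defines a morphism $\COp\to\AOp$, but any lift $\DOp\to\AOp$ would have to satisfy $a\otimes a=0$, which fails. So $p\notin I\text{-inj}$: the acyclic fibrations of dg cooperads form a strictly smaller class than the surjective quasi-isomorphisms, and the input you feed into the recognition theorem is incorrect.

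Your generating set $J$ is also wrong. A cofree cooperad $\FreeOp^c(\MOp)$ whose generator has a component in arity $\geq 2$ is \emph{not} concentrated in bounded arities (trees with $k$ binary vertices contribute to arity $k+1$), contrary to your claim; and taking cofree maps on acyclic generators is the recipe for a model structure \emph{transferred along a right adjoint}, whereas here cofibrations and weak equivalences are created in the underlying category, so the generating acyclic cofibrations must instead be extracted by the Zorn/cardinality argument of \cite{ExtendedRHT} --- the one genuinely technical step, which your proposal never carries out. Finally, the ``hard part'' you isolate is a non-issue resting on a misreading of the logical structure: surjectivity of fibrations is not an input to the construction of the model structure, here or in \cite{ExtendedRHT}; in this paper it is deduced a posteriori from the bar-cobar description of Proposition~\ref{prop:f and Wf}, only in the non-negatively graded case, and the failure of that machinery over $\ZZ$ (see the remark after Lemma~\ref{lem:fib1}) is why the \emph{pullback-corner property} is left open for $\dgZHOpc$, not why the model structure would be in doubt. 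Note also that a fibration which is a quasi-isomorphism is an acyclic fibration \emph{by definition} in this model structure, so no surjectivity statement is needed where you invoke one; the replacement argument you sketch along the conilpotency filtration addresses a problem that does not exist, while the actual content (that acyclic cofibrations are generated by countably dimensional, bounded-arity ones) is exactly the degree-agnostic argument the paper cites.
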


\begin{proof}
The proof is the same as that of \cite[Theorem 1.4]{ExtendedRHT}, for which the fact that complexes are concentrated in non-negative degrees plays no role.
\end{proof}

Recall that our cooperads are assumed to be coaugmented. We now consider the category of $\ZZ$-graded dg Hopf cooperads $\dgZHOpc$.
There is an adjunction (as in \cite{ExtendedRHT})
\begin{equation}\label{equ:S omega adjunction}
\Sym: \dgZOpc\rightleftarrows\dgZHOpc :\omega
\end{equation}
between the forgetful functor $\omega$ and a version of the free symmetric algebra functor $\Sym$.
For a dg cooperad $\COp\in\dgZOpc$, we explicitly have $\Sym(\COp)(r) = S(\COp(r))$ for $r\geq 2$, where $S(-)$ denotes the symmetric algebra on the category of cochain complexes,
while the object $\Sym(\COp)(1) = S(\COp(1))\otimes_{S(\QQ)}\QQ$
is defined by identifying the coaugmentation of the cochain complex $\COp(1)$
with the algebra unit of the symmetric algebra $S(-)$.
We use this adjunction to transport the model structure on the category of dg cooperads $\dgZOpc$ to the category of dg Hopf cooperads $\dgZHOpc$.
Hence, the model structure of the category $\dgZHOpc$
is defined as follows:
\begin{enumerate}
\item The weak equivalences are the arity-wise quasi-isomorphisms.
\item The fibrations are the morphisms $f: \AOp\rightarrow\BOp$ such that $\omega(f)$ is a fibration in $\dgZOpc$.
\item The cofibrations are the morphisms that have the left-lifting property with respect to all acyclic fibrations.
\item The model structure is cofibrantly generated, with the generating (acyclic) cofibrations defined by the set of morphisms of the form $\Sym(i): \Sym(\COp)\rightarrow\Sym(\DOp)$,
where $i: \COp\rightarrow\DOp$ is a generating (acyclic) cofibration in $\dgZOpc$.
\end{enumerate}

We again check that:

\begin{prop}
The above definitions provide the category $\dgZHOpc$ with a well-defined cofibrantly generated model category structure.
\end{prop}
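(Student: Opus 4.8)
The plan is to produce the model structure on $\dgZHOpc$ by right transfer along the adjunction~\eqref{equ:S omega adjunction}, mirroring the construction of the model structure on $\dgHOpc$ from that on $\dgOpc$ in~\cite{ExtendedRHT}; the only thing to verify is that no step of that argument relies on the non-negativity of the grading. Recall that Kan's recognition (transfer) principle for cofibrantly generated model categories applies to an adjunction $\Sym:\dgZOpc\rightleftarrows\dgZHOpc:\omega$ provided three conditions hold: the target $\dgZHOpc$ is bicomplete; the sets $\Sym(I)$ and $\Sym(J)$ of images of the generating cofibrations $I$ and the generating acyclic cofibrations $J$ of $\dgZOpc$ permit the small object argument; and $\omega$ carries every transfinite composite of pushouts of maps in $\Sym(J)$ to a weak equivalence. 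Granting these, the weak equivalences and fibrations detected by $\omega$, together with the cofibrations characterized by the left lifting property against acyclic fibrations, satisfy all the model category axioms: the two-out-of-three and retract axioms are inherited from $\dgZOpc$, while the factorization and lifting axioms follow from the small object argument applied to $\Sym(I)$ and $\Sym(J)$.

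First I would record that $\dgZHOpc$ is bicomplete, by the same constructions as in~\cite{ExtendedRHT}: limits are created by the forgetful functors down to $\dgZVect$ (through reflexive equalizers of cofree cooperads, cf.~\cite[\S II.2.0]{OperadHomotopyBook}), whereas colimits exist because the symmetric algebra monad $\omega\Sym$ preserves reflexive coequalizers. For the small object argument, the previous proposition allows us to take $I$ and $J$ between objects of (countable) finite total dimension concentrated in bounded arities; the (co)domains of $\Sym(I)$ and $\Sym(J)$ are then small relative to the corresponding cell complexes, by the same smallness estimates as in the non-negatively graded case.

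The main obstacle is the acyclicity condition, namely that $\omega$ sends every relative $\Sym(J)$-cell complex to an arity-wise quasi-isomorphism. The decisive feature of working over a field $\kk$ of characteristic zero is that the symmetric algebra functor is exact on quasi-isomorphisms: each symmetric power $S^m(V)=(V^{\otimes m})_{\Sigma_m}$ is a direct summand of the tensor power $V^{\otimes m}$ via the symmetrization idempotent, and $V\mapsto V^{\otimes m}$ preserves quasi-isomorphisms, so $\Sym$ preserves arity-wise quasi-isomorphisms and in particular sends each $j\in J$ to a weak equivalence. The difficulty is that pushouts in $\dgZHOpc$, being colimits, are not created by $\omega$ and instead involve the symmetric algebra; to control a pushout of $\Sym(j)$ along an arbitrary morphism $\Sym(\COp)\rightarrow X$, I would filter the resulting Hopf cooperad and identify the layers of the associated graded with symmetric- and tensor-power constructions applied to the acyclic cofibration $j$. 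Characteristic-zero exactness then shows that each layer map is a quasi-isomorphism; a spectral sequence (or a direct filtered-colimit) argument upgrades this to the full pushout, and the stability of arity-wise quasi-isomorphisms under the relevant transfinite compositions completes the verification. Since every step of this analysis is insensitive to the range of the grading, it reproduces the proof of the non-negatively graded case in~\cite{ExtendedRHT}.
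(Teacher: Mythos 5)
Your proposal is correct and takes essentially the same route as the paper: the paper's own proof is a one-line citation to the transfer argument of \cite[Theorem II.9.3.9]{OperadHomotopyBook} along the adjunction $\Sym:\dgZOpc\rightleftarrows\dgZHOpc:\omega$, observing that it carries over to the $\ZZ$-graded setting, and this is precisely the argument you spell out (bicompleteness, smallness of the sources of $\Sym(I)$ and $\Sym(J)$, and the characteristic-zero acyclicity check for relative $\Sym(J)$-cell complexes). Note also that your convergence step is safe in the unbounded setting because your symmetric-power filtration of a pushout is increasing and exhaustive, so it is compatible with filtered colimits in cohomology --- in contrast with the complete descending filtrations that the paper must avoid elsewhere in the $\ZZ$-graded case.
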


\begin{proof}
The proof of this claim is the same as in~\cite[Theorem II.9.3.9]{OperadHomotopyBook}.
\end{proof}

\subsection{Truncation functors}
We need to compare our model categories of dg (Hopf) cooperads in the $\ZZ$-graded and in the non-negatively graded setting.
We use the adjunction:
\begin{equation*}
\tau: \dgZOpc\rightleftarrows\dgOpc :\iota,
\end{equation*}
where $\iota$ is given by the obvious inclusion of categories.
The truncation functor $\tau$, which gives the left adjoint of this inclusion functor, is defined on the category of cochain complexes by:
\begin{equation*}
(\tau V)^k = \begin{cases} V^0/\delta(V^{-1}), & \text{for k=0}, \\
V^k, & \text{for $k>0$},
\end{cases}
\end{equation*}
for any $V\in\dgZVect$. This truncation functor on cochain complexes is lax symmetric comonoidal and, therefore, preserve cooperad structures.
The adjunction relation between the categories of cochain complexes, which underlies our adjunction relation between our categories of dg cooperads, is a Quillen adjunction too.
This observation implies that the above truncation functor $\tau$ carries the (acyclic) cofibrations of $\ZZ$-graded cochain complexes
to (acyclic) cofibrations of cochain complexes,
and since the (acyclic) cofibrations of dg cooperads are nothing but morphisms dg cooperads
that define (acyclic) cofibrations of cochain complexes arity-wise,
we obtain that our truncation adjunction defines a Quillen adjunction between our categories of dg cooperads.

We have an analogous adjunction relation for dg Hopf cooperads:
\begin{equation*}
\tau_{\sharp}: \dgZHOpc\rightleftarrows\dgHOpc: \iota
\end{equation*}
where $\iota$ is again given by the obvious inclusion of categories.
The truncation functor of this adjunction relation $\tau_{\sharp}$ is still defined by the arity-wise application
of a truncation functor on the category dg commutative algebras.
For $A\in\dgZCom$, we explicitly have:
\begin{equation*}
\tau_{\sharp} A = A/(A^*,\delta(A^*),*<0),
\end{equation*}
where we consider the quotient of the dg algebra $A$ by the dg ideal generated by the components $A^*$ of degree $*<0$
of our object.
This functor on dg commutative algebras is still lax comonoidal, and therefore, does induce a functor from the category of $\ZZ$-graded dg Hopf cooperads
to the category of non-negatively graded dg Hopf cooperads.
We again have a Quillen adjunction statement:

\begin{prop}
The adjunction $\tau_{\sharp}: \dgZHOpc\rightleftarrows\dgHOpc: \iota$ is Quillen.
\end{prop}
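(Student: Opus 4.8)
The plan is to verify that $(\tau_{\sharp},\iota)$ is a Quillen adjunction by checking the right-adjoint criterion: it suffices to show that the inclusion functor $\iota\colon\dgHOpc\to\dgZHOpc$ preserves fibrations and acyclic fibrations. I would deliberately avoid working with the left adjoint $\tau_{\sharp}$ directly, since the cofibrations of dg Hopf cooperads have no simple description (in contrast to the fibrations and weak equivalences, which are transferred), so a direct verification that $\tau_{\sharp}$ preserves cofibrations and acyclic cofibrations would be cumbersome.

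The key observation is that fibrations and weak equivalences in both $\dgHOpc$ and $\dgZHOpc$ are created by the forgetful functors $\omega$ to the underlying categories of dg cooperads $\dgOpc$ and $\dgZOpc$: by the definition of the transferred model structures recalled above, a morphism $f$ is a fibration (respectively, a weak equivalence) in the Hopf category if and only if $\omega(f)$ is a fibration (respectively, a weak equivalence) in the corresponding cooperad category. Moreover, the forgetful and inclusion functors fit into a commutative square $\omega\circ\iota = \iota\circ\omega$, since both composites carry a non-negatively graded dg Hopf cooperad to its underlying $\ZZ$-graded dg cooperad, independently of the order in which we forget the dg commutative algebra structure and enlarge the range of the grading.

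Granting this, I would argue as follows. Let $f$ be a fibration in $\dgHOpc$, so that $\omega(f)$ is a fibration in $\dgOpc$. Since the adjunction $\tau\colon\dgZOpc\rightleftarrows\dgOpc\colon\iota$ on dg cooperads has already been shown to be Quillen earlier in this subsection, the functor $\iota$ carries $\omega(f)$ to a fibration of $\dgZOpc$. By the commutation relation, $\omega(\iota(f)) = \iota(\omega(f))$ is therefore a fibration in $\dgZOpc$, which means precisely that $\iota(f)$ is a fibration in $\dgZHOpc$. The same argument applies verbatim to an acyclic fibration $f$: the fibration part is handled exactly as above, while the weak-equivalence part is immediate because $\iota$ leaves the underlying cochain complexes unchanged and hence preserves the arity-wise quasi-isomorphisms. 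This establishes that $\iota$ preserves fibrations and acyclic fibrations, which is the required criterion.

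The only genuinely delicate point, and hence the main thing to pin down, is the compatibility between the transferred model structures and the forgetful functors in the $\ZZ$-graded Hopf setting: one must confirm that fibrations and weak equivalences in $\dgZHOpc$ are indeed detected by $\omega$ as asserted (this is exactly the definition of the model structure on $\dgZHOpc$ recalled in the previous subsection) and that the square of forgetful and inclusion functors commutes strictly. Both facts are formal, so no further computation is needed once these identifications are in place, and the statement follows.
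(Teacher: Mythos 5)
Your proof is correct, but it runs on the opposite side of the adjunction from the paper's argument. The paper works with the left adjoint: it uses that $\dgZHOpc$ is cofibrantly generated with generating (acyclic) cofibrations of the form $\Sym(i)$, observes the identity $\tau_{\sharp}\Sym(\COp) = \Sym(\tau\COp)$, and concludes from the already-established fact that $\tau$ preserves (acyclic) cofibrations of dg cooperads that $\tau_{\sharp}$ carries generating (acyclic) cofibrations to (acyclic) cofibrations. You instead work with the right adjoint: you use that fibrations and weak equivalences in both Hopf categories are created by the forgetful functor $\omega$ to the cooperad level, that the square $\omega\circ\iota = \iota\circ\omega$ commutes strictly, and that the cooperad-level right adjoint $\iota$ preserves fibrations and acyclic fibrations (the right-adjoint half of the Quillen adjunction $\tau\colon\dgZOpc\rightleftarrows\dgOpc\colon\iota$ established just before). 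Both arguments lean on the same cooperad-level input, and in fact your commutation $\omega\iota = \iota\omega$ of right adjoints is precisely the mate of the paper's commutation $\tau_{\sharp}\Sym = \Sym\tau$ of left adjoints, so the two proofs are dual to one another. What your route buys is independence from the explicit generating sets: you never need to know that the generating cofibrations of $\dgZHOpc$ have the form $\Sym(i)$, only that the model structure is transferred, which makes the argument slightly more formal and robust. What the paper's route buys is that it stays entirely on the side along which the model structure on $\dgZHOpc$ was actually constructed, so the key identity $\tau_{\sharp}\Sym = \Sym\tau$ is immediately checkable on the objects at hand.
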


\begin{proof}
For a symmetric algebra $\AOp = \Sym(\COp)$, we have an obvious identity $\tau_{\sharp}\Sym(\COp) = \Sym(\tau\COp)$, where we consider the image of the dg cooperad $\COp$
under the truncation functor $\tau: \dgZOpc\rightarrow\dgOpc$.
We deduce from this observation that the truncation functor $\tau_{\sharp}$ carries the generating (acyclic) cofibrations of the category of $\ZZ$-graded dg Hopf cooperads
to (acyclic) cofibrations in the category of non-negatively graded dg Hopf cooperads. The proposition follows.
\end{proof}

The adjunction augmentation $\epsilon: \tau_{\sharp}\iota\AOp\rightarrow\AOp$ is obviously the identity morphism, and hence, defines a weak equivalence.
For the adjunction unit, we have the following statement:

\begin{prop}\label{prop:tru_unit}
If $A\in\dgZCom$ is a cofibrant dg commutative algebra, equipped with an augmentation over the ground field, and such that $H^k(A) = 0$ for $k<0$ and $H^0(A) = \kk$,
then the adjunction unit
\begin{equation*}
A\rightarrow\iota\tau_{\sharp} A
\end{equation*}
is a quasi-isomorphism.
\end{prop}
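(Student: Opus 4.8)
The plan is to identify the unit map explicitly and reduce the statement to the acyclicity of a single dg ideal. By the description of $\tau_{\sharp}$ recalled above, the unit $A\to\iota\tau_{\sharp}A$ is exactly the quotient map $A\to A/I$, where $I$ is the dg ideal of $A$ generated by the negative-degree part $A^{<0}$ (so $I$ contains $A^{<0}$ together with $d(A^{<0})\subseteq A^{\leq 0}$ and all their $A$-multiples). Since $A$ is augmented over $\kk$, the augmentation ideal $\bar A=\ker(A\to\kk)$ is a dg ideal containing $A^{<0}$, whence $I\subseteq\bar A$; in particular $1\notin I$, so $\tau_{\sharp}A$ is again augmented. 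From the short exact sequence $0\to I\to A\to A/I\to 0$ and its long exact cohomology sequence, the unit is a quasi-isomorphism if and only if $I$ is acyclic, and this is what I would establish.

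First I would dispose of the negative degrees. Consider the subcomplex $N=A^{<0}+d(A^{<0})\subseteq I$, the smallest subcomplex of $A$ containing $A^{<0}$; then $A/N$ is the good (cohomological) truncation of the underlying complex of $A$. A direct computation gives $H^{k}(N)=H^{k}(A)$ for $k<0$ and $H^{0}(N)=0$, so the hypothesis $H^{<0}(A)=0$ forces $N$ to be acyclic; equivalently $A\to A/N$ is a quasi-isomorphism. As $N\subseteq I$, the unit factors as $A\xrightarrow{\sim}A/N\to A/I$, and by two-out-of-three it is a quasi-isomorphism precisely when $A/N\to A/I$ is, i.e. precisely when $I/N$ is acyclic. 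Here $I/N$ is concentrated in degrees $\geq 0$, and the remaining hypothesis $H^{0}(A)=\kk$ translates, through the augmentation, into $H^{\leq 0}(\bar A)=0$; this is the input I expect to be essential in degree $0$.

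The remaining and main step is to prove that $I$ (equivalently $I/N$) is acyclic, and this is where cofibrancy enters. I would first reduce to $A$ quasi-free: a general cofibrant object is a retract of a quasi-free one, the formation of $I\subseteq A$ is functorial, and a retract of an acyclic complex is acyclic. For $A=(S(V),d)$ quasi-free I would use an exhaustive filtration on the space of generators $V$, as provided by the cofibrancy criterion (cf. Lemma~\ref{lem:cofib_criterion}), for which the differential is filtration-preserving. Filtering $A$ and its ideal $I$ accordingly yields a convergent spectral sequence, and on the associated graded the acyclicity of $I$ should reduce, by a K\"unneth argument, to the acyclicity of the negative part already secured through $N$, the class of $1\in H^{0}(A)=\kk$ accounting for the surviving degree-$0$ cohomology.

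I expect the main obstacle to be exactly this last step, and specifically its behaviour in degree $0$. In positive degrees $I$ is generated over $A$ by genuinely negative elements and the acyclicity of $N$ propagates readily; but the degree-$0$ component $I^{0}$ also receives the products $\sum_{j\geq 1}A^{j}\cdot A^{-j}$ of positive- and negative-degree elements, and it is only the connectivity hypothesis $H^{0}(A)=\kk$ (equivalently $Z^{0}(A)=\kk\cdot 1\oplus d(A^{-1})$) that prevents these products from either introducing or destroying cohomology there. Controlling this interaction, and ensuring the filtration is complete and exhaustive enough for the spectral sequence to converge, which is precisely what cofibrancy guarantees, is the technical heart of the argument. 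An alternative would be to build an explicit contracting homotopy on $I$ out of a contraction of $\bar A$ in non-positive degrees (available since $H^{\leq 0}(\bar A)=0$), though making such a homotopy interact with the non-derivation multiplication is itself delicate.
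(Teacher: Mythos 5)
Your preliminary reductions are correct but they are the easy part, and the proposal stops exactly where the content of Proposition~\ref{prop:tru_unit} begins. Identifying the unit with $A\rightarrow A/I$, checking that $N=A^{<0}+d(A^{-1})$ is acyclic when $H^{<0}(A)=0$, and reducing to the acyclicity of $I$ (equivalently of $I/N$) is all fine; but that acyclicity is then only asserted, and the mechanism you propose for it cannot work in the form stated. If $F^sV$ is a cell filtration of the generators with $\partial F^sV\subset S(F^{s-1}V)$, the finite stages $A_s=S(F^sV)$ are sub-dg-algebras of $A$ which in general \emph{fail} the hypotheses of the proposition: their negative-degree cohomology is killed only by cells attached at later stages. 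Correspondingly, the ideals $I_s\subset A_s$ generated by the negative parts of the stages are not acyclic, so the vanishing you need is simply absent stage by stage, and no K\"unneth computation on the associated graded can reduce the acyclicity of $I$ to that of $N$. The acyclicity of $I$ is a global cancellation, produced precisely by the interaction you flag between the nonlinear part of the differential and the products $A^j\cdot A^{-j}$, and it only appears in the colimit. A further symptom of the gap: the hypotheses $H^0(A)=\kk$ and the existence of the augmentation are never genuinely invoked in your core step, yet the statement is false without them.

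The paper's proof shows where those hypotheses really enter, and it avoids analyzing $I$ altogether. By Sullivan minimal model theory — which applies because $A$ is augmented, $H^{<0}(A)=0$ and $H^0(A)=\kk$ — one chooses a quasi-free model $B=(S(V),\partial)\xrightarrow{\sim}A$ whose generators $V$ are concentrated in degrees $\geq 1$, so that $B$ has no elements in negative degrees at all and $\tau_{\sharp}B=B$ on the nose. Since $\tau_{\sharp}$ is a left Quillen functor, it preserves weak equivalences between cofibrant objects (Ken Brown's lemma), so applying it to $B\xrightarrow{\sim}A$ gives a quasi-isomorphism $B=\tau_{\sharp}B\xrightarrow{\sim}\tau_{\sharp}A$, and the naturality square of the adjunction unit together with two-out-of-three yields the claim. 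If you want to complete your outline, this is also the cleanest repair: instead of filtering $A$ and attacking the ideal $I$ head-on, replace $A$ by such a minimal model; the acyclicity of $I$ then follows a posteriori from the proposition rather than being the thing one proves.
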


\begin{proof}
By minimal model theory we can construct a cofibrant replacement of the algebra $A$
of the form:
\begin{equation*}
B = (S(V),\partial)\xrightarrow{\sim} A,
\end{equation*}
where $V$ is a graded vector space concentrated in degrees $\geq 1$,
and equipped with an exhaustive filtration
\begin{equation*}
0 = F^0 V \subset F^1 V\subset\cdots\subset F^s V\subset\cdots\subset V
\end{equation*}
such that $\partial F^s V\subset S_{\geq 2} (F^{s-1} V)$.
Then we clearly have $\tau_{\sharp} B = B$.
But $\tau_{\sharp}$ preserves weak equivalences between cofibrant objects by Quillen adjunction, and hence, the morphism $A\rightarrow\iota\tau_{\sharp} A$
fits into a commutative diagram of quasi-isomorphism:
\begin{equation*}
\xymatrix{ B\ar[r]^-{\sim}\ar[d]^-{=} & A\ar[d] \\
\iota\tau_{\sharp}B = B \ar[r]^{\sim} & \iota\tau_{\sharp}A }.
\end{equation*}
The lemma follows.
\end{proof}

This proposition also holds for the $\ZZ$-graded dg Hopf cooperads which satisfy the assumption of the proposition arity-wise
since the functors $\tau_\sharp$ and $\iota$ on dg Hopf cooperads are obtained by an arity-wise application
of the corresponding functors on dg commutative algebras.

\subsection{Cofibrancy criterion for dg Hopf cooperads}
We use the following cofibrancy criterion in our constructions:

\begin{lemm}\label{lem:cofib_criterion}
Let $\AOp$ be a dg Hopf cooperad (either $\ZZ$-graded or non-negatively graded). We assume that $\AOp$ is a quasi-free dg commutative algebra arity-wise:
\begin{equation*}
\AOp(r) = (S(\MOp(r)),\partial),\quad r>0,
\end{equation*}
for a graded symmetric sequence $\MOp = \MOp(1),\MOp(2),\ldots$ equipped with an exhaustive filtration $0= F^0\MOp\subset F^1\MOp\subset\cdots\subset F^s\MOp\subset\cdots\subset\MOp$
such that the symmetric sequence $S(F^{s-1}\MOp) + F^s\MOp\subset S(\MOp)$
is preserved by the differential and the composition coproducts of the cooperad $\AOp$,
for every $s\geq 1$.
Then $\AOp$ defines a cofibrant object in the category of dg Hopf cooperads ($\dgHOpc$ or $\dgZHOpc$).
\end{lemm}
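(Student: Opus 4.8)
The plan is to realise $\AOp$ as a transfinite composite of cobase changes of free Hopf cooperads and then invoke that cofibrations are closed under these operations, working through the symmetric-algebra adjunction $\Sym : \dgOpc \rightleftarrows \dgHOpc : \omega$ of~\cite{ExtendedRHT} in the non-negatively graded case and through its $\ZZ$-graded analogue~\eqref{equ:S omega adjunction} in the $\ZZ$-graded case. First I would set $F^s\AOp = (S(F^s\MOp),\partial)$ for $s\geq 0$ and check that these are sub-dg-Hopf-cooperads of $\AOp$ forming an exhaustive chain with $F^0\AOp = \Com^c$ and $\AOp = \colim_s F^s\AOp$. The differential preserves $S(F^s\MOp)$ since it is a derivation carrying the generators $F^s\MOp$ into $S(F^{s-1}\MOp)+F^s\MOp\subset S(F^s\MOp)$ by hypothesis; the composition coproducts preserve it because they are morphisms of commutative algebras, so it suffices that $\circ^*$ maps the generating subspace $F^s\MOp$ into $S(F^s\MOp)\otimes S(F^s\MOp)$, which follows from the hypothesis applied to $\DOp = (S(F^{s-1}\MOp)+F^s\MOp,\partial)\subset S(F^s\MOp)$. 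Exhaustiveness and the value $F^0\AOp = \Com^c$ come from $\MOp = \bigcup_s F^s\MOp$ and $S(0)=\Com^c$.

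Next, for each $s\geq 1$ I would exhibit the pushout square in $\dgHOpc$ (respectively $\dgZHOpc$)
\begin{equation*}
\begin{tikzcd}
\Sym(\COp) \ar{r} \ar{d}{\Sym(i)} & F^{s-1}\AOp \ar{d} \\
\Sym(\DOp) \ar{r} & F^s\AOp
\end{tikzcd}
\end{equation*}
where $\COp = (S(F^{s-1}\MOp),\partial) = \omega(F^{s-1}\AOp)$ and $\DOp = (S(F^{s-1}\MOp)+F^s\MOp,\partial)$ are regarded as coaugmented dg cooperads, the upper horizontal map is the adjunction counit $\Sym\,\omega(F^{s-1}\AOp)\to F^{s-1}\AOp$, and the left vertical map is $\Sym(i)$ for the inclusion of cooperads $i:\COp\hookrightarrow\DOp$. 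The map $i$ is arity-wise injective, hence a cofibration of dg cooperads both in $\dgOpc$ (where cofibrations are the maps injective in positive degrees) and in $\dgZOpc$ (where they are the arity-wise injective maps). Because $\Sym$ is the left adjoint of a Quillen adjunction it is left Quillen and carries $i$ to a cofibration $\Sym(i)$, so its cobase change $F^{s-1}\AOp\to F^s\AOp$ is again a cofibration. The map $\Com^c = F^0\AOp\to\AOp$ is then the transfinite composite of these cofibrations and is therefore itself a cofibration; since $\Com^c$ is the initial object, $\AOp$ is cofibrant.

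The one genuinely non-formal point, which I expect to be the main obstacle, is the verification that the square above is a pushout: colimits in $\dgHOpc$ are not computed arity-wise on the underlying symmetric sequences, so this cannot be read off directly. I would establish it through the universal property. A morphism out of the pushout is the datum of a morphism $F^{s-1}\AOp\to\EOp$ together with a morphism $\Sym(\DOp)\to\EOp$, that is a cooperad morphism $\DOp\to\omega\EOp$, agreeing after restriction along $i$ with the cooperad morphism induced by $F^{s-1}\AOp\to\EOp$. This is exactly the data needed to extend $F^{s-1}\AOp\to\EOp$ to $F^s\AOp\to\EOp$, because as a commutative algebra $S(F^s\MOp)$ is free over $S(F^{s-1}\MOp)$ on any graded complement of $F^{s-1}\MOp$ in $F^s\MOp$, while the cooperad structure of $F^s\AOp$ is determined by that of $\DOp$ together with $F^{s-1}\AOp$. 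The two model-structure cases require no separate treatment here, since only the description of the cofibrations of the underlying cooperads differs and arity-wise injections qualify in both.
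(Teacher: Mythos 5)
Your proposal is correct and follows essentially the same route as the paper's proof: the same filtration $F^s\AOp = (S(F^s\MOp),\partial)$, the same pushout squares along $\Sym(i)$ for the inclusion $i\colon (S(F^{s-1}\MOp),\partial)\hookrightarrow(S(F^{s-1}\MOp)+F^s\MOp,\partial)$, and the same conclusion via closure of cofibrations under cobase change and transfinite composition. The only difference is that you spell out the verification of the pushout's universal property and of the preservation of the filtration by the differential and coproducts, which the paper asserts without detail.
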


\begin{proof}
The assumption implies that the symmetric sequence $S(F^s\MOp)\subset S(\MOp)$ is preserved by the differential and the composition coproducts of the cooperad $\AOp$.
Hence, the Hopf dg cooperad $\AOp$ has a decomposition $\AOp = \colim_s F^s\AOp$
as a colimit of subobjects
\begin{gather*}
\Com^c = F^0\AOp\subset F^1\AOp\subset\cdots\subset F^s\AOp\subset\cdots\subset\AOp
\intertext{such that}
F^s\AOp = (S(F^s\MOp),\partial).
\end{gather*}

For each $s\geq 1$, we moreover have a pushout diagram
in the category of dg Hopf cooperads:
\begin{equation*}
\xymatrix{ \Sym(S(F^{s-1}\MOp),\partial)\ar@{^{(}->}[]!D-<0pt,2pt>;[d]\ar[r] & F^{s-1}\AOp\ar@{.>}[d] \\
\Sym(S(F^{s-1}\MOp) + F^s\MOp,\partial)\ar@{.>}[r] & F^s\AOp },
\end{equation*}
where we consider coaugmented dg cooperads such that $\COp = (S(F^{s-1}\MOp),\partial)$, $\DOp = (S(F^{s-1}\MOp) + F^s\MOp,\partial)$
(using the assumption that $S(F^{s-1}\MOp) + F^s\MOp\subset\AOp$
is also preserved by the differential and the composition coproducts of the cooperad $\AOp$),
and we take the morphism of dg Hopf cooperads obtained by applying the cooperadic symmetric algebra functor $\Sym(-)$
to the inclusion of these coaugmented dg cooperads $i: \COp\hookrightarrow\DOp$.
The upper horizontal arrow of this pushout diagram is the morphism of dg Hopf cooperads induced by the identity morphism on $S(F^{s-1}\MOp)$,
while the lower horizontal arrow is the morphism of dg Hopf cooperads induced by the obvious inclusion of sub-cooperads $S(F^{s-1}\MOp) + F^s\MOp\subset S(F^s\MOp)$
within $\AOp$.

The morphism $\Sym(i): \Sym(\COp)\hookrightarrow\Sym(\DOp)$ is a cofibration by definition of the model structure of dg Hopf cooperads
by adjunction from the model structure of dg cooperads.
Then we just use that the class of cofibrations in a model category is closed under pushouts and transfinite compositions
to get the result of the lemma.
\end{proof}

\end{appendix}

\bibliographystyle{plain}
\bibliography{MappingSpaceModel}

\end{document}